\newtheoremstyle{vetalike}
{6pt}
{3pt}
{\slshape}
{}
{\bfseries}
{.}
{1em}
{}
\newtheoremstyle{deflike}
{6pt}
{6pt}
{}
{}
{\bfseries}
{.}
{1em}
{}
\newtheoremstyle{prlike}
{6pt}
{6pt}
{}
{}
{\bfseries}
{.}
{1em}
{}
\newtheoremstyle{poznslike}
{6pt}
{6pt}
{}
{}
{\bfseries}
{.}
{\newline}
{}
\theoremstyle{vetalike}
\newtheorem{thm}{Theorem}[section]
\newtheorem{prop}[thm]{Proposition}
\newtheorem{lem}[thm]{Lemma}
\newtheorem{cor}[thm]{Corollary}
\theoremstyle{deflike}
\newtheorem{deff}[thm]{Definition}
\newtheorem{rem}[thm]{Remark}
\newtheorem{constr}[thm]{Construction}
\theoremstyle{poznslike}
\theoremstyle{prlike}
\newtheorem{example}[thm]{Example}
\def\blfootnote{\xdef\@thefnmark{}\@footnotetext}
\newcommand{\card}[1]{\left\lvert{#1}\right\rvert}
\newcommand{\CM}{Cohen-Macaulay }
\renewcommand{\ker}{\mathrm{Ker}\,}
\renewcommand{\Im}{\mathrm{Im}\,}
\newcommand{\coker}{\mathrm{Coker}\,}
\newcommand{\im}{\mathrm{Im}\,}
\newcommand{\ass}{\mathrm{Ass}\,}
\newcommand{\supp}{\mathrm{Supp}\,}
\newcommand{\ann}{\mathrm{Ann}}
\newcommand{\qcoh}{\mathsf{QCoh}}
\newcommand{\coh}{\mathsf{Coh}}
\newcommand{\spec}[1]{\mathrm{Spec}\,#1}
\newcommand{\proj}[1]{\mathrm{Proj}\,#1}
\newcommand{\Mod}{\mathsf{Mod\textnormal{\textsf{-}}}}
\newcommand{\modl}{\mathsf{\textnormal{\textsf{-}}mod}}
\newcommand{\Ab}{\mathsf{Ab}}
\newcommand{\Der}{\mathsf{D}}
\newcommand{\oh}{\mathcal{O}}
\newcommand{\Prodcl}{\mathrm{Prod}}
\newcommand{\Limcl}{\underrightarrow{\mathrm{Lim}}\,}
\newcommand{\injdim}{\mathrm{injdim}\,}
\newcommand{\depth}{\mathrm{depth}\,}
\newcommand{\height}{\mathrm{ht}\,}
\newcommand{\cogen}{\mathrm{Cogen}}
\renewcommand{\hom}{\mathrm{Hom}}
\newcommand{\homSh}{\mathscr{H}\!om}   
\newcommand{\ext}{\mathrm{Ext}}
\newcommand{\ehom}{\mathrm{End}}
\begin{document}

\title{Cotilting sheaves on Noetherian schemes}

\author{Pavel \v{C}oupek}
\address{Purdue University, Department of Mathematics, 150 N. University Street,  West Lafayette, IN 47907, USA}

\author{Jan \v{S}\v{t}ov\'{\i}\v{c}ek}
\address{Charles University, Faculty of Mathematics and Physics, Department of Algebra, Sokolovsk\'{a} 83, 186 75 Prague 8, Czech Republic}
\maketitle

\blfootnote{\textit{Keywords and phrases:} Grothendieck category, cotilting objects, pure-injective objects, Noetherian scheme, classification. }

\blfootnote{The first named author was partially supported by the institutional grant SVV 260456 of the Charles University and partially supported by the Ross Fellowship of Purdue University.}

\blfootnote{The second named author was supported by Neuron Fund for Support of Science.}

\begin{abstract}
We develop theory of (possibly large) cotilting objects of injective dimension at most one in general Grothendieck categories. We show that such cotilting objects are always pure-injective and that they characterize the situation where the Grothendieck category is tilted using a torsion pair to another Grothendieck category. We prove that for Noetherian schemes with an ample family of line bundles a cotilting class of quasi-coherent sheaves is closed under injective envelopes if and only if it is invariant under twists by line bundles, and that such cotilting classes are parametrized by specialization closed subsets disjoint from the associated points of the scheme. Finally, we compute the cotilting sheaves of the latter type explicitly for curves as products of direct images of indecomposable injective modules or completed canonical modules at stalks.
\end{abstract}

\setcounter{tocdepth}{2} 
\tableofcontents 

\section{Introduction}

Tilting theory is a collection of well established methods for studying equivalences between triangulated categories in homological algebra. Although it has many facets (see~\cite{HandbookTilting}), in its basic form \cite{Happel,Rickard} it struggles to answer the following question: Given two abelian categories $\mathcal{A}, \mathcal{H},$ which may not be equivalent, how can we characterize the situation where their \emph{derived} categories are equivalent,
\[ \Der(\mathcal{H}) \simeq \Der(\mathcal{A})? \]

If $\mathcal{H}$ is a module category, $\mathcal{H} = \Mod{R}$, the answer is well-known. The ring $R$ is transferred by the equivalence to what is called a tilting complex in $\Der(\mathcal{A})$. On the other hand, existence of a tilting complex in $\Der(\mathcal{A})$ whose endomorphism ring is $R$ ensures such a derived equivalence. This is an extremely powerful tool to study representations of groups and quivers, coherent sheaves in commutative or non-commutative geometry, and also in various other situations.

\smallskip

The starting point for this paper is how to detect the case where $\mathcal{H}$ has an injective cogenerator $W$, e.g.\ if $\mathcal{H}$ is a Grothendieck category. In this case, the image $C$ of $W$ in $\Der(\mathcal{A})$ should morally be a cotilting complex. However, we are on a much more experimental ground now with an attempt to define such a complex intrinsically in $\Der(\mathcal{A})$. There are several definitions available in the case where $C$ is required to be an object of $\mathcal{A}$ (see~\cite{Colpi,ColpiTrlifaj2,ColpiHeart,St3,ParraSaorin,FiorotMattielloSaorin,PositselskiSt}), and rather recent research dealing with the case where $C$ is an actual complex, \cite{PsaroudakisVitoria,NicolasSaorinZvonareva}. One of the major problems with manipulating cotilting complexes is that, unlike the ring in its module category, injective cogenerators are often very far from being finitely generated in any reasonable sense.

Our aim here is to understand the situation in detail for the particular case where $\mathcal{A}$ is also a Grothendieck category, and preferably even the category $\qcoh_X$ of quasi-coherent sheaves on a Noetherian scheme $X$. We are inspired by recent progress in understanding cotilting sheaves of affine schemes in~\cite{St1,HrbekSt}.

We also restrict the shape of derived equivalences which we consider. We only focus on derived equivalences coming from `turning around' a torsion pair $(\mathcal{T}, \mathcal{F})$ in $\mathcal{A}$. This is a very general method introduced by Happel, Reiten and Smal\o{} in~\cite{HRS}. The abelian category $\mathcal{H}$ which we obtain on the other end of the derived equivalence comes equipped with a torsion pair $(\mathcal{F}, \mathcal{T})$ and it has very strong homological bonds to $\mathcal{A}$. This effectively means that we restrict ourselves to cotilting objects in $\mathcal{A}$ whose injective dimension is at most one.

\smallskip

Our main result in this direction (Theorem~\ref{thm:maintheorem}), which generalizes~\cite[\S5]{ParraSaorin}, says that for any Grothendieck category $\mathcal{A}$ equipped with a torsion pair $(\mathcal{T}, \mathcal{F})$ such that $\mathcal{F}$ contains a generator, the tilted category $\mathcal{H}$ (in the sense of Happel, Reiten and Smal\o{}) is again a Grothendieck category if and only if $\mathcal{H}$ has an injective cogenerator if and only if $\mathcal{F}$ has an injective cogenerator $C$ as an exact category. We call such objects of $\mathcal{A}$ cotilting.

In order to prove this result, we had to overcome the following problem---all proofs available for module categories use the fact that cotilting modules are pure-injective. As a general Grothendieck category $\mathcal{A}$ need not be locally finitely presentable, there does not seem to be any good definition of a pure-exact structure on $\mathcal{A}$ available. Still it turns out that it make sense to define pure-injective objects in this context and that all cotilting objects as in the last paragraph are pure-injective (Theorem~\ref{thm:CotiltingPureInjective}).

The pure-injectivity has other nice consequences. For example, this allows us to easily describe the cotilting torsion-free classes in $\mathcal{A}$ (Theorem~\ref{thm:CharTiltingClasses}) and, if $\mathcal{A}$ is a locally Noetherian Grothendieck category, cotilting objects are parametrized, up to product equivalence, by torsion pairs in the category $\mathcal{A}_0$ of Noetherian objects of $\mathcal{A}$ (Theorem~\ref{thm:ClassificationViaTP}; see also~\cite{BuanKrause-cotilting,ParraSaorin}).

If $X$ is a Noetherian scheme, this takes us back to our original aim. We can in principle classify all cotilting sheaves in $\qcoh_X$, up to product equivalence, as soon as we understand torsion pairs in $\coh_X$. If $X$ is affine, all torsion pairs in $\coh_X$ are hereditary and given by a subset $Y\subseteq X$ which is closed under specialization~\cite[\S2]{St1}. If $X$ is non-affine, this is no longer the case. Here we single out the torsion pairs in $\coh_X$ which are hereditary (Theorem~\ref{thm:charTPCoh}). It turns out that these are again precisely the ones classified by specialization closed subsets $Y\subseteq X$, or equivalently the ones for which the torsion class is a tensor ideal. If $X$ has an ample family of line bundles, these are also precisely those for which the torsion-free class is closed under twists by the line bundles.
The consequences for cotilting quasi-coherent sheaves are then summarized in Theorem~\ref{thm:classification}. 

Last but not least, we also illustrate the theory throughout the paper in the case of $1$-dimensional Noetherian schemes by providing an explicit computation of the cotilting sheaves associated to a specialization closed subset. We discuss the example of $\qcoh_{\mathbb{P}^1_k}$, quasi-coherent sheaves on the projective line over a field, in particular. It turns out that all the technical nuances and differences from the affine situation already occur there.

\section{Cotilting objects in Grothendieck categories}
\label{sec:cotilting}

The goal of the section is to establish basic theory of cotilting objects of injective dimension at most $1$ for Grothendieck categories. The definition of such a (infinitely generated) cotilting object in a module category is rather standard, see \cite{Colpi,ColpiTrlifaj2}. However, as mentioned in the introduction, extensions of the concept to more general abelian categories or even triangulated categories are still subject to experiments by various authors, see e.g.\ \cite{ParraSaorin,FiorotMattielloSaorin,PsaroudakisVitoria,NicolasSaorinZvonareva} to name a few.

Here we show that basic aspects of cotilting modules from \cite{Colpi,ColpiTrlifaj2} generalize to Grothendieck categories rather easily and that our definition matches perfectly with the one from \cite{ParraSaorin,FiorotMattielloSaorin,NicolasSaorinZvonareva} (and a posteriori with \cite{PsaroudakisVitoria} as well in view of the results in \cite{NicolasSaorinZvonareva}).

At several steps throughout this section, we make use of results on interaction of the $\ext^1$-functor with infinite products in a Grothendieck category, which are stated in Appendix~\ref{sec:ExtProd}.

We start by recalling the definition of a torsion pair in an abelian category. 

\begin{deff}[{\cite{Dickson}}]\label{def:torznipar}
Let $\mathcal{A}$ be an abelian category. A \emph{torsion pair} in $\mathcal{A}$ is a pair $(\mathcal{T}, \mathcal{F})$ of full subcategories of $\mathcal{A}$ such that 
\begin{enumerate}[(1)]
\item\label{TPDef1}{$\hom_{\mathcal{A}}(\mathcal{T}, \mathcal{F})=0$, and }
\item\label{TPDef2}{for every $A \in \mathcal{A},$ there is an exact sequence

\vspace{0.25cm}
\begin{adjustbox}{max totalsize={1\textwidth}{.9\textheight},center}
\begin{tikzcd}
0 \ar[r]& T \ar[r]& A \ar[r] & F \ar[r] & 0 
\end{tikzcd}
\end{adjustbox}
\vspace{0.15cm}

\noindent with $T \in \mathcal{T}$ and $F \in \mathcal{F}$. We call the object $T$ the \emph{torsion part} of $A$, and the object $F$ the \emph{torsion-free part} of $A$.
}
\end{enumerate}
A torsion pair $(\mathcal{T}, \mathcal{F})$ is called \emph{hereditary} if $\mathcal{T}$ is closed under subobjects. 
\end{deff}

\begin{rem}[\cite{Dickson}]\label{rem:TP} ~
\begin{enumerate}
\item\label{TP1}{It follows from the definition that for each torsion pair $\mathcal{F}=\ker \hom_{\mathcal{A}}(\mathcal{T}, -)$ and $\mathcal{T}=\ker \hom_{\mathcal{A}}(-, \mathcal{F})$. In particular, $\mathcal{T}$ is closed under extensions, factors and under all colimits that exist in $\mathcal{A}$ and, dually, $\mathcal{F}$ is closed under extensions, subobjects and limits.
}
\item\label{TP2}{Suppose that $\mathcal{A}$ is a well-powered abelian category where for every direct system $A_i$ of subobjects of a fixed object $A$ (and monomorphisms compatible with those to $A$), $\varinjlim_i A_i$ exists---this is fulfilled in particular when $\mathcal{A}$ is a Grothendieck category (e.g. $\qcoh_X$) or a Noetherian category (e.g. $\coh_X$ for a Noetherian scheme $X$). Assume that $\mathcal{T}\subseteq\mathcal{A}$ is closed under extensions, quotients and under colimits that exist in $\mathcal{A}$, and put $\mathcal{F}=\ker \hom_{\mathcal{A}}(\mathcal{T}, -)$. Then $(\mathcal{T}, \mathcal{F})$ is a torsion pair.

Indeed, consider $A \in \mathcal{A}$, and take $T \stackrel{\alpha}\hookrightarrow A$ the maximum subobject of $A$ with $T \in \mathcal{T}$. Such a $T$ exists, it is obtained as the image of $\varinjlim_i T_i \rightarrow A,$ where $T_i$ varies over (representatives of) all subobjects of $A$ which are in $\mathcal{T}$. Then necessarily $\coker\alpha\in\mathcal{F}$ and we obtain the exact sequence required by Definition~\hyperref[TPDef2]{\ref*{def:torznipar}~(\ref*{TPDef2})}.}
\end{enumerate}
\end{rem}

We will also make use of the following easy observation, whose proof we omit.

\begin{lem}\label{lem:TorsionPairsInclusion}
Suppose $(\mathcal{T}, \mathcal{F})$ and $(\mathcal{T}', \mathcal{F}')$ are two torsion pairs in an abelian category $\mathcal{A}$ such that $\mathcal{T} \subseteq \mathcal{T}'$ and $\mathcal{F}\subseteq \mathcal{F}'$. Then  $(\mathcal{T}, \mathcal{F})=(\mathcal{T}', \mathcal{F}').$
\end{lem}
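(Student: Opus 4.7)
My plan is to prove the two nontrivial inclusions $\mathcal{T}'\subseteq\mathcal{T}$ and $\mathcal{F}'\subseteq\mathcal{F}$, each by applying the $(\mathcal{T},\mathcal{F})$-torsion decomposition to an object of the larger class and then exploiting the defining orthogonality $\hom_{\mathcal{A}}(\mathcal{T}',\mathcal{F}')=0$ (so in particular $\mathcal{T}'\cap\mathcal{F}'=0$, because an object lying in both admits its identity endomorphism, which must vanish). The basic closure properties from Remark~\ref{rem:TP}(\ref{TP1}) — torsion classes are closed under quotients, torsion-free classes under subobjects — do all the work.

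For $\mathcal{T}'\subseteq\mathcal{T}$, I would take $T'\in\mathcal{T}'$ and consider its canonical sequence $0\to T\to T'\to F\to 0$ with $T\in\mathcal{T}$ and $F\in\mathcal{F}$. Since $\mathcal{T}\subseteq\mathcal{T}'$ we have $T\in\mathcal{T}'$, and since $\mathcal{T}'$ is closed under quotients the cokernel $F$ lies in $\mathcal{T}'$ as well. But $\mathcal{F}\subseteq\mathcal{F}'$ forces $F\in\mathcal{F}'$, so $F\in\mathcal{T}'\cap\mathcal{F}'=0$. Hence the sequence collapses and $T'\cong T\in\mathcal{T}$.

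The argument for $\mathcal{F}'\subseteq\mathcal{F}$ is entirely symmetric: for $F'\in\mathcal{F}'$, apply the $(\mathcal{T},\mathcal{F})$-decomposition $0\to T\to F'\to F\to 0$. The subobject $T$ of $F'$ lies in $\mathcal{F}'$ by closure of $\mathcal{F}'$ under subobjects, and it also lies in $\mathcal{T}\subseteq\mathcal{T}'$, so again $T=0$ and $F'\cong F\in\mathcal{F}$.

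There is no real obstacle here: the entire argument is a one-step application of the torsion decomposition combined with the closure properties, which is presumably why the authors omit it. The only subtle point worth stating explicitly is the passage $\hom_{\mathcal{A}}(\mathcal{T}',\mathcal{F}')=0 \Rightarrow \mathcal{T}'\cap\mathcal{F}'=0$, and even that is immediate.
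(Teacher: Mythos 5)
The paper omits its own proof of this lemma, so there is nothing to compare against directly; but your argument is correct and is the standard one. One small inaccuracy in phrasing: in the first inclusion you write that $T\in\mathcal{T}'$ and then invoke closure of $\mathcal{T}'$ under quotients to get $F\in\mathcal{T}'$. The relevant fact is that $F$ is a quotient of $T'$ (not of $T$), so the observation $T\in\mathcal{T}'$ is extraneous. With that noted, your chain $F\in\mathcal{T}'\cap\mathcal{F}'=0$ and its mirror $T\in\mathcal{T}'\cap\mathcal{F}'=0$ carry the day. A slightly faster route, using only Remark~\ref{rem:TP}(\ref{TP1}), is to observe that $\mathcal{F}=\ker\hom_{\mathcal{A}}(\mathcal{T},-)$ and $\mathcal{F}'=\ker\hom_{\mathcal{A}}(\mathcal{T}',-)$, so $\mathcal{T}\subseteq\mathcal{T}'$ already forces $\mathcal{F}'\subseteq\mathcal{F}$ (the perp operation is inclusion-reversing), hence $\mathcal{F}=\mathcal{F}'$; and dually $\mathcal{T}=\ker\hom_{\mathcal{A}}(-,\mathcal{F})=\ker\hom_{\mathcal{A}}(-,\mathcal{F}')=\mathcal{T}'$. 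This avoids invoking the decomposition axiom at all, relying only on the orthogonality characterization of each side of a torsion pair in terms of the other.
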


Given a class of objects $\mathcal{C}$ in a Grothendieck category $\mathcal{A}$, we further denote
\begin{align*}
{^\perp\mathcal{C}} &= \{ F \in \mathcal{A} \mid \ext^1_\mathcal{A}(F, C) = 0 \text{ for each } C\in\mathcal{C} \},\\
\mathcal{C}^\perp &= \{ F \in \mathcal{A} \mid \ext^1_\mathcal{A}(C, F) = 0 \text{ for each } C\in\mathcal{C} \}
\end{align*}
and we denote by $\Prodcl({\mathcal{C}})$ the class of direct summands of arbitrary direct products of objects from $\mathcal{C}$.
If $\mathcal{C}=\{C\}$ is a singleton, we simply write ${^\perp{C}}$, $C^\perp$ and $\Prodcl({C})$, respectively.
Given an object $C \in \mathcal{A}$, we denote by $\cogen({C})$ the class of all objects $F\in\mathcal{A}$ which are cogenerated by $C$ (that is, which admit an embedding of the form $F\hookrightarrow {C}^{\times I}$).

Now we can give the technically least involved definition of a cotilting object in a Grothendieck category $\mathcal{A}$. It generalizes~\cite[Definition 2.6]{ColpiTiltingGrothendieck} for Grothendieck categories which might not have enough projective objects.

\begin{deff}\label{def:cotilting}
An object $C$ in a Grothendieck category $\mathcal{A}$ is \emph{cotilting} if ${^\perp C}=\cogen({C})$ and the class ${^\perp C}$ contains a generator of $\mathcal{A}$.
The class $\mathcal{C} = \cogen({C})$ is called the \emph{cotilting class} associated with $C$.
\end{deff}

\begin{rem}
Unlike in module categories, ${^\perp C}$ is not automatically generating since $\mathcal{A}$ may not have enough projective objects.
\end{rem}

The following shows at once that each cotilting class is a torsion-free class. We state the lemma in greater generality for later use.

\begin{lem}\label{lem:cotilting-TF}
Let $\mathcal{A}$ be a Grothendieck category and $C\in\mathcal{A}$ be an object such that $\cogen({C})\subseteq {^\perp C}$. Then the class $\cogen({C})$ is torsion-free, with the corresponding torsion class given by
\[ \mathcal{T} = \{ T \in \mathcal{A} \mid \hom_\mathcal{A}(T, C) = 0 \}. \]
In particular, the cotilting class is closed under direct sums.
\end{lem}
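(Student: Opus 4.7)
My plan is to verify directly that $\mathcal{T} := \{T \in \mathcal{A} : \hom_\mathcal{A}(T, C) = 0\}$ and $\mathcal{F} := \cogen(C)$ satisfy the two axioms of a torsion pair in Definition~\ref{def:torznipar}, then deduce the direct sum closure. Axiom (1) is immediate: given $T \in \mathcal{T}$, $F \in \mathcal{F}$ with a monomorphism $\iota: F \hookrightarrow C^I$, and any $h: T \to F$, each composite $\pi_i \circ \iota \circ h: T \to C$ vanishes by hypothesis on $T$, so $\iota \circ h = 0$ and hence $h = 0$.

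For axiom (2), given $A \in \mathcal{A}$, I would use the canonical evaluation map $\phi: A \to C^{\hom_\mathcal{A}(A, C)}$, defined so that its $f$-th projection is $f$ itself. Set $T := \ker \phi$ and $F := \im \phi$, giving a short exact sequence $0 \to T \to A \to F \to 0$. Since $F$ is by definition a subobject of a power of $C$, it lies in $\mathcal{F}$. To see $T \in \mathcal{T}$, apply $\hom_\mathcal{A}(-, C)$ to obtain the exact sequence
\[
\hom_\mathcal{A}(A, C) \to \hom_\mathcal{A}(T, C) \to \ext^1_\mathcal{A}(F, C);
\]
the hypothesis $\cogen({C}) \subseteq {^\perp C}$ forces the last term to vanish, so every $g: T \to C$ extends to some $\tilde{g}: A \to C$. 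But then $\tilde{g}$ is itself an index in $\hom_\mathcal{A}(A, C)$, and by construction $\tilde{g}|_T = \pi_{\tilde{g}} \circ \phi|_T = 0$, yielding $g = 0$. Therefore $\hom_\mathcal{A}(T, C) = 0$, i.e.\ $T \in \mathcal{T}$.

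For the final assertion, any family $\{F_j \hookrightarrow C^{I_j}\}_{j \in J}$ in $\cogen(C)$ gives a monomorphism $\bigoplus_{j \in J} F_j \hookrightarrow \prod_{j \in J} F_j \hookrightarrow \prod_{j \in J} C^{I_j} \cong C^{\bigsqcup_j I_j}$, so $\bigoplus_j F_j \in \cogen(C)$. The only (mild) subtlety in the whole argument is recognizing that the canonical evaluation map is the right candidate for producing the torsion approximation sequence: its image automatically lies in $\cogen(C)$, and the hypothesis $\cogen(C) \subseteq {^\perp C}$ is precisely the leverage needed to lift morphisms $T \to C$ to $A$ and then exploit the defining property of $\phi$ to force such lifts to restrict to zero on $T$.
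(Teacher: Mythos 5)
Your proof is correct, and it takes a genuinely different route from the paper's. The paper proceeds by showing $\cogen(C)$ is closed under subobjects and products (hence all limits) and then under extensions---the latter via a diagram built around a split exact sequence of powers of $C$, the Ext vanishing, and the Four Lemma---and finally invokes the dual of Remark~\ref{rem:TP}~(2) to conclude that a class with these closure properties is torsion-free. You instead verify the two torsion-pair axioms head-on: axiom (1) by factoring through a power of $C$, and axiom (2) by exhibiting the approximation sequence $0 \to \ker\phi \to A \to \im\phi \to 0$ associated to the canonical evaluation $\phi\colon A \to C^{\hom_\mathcal{A}(A,C)}$, using $\ext^1_\mathcal{A}(\im\phi, C)=0$ to extend any $g\colon \ker\phi \to C$ to $A$ and then observing that such an extension is already a coordinate of $\phi$, forcing $g=0$. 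One way to view this: your argument is exactly what the dualized Remark~\ref{rem:TP}~(2) machinery specializes to when the torsion-free class is cogenerated by a single object, written out concretely; the advantage is that you bypass the separate extension-closure step entirely (closure under extensions then comes for free once the torsion pair is established), while the paper's proof makes that closure property explicit as an intermediate fact. Your direct-sum argument also differs in flavor---you embed $\bigoplus_j F_j$ straight into a power of $C$ rather than appealing to the general fact that torsion-free classes in Grothendieck categories are closed under direct sums---but both hinge on the same observation that $\bigoplus \to \prod$ is monic in a Grothendieck category, which you might say a word about.
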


\begin{proof}
By the very definition, $\cogen({C})$ is closed under taking subobjects and products, and thus also all limits in $\mathcal{A}$. Once we show that $\cogen({C})$ is also closed under extensions, it will be a torsion-free class by an argument formally dual to Remark~\hyperref[TP2]{\ref*{rem:TP}~(\ref*{TP2})}.

To that end, consider a short exact sequence

\vspace{0.25cm}
\begin{adjustbox}{max totalsize={1\textwidth}{.9\textheight},center}
\begin{tikzcd}
0 \ar[r]& {F}' \ar[r, "j"] & {F} \ar[r, "p"] &  {F}'' \ar[r] & 0  
\end{tikzcd}
\end{adjustbox}
\vspace{0.15cm}

\noindent in $\mathcal{A}$, where ${F}', {F}'' \in \cogen({C})$. That is, we have monomorphisms $i', i''$ forming the solid part of the following diagram:

\vspace{0.25cm}
\begin{adjustbox}{max totalsize={1\textwidth}{.9\textheight},center}
\begin{tikzcd}
0 \ar[r]& {F}' \ar[r, "j"] \ar[d, "i'", hook] & {F} \ar[r, "p"] \ar[d, dotted, "({}^{\;\;k}_{i''p})"] &  {F}'' \ar[r] \ar[d, "i''", hook] & 0  \\
0 \ar[r, dotted]& {C}^{\times I} \ar[r, dotted, "\iota_I"] & {C}^{\times(I \cup J)} 
\ar[r, dotted, "\pi_J"] & {C}^{\times J} 
\ar[r, dotted] & 0
\end{tikzcd}
\end{adjustbox}
\vspace{0.15cm}

\noindent
Regarding the dotted part, we take for the lower sequence the split exact sequence with the canonical projections and inclusions. Since $F'' \in \cogen({C}) \subseteq {^\perp C} = {^\perp {C}^{\times(I \cup J)}}$ by Corollary~\ref{cor:BunoProd}, the composition $\iota_I i'$ extends to a morphism $k\colon F \to {C}^{\times(I \cup J)}$, i.e.\ $\iota_I i' = kj$. One readily checks that this choice of $k$ in the matrix at the middle vertical arrow makes the diagram commutative and that, by the Four Lemma, the middle arrow is a monomorphism. Thus, $F\in \cogen({C})$, as required.

Finally, note that the canonical morphism $\bigoplus_{i\in I} F_i \to \prod_{i\in I} F_i$ from a direct sum to a product is injective in any Grothendieck category, since it is the direct limit of the split monomorphisms $\bigoplus_{i\in J} F_i \simeq \prod_{i\in J} F_i \to \prod_{i\in I} F_i$, where $J$ runs over finite subsets of $I$. It follows that any torsion-free class in a Grothendieck category is closed under taking direct sums.
\end{proof}
Now we aim at giving a homological characterization of cotilting objects along the lines of \cite{Colpi,ColpiTrlifaj2}. However, some care is necessary since products need not be exact in $\mathcal{A}$. We first note that the injective dimension of a cotilting object $C$ is at most one. Indeed, this is a consequence of the following proposition applied to $\mathcal{F} = \cogen({C}) = {^\perp{C}}.$

\begin{prop}\label{prop:cotilting-injdim1}
Let $\mathcal{A}$ be a Grothendieck category and $\mathcal{F}$ be a torsion-free class in $\mathcal{A}$ which contains a generator.
Then $\injdim {C}\leq 1$ for any $C\in\mathcal{F}^\perp$.
\end{prop}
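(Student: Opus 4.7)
The plan is to prove directly that $E(C)/C$ is injective, where $E(C)$ denotes an injective envelope of $C$. Once this is established, the short exact sequence $0 \to C \to E(C) \to E(C)/C \to 0$ is an injective coresolution of $C$ of length one, which immediately yields $\injdim C \leq 1$.

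To test injectivity I will apply the Baer-type criterion valid in any Grothendieck category with a generator: an object $M \in \mathcal{A}$ is injective if and only if for every subobject $S$ of a fixed generator, every morphism $S \to M$ extends to the whole generator. By hypothesis, $\mathcal{F}$ contains a generator $G$ of $\mathcal{A}$; I will apply the criterion to $M = E(C)/C$ using this $G$.

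So let $S \hookrightarrow G$ and a morphism $f \colon S \to E(C)/C$ be given. Since $\mathcal{F}$ is a torsion-free class it is closed under subobjects (see Remark~\ref{rem:TP}), so $S \in \mathcal{F}$. Applying $\hom_{\mathcal{A}}(S, -)$ to $0 \to C \to E(C) \to E(C)/C \to 0$ gives the exact fragment
\[
\hom_{\mathcal{A}}(S, E(C)) \to \hom_{\mathcal{A}}(S, E(C)/C) \to \ext^1_{\mathcal{A}}(S, C),
\]
whose right-hand term vanishes by $S \in \mathcal{F}$ and $C \in \mathcal{F}^\perp$. Thus $f$ lifts to some $\bar f \colon S \to E(C)$. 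As $E(C)$ is injective, $\bar f$ extends to $\tilde{\bar f} \colon G \to E(C)$; composing with the projection $E(C) \twoheadrightarrow E(C)/C$ then gives the required extension of $f$ to $G$.

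The only external input is the cited form of Baer's criterion, which is classical for Grothendieck categories (see, e.g., Stenstr\"om's \emph{Rings of Quotients}). Beyond that no serious obstacle arises: the whole argument simply exploits the fact that the obstruction to lifting $f$ through the injective hull of $C$ is exactly the Ext group killed by the assumption $C \in \mathcal{F}^\perp$ once $S$ is recognised as an object of $\mathcal{F}$.
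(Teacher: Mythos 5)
Your proof is correct, and it takes a genuinely different route from the paper. The paper argues directly with Yoneda $2$-extensions: given $\xi\colon 0\to C\to E_2\to E_1\to G\to 0$, it uses the generator in $\mathcal{F}$ to find an epimorphism $F_1\twoheadrightarrow E_1$ with $F_1\in\mathcal{F}$, pulls $\xi$ back to an equivalent $2$-extension $\xi'$ whose middle kernel $\ker\beta\subseteq F_1$ lies in $\mathcal{F}$, and then observes that the resulting short exact sequence beginning with $C$ splits because $\ext^1_\mathcal{A}(\ker\beta,C)=0$; thus $\ext^2_\mathcal{A}(G,C)=0$. You instead show directly that $E(C)/C$ is injective via the Baer criterion for Grothendieck categories with a generator (Stenstr\"om, Prop.~V.2.9), checking the extension property on subobjects $S\subseteq G$: since $S\in\mathcal{F}$, the obstruction $\ext^1_\mathcal{A}(S,C)$ vanishes, every $S\to E(C)/C$ lifts through $E(C)$, and then extends to $G$ by injectivity of $E(C)$. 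Both arguments are sound. The paper's version is self-contained at the level of Yoneda extensions and requires only the closure of $\mathcal{F}$ under direct sums and subobjects; your version is shorter and more structural (it exhibits the length-one injective coresolution explicitly and even identifies the second term as $E(C)/C$), at the cost of invoking the Baer criterion as an external input. The one thing to be careful about when citing the Baer criterion in this generality is that the generator really does need to be a generator of the whole category $\mathcal{A}$, not merely of $\mathcal{F}$ --- which is exactly what the hypothesis provides.
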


\begin{proof}
Let ${G}$ be an object of $\mathcal{A}$. We show that $\ext_{\mathcal{A}}^2({G}, {C})=0$ by showing that every $2$-fold extension of ${G}$ by ${C}$ represents the trivial class of $\ext^2_{\mathcal{A}}({G}, {C})$. 
	
Consider a $2$-fold extension 
	
\vspace{0.25cm}
\begin{adjustbox}{max totalsize={1\textwidth}{.9\textheight},center}
	\begin{tikzcd} 
		\xi: &0 \ar[r]  & {C} \ar[r]& {E}_2 \ar[r]& {E}_1 \ar[r, "\alpha"] & {G} \ar[r] & 0 \;.& 
	\end{tikzcd}
\end{adjustbox}
\vspace{0.15cm}
	
\noindent Since $\mathcal{F}$ contains a generator and is closed under direct sums by Lemma~\ref{lem:cotilting-TF}, there is an object ${F}_1 \in \mathcal{F}$ and an epimorphism $\varepsilon\colon{F}_1 \rightarrow {E}_1$. If we denote $\beta=\alpha \varepsilon$ and consider the pullback of the projection $\pi\colon E_2 \to \ker\alpha$ along the map $\ker\beta\to\ker\alpha$ induced by $\varepsilon$, we obtain the following commutative diagram with exact rows:
	
\vspace{0.25cm}
\begin{adjustbox}{max totalsize={1\textwidth}{.9\textheight},center}
	\begin{tikzcd}[column sep=small, row sep=tiny] 
		\xi:&0 \ar[rr]&&{C} \ar[rr]&& {E}_2 \ar[rr] \ar[dr, two heads, "\pi"'] && {E}_1 \ar[rr, "\alpha"]  && {G} \ar[rr] && 0 \\
		&&&  &&       & \ker\alpha \ar[ru, hook] &&  & &                                                                \\
		\xi':&0 \ar[rr]&&{C} \ar[uu, equal] \ar[rr, "\gamma"]&& {F}_2 \ar[rr]\ar[rd, "\pi''"', two heads] \ar[uu] && {F}_1  \ar[uu, "\varepsilon", two heads] \ar[rr, "\beta"] && {G} \ar[rr] \ar[uu, equal] && 0  \\
		&&&  &&        & \ker\beta \ar[ru, hook]\ar[uu, "\varepsilon'", crossing over, near start] && & &                                                                
	\end{tikzcd}
\end{adjustbox}
\vspace{0.15cm}
	
That is, the $2$-extensions $\xi$ and $\xi'$ represent the same element of $\ext^2_\mathcal{A}(G, C)$. Now it is enough to show that $\xi'$ is trivial by showing that $\gamma$ is split monic. This, however, immediately follows from the fact that $\ker\beta \in \mathcal{F} \subseteq {^\perp C}$ since $\ker\beta$ is a subobject of $F_1$.
\end{proof}

Suppose now that $C\in\mathcal{A}$ is a cotilting object. Then, by Corollary~\ref{cor:BunoProd}, $C^{\times I}$ is also a cotilting object associated with the same cotilting class. In particular, each $C' \in \Prodcl(C)$ is of injective dimension at most one, and as such admits an injective resolution

\vspace{0.25cm}
\begin{adjustbox}{max totalsize={1\textwidth}{.9\textheight},center}
	\begin{tikzcd} 
		0 \ar[r]  & {C}' \ar[r]& {E}^0 \ar[r]& {E}^1 \ar[r] & 0 \;.&
	\end{tikzcd}
\end{adjustbox}
\vspace{0.15cm}

We will show that, on the other hand, each injective object admits a dual resolution in terms of objects of $\Prodcl{(C)}$. In order to do so, we need a lemma, which in essence generalizes \cite[Proposition~1.8]{Colpi} to our abstract setting.

\begin{lem}\label{lem:cogen=copres}
Let ${C}$ be an object of $\mathcal{A}$ satisfying ${^{\perp}}{C}=\cogen({C})$. Assume that ${K} \in \cogen({C})$. Then there is a short exact sequence

\vspace{0.25cm}
\begin{adjustbox}{max totalsize={1\textwidth}{.9\textheight},center}
\begin{tikzcd}
0 \ar[r]& {K} \ar[r]& {C}^{\times X} \ar[r] & {L} \ar[r] & 0 \;, 
\end{tikzcd}
\end{adjustbox}
\vspace{0.15cm}

\noindent where $X$ is a set and ${L} \in \cogen({C})$.
\end{lem}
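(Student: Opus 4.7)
The plan is to take $X := \hom_\mathcal{A}(K, C)$ and let $\iota \colon K \to C^{\times X}$ be the canonical evaluation morphism whose $f$-th coordinate (for $f \in X$) is $f$ itself. First I would verify that $\iota$ is a monomorphism: since $K \in \cogen(C)$, there exists some embedding $j \colon K \hookrightarrow C^{\times Y}$, and this factors through $\iota$ via the morphism $C^{\times X} \to C^{\times Y}$ whose $y$-th component is the element $\pi_y \circ j \in X$ (for $y \in Y$); because $j$ is monic, so is $\iota$. Setting $L := \coker \iota$, we obtain the candidate short exact sequence
$$0 \to K \xrightarrow{\iota} C^{\times X} \to L \to 0.$$

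It remains to verify that $L \in \cogen(C)$. By the standing hypothesis $\cogen(C) = {}^\perp C$, this reduces to showing $\ext^1_\mathcal{A}(L, C) = 0$. I would apply $\hom_\mathcal{A}(-, C)$ to the above sequence and exploit the fragment
$$\hom_\mathcal{A}(C^{\times X}, C) \longrightarrow \hom_\mathcal{A}(K, C) \longrightarrow \ext^1_\mathcal{A}(L, C) \longrightarrow \ext^1_\mathcal{A}(C^{\times X}, C)$$
of the long exact sequence. The rightmost term vanishes because $C^{\times X}$ lies in $\cogen(C) \subseteq {}^\perp C$ trivially, via its identity self-embedding. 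The leftmost arrow is surjective by the very construction of $\iota$: for any $f \in \hom_\mathcal{A}(K, C) = X$, one has $\pi_f \circ \iota = f$, so $f$ lifts along $\iota$ to the $f$-th canonical projection $\pi_f \colon C^{\times X} \to C$. Exactness then forces $\ext^1_\mathcal{A}(L, C) = 0$, as desired.

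The only step requiring genuine care is the monicity of the canonical $\iota$, which is precisely where the hypothesis $K \in \cogen(C)$ is used; the remainder is a routine application of the $\hom/\ext$ long exact sequence, combined with the observation that arbitrary powers of $C$ lie automatically in ${}^\perp C$ under the self-orthogonality condition $\cogen(C) \subseteq {}^\perp C$. No appeal to exactness of products or to the appendix results on $\ext^1$-commuting with products is needed, since we only require the vanishing of $\ext^1(C^{\times X}, C)$, which comes directly from the hypothesis.
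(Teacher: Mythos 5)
Your proof is correct and takes essentially the same route as the paper: the same evaluation/diagonal map $\iota\colon K \to C^{\times X}$ with $X = \hom_{\mathcal{A}}(K,C)$, the same surjectivity observation for $\hom_{\mathcal{A}}(\iota, C)$, and the same use of $\ext^1_{\mathcal{A}}(C^{\times X}, C) = 0$ (which, as you correctly note, follows directly from $C^{\times X} \in \cogen(C) = {}^\perp C$ without any appeal to the appendix). You supply a slightly more explicit factorization argument for the monicity of $\iota$ than the paper, which simply asserts it from $K \in \cogen(C)$, but the underlying idea is identical.
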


\begin{proof}
We put $X=\hom_{\mathcal{A}}({K}, {C})$ and consider the diagonal map $\Delta\colon {K} \rightarrow {C}^{\times X}.$ That is, $\Delta$ is given by $\pi_{\chi} \Delta = \chi,\;\; \chi \in X,$
and it is injective since ${K} \in \cogen({C}).$ Thus, we obtain a short exact sequence

\vspace{0.25cm}
\begin{adjustbox}{max totalsize={1\textwidth}{.9\textheight},center}
\begin{tikzcd}
0 \ar[r]& {K} \ar[r, "\Delta"]& {C}^{\times X} \ar[r] & {L} \ar[r] & 0  
\end{tikzcd}
\end{adjustbox}
\vspace{0.15cm}

\noindent
and it remains to check that ${L} \in \cogen({C})={^{\perp}}{C}$.
Applying $\hom_{\mathcal{A}}(-, {C})$, we obtain a long exact sequence

\vspace{0.25cm}
\begin{adjustbox}{max totalsize={1\textwidth}{.9\textheight},center}
\begin{tikzcd}
\cdots\; \hom_{\mathcal{A}}({C}^{\times X}, {C}) \ar[r, "{-\circ\Delta}"] & \hom_{\mathcal{A}}({K}, {C}) \ar[r] &\ext_{\mathcal{A}}^1({L}, {C}) \ar[r] & \ext_{\mathcal{A}}^1({C}^{\times X}, {C}) \;\cdots\,. 
 \end{tikzcd}
\end{adjustbox}
\vspace{0.15cm}

\noindent The map $-\circ \Delta=\hom_{\mathcal{A}}(\Delta, {C})$ is clearly surjective from construction and $\ext^{1}_{\mathcal{A}}({C}^{\times X}, {C})=0$. Thus, $\ext_{\mathcal{A}}^1({L}, {C})=0$, which concludes the proof. 
\end{proof}

\begin{lem} \label{lem:prod-C}
Given a cotilting object $C$ of $\mathcal{A},$ we have that
$$ \cogen({C}) \cap \cogen({C})^\perp=\Prodcl({C}).$$
\end{lem}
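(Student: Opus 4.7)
The plan is to prove the two inclusions separately, with the forward direction following essentially from the preceding lemma and the reverse being almost immediate from the assumptions.

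For the inclusion $\Prodcl(C) \subseteq \cogen(C) \cap \cogen(C)^\perp$, I would first observe that any product $C^{\times I}$ embeds into itself and thus lies in $\cogen(C)$; direct summands of such products also embed into them, so $\Prodcl(C) \subseteq \cogen(C)$. For the $\cogen(C)^\perp$ part, take any $F \in \cogen(C) = {^\perp C}$. Then by Corollary~\ref{cor:BunoProd} (which gives the interaction of $\ext^1$ with products) we have $\ext^1_\mathcal{A}(F, C^{\times I}) \cong \prod_{i \in I} \ext^1_\mathcal{A}(F, C) = 0$, so $C^{\times I} \in \cogen(C)^\perp$. Since $\cogen(C)^\perp$ is closed under direct summands (the functor $\ext^1_\mathcal{A}(F, -)$ is additive), this extends to all of $\Prodcl(C)$.

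For the reverse inclusion $\cogen(C) \cap \cogen(C)^\perp \subseteq \Prodcl(C)$, let $K \in \cogen(C) \cap \cogen(C)^\perp$. Applying Lemma~\ref{lem:cogen=copres} to $K$, which uses only that $K \in \cogen(C)$ and that $\cogen(C) = {^\perp C}$, produces a short exact sequence
\[
0 \longrightarrow K \longrightarrow C^{\times X} \longrightarrow L \longrightarrow 0
\]
with $L \in \cogen(C)$. Because $K \in \cogen(C)^\perp$ and $L \in \cogen(C)$, we have $\ext^1_\mathcal{A}(L, K) = 0$, so the sequence splits. Consequently $K$ is a direct summand of $C^{\times X}$, hence $K \in \Prodcl(C)$.

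There is no real obstacle here; the only subtle point is recognizing that Lemma~\ref{lem:cogen=copres} gives us the required copresentation with the cokernel already inside $\cogen(C)$, which is exactly what is needed to invoke the $\cogen(C)^\perp$ hypothesis to split the sequence. The role of $C$ being cotilting (rather than merely satisfying $\cogen(C) = {^\perp C}$) enters only implicitly, through the applicability of Corollary~\ref{cor:BunoProd} in the first inclusion and through the existence of the copresentation in the second.
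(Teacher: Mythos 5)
Your proof follows essentially the same two-inclusion argument as the paper, and the reverse inclusion (Lemma~\ref{lem:cogen=copres} plus splitting) is exactly the paper's proof. However, there is one technical misstatement in the forward direction: the claimed isomorphism
\[
\ext^1_\mathcal{A}(F, C^{\times I}) \cong \prod_{i \in I} \ext^1_\mathcal{A}(F, C)
\]
does \emph{not} hold in a general Grothendieck category -- this is precisely the subtlety that Appendix~\ref{sec:ExtProd} of the paper is devoted to. Proposition~\ref{prop:ExtMono} gives only that the canonical map $\psi_{F,C}$ is injective (not bijective), and Example~\ref{example:ExtProducts} exhibits the failure of surjectivity even for $\qcoh_{\mathbb{P}^1_k}$. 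Fortunately, injectivity alone yields Corollary~\ref{prop:ExtTriv}, namely that $\ext^1_\mathcal{A}(F, \prod_i B_i)=0$ iff all $\ext^1_\mathcal{A}(F, B_i)=0$, and that vanishing statement is all you actually use, so the conclusion stands. You should nevertheless replace the isomorphism claim by an appeal to this vanishing criterion (or, as the paper does, note that $C \in ({^\perp C})^\perp = \cogen(C)^\perp$ and that $\cogen(C)^\perp$ is closed under products and summands by Corollary~\ref{prop:ExtTriv}), rather than asserting that $\ext^1$ commutes with products.
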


\begin{proof}
To show the inclusion `$\supseteq$', note that $\Prodcl({C}) \subseteq \cogen({C})$ and ${C} \in (^\perp {C})^\perp=\cogen({C})^\perp$, using Definition~\ref{def:cotilting}. It follows that $\Prodcl({C}) \subseteq \cogen({C})^\perp$ since $\cogen({C})^\perp$ is closed under direct summands and direct products by Corollary~\ref{prop:ExtTriv}. 

Conversely, given ${P} \in \cogen({C}) \cap \cogen({C})^\perp$, by Lemma~\ref{lem:cogen=copres} we obtain a short exact sequence $0 \to P \to {C}^{\times X} \to L \to 0$  with $L\in\cogen({C})$. Since ${P} \in \cogen({C})^\perp$, the sequence splits, showing that ${P} \in \Prodcl({C})$ and thus finishing the proof.
\end{proof}

\begin{prop} \label{prop:condition-C3}
Let ${C}$ be a $1$-cotilting object of $\mathcal{A}$. Given an injective object ${W}$ (an injective cogenerator for $\mathcal{A}$ in particular), there is a short exact sequence 

\vspace{0.25cm}
\begin{adjustbox}{max totalsize={1\textwidth}{.9\textheight},center}
\begin{tikzcd}
0 \ar[r]& {C}_1 \ar[r]& {C}_0 \ar[r] & {W} \ar[r] & 0  
\end{tikzcd}
\end{adjustbox}
\vspace{0.15cm}

\noindent with ${C}_0, {C}_1 \in \Prodcl({C})$.
\end{prop}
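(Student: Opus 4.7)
The plan is to build the desired sequence by combining a generating resolution of $W$ with the copresentation from Lemma~\ref{lem:cogen=copres}, via a pushout.

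First, since $^\perp C = \cogen(C)$ contains a generator $G$ of $\mathcal{A}$, there is an epimorphism $\varepsilon\colon G^{(I)} \twoheadrightarrow W$ for some set $I$. Let $K = \ker\varepsilon$. By Lemma~\ref{lem:cotilting-TF}, the class $\cogen(C)$ is a torsion-free class, hence closed under direct sums and subobjects; consequently both $G^{(I)}$ and $K$ belong to $\cogen(C)$. Apply Lemma~\ref{lem:cogen=copres} to $K$ to obtain a short exact sequence
\[ 0 \to K \to C^{\times X} \to L \to 0 \]
with $L \in \cogen(C)$.

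Next, form the pushout $P$ of $G^{(I)} \leftarrow K \to C^{\times X}$. This yields a commutative diagram with exact rows
\[
\begin{tikzcd}
0 \ar[r] & K \ar[r]\ar[d] & G^{(I)} \ar[r,"\varepsilon"]\ar[d] & W \ar[r]\ar[d,equal] & 0 \\
0 \ar[r] & C^{\times X} \ar[r] & P \ar[r] & W \ar[r] & 0,
\end{tikzcd}
\]
together with an auxiliary short exact sequence $0 \to G^{(I)} \to P \to L \to 0$ arising from the pushout. Set $C_1 = C^{\times X}$ (which is clearly in $\Prodcl(C)$) and $C_0 = P$; the bottom row is then the sequence we want, provided $P \in \Prodcl(C)$.

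To verify $P \in \Prodcl(C)$, invoke Lemma~\ref{lem:prod-C}, which identifies $\Prodcl(C)$ with $\cogen(C) \cap \cogen(C)^\perp$. For the first membership, the auxiliary sequence $0 \to G^{(I)} \to P \to L \to 0$ has outer terms in $\cogen(C)$, so $P \in \cogen(C)$ since torsion-free classes are closed under extensions. For the second membership, take any $F \in \cogen(C) = {}^\perp C$ and apply $\hom_{\mathcal{A}}(F, -)$ to the bottom row to obtain
\[ \ext^1_{\mathcal{A}}(F, C^{\times X}) \to \ext^1_{\mathcal{A}}(F, P) \to \ext^1_{\mathcal{A}}(F, W). \]
The right term vanishes because $W$ is injective, and the left term vanishes by Corollary~\ref{cor:BunoProd}, which gives ${}^\perp C = {}^\perp (C^{\times X})$. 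Hence $\ext^1_{\mathcal{A}}(F, P) = 0$ for all $F \in \cogen(C)$, i.e.\ $P \in \cogen(C)^\perp$, completing the argument. There is no serious obstacle here; the only subtlety is recognizing that the pushout trick allows us to simultaneously arrange both that the kernel lies in $\Prodcl(C)$ and that the middle term does as well, by playing the torsion-freeness of $\cogen(C)$ against the $\ext^1$-vanishing inherited from $C^{\times X}$ and $W$.
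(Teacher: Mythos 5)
Your proof is correct and follows essentially the same pushout strategy as the paper, invoking the same key lemmas (Lemma~\ref{lem:cogen=copres}, Lemma~\ref{lem:prod-C}, Corollary~\ref{cor:BunoProd}) and identifying $\Prodcl(C)$ with $\cogen(C)\cap\cogen(C)^\perp$. The only deviation is that the paper first extends the chosen epimorphism $F\twoheadrightarrow W$ (with $F\in\cogen(C)$) along an embedding $F\hookrightarrow C^{\times I}$ to get an epimorphism $C^{\times I}\twoheadrightarrow W$ before forming the pushout, whereas you form the pushout directly from the sequence $0\to K\to G^{(I)}\to W\to 0$; this extra step in the paper is in fact unnecessary, since showing $P\in\cogen(C)$ only uses that the middle term of the top row lies in $\cogen(C)$ (and $G^{(I)}$ already does), so your variant is a mild streamlining of the same argument.
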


\begin{proof}
The argument is very similar to the one used in \cite{ColpiTrlifaj} for modules, with necessary modifications due to the fact that we do not have exact products and enough projective objects.

Since ${^{\perp}}{C}=\cogen({C})$ is generating, one can consider an epimorphism $e':{F}\twoheadrightarrow {W}$, where ${F} \in \cogen({C})$. By definition of $\cogen({C})$, there is a monomorphism $\iota:{F}\hookrightarrow {C}^{\times I}$ for some set $I$. By injectivity of ${W}$, $e'$ extends along $\iota$ to a morphism $e: {C}^{\times I}\twoheadrightarrow {W}$. Clearly $e$ is an epimorphism as well.

Thus, we have a short exact sequence

\vspace{0.25cm}
\begin{adjustbox}{max totalsize={1\textwidth}{.9\textheight},center} 
\begin{tikzcd}
0 \ar[r] &  {K}  \ar[r, "i"]& {C}^{\times I} \ar[r, "e"] & {W} \ar[r]& 0,
\end{tikzcd}
\end{adjustbox}
\vspace{0.15cm}

 and, obviously, ${K} \in \cogen({C})$. By Lemma~\ref{lem:cogen=copres} there is a short exact sequence 

\vspace{0.25cm}
\begin{adjustbox}{max totalsize={1\textwidth}{.9\textheight},center} 
\begin{tikzcd}
0  \ar[r] & {K} \ar[r, "j"] & {C}^{\times J} \ar[r] & {L} \ar[r]& 0
\end{tikzcd}
\end{adjustbox}
\vspace{0.15cm} 

for some set $J$ and some ${L} \in \cogen({C}).$
Consider the pushout ${P}$ of $i$ and $j$, which gives rise to a commutative diagram

\vspace{0.25cm}
\begin{adjustbox}{max totalsize={1\textwidth}{.9\textheight},center}
\begin{tikzcd}
& 0 \ar[d] & 0 \ar[d] & & \\
0 \ar[r]& {K} \ar[r, "i"] \ar[d, "j"]& {C}^{\times I} \ar[r, "e"] \ar[d] & {W} \ar[r] \ar[d, equal] & 0  \\
0 \ar[r]& {C}^{\times J} \ar[r] \ar[d]& {P} \ar[r] \ar[d] & {W} \ar[r] & 0  \\
& {L} \ar[r, equal] \ar[d] & {L} \ar[d] & & \\
& 0 & 0 & & 
\end{tikzcd}
\end{adjustbox}
\vspace{0.15cm}

\noindent with exact rows and columns. Observe that since ${L}, {C}^{\times I} \in \cogen({C})$ and $\cogen({C})={^\perp}{C}$ is closed under extensions, we have ${P} \in \cogen({C})$. Since, on the other hand, ${C}^{\times J}, W\in\cogen({C})^\perp=({^\perp}{C})^\perp$, we also have ${P} \in \cogen({C})^\perp.$ Thus, ${P} \in \Prodcl({C})$ by Lemma~\ref{lem:prod-C}, and the second row of the pushout diagram is the desired exact sequence from the statement of the proposition.
\end{proof}

Now we can characterize cotilting objects essentially in terms of the conclusions of Propositions~\ref{prop:cotilting-injdim1} and~\ref{prop:condition-C3}, and the fact that $\ext^1_\mathcal{A}({C}^{\times I}, {C}) = 0$ for all sets $I$.

\begin{thm}\label{thm:CotiltingChar}
Let $\mathcal{A}$ be a Grothendieck category and $C$ be an object in $\mathcal{A}$. Then the following conditions are equivalent:
\begin{enumerate}[(1)]
	\item\label{Char1}{$C$ is cotilting.}
	\item\label{Char2}{$C$ satisfies the following three conditions:
	\begin{enumerate}[(C1)]
		\item\label{C1}{$\injdim C \leq 1$.}
		\item\label{C2}{$\ext_\mathcal{A}^{1}(C^{\times I}, C)=0$ for every set $I$.}
		\item\label{C3}{For every injective cogenerator $W$, there is an exact sequence 
						
		\vspace{0.25cm}
		\begin{adjustbox}{max totalsize={1\textwidth}{.9\textheight},center}
			\begin{tikzcd} 
				0 \ar[r] & C_1 \ar[r] & C_0 \ar[r] & W \ar[r] & 0 
			\end{tikzcd}
		\end{adjustbox}
		\vspace{0.15cm}
						
		\noindent where $C_0, C_1 \in \Prodcl(C)$.}
	\end{enumerate}}
\end{enumerate}
\end{thm}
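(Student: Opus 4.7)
The plan is to prove both implications, with $(1)\Rightarrow(2)$ reducing to a collation of the preceding results and $(2)\Rightarrow(1)$ being the substantive direction. For $(1)\Rightarrow(2)$: condition (C1) follows from Proposition~\ref{prop:cotilting-injdim1} applied to $\mathcal{F} = \cogen(C) = {^\perp C}$, which by Lemma~\ref{lem:cotilting-TF} and Definition~\ref{def:cotilting} is a torsion-free class containing a generator and tautologically satisfies $C \in \mathcal{F}^\perp$; condition (C2) is immediate from $C^{\times I} \in \cogen(C) = {^\perp C}$; and condition (C3) is precisely Proposition~\ref{prop:condition-C3}.

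For the converse $(2)\Rightarrow(1)$, two preliminary consequences of (C2) will be crucial: additivity of $\ext^1$ yields $\Prodcl(C) \subseteq {^\perp C}$, and Corollary~\ref{cor:BunoProd} implies that whenever $\ext^1(A, C) = 0$ one has $\ext^1(A, P) = 0$ for every $P \in \Prodcl(C)$. The inclusion $\cogen(C) \subseteq {^\perp C}$ then follows from the long exact sequence of $\hom(-, C)$ applied to $0 \to F \to C^{\times J} \to F' \to 0$ arising from a cogeneration $F \hookrightarrow C^{\times J}$: the vanishing $\ext^1(F, C) = 0$ is squeezed between $\ext^1(C^{\times J}, C) = 0$ (by (C2)) and $\ext^2(F', C) = 0$ (by (C1)). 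For the reverse inclusion, take $K \in {^\perp C}$ and apply $\hom(K, -)$ to the sequence $0 \to C_1 \to C_0 \to W \to 0$ from (C3); since $\ext^1(K, C_1) = 0$ by the preliminary remark, every morphism $K \to W$ lifts through $C_0 \to W$. Because $W$ is an injective cogenerator, the morphisms $K \to W$ have trivial joint kernel, and the lifting inherits this property, forcing the diagonal $K \to C_0^{\hom(K, C_0)}$ to be a monomorphism; composing with an embedding $C_0 \hookrightarrow C^{\times I_0}$ shows $K \in \cogen(C)$.

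It remains to exhibit a generator in ${^\perp C}$. Fix any generator $G_0$ of $\mathcal{A}$ and observe that $W' := W \oplus E(G_0)$, where $E(G_0)$ is the injective envelope of $G_0$, is again an injective cogenerator. Applying (C3) to $W'$ produces a short exact sequence $0 \to C_1' \to C_0' \to W' \to 0$ with $C_0' \in \Prodcl(C) \subseteq {^\perp C}$; composing with the projection $W' \twoheadrightarrow E(G_0)$ yields an epimorphism $\varepsilon\colon C_0' \twoheadrightarrow E(G_0)$. Letting $Q$ be the pullback of $\varepsilon$ along the essential embedding $G_0 \hookrightarrow E(G_0)$, we obtain an epimorphism $Q \twoheadrightarrow G_0$ and a short exact sequence $0 \to Q \to C_0' \to E(G_0)/G_0 \to 0$. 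Applying $\hom(-, C)$, the vanishing of $\ext^1(C_0', C)$ and $\ext^2(E(G_0)/G_0, C)$ (the latter by (C1)) sandwiches $\ext^1(Q, C) = 0$, so $Q \in {^\perp C}$; and $Q$ is itself a generator, inheriting this property via its epimorphism onto $G_0$. The main delicacy is the inclusion ${^\perp C} \subseteq \cogen(C)$, where the failure of right-exactness of products in a general Grothendieck category is circumvented by Corollary~\ref{cor:BunoProd} to extract $\ext^1(K, C_1) = 0$ from $\ext^1(K, C) = 0$.
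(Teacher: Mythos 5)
Your proof is correct. The $(1)\Rightarrow(2)$ direction coincides with the paper's, citing Propositions~\ref{prop:cotilting-injdim1} and~\ref{prop:condition-C3}. The interesting comparison is in $(2)\Rightarrow(1)$, where you depart from the paper at two points. For the inclusion ${^\perp}{C}\subseteq\cogen({C})$, the paper invokes Lemma~\ref{lem:cotilting-TF} to see that $\cogen({C})$ is a torsion-free class, decomposes a given $A\in{^\perp}{C}$ as $0\to T\to A\to F\to 0$ with $T\in\ker\hom_\mathcal{A}(-,C)$, and shows $T=0$ by sandwiching $\hom_\mathcal{A}(T,W)$ between $\hom_\mathcal{A}(T,C_0)=0$ and $\ext^1_\mathcal{A}(T,C_1)=0$. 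You replace this torsion-theoretic detour with a direct lifting argument: $\ext^1_\mathcal{A}(K,C_1)=0$ makes every $K\to W$ lift through $C_0\to W$, and since the maps $K\to W$ have trivial joint kernel, so do their lifts, giving an embedding $K\hookrightarrow C_0^{\times\hom_\mathcal{A}(K,C_0)}\hookrightarrow C^{\times J}$. Both arguments are valid; yours avoids mentioning torsion pairs explicitly and produces the embedding into a power of $C$ by hand rather than deducing it from a vanishing condition. For the generating condition, the paper embeds an arbitrary $G$ into an injective cogenerator $W$, pulls back the (C3) sequence, and simply observes that the pullback $P$ is a subobject of $C_0\in\Prodcl({C})$ and hence lies in $\cogen({C})={^\perp}{C}$. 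Your route via $W'=W\oplus E(G_0)$ and the Ext-sandwich $\ext^1_\mathcal{A}(C_0',C)\to\ext^1_\mathcal{A}(Q,C)\to\ext^2_\mathcal{A}(E(G_0)/G_0,C)$ is correct but carries redundancy: once $\cogen({C})={^\perp}{C}$ is known, the membership $Q\in{^\perp}{C}$ follows immediately from $Q\hookrightarrow C_0'\in\Prodcl({C})$, with no need for the long exact sequence. The delicacy you flag---circumventing inexactness of products via Corollary~\ref{cor:BunoProd}---is real and your handling of it matches the paper's.
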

		
\begin{proof}
The implication $\hyperref[Char1]{(\ref*{Char1})} \Rightarrow \hyperref[Char2]{(\ref*{Char2})}$ is clear from Propositions~\ref{prop:cotilting-injdim1} and~\ref{prop:condition-C3}.

Suppose conversely that $C$ satisfies conditions \hyperref[C1]{(C1)}--\hyperref[C3]{(C3)}.
By \hyperref[C2]{(C2)} we have $\Prodcl({C}) \subseteq {^\perp}{C}$ and, by \hyperref[C1]{(C1)}, ${^\perp}{C}$ is closed under subobjects. Thus ${\cogen({C}) \subseteq {^\perp}{C}}$.

In order to prove the inclusion $\cogen({C}) \supseteq {^\perp}{C}$, note that we already know by Lemma~\ref{lem:cotilting-TF} that $\cogen({C})$ is a torsion-free class of a torsion pair in $\mathcal{A}$. Consider any object $A \in {^{\perp}}{C}$ and the short exact sequence

\vspace{0.25cm}
\begin{adjustbox}{max totalsize={1\textwidth}{.9\textheight},center}
\begin{tikzcd}
0 \ar[r]& {T} \ar[r] & {A} \ar[r] &  {F} \ar[r] & 0  
\end{tikzcd}
\end{adjustbox}
\vspace{0.15cm}

\noindent with ${F} \in \cogen({C})$ and ${T} \in \ker \hom_{\mathcal{A}}(-, {C}).$ In order to prove that ${A} \in \cogen({C})$, it suffices to show that ${T}=0$.
To this end, consider an injective cogenerator ${W} \in \mathcal{A}$ and the short exact sequence 

\vspace{0.25cm}
\begin{adjustbox}{max totalsize={1\textwidth}{.9\textheight},center}
\begin{tikzcd}
0 \ar[r]& {C}_1 \ar[r] & {C}_0 \ar[r] &  {W} \ar[r] & 0  
\end{tikzcd}
\end{adjustbox}
\vspace{0.15cm}

\noindent from axiom \hyperref[C3]{(C3)}. Applying $\hom_{\mathcal{A}}({T}, -)$, we obtain an exact sequence

\vspace{0.25cm}
\begin{adjustbox}{max totalsize={1\textwidth}{.9\textheight},center}
\begin{tikzcd}
\cdots \ar[r]& \hom_{\mathcal{A}}({T}, {C}_0) \ar[r] & \hom_{\mathcal{A}}({T}, {W}) \ar[r] &  \ext^1_{\mathcal{A}}({T}, {C}_1) \ar[r] & \cdots\,,  
\end{tikzcd}
\end{adjustbox}
\vspace{0.15cm}

\noindent where $\hom_{\mathcal{A}}({T}, {C}_0)=0$, and by the fact that ${^\perp}{C} = {^\perp}{\Prodcl({C})}$ is closed under subobjects, also $\ext^1_{\mathcal{A}}({T}, {C}_1)=0$. It follows that $\hom_{\mathcal{A}}({T}, {W})=0$ and, since ${W}$ is a cogenerator, also ${T}=0$. This concludes the proof of the inclusion.

It remains to observe that $\cogen({C}) = {^\perp}{C}$ is generating. Consider any object $G \in \mathcal{A}$ and an injective cogenerator $W$ admitting a monomorphism $\iota:G \hookrightarrow W$ (if $U$ is any injective cogenerator, then there is a monomorphism $G \hookrightarrow U^{\times I}$ for some set $I$ and we can take $W=U^{\times I}$). By \hyperref[C3]{(C3)}, we may consider an exact sequence 
			
\vspace{0.25cm}
\begin{adjustbox}{max totalsize={1\textwidth}{.9\textheight},center}
	\begin{tikzcd} 
		0 \ar[r] & C_1 \ar[r] & C_0 \ar[r, "\pi"] & W \ar[r] & 0 
	\end{tikzcd}
\end{adjustbox}	
\vspace{0.15cm}
			
\noindent with $C_0, C_1 \in \Prodcl({C})$, and take the pullback of $\pi$ along $\iota$,

\vspace{0.25cm}
\begin{adjustbox}{max totalsize={1\textwidth}{.9\textheight},center}
	\begin{tikzcd} 
		0 \ar[r] & C_1 \ar[r]  & C_0 \ar[r, "\pi"] & W \ar[r] & 0 \\
		0 \ar[r] & C_1 \ar[r] \ar[u, equal] & P \ar[r, "\pi'"] \ar[u, "\iota'", hook] & G \ar[r] \ar[u, "\iota", hook] & 0\,.
	\end{tikzcd}
\end{adjustbox}
\vspace{0.15cm}

\noindent
This shows that $G$ is an epimorphic image of $P$ and $P \in \cogen({C})$.
\end{proof}

We conclude the section by showing that products of copies of a cotilting object are homologically well-behaved. This implies that the definition of a $1$-cotilting object from \cite[Definition 8]{NicolasSaorinZvonareva} or \cite[Definition 2.1]{FiorotMattielloSaorin} for more general abelian categories specializes precisely to our definition in the case of Grothendieck categories.
Strictly speaking, the latter references give formally dual definitions of so-called tilting objects, but it is remarked in both of the papers that their results apply to the dual concept as well. In particular, we can use abstract results on the existence of derived equivalences from \cite{NicolasSaorinZvonareva,FiorotMattielloSaorin} (this topic will be discussed in Section~\ref{sec:purity} as well).

We in fact prove a more general result which implies that a Grothendieck category may have interesting full subcategories where products are exact (see e.g.\ Example~\ref{example:P1VsKronecker} below).

\begin{prop}\label{prop:ExactProd}
Let $\mathcal{A}$ be a Grothendieck category and $\mathcal{F}\subseteq\mathcal{A}$ torsion-free class which contains a generator. Given any collection of objects $B_i,$ $i \in I,$ from $\mathcal{F}^\perp$ and any $n>0$, the $n$-th right derived functor of product, $\mathbf{R}^n\prod_{i \in I} B_i,$ vanishes.
\end{prop}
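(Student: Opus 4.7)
The plan is to compute the right derived functors of $\prod\colon\mathcal{A}^I\to\mathcal{A}$ via injective resolutions in $\mathcal{A}^I$, exploiting Proposition~\ref{prop:cotilting-injdim1} to bound injective dimensions in $\mathcal{F}^\perp$ by one. Since $\mathcal{A}^I$ is again a Grothendieck category whose injective objects are precisely the componentwise injective ones, I would choose for each $i\in I$ a length-one injective resolution $0\to B_i\to E_i^0\to E_i^1\to 0$ in $\mathcal{A}$ and assemble these into an injective resolution of $(B_i)_{i\in I}$ in $\mathcal{A}^I$ of length one. As $\prod$ is the right adjoint of the (exact) diagonal functor $\Delta\colon\mathcal{A}\to\mathcal{A}^I$, its right derived functors are computed by applying $\prod$ termwise to this resolution and taking cohomology. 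This immediately yields $\mathbf{R}^n\prod_{i\in I} B_i=0$ for $n\geq 2$ and reduces the case $n=1$ to showing that the canonical morphism $\prod_i E_i^0\to\prod_i E_i^1$ is an epimorphism in $\mathcal{A}$.

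For the remaining case, let $M\subseteq\prod_i E_i^1$ denote the image and set $N:=\prod_i E_i^1/M$, so that $N\cong\mathbf{R}^1\prod_i B_i$. Since $\mathcal{F}$ contains a generator $G$ of $\mathcal{A}$, by the generator property it suffices to show $\hom_\mathcal{A}(G,N)=0$. Applying $\hom_\mathcal{A}(G,-)$ to each short exact sequence $0\to B_i\to E_i^0\to E_i^1\to 0$ and using $\ext^1_\mathcal{A}(G,B_i)=0$ (as $G\in\mathcal{F}$ and $B_i\in\mathcal{F}^\perp$), one obtains surjectivity of $\hom_\mathcal{A}(G,E_i^0)\to\hom_\mathcal{A}(G,E_i^1)$ for each $i$. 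Since products are exact in $\mathsf{Ab}$ and $\hom_\mathcal{A}(G,-)$ commutes with products, passing to products gives surjectivity of $\hom_\mathcal{A}(G,\prod_i E_i^0)\to\hom_\mathcal{A}(G,\prod_i E_i^1)$; a small diagram chase through $\prod_i E_i^0\twoheadrightarrow M\hookrightarrow\prod_i E_i^1$ promotes this to an isomorphism $\hom_\mathcal{A}(G,M)\cong\hom_\mathcal{A}(G,\prod_i E_i^1)$, and the long exact sequence associated with $0\to M\to\prod_i E_i^1\to N\to 0$ then produces an embedding $\hom_\mathcal{A}(G,N)\hookrightarrow\ext^1_\mathcal{A}(G,M)$.

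The main obstacle is the vanishing $\ext^1_\mathcal{A}(G,M)=0$, which I would settle by applying $\ext^*_\mathcal{A}(G,-)$ to the short exact sequence $0\to\prod_i B_i\to\prod_i E_i^0\to M\to 0$. This sandwiches $\ext^1_\mathcal{A}(G,M)$ between $\ext^1_\mathcal{A}(G,\prod_i E_i^0)=0$ (since $\prod_i E_i^0$ is injective) and $\ext^2_\mathcal{A}(G,\prod_i B_i)$. The key input from Appendix~\ref{sec:ExtProd}, in the form of Corollary~\ref{prop:ExtTriv}, ensures that $\mathcal{F}^\perp$ is closed under arbitrary products, so $\prod_i B_i\in\mathcal{F}^\perp$. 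Another application of Proposition~\ref{prop:cotilting-injdim1} then gives $\injdim\prod_i B_i\leq 1$, whence $\ext^2_\mathcal{A}(G,\prod_i B_i)=0$ and thus $\ext^1_\mathcal{A}(G,M)=0$. Consequently $\hom_\mathcal{A}(G,N)=0$, and since $G$ is a generator this forces $N=0$, finishing the proof of $\mathbf{R}^1\prod_i B_i=0$.
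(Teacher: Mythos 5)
Your proof is correct and follows essentially the same strategy as the paper: choose length-one injective resolutions of the $B_i$ (possible by Proposition~\ref{prop:cotilting-injdim1}), take their product, and use the generator $G\in\mathcal{F}$ to detect exactness. The only difference is that your final step is longer than necessary. Once you have established that $\hom_\mathcal{A}(G,\prod_i E_i^0)\to\hom_\mathcal{A}(G,\prod_i E_i^1)$ is surjective, the fact that $G$ is a generator \emph{already} implies that $\prod_i E_i^0\to\prod_i E_i^1$ is an epimorphism in $\mathcal{A}$ (a morphism $f$ in a Grothendieck category with generator $G$ is an epimorphism as soon as $\hom_\mathcal{A}(G,f)$ is surjective, since $\hom_\mathcal{A}(G,-)$ is faithful), which gives $N=\mathbf{R}^1\prod_i B_i=0$ directly. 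Your detour through the image $M$, the isomorphism $\hom_\mathcal{A}(G,M)\cong\hom_\mathcal{A}(G,\prod_i E_i^1)$, and the vanishing of $\ext^1_\mathcal{A}(G,M)$ via $\injdim\prod_i B_i\le 1$ is valid but redundant. This is precisely the shortcut the paper takes.
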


\begin{proof}
First note that, by Proposition~\ref{prop:cotilting-injdim1}, we obtain that $\injdim {B_i}\leq 1$ for all $i\in I.$ That is, $\mathbf{R}^n\prod_{i \in I} B_i = 0$ for all $n>1$.

Now fix a generator $G\in \mathcal{F}$ and for each $i\in I$, fix an injective resolution

\vspace{0.25cm}
\begin{adjustbox}{max totalsize={1\textwidth}{.9\textheight},center}
\begin{tikzcd} 
0 \ar[r]  & B_i \ar[r]& E^0_i \ar[r, "\rho_i"]& E^1_i \ar[r] & 0\;.
\end{tikzcd}
\end{adjustbox}
\vspace{0.15cm}

\noindent
Then $\hom_\mathcal{A}(G, \rho_i)$ is surjective by the assumption and so is $\hom_\mathcal{A}(G, \prod_{i\in I}\rho_i) \simeq \prod_{i\in I}\hom_\mathcal{A}(G, \rho_i)$ since products are exact in the category of abelian groups. As $G$ is a generator for $\mathcal{A}$, it follows that $\prod_i \rho_i\colon \prod_i E^0_i \to \prod_i E^1_i$ is an epimorphism and $\mathbf{R}^1\prod_{i \in I} B_i = \coker \prod_i \rho_i = 0.$
\end{proof}

\begin{cor}\label{cor:ExactProdOfC}
Let and $C \in \mathcal{A}$ be a cotilting object. Then a product of copies of $C$ in $\mathcal{A}$ coincides with the corresponding product of copies of $C$ in the derived category $\Der(\mathcal{A})$.
\end{cor}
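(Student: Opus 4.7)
The plan is to reduce the claim to a vanishing statement for the higher derived functors of the product functor, and then invoke Proposition~\ref{prop:ExactProd}. Recall that in a Grothendieck category $\mathcal{A}$ products exist but need not be exact, yet they are always left exact, so their right derived functors $\mathbf{R}^n\prod_{i\in I}$ are defined. Moreover, the product in the derived category $\Der(\mathcal{A})$ of a family of objects $(B_i)_{i\in I}$ (viewed as stalk complexes in degree $0$) is represented by the complex obtained by taking term-wise products of injective resolutions of the $B_i$; its cohomology objects are precisely $\mathbf{R}^n\prod_{i\in I} B_i$. Thus, to prove the corollary, it suffices to show that for the cotilting object $C$ and any set $I$ one has $\mathbf{R}^n\prod_{i\in I} C = 0$ for all $n>0$, since then the derived product is quasi-isomorphic to the ordinary product $C^{\times I}$ placed in degree $0$.

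To obtain this vanishing, I would apply Proposition~\ref{prop:ExactProd} to the torsion-free class $\mathcal{F} = \cogen(C) = {^\perp C}$. By Definition~\ref{def:cotilting} this class contains a generator of $\mathcal{A}$, and by Lemma~\ref{lem:cotilting-TF} it is indeed a torsion-free class. It then remains to verify that $C \in \mathcal{F}^\perp$; but $\mathcal{F}^\perp = (\cogen(C))^\perp = ({^\perp C})^\perp$, and $C$ trivially belongs to this class since $\ext^1_\mathcal{A}(X,C)=0$ for every $X\in {^\perp C}$ by the very definition of ${^\perp C}$. Taking $B_i = C$ for all $i\in I$ in Proposition~\ref{prop:ExactProd} now yields $\mathbf{R}^n\prod_{i\in I} C = 0$ for all $n>0$.

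The only non-routine point is the identification of products in $\Der(\mathcal{A})$ with the hypercohomology of the product functor, but this is a standard consequence of the existence of enough injectives and the fact that a product of injective objects is injective (hence K-injective in the bounded sense needed here, as $\injdim C \le 1$ by Proposition~\ref{prop:cotilting-injdim1} so only the first derived product could possibly contribute); no further technicalities arise.
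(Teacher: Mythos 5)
Your proof is correct and follows essentially the same route as the paper's one-line argument, which is simply to apply Proposition~\ref{prop:ExactProd} to $\mathcal{F}=\cogen(C)$; you have merely spelled out the routine verification that $\mathcal{F}$ is a torsion-free class containing a generator and that $C\in\mathcal{F}^\perp$, together with the standard identification of derived products with $\mathbf{R}^\bullet\prod$.
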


\begin{proof}
We apply the previous proposition to $\mathcal{F} = \cogen({C})$.
\end{proof}

\section{Pure-injectivity of cotilting objects}
\label{sec:purity}

Here we focus on a more advanced aspect of cotilting objects in Grothendieck categories. A key ingredient of the corresponding theory for modules is that all cotilting modules are pure-injective, \cite{BazzoniPureInj}. We will prove an analogous result for cotilting objects in Grothendieck categories. This will allow us to characterize which torsion-free classes are associated with cotilting objects. As a consequence, we recover a first version of a classification for locally Noetherian Grothendieck categories from~\cite{BuanKrause-cotilting,ParraSaorin} and, in the next section, we prove that a cotilting object in a Grothendieck category induces a derived equivalence to another abelian category which is again a Grothendieck category.

The first obstacle on the way is that there seems to be no consensus on what the definition of a pure-injective object is for a general Grothendieck category (the definition using pure exact sequences does not apply for a Grothendieck category need not be locally finitely presentable). We use as the definition a characterization of pure-injectivity for modules from~\cite[Theorem 7.1~(vi)]{JensenLenzing}.

\begin{deff}\label{def:PureInj}
An object $C$ in a Grothendieck category $\mathcal{A}$ is \emph{pure-injective} if, for each index set $I$, the summing map $\Sigma\colon C^{\oplus I} \to C$ (whose all components $\Sigma \circ \iota_i\colon C \to C,$ $i\in I,$ are the identity maps on $C$) extends to a homomorphism $\overline{\Sigma}\colon C^{\times I} \to C$.
\end{deff}

Although this is a rather elementary intrinsic definition of pure-injectivity, it will be useful to give a characterization in terms of pure-injectivity of certain modules. We will do so via the Gabriel-Popescu theorem, a version of which we first recall.

\begin{prop}[{\cite[\S X.4]{Stenstrom}}] \label{prop:GabrielPopescu}
Let $\mathcal{A}$ be a Grothendieck category with a generator $G$ and put $R = \ehom_\mathcal{A}(G)$. Then the functor $H = \hom_\mathcal{A}(G,-)\colon \mathcal{A} \to \Mod R$ is fully faithful and has an exact left adjoint $T$,

\vspace{0.25cm}
\begin{adjustbox}{max totalsize={1\textwidth}{.9\textheight},center}
\begin{tikzcd}
\Mod R \ar[rr, bend left, start anchor=north east, end anchor=north west, "T"] & \perp & \mathcal{A}\;. \ar[ll, bend left, start anchor=south west, end anchor=south east, hook, "H"]
\end{tikzcd}
\end{adjustbox}
\end{prop}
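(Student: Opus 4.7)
The plan is to build the left adjoint $T$ explicitly on free modules, extend it by presentations to all of $\Mod R$, verify the adjunction, and then handle fully faithfulness of $H$ and exactness of $T$ as separate steps. I expect the main obstacle to be the exactness of $T$.

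The $R$-module structure on $H(A) = \hom_\mathcal{A}(G, A)$ is given by precomposition with $R = \ehom_\mathcal{A}(G)$. To construct $T$, I would first declare $T(R^{(I)}) = G^{(I)}$ on free modules: an $R$-linear map $R^{(J)} \to R^{(I)}$ is the same as a $J$-indexed family in $R^{(I)} \subseteq \hom_\mathcal{A}(G, G^{(I)})$, and any such family lifts uniquely to a morphism $G^{(J)} \to G^{(I)}$ by the universal property of the coproduct. For arbitrary $M \in \Mod R$, I would then pick any free presentation $R^{(J)} \to R^{(I)} \to M \to 0$ and set $T(M)$ to be the cokernel of the induced map $G^{(J)} \to G^{(I)}$; the usual left Kan extension argument shows this is independent of the presentation up to canonical isomorphism. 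The adjunction $T \dashv H$ is then immediate on free modules from
\[
\hom_\mathcal{A}(T(R^{(I)}), A) = \hom_\mathcal{A}(G^{(I)}, A) \cong H(A)^I \cong \hom_R(R^{(I)}, H(A)),
\]
and extends to arbitrary $M$ by matching the equalizers obtained from applying $\hom_\mathcal{A}(-, A)$ and $\hom_R(-, H(A))$ to the respective presentations.

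For fully faithfulness of $H$, I would show that the counit $\varepsilon_A\colon TH(A) \to A$ is an isomorphism for every $A \in \mathcal{A}$. Surjectivity is immediate from the generator property: the composite $G^{(H(A))} = T(R^{(H(A))}) \to TH(A) \xrightarrow{\varepsilon_A} A$ is the canonical evaluation epimorphism, which is surjective because $G$ generates. Injectivity is more delicate and relies on the triangle identities of the adjunction together with the fact that a subobject of $A$ receiving only the zero map from $G$ must itself be zero.

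Finally, for exactness of $T$, right exactness is automatic as $T$ is a left adjoint. The substantive point---and the main obstacle---is preservation of monomorphisms. Given $\iota\colon M' \hookrightarrow M$ in $\Mod R$, I would choose compatible free presentations of $M'$ and $M$ and analyze the induced diagram of coproducts of $G$; the Grothendieck hypothesis enters essentially, through exactness of filtered colimits in $\mathcal{A}$, and allows a reduction to the finitely generated case where the argument is direct. A more conceptual alternative is to verify that $\mathcal{S} := \ker T \subseteq \Mod R$ is a hereditary torsion (Serre, colimit-closed) subcategory, identify $T$ with the Gabriel quotient functor $\Mod R \to (\Mod R)/\mathcal{S}$, and then invoke exactness of Gabriel localization.
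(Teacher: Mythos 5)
The paper does not prove Proposition~\ref{prop:GabrielPopescu}; it cites it from Stenstr\"{o}m \cite[\S X.4]{Stenstrom}, so there is no internal argument to compare against. Judged on its own terms, your sketch correctly constructs the left adjoint $T$ via free presentations (equivalently, $T(-)\cong G\otimes_R -$) and correctly notes that surjectivity of the counit $\varepsilon_A\colon TH(A)\to A$ is immediate from $G$ being a generator.

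The two deferred assertions, however, are where the entire content of the Gabriel--Popescu theorem lives, and the hints you give do not settle them. For injectivity of $\varepsilon_A$: the fact that a subobject of $A$ with no nonzero maps from $G$ must vanish applies to subobjects of $A$, not to $\ker\varepsilon_A\subseteq TH(A)$. To conclude $\ker\varepsilon_A=0$ via the generator property one needs $H(\ker\varepsilon_A)=\ker H(\varepsilon_A)=0$, i.e.\ that $H(\varepsilon_A)$ is a monomorphism; but the triangle identity $H(\varepsilon_A)\circ\eta_{H(A)}=\mathrm{id}$ only shows it is a split epimorphism. Closing this gap requires a genuine combinatorial lemma (Takeuchi's lemma, or the original Popescu--Gabriel argument) governing when a morphism $G^{(J)}\to G^{(I)}$ factors through a prescribed subobject, and this is the heart of the theorem. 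For exactness of $T$: the proposed reduction to finitely generated modules does not make the problem tractable, since left exactness of $T$ is non-obvious already for an inclusion of finitely generated modules, and the standard proofs do not proceed by such a reduction. Finally, the ``conceptual alternative'' via the Gabriel quotient is circular as stated: identifying $T$ with the quotient functor $\Mod R\to(\Mod R)/\ker T$, and the quotient with $\mathcal{A}$, already presupposes full faithfulness of $H$ and the exactness you are trying to prove. In short, your outline is a reasonable road map, but the two hard steps carry essentially all of the theorem's weight and remain open in your sketch.
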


In particular, the adjunction counit $\varepsilon\colon TH\to 1_\mathcal{A}$ is a natural equivalence and $H$ identifies $\mathcal{A}$ with an extension closed reflective subcategory of $\Mod R$, where the adjunction unit $\eta\colon 1_{\Mod R} \to HT$ provides the reflections.

Clearly, $T$ preserves all colimits and finite limits in $\mathcal{A}$, but it need not preserve infinite products. However, the following easy observation shows that it does preserve at least certain products.

\begin{lem}\label{lem:GabrielPopescuProducts}
Suppose that we are in the situation of Proposition~\ref{prop:GabrielPopescu}. If $M_i,$ $i\in I,$ is a family of $R$-modules in the essential image of $H$, then the canonical morphism $T(\prod_i M_i) \to \prod_i T(M_i)$ is an isomorphism.
\end{lem}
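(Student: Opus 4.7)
My plan is to reduce the statement to the fact that the fully faithful right adjoint $H$ preserves arbitrary products, together with the natural equivalence $\varepsilon\colon TH \xrightarrow{\sim} 1_\mathcal{A}$ guaranteed by Proposition~\ref{prop:GabrielPopescu}.

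First, since each $M_i$ lies in the essential image of $H$, I would fix objects $A_i \in \mathcal{A}$ together with isomorphisms $M_i \simeq H(A_i)$. Because $H$ is a right adjoint, it preserves arbitrary limits; in particular, the canonical morphism $H(\prod_i A_i) \to \prod_i H(A_i) \simeq \prod_i M_i$ is an isomorphism in $\Mod R$. Applying $T$ and using that the counit $\varepsilon\colon TH \to 1_\mathcal{A}$ is a natural equivalence, I get isomorphisms
\[
T\Bigl(\prod_i M_i\Bigr) \simeq T H\Bigl(\prod_i A_i\Bigr) \xrightarrow{\varepsilon_{\prod A_i}} \prod_i A_i \simeq \prod_i TH(A_i) \simeq \prod_i T(M_i).
\]
This already shows $T(\prod_i M_i)$ and $\prod_i T(M_i)$ are abstractly isomorphic.

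The remaining step, which is the only genuinely delicate point, is to verify that the isomorphism just constructed is in fact the canonical comparison morphism $T(\prod_i M_i) \to \prod_i T(M_i)$ induced by $T$ applied to the product projections $\pi_j\colon \prod_i M_i \to M_j$. To see this I would unwind both morphisms along the projection $\pi_j$: on the one hand, by naturality of $\varepsilon$ applied to the map $\prod_i A_i \to A_j$ in $\mathcal{A}$, the composite above followed by the $j$-th projection equals $T(H(\mathrm{pr}_j))$ up to the fixed identifications $M_i \simeq H(A_i)$; on the other hand, the canonical comparison map followed by the $j$-th projection is by definition $T(\pi_j) = T(H(\mathrm{pr}_j))$ under those same identifications. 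A short diagram chase using the triangle identity $\varepsilon_{T(-)} \circ T(\eta_{-}) = 1$ then shows both agree.

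The main conceptual obstacle is thus not the existence of an abstract isomorphism (that is immediate from $H$ preserving products and $\varepsilon$ being invertible) but rather checking that this isomorphism \emph{is} the canonical one; this is entirely formal, reducing to the compatibility of $\varepsilon$ with the product projections. No further input, such as exactness of $T$ or properties of the generator $G$, is required.
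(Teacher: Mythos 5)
Your proof is correct and rests on exactly the same two facts that the paper's one-line proof invokes: $H$, being a right adjoint, preserves products (so $\operatorname{Im} H$ is closed under products in $\Mod R$), and $T$ restricted to $\operatorname{Im} H$ is a quasi-inverse to $H$ (the counit $\varepsilon\colon TH \to 1_\mathcal{A}$ is an isomorphism). You have simply unpacked the paper's assertion ``$T$ is an inverse of the equivalence $H\colon \mathcal{A}\to\operatorname{Im} H$ and products in $\operatorname{Im} H$ are also products in $\Mod R$'' into an explicit chain of isomorphisms plus the verification that the chain agrees with the canonical comparison morphism. One small remark: the triangle identity $\varepsilon_{T}\circ T\eta = 1$ is not actually needed in your final diagram chase; naturality of $\varepsilon$ with respect to the projection $\mathrm{pr}_j\colon \prod_i A_i \to A_j$ together with the defining property of the identification $H(\prod_i A_i)\simeq\prod_i M_i$ already forces the isomorphism to have $T(\pi_j)$ as its $j$-th component. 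Invoking the triangle identity is harmless but superfluous here.
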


\begin{proof}
Let $\im H\subseteq \Mod R$ denote the essential image of $H$. The lemma follows from the fact that $T$ is an inverse of the category equivalence $H\colon \mathcal{A} \to \im H$ and that products in $\im H$ are also products in $\Mod R$.
\end{proof}

Now we can characterize pure-injectivity as follows.

\begin{prop}\label{prop:PureInjChar}
Let $\mathcal{A}$ be a Grothendieck category and $C \in \mathcal{A}.$ Then the following are equivalent:

\begin{enumerate}[(1)]
\item\label{PInj1}{$C$ is pure-injective in $\mathcal{A}$.}
\item\label{PInj2}{There is a generator $G\in\mathcal{A}$ such that $\hom_\mathcal{A}(G, C)$ is a pure-injective right $\ehom_\mathcal{A}(G)$-module.}
\item\label{PInj3}{$\hom_\mathcal{A}(G, C)$ is a pure-injective $\ehom_\mathcal{A}(G)$-module for each generator $G\in\mathcal{A}$.}
\end{enumerate}
\end{prop}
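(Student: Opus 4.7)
The plan is to use the Gabriel--Popescu adjunction from Proposition~\ref{prop:GabrielPopescu} to transport the extension condition of Definition~\ref{def:PureInj} between $\mathcal{A}$ and $\Mod R$. The implication $\hyperref[PInj3]{(\ref*{PInj3})}\Rightarrow\hyperref[PInj2]{(\ref*{PInj2})}$ is trivial, and the remaining implications $\hyperref[PInj2]{(\ref*{PInj2})}\Rightarrow\hyperref[PInj1]{(\ref*{PInj1})}$ and $\hyperref[PInj1]{(\ref*{PInj1})}\Rightarrow\hyperref[PInj3]{(\ref*{PInj3})}$ are handled by the same formalism, so nothing extra is needed to treat $(\ref*{PInj3})$ versus the weaker $(\ref*{PInj2})$. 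Fix a generator $G\in\mathcal{A}$, write $R=\ehom_\mathcal{A}(G)$, $H=\hom_\mathcal{A}(G,-)$, and let $T$ denote its left adjoint. For a set $I$, let $j_\mathcal{A}\colon C^{\oplus I}\hookrightarrow C^{\times I}$ and $\Sigma_\mathcal{A}\colon C^{\oplus I}\to C$ be the canonical injection and the summing map in $\mathcal{A}$, and let $j_R$ and $\Sigma_R$ be their counterparts for $H(C)$ in $\Mod R$.

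The linchpin of the argument is the canonical comparison morphism $\kappa\colon H(C)^{\oplus I}\to H(C^{\oplus I})$ determined by $\kappa\circ\iota_i^R=H(\iota_i^\mathcal{A})$ for each $i\in I$. Using that $H$ preserves products (so $H(C^{\times I})=H(C)^{\times I}$ and $H(\pi_i^\mathcal{A})=\pi_i^R$), a projection-by-projection check gives the two basic identities
\[ H(j_\mathcal{A})\circ\kappa=j_R \qquad\text{and}\qquad H(\Sigma_\mathcal{A})\circ\kappa=\Sigma_R. \]
Next, since $T$ preserves coproducts and the counit $TH\to 1_\mathcal{A}$ is an isomorphism, inspecting $\kappa$ summand by summand yields $T(\kappa)=1_{C^{\oplus I}}$ under the natural identifications $T(H(C)^{\oplus I})\simeq C^{\oplus I}\simeq TH(C^{\oplus I})$. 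By the $T\dashv H$ adjunction, this forces precomposition with $\kappa$ to be a \emph{bijection}
\[ -\circ\kappa\colon \hom_R(H(C^{\oplus I}),H(C))\longrightarrow \hom_R(H(C)^{\oplus I},H(C)). \]

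The remaining implications now reduce to short diagram chases, using also that $H$ is fully faithful. For $\hyperref[PInj1]{(\ref*{PInj1})}\Rightarrow\hyperref[PInj3]{(\ref*{PInj3})}$, given an extension $\overline{\Sigma}_\mathcal{A}\colon C^{\times I}\to C$ of $\Sigma_\mathcal{A}$ in $\mathcal{A}$, the morphism $H(\overline{\Sigma}_\mathcal{A})\colon H(C)^{\times I}\to H(C)$ satisfies $H(\overline{\Sigma}_\mathcal{A})\circ j_R=H(\overline{\Sigma}_\mathcal{A}\circ j_\mathcal{A})\circ\kappa=H(\Sigma_\mathcal{A})\circ\kappa=\Sigma_R$, showing that $H(C)$ is pure-injective. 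For $\hyperref[PInj2]{(\ref*{PInj2})}\Rightarrow\hyperref[PInj1]{(\ref*{PInj1})}$, an extension $\overline{\Sigma}_R$ of $\Sigma_R$ transfers, via full faithfulness of $H$ and the identification $H(C^{\times I})=H(C)^{\times I}$, to a unique $\overline{\Sigma}_\mathcal{A}\colon C^{\times I}\to C$ with $H(\overline{\Sigma}_\mathcal{A})=\overline{\Sigma}_R$. The chain $H(\overline{\Sigma}_\mathcal{A}\circ j_\mathcal{A})\circ\kappa=\overline{\Sigma}_R\circ j_R=\Sigma_R=H(\Sigma_\mathcal{A})\circ\kappa$, combined with injectivity of $-\circ\kappa$, forces $H(\overline{\Sigma}_\mathcal{A}\circ j_\mathcal{A})=H(\Sigma_\mathcal{A})$, and full faithfulness yields $\overline{\Sigma}_\mathcal{A}\circ j_\mathcal{A}=\Sigma_\mathcal{A}$, as required.

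The main subtlety is that $H$ does \emph{not} preserve coproducts, so one cannot simply apply $H$ to the pair $(j_\mathcal{A},\Sigma_\mathcal{A})$ and land on the analogous data in $\Mod R$. Introducing the comparison morphism $\kappa$ and recognising that $H(j_\mathcal{A})\circ\kappa$ is precisely the $\Mod R$-canonical inclusion $j_R$ is the heart of the argument; once this identification and the invariant $T(\kappa)=1$ are in place, the equivalences follow by routine adjunction bookkeeping.
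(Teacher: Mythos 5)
Your proof is correct and follows essentially the same route as the paper: both transport the extension criterion of Definition~\ref{def:PureInj} across the Gabriel--Popescu adjunction by exploiting that $H$ preserves products, and both cite the same characterization of module pure-injectivity from~\cite{JensenLenzing}. Your comparison map $\kappa$ is precisely the morphism $\eta'$ in the paper's commutative square (the reflection of $M^{\oplus I}$ into $\im H$), and your observation that $T(\kappa)=1$ makes $-\circ\kappa$ a bijection is the same fact the paper encodes by noting that $\eta'$ is an $\im H$-reflection, so $\hom(\eta',M)$ is an isomorphism.
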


\begin{proof}
Let $G\in\mathcal{A}$ be a generator of $\mathcal{A}$, $R=\ehom_\mathcal{A}(G)$, and consider the fully faithful functor $H=\hom_\mathcal{A}(G,-)\colon \mathcal{A} \to \Mod{R}$. If $I$ is any set and we denote $M=H(C)$, we have a commutative square

\vspace{0.25cm}
\begin{adjustbox}{max totalsize={1\textwidth}{.9\textheight},center}
	\begin{tikzcd}
		M^{\oplus I} \ar[r] \ar[d, "\eta'"]& M^{\times I}\ar[d, "\simeq"] \\
		H(C^{\oplus I}) \ar[r]& H(C^{\times I})\;.
	\end{tikzcd}
\end{adjustbox}
\vspace{0.15cm}

\noindent
The horizontal morphisms are the canonical ones, while the vertical ones are the compositions of the unit of the adjunction with the isomorphisms $HT(M^{\oplus I}) \to H((TM)^{\oplus I}) \cong H(C^{\oplus I})$ and $HT(M^{\times I}) \to H((TM)^{\times I}) \cong H(C^{\times I})$, respectively. In particular, $\eta'$ is an $\im H$-reflection and $\hom_\mathcal{A}(\eta',M)$ is an isomorphism.

Now $\Sigma \in \hom_\mathcal{A}(C^{\oplus I}, C)$ factorizes through $C^{\oplus I} \hookrightarrow C^{\times I}$ if and only if $H(\Sigma)$ factorizes through the lower horizontal map of the square if and only if the summing map $\Sigma'\colon M^{\oplus I}\to M$ factorizes through the upper horizontal map of the square. 

The latter condition characterizes pure-injectivity of $M$ as an $R$-module by~\cite[Theorem 7.1~(vi)]{JensenLenzing} and the argument clearly does not depend on the choice of the generator $G$.
\end{proof}

Our next concern is to prove that cotilting classes are closed under taking direct limits. We first introduce a special kind of direct limits.

\begin{deff}\label{def:ReducedProd}
Let $I$ be a set, $(F_i\mid i\in I)$ be a family of objects of a Grothendieck category $\mathcal{A}$ and $\kappa$ be an infinite cardinal number. A \emph{$\kappa$-bounded product} $\prod_{i \in I}^{<\kappa} F_i$ is defined as the direct limit $\varinjlim_{J\subseteq I} \prod_{j\in J} F_j,$ where $J$ runs over all subsets of $I$ of cardinality $<\kappa$ and the maps in the direct system are the canonical split embeddings $\prod_{j\in J} F_j \hookrightarrow \prod_{j\in J'} F_j$ for each $J\subseteq J'\subseteq I$.

Clearly, the bounded product $\prod_{i \in I}^{<\kappa} F_i$ canonically embeds into the usual product $\prod_{i\in I} F_i$. The factor $\prod_{i\in I} F_i/\prod_{i\in I}^{<\kappa} F_i$ is called the \emph{$\kappa$-reduced product}.

When all $F_\alpha$ are equal to a single object $F$, we will speak of \emph{$\kappa$-bounded} and \emph{$\kappa$-reduced powers}
 of $F$, respectively. If $\kappa$ is clear from the context, we will denote these by $F^{\boxtimes I}$ and $F^{\times I}/F^{\boxtimes I}$, respectively.
\end{deff}

If $\kappa = \aleph_0$, the bounded product $\prod_{i\in I}^{<\aleph_0} F_i$ coincides with the usual direct sum $\bigoplus_{i\in I} F_i$. If $\mathcal{A}$ is a module category, the bounded product $\prod_{i \in I}^{<\kappa} F_i$ can be described as the submodule of $\prod_{i\in I} F_i$ formed by the elements with $<\kappa$ non-zero components.

We will employ the following relation between reduced products and direct limits which is essentially a special case of~\cite[Theorem 3.3.2]{Prest}.

\begin{prop}\label{prop:ReducedProdToDirectLim}
Let $\mathcal{A}$ be a Grothendieck category, $\kappa$ be an infinite regular cardinal, $(F_\alpha, f_{\alpha\beta}\colon F_\alpha\to F_\beta \mid \alpha<\beta<\kappa)$ be a direct system in $\mathcal{A}$ indexed by $\kappa$ and put $F=\prod_{\alpha<\kappa} F_\alpha$. Then there is an embedding into the $\kappa$-reduced product $F$,

\vspace{0.25cm}
\begin{adjustbox}{max totalsize={1\textwidth}{.9\textheight},center}
	\begin{tikzcd}
		\varinjlim\limits_{\alpha<\kappa} F_\alpha \ar[r, hook]& \prod\limits_{\alpha<\kappa} F^{\times \kappa}/F^{\boxtimes \kappa}.
	\end{tikzcd}
\end{adjustbox}
\end{prop}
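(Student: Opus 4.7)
The plan is to categorify the classical model-theoretic argument that a $\kappa$-filtered colimit embeds into a $\kappa$-reduced product. The morphism will be induced by the evident ``diagonal'' maps $F_\beta\to F$ modulo the bounded sub-product.

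Concretely, for each $\beta<\kappa$ I would first build a morphism $\sigma_\beta\colon F_\beta\to F$ whose component at coordinate $\alpha$ is $f_{\beta,\alpha}$ if $\alpha\geq\beta$ and the zero morphism otherwise; existence and uniqueness follow from the universal property of the product. Composing with the canonical epimorphism $\pi\colon F\to F/F^{\boxtimes\kappa}$ onto the reduced product, set $\tilde\sigma_\beta=\pi\sigma_\beta$. The composition law $f_{\gamma,\alpha}\circ f_{\beta,\gamma}=f_{\beta,\alpha}$ implies that for $\beta\leq\gamma<\kappa$ the difference $\sigma_\gamma f_{\beta\gamma}-\sigma_\beta$ vanishes on every coordinate $\alpha\geq\gamma$ and hence factors through $\prod_{\alpha<\gamma}F_\alpha\hookrightarrow F$; since $|\gamma|<\kappa$, the latter subobject is contained in $F^{\boxtimes\kappa}$, so $\tilde\sigma_\gamma f_{\beta\gamma}=\tilde\sigma_\beta$. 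The family $(\tilde\sigma_\beta)_\beta$ is therefore a cocone and yields the desired morphism
\[ \sigma\colon\varinjlim_{\alpha<\kappa}F_\alpha\longrightarrow F/F^{\boxtimes\kappa}. \]

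The heart of the proof is showing that $\sigma$ is a monomorphism. Denoting the structural morphism into the colimit by $g_\beta\colon F_\beta\to\varinjlim_\alpha F_\alpha$, the AB5 property says that filtered colimits are exact and, in particular, that kernels commute with this colimit; so it suffices to verify $\ker\tilde\sigma_\beta=\ker g_\beta$ for every $\beta<\kappa$, where $\ker g_\beta=\bigcup_{\gamma\geq\beta}\ker f_{\beta,\gamma}$ (again by AB5). The inclusion $\supseteq$ is straightforward: any subobject of $F_\beta$ annihilated by $f_{\beta,\gamma}$ is annihilated by $f_{\beta,\alpha}$ for every $\alpha\geq\gamma$, so $\sigma_\beta$ sends it into $\prod_{\alpha<\gamma}F_\alpha\subseteq F^{\boxtimes\kappa}$. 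For $\subseteq$, write $\ker\tilde\sigma_\beta=\sigma_\beta^{-1}(F^{\boxtimes\kappa})$; since $F^{\boxtimes\kappa}$ is the directed union of the subobjects $\prod_{\alpha\in J}F_\alpha$ with $|J|<\kappa$, and pullback along a fixed morphism preserves directed unions of subobjects in a Grothendieck category, $\ker\tilde\sigma_\beta$ is the directed union of $\sigma_\beta^{-1}(\prod_{\alpha\in J}F_\alpha)$. By regularity of $\kappa$ each such $J$ is bounded by some $\gamma_J<\kappa$, and for $\gamma\geq\max(\beta,\gamma_J)$ the pullback lies in $\ker f_{\beta,\gamma}$ because the $\gamma$-th component of $\sigma_\beta$ is $f_{\beta,\gamma}$ and $\gamma\notin J$.

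The main obstacle I anticipate is precisely the last step: without elements at hand, the standard pointwise argument must be replaced by manipulation of subobjects, invoking AB5 twice (to rewrite $\ker g_\beta$ as a directed union, and to ensure that pullback along $\sigma_\beta$ commutes with the directed union defining $F^{\boxtimes\kappa}$) and using the regularity of $\kappa$ to bound each $<\kappa$-sized index set inside $\kappa$.
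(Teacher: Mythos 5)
Your construction correctly produces an embedding
\[
\varinjlim_{\alpha<\kappa}F_\alpha \hookrightarrow \prod_{\alpha<\kappa}F_\alpha\Big/{\textstyle\prod\limits_{\alpha<\kappa}^{<\kappa}}F_\alpha,
\]
i.e.\ into the $\kappa$-reduced \emph{product} of the family $(F_\alpha)$, and your monomorphism argument (rewrite $\ker g_\beta$ as a directed union, rewrite the $\kappa$-bounded product as a directed union of $\prod_{\alpha\in J}F_\alpha$ with $\lvert J\rvert<\kappa$, pull both back along $\sigma_\beta$, and invoke AB5 plus regularity of $\kappa$) is sound. However, this is one step short of the statement. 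The target object in the Proposition is the $\kappa$-reduced \emph{power} $F^{\times\kappa}/F^{\boxtimes\kappa}$ of $F=\prod_{\alpha<\kappa}F_\alpha$, not the reduced product of the $F_\alpha$ themselves; you are using $F^{\boxtimes\kappa}$ inconsistently with the paper's Definition (where $F^{\boxtimes\kappa}\subseteq F^{\times\kappa}$ denotes the bounded power of $\kappa$ copies of the \emph{single} object $F$, so ``$F/F^{\boxtimes\kappa}$'' is not even well formed). The distinction matters: in Corollary~\ref{cor:ReducedProdToDirectLim} and in the proof of Theorem~\ref{thm:CotiltingPureInjective}, closure under direct limits is deduced from closure under reduced \emph{powers} of a single object, and your intermediate object does not feed into that criterion. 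You need a second monomorphism
\[
\prod_{\alpha<\kappa}F_\alpha\Big/{\textstyle\prod\limits_{\alpha<\kappa}^{<\kappa}}F_\alpha \hookrightarrow F^{\times\kappa}/F^{\boxtimes\kappa},
\]
induced by the product of the split embeddings $\iota_\alpha\colon F_\alpha\hookrightarrow F$. Proving this map is a monomorphism amounts to the identity $\bigl(\prod_\alpha\iota_\alpha\bigr)^{-1}(F^{\boxtimes\kappa})={\textstyle\prod^{<\kappa}_\alpha}F_\alpha$, which again requires a directed-union/pullback argument of the same flavour as the one you already carried out. Once that is added, your proof is complete.

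As for method: your argument works directly inside the abstract Grothendieck category, replacing element chasing by subobject manipulations under AB5. The paper instead transfers everything to $\Mod R$ via the Gabriel--Popescu theorem (Proposition~\ref{prop:GabrielPopescu}), quotes the module-theoretic result from Prest for the first embedding and the split-mono observation for the second, and then applies the exact colimit-preserving functor $T$ (using Lemma~\ref{lem:GabrielPopescuProducts} to control products of copies of $M$). Your route avoids Gabriel--Popescu at the cost of reproving the module-theoretic facts element-freely; the paper's route is shorter given that the module result is already available, but hides the elementary combinatorics in a black box.
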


\begin{proof}
We can transfer the problem to a module category thanks to Proposition~\ref{prop:GabrielPopescu}. Let $M_\alpha = H(F_\alpha)$ for each $\alpha<\kappa$ and $M = H(F) \simeq \prod_{\alpha<\kappa} M_\alpha$. Then we have embeddings (even pure embeddings)

\vspace{0.25cm}
\begin{adjustbox}{max totalsize={1\textwidth}{.9\textheight},center}
	\begin{tikzcd}
		\varinjlim\limits_{\alpha<\kappa} M_\alpha \ar[r, hook]& \prod\limits_{\alpha<\kappa} M_\alpha/{\prod\limits_{\alpha<\kappa}}\!\!^{<\kappa} M_\alpha  \ar[r, hook]&
		M^{\times \kappa}/M^{\boxtimes \kappa}
	\end{tikzcd}
\end{adjustbox}
\vspace{0.15cm}

\noindent in $\Mod{R}$, where the first one is obtained from (the proof of) \cite[Theorem 3.3.2]{Prest} and the second one is induced by the product map $\prod_{\alpha<\kappa}\iota_\alpha\colon \prod_{\alpha<\kappa} M_\alpha \hookrightarrow M^{\times \kappa}$ of the split embeddings $\iota_\alpha\colon M_\alpha \hookrightarrow M$.

Since $T\colon \Mod{R} \to \mathcal{A}$ is exact, preserves all colimits and, by Lemma~\ref{lem:GabrielPopescuProducts}, also products of copies the module $M$, we obtain the embedding from the statement simply by an application of $T$.
\end{proof}

\begin{cor}\label{cor:ReducedProdToDirectLim}
Given a Grothendieck category $\mathcal{A}$ and a class of objects $\mathcal{F}\subseteq\mathcal{A}$ closed under products and subobjects, $\mathcal{F}$ is closed under taking direct limits if and only if the following holds:

Given an infinite regular cardinal $\kappa$ and an object $F \in \mathcal{F}$, the $\kappa$-reduced power 
$M^{\times \kappa}/M^{\boxtimes \kappa}$ belongs to $\mathcal{F}$.
\end{cor}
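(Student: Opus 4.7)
For the forward direction, I would express the $\kappa$-reduced power as a directed colimit of products. Since $\kappa$ is regular, every subset of $\kappa$ of cardinality $<\kappa$ is contained in some proper initial segment, so
$$
F^{\boxtimes\kappa}\;=\;\varinjlim_{\beta<\kappa}F^{\times\beta},
$$
where the transition maps are the canonical split inclusions. Using the splittings $F^{\times\kappa}\cong F^{\times\beta}\oplus F^{\times(\kappa\setminus\beta)}$, dividing out identifies the reduced power with
$$
F^{\times\kappa}/F^{\boxtimes\kappa}\;\cong\;\varinjlim_{\beta<\kappa}F^{\times(\kappa\setminus\beta)}.
$$
Each term is a product of copies of $F\in\mathcal{F}$ and therefore lies in $\mathcal{F}$ by product-closure, so the direct limit lies in $\mathcal{F}$ by the assumed closure of $\mathcal{F}$ under direct limits.

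For the backward direction, suppose every $\kappa$-reduced power of an object of $\mathcal{F}$ lies in $\mathcal{F}$. The strategy is to exhibit an arbitrary direct limit of objects of $\mathcal{F}$ as a subobject of an object built from $\mathcal{F}$ by iterating products and reduced powers. Concretely, for a chain $(F_\alpha)_{\alpha<\kappa}$ in $\mathcal{F}$ indexed by a regular cardinal $\kappa$, the product $F:=\prod_{\alpha<\kappa}F_\alpha$ lies in $\mathcal{F}$ by product-closure; the reduced power $F^{\times\kappa}/F^{\boxtimes\kappa}$ lies in $\mathcal{F}$ by hypothesis; and any further product of copies of it remains in $\mathcal{F}$. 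The embedding furnished by Proposition~\ref{prop:ReducedProdToDirectLim} then places $\varinjlim_{\alpha<\kappa}F_\alpha$ as a subobject of such a product, and subobject-closure delivers $\varinjlim_{\alpha<\kappa}F_\alpha\in\mathcal{F}$.

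The genuinely delicate point is the reduction from an arbitrary directed system $(F_i)_{i\in I}$ to a chain indexed by a regular cardinal. The most transparent route is to transfer the question to $\Mod R$ via the Gabriel--Popescu adjunction of Proposition~\ref{prop:GabrielPopescu}: since $T$ preserves all colimits and, by Lemma~\ref{lem:GabrielPopescuProducts}, preserves products of modules in the essential image of $H$, the classical module-theoretic realization of a directed colimit as a subobject of a reduced product of a suitable chain can be performed in $\Mod R$ and then pulled back to $\mathcal{A}$. This cofinality-type reduction is where the main technical care is required; everything else is a clean combination of the closure hypotheses with Proposition~\ref{prop:ReducedProdToDirectLim}.
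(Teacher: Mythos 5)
Your forward direction and the regular-cardinal chain case in the backward direction coincide with the paper's proof. You correctly express the $\kappa$-reduced power as $\varinjlim_{\beta<\kappa}F^{\times(\kappa\setminus\beta)}$ and apply Proposition~\ref{prop:ReducedProdToDirectLim} in exactly the way the paper does.

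Where you diverge is in the passage from $\kappa$-indexed chains to arbitrary directed systems. The paper stays entirely in $\mathcal{A}$: it first observes that closure under chains indexed by infinite regular cardinals extends to closure under all well-ordered chains, since any chain indexed by a limit ordinal $\lambda$ has a cofinal subchain indexed by $\operatorname{cf}(\lambda)$, which is regular; then it invokes (the proof of) \cite[Corollary~1.7]{AdamekRosicky}, a purely categorical statement, to upgrade closure under well-ordered chain colimits to closure under all directed colimits. Your proposal instead tries to transfer the whole problem to $\Mod R$ via Gabriel--Popescu and carry out the reduction there. This is not wrong in spirit, but it is not the cleaner route, and as written it leaves the decisive issue untouched for two reasons: (i) the reduction of an arbitrary directed colimit to a chain colimit is a categorical fact that is no easier in $\Mod R$ than in $\mathcal{A}$ --- you would end up proving some form of the Ad\'{a}mek--Rosick\'{y} lemma anyway; and (ii) the intermediate chain you would build in $\Mod R$ consists of modules that are partial colimits of the $H(F_i)$, and these need not lie in the essential image of $H$ (since $H$ preserves limits but not colimits), so Lemma~\ref{lem:GabrielPopescuProducts} no longer guarantees that $T$ behaves well on the products and reduced powers you form from them. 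You have correctly isolated the step that needs care, but the proposed Gabriel--Popescu detour does not actually discharge it; the cofinality argument followed by Ad\'{a}mek--Rosick\'{y}, applied directly in $\mathcal{A}$, is both what the paper does and what you should use here.
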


\begin{proof}
The `if' part is clear since we can express the $\kappa$-reduced power of $F$ as $\varinjlim_{\alpha<\kappa} F^{\times (\kappa\setminus\alpha)}$.

Conversely, if $\mathcal{F}$ is closed under subobjects, products and the reduced powers as above, Proposition~\ref{prop:ReducedProdToDirectLim} implies that it is also closed under direct limits of well ordered chains indexed by infinite regular cardinals. Then $\mathcal{F}$ is in fact closed under direct limits of all well-ordered chains, for any such chain indexed by a limit ordinal $\lambda$ has a cofinal subchain indexed by $\kappa = \operatorname{cf}(\lambda)$, the cofinality of $\lambda$, and $\kappa$ is known to be an infinite regular cardinal. It follows from (the proof of) \cite[Corollary 1.7]{AdamekRosicky} that $\mathcal{F}$ is closed under all direct limits.
\end{proof}

Since any cotilting class $\mathcal{F}=\cogen({C})$ is a torsion-free class, we have reduced the original problem to proving that cotilting classes are closed under certain reduced powers. To that end, we use the following result which comes from \cite{BazzoniPureInj} for the case $\kappa=\aleph_0$ and from \cite{StPureInj} for a general $\kappa$.

\begin{prop} \label{prop:CountingSequence}
Let $\mathcal{A}$ be a Grothendieck category, $F\in \mathcal{A}$ be an object and $\kappa$ be an infinite regular cardinal. Then there exist arbitrarily large cardinal numbers $\lambda$ such that there is an exact sequence

\vspace{0.25cm}
\begin{adjustbox}{max totalsize={1\textwidth}{.9\textheight},center}
	\begin{tikzcd}
		0 \ar[r]& F^{\boxtimes\lambda} \ar[r]& E \ar[r]& P^{\oplus 2^\lambda} \ar[r]& 0\;,
	\end{tikzcd}
\end{adjustbox}
\vspace{0.15cm}

\noindent where $F^{\boxtimes\lambda}$ is the $\kappa$-bounded product of $\lambda$ copies of $F$, $F^{\boxtimes \lambda} \subseteq E \subseteq F^{\times \lambda}$, and $P = F^{\times\kappa}/F^{\boxtimes\kappa}$ is the $\kappa$-reduced product of $\kappa$ copies of $F$.
\end{prop}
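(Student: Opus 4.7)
The plan is to carry out the almost-disjoint-family construction of Bazzoni and \v{S}\v{t}ov\'{\i}\v{c}ek, first in a module category via Gabriel--Popescu and then transferred. Concretely, fix a generator $G$ of $\mathcal{A}$, put $R = \ehom_{\mathcal{A}}(G)$, let $H = \hom_{\mathcal{A}}(G,-)\colon \mathcal{A}\hookrightarrow \Mod R$ with exact left adjoint $T$ as in Proposition~\ref{prop:GabrielPopescu}, and set $M = H(F)$. By Lemma~\ref{lem:GabrielPopescuProducts}, together with exactness of $T$ and preservation of colimits, $T$ sends the module-theoretic $M^{\times\lambda}$, $M^{\boxtimes\lambda}$ and $M^{\times\kappa}/M^{\boxtimes\kappa}$ to the corresponding objects $F^{\times\lambda}$, $F^{\boxtimes\lambda}$ and $P$ in $\mathcal{A}$. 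Hence it suffices to produce the desired sequence in $\Mod R$ for $M$ and then apply $T$.

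The combinatorial input is the following set-theoretic fact, established for $\kappa = \aleph_0$ in \cite{BazzoniPureInj} and for general regular $\kappa$ in \cite{StPureInj}: for arbitrarily large $\lambda$ there exists an \emph{almost disjoint family} $\{S_\alpha : \alpha < 2^\lambda\}$ of subsets of $\lambda$, each of cardinality $\kappa$, with $|S_\alpha \cap S_\beta| < \kappa$ whenever $\alpha \neq \beta$. Fixing such $\lambda$ and family, for each $\alpha$ define $\phi_\alpha \colon M^{\times\lambda} \to P$ as the canonical projection $M^{\times\lambda}\twoheadrightarrow M^{\times S_\alpha}$ followed by the quotient $M^{\times S_\alpha}\twoheadrightarrow M^{\times S_\alpha}/M^{\boxtimes S_\alpha} \cong P$. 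Then let $E \subseteq M^{\times\lambda}$ be the submodule $M^{\boxtimes\lambda} + \sum_{\alpha < 2^\lambda} M^{\times S_\alpha}$, where each $M^{\times S_\alpha}$ is viewed as a submodule of $M^{\times\lambda}$ by extension by zero outside $S_\alpha$.

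The key computation, driven entirely by almost disjointness, is that for $f \in M^{\times S_\alpha}$ and $\beta \neq \alpha$ the element $\phi_\beta(f)$ is represented by the restriction of $f$ to $S_\alpha \cap S_\beta$, a set of cardinality $<\kappa$, hence $\phi_\beta(f) = 0$, whereas $\phi_\alpha(f)$ is the class of $f$ in $P$. From this I will deduce three facts. First, the assembled map $\Phi = (\phi_\alpha)_\alpha\colon E \to P^{\times 2^\lambda}$ lands in the direct sum $P^{\oplus 2^\lambda}$, since each generator of $E$ hits at most one coordinate (and $M^{\boxtimes\lambda}$ maps to $0$). Second, $\Phi$ is surjective, because for each $\alpha$ the composition $M^{\times S_\alpha} \hookrightarrow E \xrightarrow{\Phi} P^{\oplus 2^\lambda}$ is the $\alpha$-th coordinate inclusion precomposed with the defining quotient $M^{\times S_\alpha}\twoheadrightarrow P$. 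Third, $\ker \Phi = M^{\boxtimes\lambda}$: writing a general element of $E$ as $f_0 + \sum_{i=1}^{n} f_{\alpha_i}$ with $f_0 \in M^{\boxtimes\lambda}$ and $f_{\alpha_i} \in M^{\times S_{\alpha_i}}$, its $\phi_{\alpha_k}$-component equals the class of $f_{\alpha_k}$ modulo $M^{\boxtimes S_{\alpha_k}}$, so $\Phi(\cdot)=0$ forces each $f_{\alpha_i} \in M^{\boxtimes S_{\alpha_i}} \subseteq M^{\boxtimes\lambda}$, and hence the whole element to lie in $M^{\boxtimes\lambda}$. Applying $T$ to the resulting short exact sequence $0 \to M^{\boxtimes\lambda} \to E \to P^{\oplus 2^\lambda} \to 0$ yields the required sequence in $\mathcal{A}$.

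The principal technical hurdle is the existence of the almost disjoint family of size $2^\lambda$ with $<\kappa$-sized pairwise intersections for arbitrarily large $\lambda$; for $\kappa > \aleph_0$ this is a delicate set-theoretic input, but it is precisely the content of the cited lemmas in \cite{BazzoniPureInj, StPureInj}. Once granted, the remainder of the argument is essentially bookkeeping with the almost-disjointness property, and the transfer from $\Mod R$ to $\mathcal{A}$ is automatic because $T$ is exact and, by Lemma~\ref{lem:GabrielPopescuProducts}, preserves the relevant products and bounded products of copies of $M$.
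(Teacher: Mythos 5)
Your proof is correct and follows essentially the same route as the paper: transfer to $\Mod R$ via Gabriel--Popescu, produce the sequence there, and apply the exact left adjoint $T$ (using Lemma~\ref{lem:GabrielPopescuProducts} and colimit-preservation to identify the transported objects). The only difference is that the paper simply cites the module-theoretic construction from \cite[Lemma~7]{StPureInj}, whereas you reproduce that almost-disjoint-family argument in detail; the bookkeeping you carry out (a generator of $E$ hits at most one coordinate, surjectivity from the coordinatewise quotients, and the kernel computation forcing $f_{\alpha_k}\in M^{\boxtimes S_{\alpha_k}}$) is the content of the cited lemma.
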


\begin{proof}
We again use Proposition~\ref{prop:GabrielPopescu} and put $M=H(F).$ A construction of such an exact sequence for $M\in\Mod{R}$ in place of $F$ is given in the proof of \cite[Lemma 7]{StPureInj}, and then we apply $T$ to transfer this exact sequence back to $\mathcal{A}$.
\end{proof}

Now we can prove the main result of the section.

\begin{thm}\label{thm:CotiltingPureInjective}
Let $\mathcal{A}$ be a Grothendieck category, $C\in\mathcal{A}$ be a cotilting object and $\mathcal{F} = \cogen({C}) = {^\perp C}$ the associated cotilting class (Definition~\ref{def:cotilting}). Then $C$ is pure-injective and $\mathcal{F}$ is closed under direct limits in $\mathcal{A}$.
\end{thm}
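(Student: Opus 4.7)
I would prove both assertions in two stages: first establish the direct-limit closure of $\mathcal{F}$, then derive pure-injectivity as a consequence. By Lemma~\ref{lem:cotilting-TF} the class $\mathcal{F} = \cogen(C)$ is already closed under subobjects and products, so by Corollary~\ref{cor:ReducedProdToDirectLim} it suffices to prove that for every $F \in \mathcal{F}$ and every infinite regular cardinal $\kappa$, the $\kappa$-reduced power $P = F^{\times\kappa}/F^{\boxtimes\kappa}$ lies in $\mathcal{F}$; equivalently, since $\mathcal{F} = {^\perp}C$, that $\ext^1_\mathcal{A}(P, C) = 0$.

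The plan is to argue by contradiction via a Bazzoni-style cardinality estimate. Assuming $\ext^1_\mathcal{A}(P, C) \neq 0$ and invoking Proposition~\ref{prop:CountingSequence} for an arbitrarily large $\lambda$ to be specified, we obtain an exact sequence $0 \to F^{\boxtimes\lambda} \to E \to P^{\oplus 2^\lambda} \to 0$ with $F^{\boxtimes\lambda} \subseteq E \subseteq F^{\times\lambda}$. Both $F^{\boxtimes\lambda}$ and $E$ inherit membership in $\mathcal{F}$ by subobject-closure, hence $\ext^1_\mathcal{A}(E, C) = \ext^1_\mathcal{A}(F^{\boxtimes\lambda}, C) = 0$. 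The long exact sequence from $\hom_\mathcal{A}(-, C)$ then yields a surjection
\[
\hom_\mathcal{A}(F^{\boxtimes\lambda}, C) \twoheadrightarrow \ext^1_\mathcal{A}(P^{\oplus 2^\lambda}, C) \cong \ext^1_\mathcal{A}(P, C)^{\times 2^\lambda},
\]
whose target has cardinality at least $2^{2^\lambda}$. For the source, the presentation $F^{\boxtimes\lambda} = \varinjlim_{J \subseteq \lambda,\, |J|<\kappa} F^{\times J}$ gives an injection $\hom_\mathcal{A}(F^{\boxtimes\lambda}, C) \hookrightarrow \prod_{J} \hom_\mathcal{A}(F^{\times J}, C)$; each factor has cardinality bounded in terms of $|H(F)|$, $|H(C)|$ and $\kappa$ only, via the Gabriel-Popescu embedding (Proposition~\ref{prop:GabrielPopescu}). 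Letting $\mu$ be this common bound and choosing $\lambda > \mu$ with $\lambda^{<\kappa} = \lambda$, we obtain $|\hom_\mathcal{A}(F^{\boxtimes\lambda}, C)| \leq \mu^{\lambda} \leq 2^\lambda$, contradicting the lower bound $2^{2^\lambda}$.

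For pure-injectivity, consider the canonical short exact sequence $0 \to C^{\oplus I} \to C^{\times I} \to Q \to 0$. Writing $C^{\oplus I} = \varinjlim_{J \subseteq I\text{ finite}} C^{\times J}$ and using exactness of filtered colimits in $\mathcal{A}$, one identifies $Q \cong \varinjlim_J C^{\times(I \setminus J)}$, a filtered colimit of objects of the product-closed class $\mathcal{F}$. By the direct-limit closure just established, $Q \in \mathcal{F}$, so $\ext^1_\mathcal{A}(Q, C) = 0$. Applying $\hom_\mathcal{A}(-, C)$ to the sequence then lifts every morphism $C^{\oplus I} \to C$, in particular the summing map, to $C^{\times I} \to C$, which is exactly Definition~\ref{def:PureInj}. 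The main obstacle is the cardinality estimate on $\hom_\mathcal{A}(F^{\boxtimes\lambda}, C)$: unlike in a module category, $F^{\boxtimes\lambda}$ does not come with an obvious small generating set, so one has to route through Gabriel-Popescu and select $\lambda$ with favourable cardinal arithmetic.
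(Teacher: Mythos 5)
Your proof follows essentially the same route as the paper's: reduce via Corollary~\ref{cor:ReducedProdToDirectLim} to showing $\kappa$-reduced powers of $F\in\mathcal{F}$ stay in $\mathcal{F}$, run the Bazzoni-style cardinality contradiction using the exact sequence from Proposition~\ref{prop:CountingSequence}, and then derive pure-injectivity from direct-limit closure via the cokernel $C^{\times I}/C^{\oplus I}$. The only substantive difference is bookkeeping: you spell out the bound on $|\hom_\mathcal{A}(F^{\boxtimes\lambda},C)|$ via Gabriel--Popescu and a choice of $\lambda$ with $\lambda^{<\kappa}=\lambda$, whereas the paper picks $\lambda\ge|\hom_\mathcal{A}(F^{\times\mu},C)|$ for all $\mu<\kappa$ and leaves the analogous cardinal arithmetic more implicit (in both cases one is relying on Proposition~\ref{prop:CountingSequence} supplying arbitrarily large $\lambda$ with favourable arithmetic). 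You should also record, as the paper does, that the identification $\ext^1_\mathcal{A}(P^{\oplus 2^\lambda},C)\cong\ext^1_\mathcal{A}(P,C)^{\times 2^\lambda}$ uses Remark~\ref{rem:ExtCoproductsOk}.
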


\begin{proof}
Once we prove that $\mathcal{F}$ is closed under direct limits, the pure-injectivity of $C$ will follow. Indeed, in that case
\[ C^{\times I}/C^{\oplus I} = \varinjlim_{F\subseteq I} C^{\times (I\setminus F)} \in \mathcal{F}, \]
where $F$ runs over all finite subsets of $I$, and hence every morphism $C^{\oplus I} \to C$ extends to a morphism $C^{\times I} \to C$.

In view of Corollary~\ref{cor:ReducedProdToDirectLim}, we only need to prove that given $F\in\mathcal{F}$, we have that $P = F^{\times\kappa}/F^{\boxtimes\kappa}$, the $\kappa$-reduced product of $\kappa$ copies of $F$, lies in $\mathcal{F}$. To this end, we will use a variant of the argument from~\cite[Proposition 2.5]{BazzoniPureInj} and \cite[Lemma 7]{StPureInj} (the method can be traced back to \cite{Hunter}).

Let $\lambda\ge\kappa$ be an infinite cardinal number for which there exists an exact sequence from Proposition~\ref{prop:CountingSequence} and such that $\lambda \ge \card{\hom_\mathcal{A}(F^{\times \mu}, C)}$ for each $\mu<\kappa$. Since each morphism $F^{\boxtimes \lambda} \to C$ is, by the universal property of the defining colimit, uniquely determined by its compositions with the embeddings $F^{\times J} \hookrightarrow F^{\boxtimes \lambda}$, where $J\subseteq \lambda$ and $\card{J}<\kappa$, and there are at most $\lambda^\kappa \le (2^\lambda)^\kappa = 2^{\lambda\times\kappa} = 2^\lambda$ such embeddings, we have
\[
\card{\hom_\mathcal{A}(F^{\boxtimes \lambda}, C)} \le
2^\lambda \times \lambda = 2^\lambda.
\]
If we on the other hand apply $\hom_\mathcal{A}(-, C)$ to the exact sequence from Proposition~\ref{prop:CountingSequence} and use Remark~\ref{rem:ExtCoproductsOk}, we obtain an exact sequence

\vspace{0.25cm}
\begin{adjustbox}{max totalsize={1\textwidth}{.9\textheight},center}
	\begin{tikzcd}
		\hom_\mathcal{A}(F^{\boxtimes \lambda}, C) \ar[r]&
		\ext^1_\mathcal{A}(P, C)^{\times 2^\lambda} \ar[r]&
		\ext^1_\mathcal{A}(E, C)\;.
	\end{tikzcd}
\end{adjustbox}
\vspace{0.15cm}

\noindent Since $E \subseteq F^{\times \lambda} \in \mathcal{F}$, we have $\ext^1_\mathcal{A}(E, C) = 0$, so the group $\ext^1_\mathcal{A}(P, C)^{\times 2^\lambda}$ is an epimorphic image of $\hom_\mathcal{A}(F^{\boxtimes \lambda}, C)$. Now if $\ext^1_\mathcal{A}(P, C) \ne 0$, we would have
\[ \card{\ext^1_\mathcal{A}(P, C)^{\times 2^\lambda}} \ge 2^{2^\lambda}, \]
which is more than the cardinality of $\hom_\mathcal{A}(F^{\boxtimes \lambda}, C)$. Thus $\ext^1_\mathcal{A}(P, C) = 0$ and $P = F^{\times\kappa}/F^{\boxtimes\kappa} \in \mathcal{F}$, as required.
\end{proof}

Our application of the latter theorem is the following description of which torsion-free classes are associated with cotilting objects. This (together with Theorem~\ref{thm:maintheorem} in the next section) extends \cite[Proposition 5.7]{ParraSaorin} to all Grothendieck categories.

\begin{thm}\label{thm:CharTiltingClasses}
Let $\mathcal{A}$ be a Grothendieck category. Then the following are equivalent for a full subcategory $\mathcal{F}\subseteq\mathcal{A}$:
	\begin{enumerate}[(1)]
		\item\label{CotiltChar1}{$\mathcal{F}$ is a cotilting class associated with a cotilting object.}
		\item\label{CotiltChar2}{$\mathcal{F}$ is a torsion-free class in $\mathcal{G}$ which contains a generator and is closed under direct limits.}
	\end{enumerate}
\end{thm}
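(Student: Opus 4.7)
The forward direction $(\ref{CotiltChar1}) \Rightarrow (\ref{CotiltChar2})$ is essentially a compilation of earlier results: $\mathcal{F} = \cogen({C})$ is torsion-free by Lemma~\ref{lem:cotilting-TF}, contains a generator directly from Definition~\ref{def:cotilting}, and is closed under direct limits by Theorem~\ref{thm:CotiltingPureInjective} proved above.

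For the converse $(\ref{CotiltChar2}) \Rightarrow (\ref{CotiltChar1})$ the plan is to manufacture a cotilting object $C$ with $\cogen({C}) = \mathcal{F}$ by verifying the characterisation of Theorem~\ref{thm:CotiltingChar}. The technical core I expect to need is the completeness of the pair $(\mathcal{F}, \mathcal{F}^\perp)$: every object $A \in \mathcal{A}$ should admit a short exact sequence $0 \to Y_A \to F_A \to A \to 0$ with $F_A \in \mathcal{F}$ and $Y_A \in \mathcal{F}^\perp$. This will be the main obstacle; I would establish it by a transfinite small-object argument in the spirit of Eklof--Trlifaj, exploiting closure of $\mathcal{F}$ under direct limits together with the existence of a generator $G \in \mathcal{F}$ to produce a set of test extensions whose right orthogonal coincides with $\mathcal{F}^\perp$---possibly by first transferring the question to $\Mod R$ via the Gabriel--Popescu embedding of Proposition~\ref{prop:GabrielPopescu}, where the necessary cotorsion-pair machinery is classical, and then lifting back along the exact left adjoint $T$.

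Granted such a sequence for $A = W$, an injective cogenerator of $\mathcal{A}$, I would observe that $Y \hookrightarrow F \in \mathcal{F}$ forces $Y \in \mathcal{F}$ by closure under subobjects, while $W \in \mathcal{F}^\perp$ (injectives lie trivially in $\mathcal{F}^\perp$) together with closure of $\mathcal{F}^\perp$ under extensions forces $F \in \mathcal{F}^\perp$; hence $Y, F \in \mathcal{F} \cap \mathcal{F}^\perp$. Setting $C := Y \oplus F$, condition~\hyperref[C1]{(C1)} follows from Proposition~\ref{prop:cotilting-injdim1} (as $C \in \mathcal{F}^\perp$), \hyperref[C2]{(C2)} from $C^{\times I} \in \mathcal{F}$ (torsion-free classes are closed under products) and $C \in \mathcal{F}^\perp$, and \hyperref[C3]{(C3)} directly from the chosen sequence, with $C_0 = F$, $C_1 = Y \in \Prodcl({C})$.

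Finally, for the equality $\cogen({C}) = \mathcal{F}$ the inclusion $\cogen({C}) \subseteq \mathcal{F}$ is immediate since $C \in \mathcal{F}$. For the reverse, given $F' \in \mathcal{F}$ and $J = \hom_\mathcal{A}(F', W)$, the pullback of $0 \to Y \to F \to W \to 0$ along each $w \in J$ yields an extension $0 \to Y \to P_w \to F' \to 0$ which splits because $\ext^1_\mathcal{A}(F', Y) = 0$, providing a lift $t_w\colon F' \to F$ of $w$. The diagonal $(t_w)_{w \in J}\colon F' \to F^{\times J}$, post-composed with the product map $F^{\times J} \to W^{\times J}$, equals the canonical diagonal $F' \hookrightarrow W^{\times J}$ associated to the injective cogenerator $W$; hence it is itself a monomorphism, giving $F' \in \cogen({F}) \subseteq \cogen({C})$ as required.
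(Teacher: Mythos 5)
Your forward direction $(\ref{CotiltChar1})\Rightarrow(\ref{CotiltChar2})$ is the same as the paper's. Your endgame is also correct and slightly cleaner than the paper's: once a short exact sequence $0\to Y\to F\to W\to 0$ with $Y, F\in\mathcal{F}\cap\mathcal{F}^\perp$ is in hand, your verification of \hyperref[C1]{(C1)}--\hyperref[C3]{(C3)} via Theorem~\ref{thm:CotiltingChar} and the pullback-diagonal argument showing $\mathcal{F}\subseteq\cogen(F)$ are both valid and replace the paper's torsion-pair computation. The issue is entirely in how you propose to \emph{build} the short exact sequence.

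The paper does not use a completeness statement for any cotorsion pair. It uses Proposition~\ref{prop:ElBashir} (El Bashir's covering theorem): $\mathcal{F}$, being closed under direct sums and direct limits and being $\Limcl$ of a set (exhibited concretely from a generator), is a covering class, and then Lemma~\ref{lem:Wakamatsu} (Wakamatsu) turns an $\mathcal{F}$-cover $C_0\twoheadrightarrow W$ into a sequence with $C_1 = \ker\in\mathcal{F}^\perp$. Your Eklof--Trlifaj route has two gaps. First, $(\mathcal{F}, \mathcal{F}^\perp)$ need not be a cotorsion pair a priori: an Eklof--Trlifaj argument would produce a sequence $0\to Y\to F\to W\to 0$ with $F\in {^\perp(\mathcal{F}^\perp)}$, and ${^\perp(\mathcal{F}^\perp)}$ may properly contain $\mathcal{F}$; but your construction requires $F\in\mathcal{F}$ (you use it to get $Y\in\mathcal{F}$ via closure under subobjects, and to get $C^{\times I}\in\mathcal{F}$ for (C2)). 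Second, the Gabriel--Popescu transfer is not neutral for $\ext$: the functor $H$ is fully faithful but not exact, so $\ext^1_\mathcal{A}(A,B)\not\simeq\ext^1_R(HA,HB)$ in general, and a special precover in $\Mod R$ need not come back to one in $\mathcal{A}$ under $T$. The El Bashir route is the one that cleanly bypasses the absence of enough projectives in $\mathcal{A}$, and Wakamatsu's lemma gives the perpendicularity of the kernel without invoking any cotorsion-pair completeness.
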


The implication \hyperref[CotiltChar1]{(\ref*{CotiltChar1})}$\Rightarrow$\hyperref[CotiltChar2]{(\ref*{CotiltChar2})} is an immediate consequence of Theorem~\ref{thm:CotiltingPureInjective}.
Our main technical tool to prove the other implication, and in particular to construct corresponding cotilting objects, is a result on covering classes in Grothendieck categories (this implication is in fact already included in \cite[Proposition 5.7]{ParraSaorin}, but we give a more direct construction of the cotilting objects which we later use in Section~\ref{sec:classif}).

Given a class of objects $\mathcal{F}$ in a category $\mathcal{A}$ and $X\in\mathcal{A}$, an \emph{$\mathcal{F}$-precover} of $X$ is a morphism $f\colon F \to X$ such that $\hom_\mathcal{A}(F',f)$ is surjective for each $F'\in\mathcal{F}$. An $\mathcal{F}$-precover $f\colon F \to X$ is an \emph{$\mathcal{F}$-cover} provided that each endomorphism $g\colon F \to F$ such that $fg=f$ is necessarily an automorphism. The class $\mathcal{F}$ is called \emph{(pre)covering} if each $X\in\mathcal{A}$ has an $\mathcal{F}$-(pre)cover.
Given a full subcategory $\mathcal{X}$ of a Grothendieck category $\mathcal{A}$, we will denote by $\Limcl\mathcal{X}$ the class of all direct limits of objects in $\mathcal{X}$.

\begin{prop}[{\cite[Theorem~3.2]{ElBashir}}]\label{prop:ElBashir}
Let $\mathcal{A}$ be a Grothendieck category and $\mathcal{F} \subseteq \mathcal{A}$ be a class of objects closed under direct sums and direct limits. Suppose that there is a set $\mathcal{S} \subseteq \mathcal{F}$ such that $\mathcal{F}=\Limcl\mathcal{S}$. Then $\mathcal{F}$ is a covering class.
\end{prop}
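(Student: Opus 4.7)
The plan is to split the proof into two stages: first establish that $\mathcal{F}$ is \emph{precovering}, and then upgrade this to the covering property via a direct-limit argument due to Enochs. For the latter, once $\mathcal{F}$-precovers exist, one takes a suitably ``minimal'' $\mathcal{F}$-precover $F \to X$, realized as a direct limit of precovers organized by the factorization relation. This direct limit still belongs to $\mathcal{F}$ by the direct-limit closure hypothesis, and one verifies that such a minimal precover is automatically a cover by a standard retract argument.

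To prove that $\mathcal{F}$ is precovering, I would establish the \emph{solution set condition}: for every $X \in \mathcal{A}$ there is a \emph{set} $\{\phi_\alpha \colon F_\alpha \to X\}_{\alpha \in A}$ with $F_\alpha \in \mathcal{F}$ such that every $\psi \colon F \to X$ with $F \in \mathcal{F}$ factors through some $\phi_\alpha$. Given this, the induced morphism $\bigoplus_\alpha F_\alpha \to X$ is an $\mathcal{F}$-precover whose source lies in $\mathcal{F}$ by the direct-sum closure. To construct $A$, I would pass through the Gabriel--Popescu embedding $H \colon \mathcal{A} \hookrightarrow \Mod R$ (Proposition~\ref{prop:GabrielPopescu}) and fix an infinite regular cardinal $\kappa$ strictly larger than $|\mathcal{S}|$ and large enough that each $H(S)$ with $S \in \mathcal{S}$ is $\kappa$-presentable in $\Mod R$. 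Then $A$ can be taken to index the (legitimate set of) pairs $(F', \phi')$ where $F'$ is the colimit of a directed system of fewer than $\kappa$ objects from $\mathcal{S}$ and $\phi' \colon F' \to X$ is a morphism in $\mathcal{A}$.

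The technical core, and the main obstacle, is the factorization step: given arbitrary $\psi \colon F \to X$ with $F = \varinjlim_{i \in I} S_i$ and $S_i \in \mathcal{S}$, one must produce a cofinal subsystem $\{S_j\}_{j \in J}$ with $|J| < \kappa$ whose colimit $F'$ admits a compatible morphism $F' \to X$ through which $\psi$ factors. I would do this by a L\"owenheim--Skolem style transfinite construction: start from any $\kappa$-small subset $J_0 \subseteq I$ and iteratively enlarge it in fewer than $\kappa$ steps so as to be closed under (a) the transition maps of the directed system, (b) the compositions $\psi \circ \iota_i \colon S_i \to X$ compatibly, and (c) witnesses of equalities forced by the colimit. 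The $\kappa$-presentability of the $H(S)$'s (transferred from $\Mod R$) keeps each enlargement bounded, and regularity of $\kappa$ keeps the final union $\kappa$-small. The resulting map $F' \to X$ then lies in the solution set, and combining this with the Enochs direct-limit argument promotes the precovering property to the covering property, finishing the proof.
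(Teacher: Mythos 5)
Note first that the paper does not prove this proposition at all: it is stated as a direct citation of \cite[Theorem~3.2]{ElBashir}. So there is no argument in the paper against which to compare; I can only judge your proposal on its own terms.

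Your high-level plan---first show $\mathcal{F}$ is precovering, then use Enochs' direct-limit argument (a precovering class closed under direct limits is covering) to promote precovers to covers---is a legitimate and standard route, and the second half does work in Grothendieck categories. But the first half has a genuine gap in the factorization step. You claim that, given $\psi\colon F\to X$ with $F=\varinjlim_{i\in I}S_i$, one can extract a \emph{cofinal} directed subsystem $\{S_j\}_{j\in J}$ with $|J|<\kappa$; but if the index poset $I$ has cofinality at least $\kappa$ (e.g.\ $I$ is a regular cardinal $\lambda\ge\kappa$), no such $J$ exists. If instead you drop the cofinality requirement, the comparison morphism you get runs in the wrong direction: a $\kappa$-small subdiagram yields a map $F'\to F\to X$ out of $F'$, whereas a precover requires a factorization $\psi = \phi_\alpha\circ g$ with $g\colon F\to F_\alpha$ landing \emph{in} a member of the solution set. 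The naive coproduct $P=\bigoplus_\alpha F_\alpha$ does not repair this either, because an attempt to map each $S_i$ into the summand indexed by $(S_i,\psi\circ\iota_i)$ is not compatible with the transition morphisms of the directed system---the images lie in different summands. So the solution-set-plus-coproduct argument in the straightforward form you describe does not go through; the precoverability half of El Bashir's theorem requires a more delicate construction that genuinely exploits closure of $\mathcal{F}$ under direct limits in that step as well, not only in the passage from precovers to covers.

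A secondary point: the detour through the Gabriel--Popescu embedding is unnecessary and potentially misleading. The functor $H$ is a right adjoint and does not preserve (filtered) colimits, so asserting that $H(S)$ is $\kappa$-presentable in $\Mod R$ does not directly transfer to a useful presentability statement about $S$ in $\mathcal{A}$. Since every Grothendieck category is locally presentable, one can simply choose a regular cardinal $\kappa$ such that every object of the set $\mathcal{S}$ (and a fixed generator) is $\kappa$-presentable in $\mathcal{A}$ itself, which is the property the Löwenheim--Skolem step actually needs.
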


If $\mathcal{F}$ is extension closed and generating, we can use the following lemma which is originally due to Wakamatsu~\cite{Wakamatsu}.

\begin{lem}[{\cite[Lemma 5.13]{G-T}}]\label{lem:Wakamatsu}
Let $\mathcal{A}$ be an abelian category, $\mathcal{F}\subseteq\mathcal{A}$ be a generating class closed under extensions and let $f\colon F \to A$ be an $\mathcal{F}$-cover of an object $A \in \mathcal{A}$. Then $f$ is an epimorphism and $\ext^1_\mathcal{A}(\mathcal{F}, \ker f) = 0$.
\end{lem}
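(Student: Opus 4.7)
My plan is to establish the two conclusions separately: the surjectivity of $f$ is almost immediate from the generating plus precover assumptions, while the $\ext^1$-vanishing requires a pushout construction in which the full cover property forces an endomorphism of $F$ to be an automorphism. For the epimorphism claim, since $\mathcal{F}$ is generating there is an epimorphism $\varepsilon\colon G \twoheadrightarrow A$ from some object $G$ which is a coproduct of members of $\mathcal{F}$; the precover property applied to each summand produces $\sigma\colon G \to F$ with $f\sigma = \varepsilon$, so $f$ is epi.

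For the $\ext^1$-vanishing, set $K=\ker f$ and consider an arbitrary extension $\xi\colon 0 \to K \xrightarrow{\iota} E \to F' \to 0$ with $F' \in \mathcal{F}$. The key step is to form the pushout $Y$ of $\alpha\colon K \hookrightarrow F$ along $\iota$, yielding two exact sequences
$$\eta_1\colon 0 \to F \xrightarrow{i} Y \to F' \to 0, \qquad \eta_2\colon 0 \to E \to Y \xrightarrow{p} A \to 0,$$
where $\eta_1 = \alpha_*\xi$. Since $F, F' \in \mathcal{F}$ and $\mathcal{F}$ is closed under extensions, $Y\in\mathcal{F}$. By the precover property applied to $p$, there exists $g\colon Y \to F$ with $fg=p$. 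Unwinding the universal property of the pushout one checks that $pi = f$, so the composition $gi\colon F \to F$ satisfies $f(gi) = f$; the cover property then forces $gi$ to be an automorphism, so $i$ is a split monomorphism and $\alpha_*\xi = 0$ in $\ext^1_\mathcal{A}(F',F)$.

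To conclude, apply $\hom_\mathcal{A}(F', -)$ to $0 \to K \xrightarrow{\alpha} F \xrightarrow{f} A \to 0$, producing the long exact sequence
$$\hom_\mathcal{A}(F', F) \xrightarrow{f_*} \hom_\mathcal{A}(F', A) \xrightarrow{\delta} \ext^1_\mathcal{A}(F', K) \xrightarrow{\alpha_*} \ext^1_\mathcal{A}(F', F).$$
The precover property makes $f_*$ surjective, whence $\delta = 0$ and $\alpha_*$ is injective; combined with $\alpha_*\xi=0$ this gives $\xi=0$, as required. The main obstacle, and the only point where the full cover property (beyond merely precover) is genuinely used, is the automorphism conclusion at the pushout step; everything else is bookkeeping with the long exact $\ext$-sequence and the universal property of the pushout.
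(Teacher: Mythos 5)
Your proof is correct and is essentially the standard argument for Wakamatsu's lemma, which is exactly what the cited source \cite[Lemma 5.13]{G-T} gives (the paper itself provides no proof, only the citation). The pushout construction, the observation that $pi=f$, the use of the cover property to conclude that $gi$ is an automorphism and hence $i$ is a split monomorphism, and the final step via the long exact $\ext$-sequence with $f_*$ surjective by the precover property are all the same ingredients in the same order.
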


\begin{proof}[Proof of Theorem~\ref{thm:CharTiltingClasses}]
In view of Theorem~\ref{thm:CotiltingPureInjective}, it remains to prove that given a torsion-free class $\mathcal{F} \subseteq \mathcal{A}$ which contains a generator and is closed under direct limits, it is associated to a cotilting object $C \in \mathcal{A}$.
Note that under these assumptions, there is a set $\mathcal{S} \subseteq \mathcal{F}$ such that $\mathcal{F}=\Limcl\mathcal{S}$.
Indeed, fix a generator $G \in \mathcal{A}$ and let $\mathcal{S} \subseteq \mathcal{F}$ be the set of all quotients of finite coproducts of $G$ that are in $\mathcal{F}$. Then for $F \in \mathcal{F}$ and an epimorphism $G^{\oplus I} \rightarrow  F$, lettting $F_{I_0}:=\Im (G^{\oplus I_0} \hookrightarrow G^{\oplus I} \rightarrow F)$ for each finite subset $I_0 \subseteq I$ yields a direct system of objects from $\mathcal{S}$ with $\varinjlim_{I_0}F_{I_0}=F$. Thus, $\mathcal{F} \subseteq \Limcl \mathcal{S}$, and the converse inclusion follows from $\mathcal{F}$ being closed under direct limits.

The construction of the cotilting module is very similar to the one in~\cite[Theorem 15.22]{G-T}. Let $W$ be an injective cogenerator of $\mathcal{A}$ and consider an exact sequence

\vspace{0.25cm}
\begin{adjustbox}{max totalsize={1\textwidth}{.9\textheight},center}
	\begin{tikzcd} 
\varepsilon\colon	&	0 \ar[r] & C_1 \ar[r] & C_0 \ar[r, "\pi"] & W \ar[r] & 0\;, &
	\end{tikzcd}
\end{adjustbox}	
\vspace{0.15cm}

\noindent
where $\pi$ is an $\mathcal{F}$-cover. Then $C_1 \in \mathcal{F}^\perp$ by Lemma~\ref{lem:Wakamatsu} and $C_0 \in \mathcal{F}^\perp$ as $\mathcal{F}^\perp$ is closed under extensions. We put $C = C_0 \oplus C_1$ and observe (using Corollary~\ref{cor:BunoProd}) that
\[
\Prodcl({C}) \subseteq \mathcal{F}\cap\mathcal{F}^\perp
\quad\text{ and }\quad
\cogen({C}) \subseteq \mathcal{F} \subseteq {^\perp C}\;. \]

Suppose further that $A \in {^\perp C}$. We use the same method as in the proof of Theorem~\ref{thm:CotiltingChar} to show that $A \in \cogen({C})$. Indeed, $\cogen({C})$ is a torsion-free class in $\mathcal{A}$ by Lemma~\ref{lem:cotilting-TF}, so we have an exact sequence

\vspace{0.25cm}
\begin{adjustbox}{max totalsize={1\textwidth}{.9\textheight},center}
	\begin{tikzcd} 
		0 \ar[r] &  T \ar[r] & A \ar[r] & F  \ar[r] & 0\;
	\end{tikzcd}
\end{adjustbox}	
\vspace{0.15cm}

\noindent with ${F} \in \cogen({C})$ and ${T} \in \ker \hom_{\mathcal{A}}(-, {C}).$
Since also $\injdim C\le 1$ by Proposition~\ref{prop:cotilting-injdim1}, we also have $T \in {^\perp C}$ and application of $\hom_{\mathcal{A}}({T}, -)$ to the sequence $\varepsilon$ yields an exact sequence

\vspace{0.25cm}
\begin{adjustbox}{max totalsize={1\textwidth}{.9\textheight},center}
	\begin{tikzcd}
		0 = \hom_{\mathcal{A}}({T}, {C}_0) \ar[r] & \hom_{\mathcal{A}}({T}, {W}) \ar[r] &  \ext^1_{\mathcal{A}}({T}, {C}_1) = 0 \;.
	\end{tikzcd}
\end{adjustbox}
\vspace{0.15cm}

It follows that ${T}=0$ and $A \simeq F \in \cogen{(C)}$. Thus, $\mathcal{F} = \cogen({C}) = {^\perp C}$ and $C$ is a cotilting object.
\end{proof}

In the locally Noetherian case, we obtain a classification of cotilting torsion-free classes in a locally Noetherian category via torsion pairs of Noetherian objects, which recovers \cite[Corollary 5.13]{ParraSaorin}.
An almost identical result appeared in \cite[Theorem 1.13]{BuanKrause-cotilting}, but with a somewhat different definition of a cotilting object (we were not able to recover the result without replacing (C3) from~\cite{BuanKrause-cotilting} by the slightly stronger condition in Theorem~\hyperref[C3]{\ref*{thm:CotiltingChar}~(\ref*{Char2})~(C3)}, but on the other hand we proved in Theorem~\ref{thm:CotiltingPureInjective} that condition (C4) from~\cite{BuanKrause-cotilting} was superfluous).
A special case for module categories of (one-sided) Noetherian rings was also obtained in \cite[Proposition 2.6]{St1}.
Geometric consequences for categories of quasi-coherent sheaves will be discussed later in Sections~\ref{sec:torsion} and~\ref{sec:classif}.

\begin{thm}\label{thm:ClassificationViaTP}
Let $\mathcal{A}$ be a locally Noetherian Grothendieck category and $\mathcal{A}_0$ be the full subcategory of Noetherian objects. Then torsion-free classes $\mathcal{F}$ in $\mathcal{A}$ associated to a cotilting object bijectively correspond to the torsion pairs $(\mathcal{T}_0, \mathcal{F}_0)$ in $\mathcal{A}_0$ for which $\mathcal{F}_0$ is a generating class (i.e.\ each object of $\mathcal{A}_0$ is a quotient of an object of $\mathcal{F}_0$). The correspondence is given by
\[ \mathcal{F}\mapsto\mathcal{F}\cap\mathcal{A}_0  \quad\text{ and }\quad (\mathcal{T}_0, \mathcal{F}_0) \mapsto \Limcl\mathcal{F}_0. \]
\end{thm}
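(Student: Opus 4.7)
The plan is to invoke Theorem~\ref{thm:CharTiltingClasses}, which identifies cotilting torsion-free classes in $\mathcal{A}$ with torsion-free classes that contain a generator and are closed under direct limits, and then to exploit two standard features of a locally Noetherian Grothendieck category: every object is the directed union of its Noetherian subobjects, and Noetherian objects are finitely presentable, so $\hom_\mathcal{A}(N,-)$ commutes with directed colimits whenever $N\in\mathcal{A}_0$.

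For the forward map, given a cotilting torsion-free class $\mathcal{F}$ with associated torsion class $\mathcal{T}$, I set $\mathcal{F}_0=\mathcal{F}\cap\mathcal{A}_0$ and $\mathcal{T}_0=\mathcal{T}\cap\mathcal{A}_0$. Since $\mathcal{A}_0$ is closed in $\mathcal{A}$ under subobjects and quotients, the torsion decomposition of any $A\in\mathcal{A}_0$ stays inside $\mathcal{A}_0$, so $(\mathcal{T}_0,\mathcal{F}_0)$ is a torsion pair in $\mathcal{A}_0$. To verify that $\mathcal{F}_0$ generates, pick $A\in\mathcal{A}_0$ together with an epimorphism $F'\twoheadrightarrow A$ with $F'\in\mathcal{F}$ (available because $\mathcal{F}$ contains a generator of $\mathcal{A}$); writing $F'$ as the directed union of its Noetherian subobjects $F'_\alpha\in\mathcal{F}_0$ (using that $\mathcal{F}$ is closed under subobjects), the Noetherianness of $A$ forces some $F'_\alpha$ to map onto $A$.

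For the backward map, given $(\mathcal{T}_0,\mathcal{F}_0)$ with $\mathcal{F}_0$ generating in $\mathcal{A}_0$, I put $\mathcal{F}=\Limcl\mathcal{F}_0$ and identify it with $\widetilde{\mathcal{F}}=\{F\in\mathcal{A}\mid\hom_\mathcal{A}(T,F)=0 \text{ for all }T\in\mathcal{T}_0\}$, which is tautologically closed under subobjects, products, and extensions, hence torsion-free in $\mathcal{A}$. Finite presentability of the Noetherian objects $T\in\mathcal{T}_0$ forces $\widetilde{\mathcal{F}}$ to be closed under direct limits as well; combined with $\mathcal{F}_0\subseteq\widetilde{\mathcal{F}}$ (by definition of the torsion pair in $\mathcal{A}_0$), this yields $\Limcl\mathcal{F}_0\subseteq\widetilde{\mathcal{F}}$. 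Conversely, any $F\in\widetilde{\mathcal{F}}$ is the directed union of its Noetherian subobjects, each sitting in $\widetilde{\mathcal{F}}\cap\mathcal{A}_0$, which by Remark~\ref{rem:TP} equals $\mathcal{F}_0$. Thus $\mathcal{F}=\widetilde{\mathcal{F}}$ is a torsion-free class containing a generator of $\mathcal{A}$ (because $\mathcal{F}_0$ generates $\mathcal{A}_0$ and $\mathcal{A}_0$ generates $\mathcal{A}$) and closed under direct limits, so Theorem~\ref{thm:CharTiltingClasses} delivers that $\mathcal{F}$ is cotilting.

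Finally, the two assignments are mutually inverse: if $\mathcal{F}=\Limcl\mathcal{F}_0$, the identification above gives $\mathcal{F}\cap\mathcal{A}_0=\widetilde{\mathcal{F}}\cap\mathcal{A}_0=\mathcal{F}_0$; conversely, if $\mathcal{F}$ is a cotilting torsion-free class and $\mathcal{F}_0=\mathcal{F}\cap\mathcal{A}_0$, then closure of $\mathcal{F}$ under direct limits yields $\Limcl\mathcal{F}_0\subseteq\mathcal{F}$, while the reverse inclusion follows by expressing each $F\in\mathcal{F}$ as the directed union of its Noetherian subobjects, which automatically lie in $\mathcal{F}\cap\mathcal{A}_0=\mathcal{F}_0$. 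The main conceptual hurdle will be the identification $\mathcal{F}=\widetilde{\mathcal{F}}$: the closure of $\widetilde{\mathcal{F}}$ under direct limits rests essentially on Noetherian objects being finitely presentable, which is precisely what makes the locally Noetherian hypothesis indispensable.
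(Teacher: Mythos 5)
Your proposal is correct and takes essentially the same route as the paper: the paper isolates the bijection between torsion pairs in $\mathcal{A}_0$ and torsion pairs in $\mathcal{A}$ whose torsion-free class is closed under direct limits into an auxiliary lemma (Lemma~\ref{lem:TorsionPairsQcohvsCoh}), and then deduces the theorem from that lemma together with Theorem~\ref{thm:CharTiltingClasses}, whereas you inline the content of that lemma, including the key identification $\Limcl\mathcal{F}_0=\ker\hom_\mathcal{A}(\mathcal{T}_0,-)$ via finite presentability of Noetherian objects, the restriction--extension arguments for torsion pairs, and the passage of the generating property between $\mathcal{A}_0$ and $\mathcal{A}$. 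No substantive difference in strategy.
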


In order to prove the result, we first make precise the relation of torsion pairs in $\mathcal{A}_0$ to torsion pairs in $\mathcal{A}$.

\begin{lem}\label{lem:TorsionPairsQcohvsCoh}
	Let $\mathcal{A}$ be a locally Noetherian category and let $\mathcal{A}_0$ be the full subcategory of Noetherian objects.
	\begin{enumerate}[(1)]
		\item\label{NLink1}{If $(\mathcal{T}, \mathcal{F})$ is a torsion pair in $\mathcal{A}$, then $(\mathcal{T}\cap \mathcal{A}_0, \mathcal{F}\cap \mathcal{A}_0)$ is a torsion pair in $\mathcal{A}_0$.}
		\item\label{NLink2}{If $(\mathcal{T}_0, \mathcal{F}_0)$ is a torsion pair in $\mathcal{A}_0$, then $(\Limcl\mathcal{T}_0, \Limcl\mathcal{F}_0)$ is a torsion pair in $\mathcal{A}$.}
	\end{enumerate}
	Moreover, these assignments yield a bijective correspondence between the class of all torsion pairs in $\mathcal{A}_0$, and the class of those torsion pairs $(\mathcal{T}, \mathcal{F})$ in $\mathcal{A}$ for which $\mathcal{F}$ is closed under direct limits. Furthermore, $\mathcal{F}_0$ is generating (in $\mathcal{A}_0$ or in $\mathcal{A}$) if and only if the corresponding class $\mathcal{F}$ contains a generator for $\mathcal{A}$.
\end{lem}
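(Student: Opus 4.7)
For (1), the vanishing $\hom_\mathcal{A}(\mathcal{T}\cap\mathcal{A}_0, \mathcal{F}\cap\mathcal{A}_0) = 0$ is immediate from $\hom_\mathcal{A}(\mathcal{T}, \mathcal{F}) = 0$, and for any $A\in\mathcal{A}_0$ the canonical $(\mathcal{T},\mathcal{F})$-sequence $0\to T\to A\to F\to 0$ remains in $\mathcal{A}_0$ because subobjects and quotients of Noetherian objects are Noetherian.

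For (2), I would first establish two concrete descriptions. Using that Noetherian objects in a locally Noetherian Grothendieck category are finitely presentable (every morphism from such an object into a direct limit factors through the system), one checks that $\Limcl\mathcal{F}_0 = \{A\in\mathcal{A} \mid \text{every Noetherian subobject of } A \text{ lies in } \mathcal{F}_0\}$ (using closure of $\mathcal{F}_0$ under subobjects), and that every $A\in\Limcl\mathcal{T}_0$ is a directed union of its Noetherian $\mathcal{T}_0$-subobjects (by passing to images and using closure of $\mathcal{T}_0$ under quotients). The Hom-vanishing $\hom_\mathcal{A}(\Limcl\mathcal{T}_0, \Limcl\mathcal{F}_0)=0$ then follows, because the image of any map from a Noetherian $\mathcal{T}_0$-subobject of $A\in\Limcl\mathcal{T}_0$ into $B\in\Limcl\mathcal{F}_0$ is simultaneously a quotient of a $\mathcal{T}_0$-object and a Noetherian subobject of $B$, hence lies in $\mathcal{T}_0\cap\mathcal{F}_0=0$. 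For the decomposition of an arbitrary $A\in\mathcal{A}$, I define $T := \sum\{N\subseteq A \mid N \text{ Noetherian},\ N\in\mathcal{T}_0\}$; finite subsums are quotients of finite coproducts and thus lie in $\mathcal{T}_0$, so $T\in\Limcl\mathcal{T}_0$.

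The main obstacle is to prove $A/T\in\Limcl\mathcal{F}_0$. Assume for contradiction that some Noetherian $N'\subseteq A/T$ has nonzero torsion part $t(N')\in\mathcal{T}_0$. Using finite generation of $t(N')$, I lift the inclusion to a Noetherian $M\subseteq A$ whose image in $A/T$ is exactly $t(N')$. The intersection $M\cap T$ is Noetherian hence finitely generated, so it sits inside a finite sum $N_* := N_{i_1}+\cdots+N_{i_k}\in\mathcal{T}_0$ of Noetherian $\mathcal{T}_0$-subobjects defining $T$. Setting $\tilde M := M + N_*$, the modular law together with $M\cap T\subseteq N_*\subseteq T$ yields $\tilde M\cap T = N_* + (M\cap T) = N_*$ and $\tilde M/N_* \cong t(N')$. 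The short exact sequence $0\to N_*\to\tilde M\to t(N')\to 0$ has outer terms in $\mathcal{T}_0$, so $\tilde M\in\mathcal{T}_0$ by extension-closure, and $\tilde M$ is Noetherian, hence $\tilde M\subseteq T$ by the definition of $T$. But then $t(N')=\tilde M/N_*=0$, a contradiction. Therefore every Noetherian subobject of $A/T$ is torsion-free, i.e.\ lies in $\mathcal{F}_0$, yielding $A/T\in\Limcl\mathcal{F}_0$ by the concrete description.

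For the bijective correspondence, if $(\mathcal{T},\mathcal{F})$ is a torsion pair in $\mathcal{A}$ with $\mathcal{F}$ closed under direct limits, then $\Limcl\mathcal{T}_0\subseteq\mathcal{T}$ (torsion classes are closed under colimits) and $\Limcl\mathcal{F}_0\subseteq\mathcal{F}$, and Lemma~\ref{lem:TorsionPairsInclusion} forces equality; conversely, a Noetherian object in $\Limcl\mathcal{T}_0$ is a sum of Noetherian $\mathcal{T}_0$-subobjects that stabilizes to a finite sum and so lies in $\mathcal{T}_0$, while a Noetherian object in $\Limcl\mathcal{F}_0$ is a Noetherian subobject of itself and so lies in $\mathcal{F}_0$. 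Finally, for the generating condition: if $F\in\mathcal{F}$ is a generator, then for each Noetherian $N$ the epimorphism $F^{\oplus I}\twoheadrightarrow N$ factors through some finite sub-coproduct $F^{\oplus n}$ (by finite generation of $N$) and then through $F_\beta^{\oplus n}\twoheadrightarrow N$ with $F_\beta\in\mathcal{F}_0$ (by the description of $F\in\Limcl\mathcal{F}_0$ and closure of $\mathcal{F}_0$ under finite coproducts), exhibiting $N$ as a quotient of an object of $\mathcal{F}_0$; conversely, if $\mathcal{F}_0$ generates $\mathcal{A}_0$, then choosing for each iso class $[N]$ of Noetherian object some $F_N\in\mathcal{F}_0$ with $F_N\twoheadrightarrow N$, the coproduct $\bigoplus_{[N]} F_N$ lies in $\mathcal{F}=\Limcl\mathcal{F}_0$ and is a generator for $\mathcal{A}$.
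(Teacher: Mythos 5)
Your proof is correct and takes a genuinely more self-contained route than the paper's. The paper discharges part (2)---that $(\Limcl\mathcal{T}_0, \Limcl\mathcal{F}_0)$ is a torsion pair---by citing Crawley-Boevey~\cite[\S4.4]{CBLocFinPres}, and likewise appeals to~\cite[\S4.1]{CBLocFinPres} for the restriction identities $\mathcal{T}_0=\mathcal{A}_0\cap\Limcl\mathcal{T}_0$ and $\mathcal{F}_0=\mathcal{A}_0\cap\Limcl\mathcal{F}_0$. You instead prove everything directly: you establish the concrete descriptions of $\Limcl\mathcal{T}_0$ (directed unions of Noetherian torsion subobjects) and $\Limcl\mathcal{F}_0$ (objects whose every Noetherian subobject lies in $\mathcal{F}_0$) by hand, and you construct the decomposition $0\to T\to A\to A/T\to 0$ by taking $T$ to be the sum of all Noetherian $\mathcal{T}_0$-subobjects and then using the modular law to lift a hypothetical nonzero torsion subobject $t(N')\subseteq A/T$ to a Noetherian $\mathcal{T}_0$-subobject $\tilde M\subseteq A$ not contained in $T$, a contradiction. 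This makes the lemma self-contained at the cost of a longer argument, whereas the paper's version is shorter but outsources the core work to a reference; your modular-law argument is essentially the proof hiding inside that citation. The handling of the bijective correspondence and the generating condition coincides with the paper's. One point worth making explicit: for the bijection you must observe that $\Limcl\mathcal{F}_0$ is closed under direct limits for \emph{every} torsion pair in $\mathcal{A}_0$, so that $(\Limcl\mathcal{T}_0,\Limcl\mathcal{F}_0)$ actually lands in the target class. This does follow from your concrete description (a Noetherian subobject of a filtered colimit factors monomorphically through a term of the system), but you do not say so; the paper instead notes $\Limcl\mathcal{F}_0=\ker\hom_\mathcal{A}(\mathcal{T}_0,-)$, which gives direct-limit closure immediately since $\mathcal{T}_0$ consists of finitely presented objects.
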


\begin{proof}
	Part~\hyperref[NLink1]{(\ref*{NLink1})} is clear since the torsion and the torsion-free part of a Noetherian object are both Noetherian. A proof of part~\hyperref[NLink2]{(\ref*{NLink2})} can be found in~\cite[\S4.4]{CBLocFinPres} (the distinction between direct limits here and filtered colimits used in~\cite{CBLocFinPres} is inessential thanks to~\cite[Theorem 1.5]{AdamekRosicky}). 
		
	In order to prove that we have the bijective correspondence, observe first that $\mathcal{T}_0 = \mathcal{A}_0\cap \Limcl\mathcal{T}_0$ and $\mathcal{F}_0 = \mathcal{A}_0\cap \Limcl\mathcal{F}_0$ (see for instance ~\cite[\S4.1]{CBLocFinPres}). Note that the class $\Limcl\mathcal{F}_0$ can be described as
$$\Limcl\mathcal{F}_0=\ker \hom_{\mathcal{A}}(\mathcal{T}_0, -)=\bigcap_{T \in \mathcal{T}_0}\ker \hom_{\mathcal{A}}(T, -).$$
Since $\mathcal{T}_0$ consists of Noetherian, hence finitely presented objects only, it follows that $\Limcl\mathcal{F}_0$ is always closed under taking direct limits. 

 If, on the other hand, $(\mathcal{T}, \mathcal{F})$ is a torsion pair in $\mathcal{A}$ with $\mathcal{F}$ closed under direct limits and if $(\mathcal{T}_0, \mathcal{F}_0)$ is its restriction to $\mathcal{A}_0$, then clearly $\Limcl\mathcal{T}_0 \subseteq \mathcal{T}$ and $\Limcl\mathcal{F}_0 \subseteq \mathcal{F}$. It follows from part~\hyperref[NLink2]{(\ref*{NLink2})} and  Lemma~\ref{lem:TorsionPairsInclusion} that $\Limcl\mathcal{T}_0 = \mathcal{T}$ and $\Limcl\mathcal{F}_0 = \mathcal{F}$.
 
Finally, note that if $G \in \mathcal{F}$ is a generator, then the set of all noetherian subobjects of $G$ is a generating subset of $\mathcal{F}_0=\mathcal{F}\cap \mathcal{A}_0$, and conversely, if $\mathcal{F}_0$ is generating, $\mathcal{A}$ having a \emph{set} of noetherian generators implies that one can choose a set $\mathcal{S} \subseteq \mathcal{F}_0$ that is generating; then $\bigoplus_{X \in \mathcal{S}} X$ is a generator in $\mathcal{F}$.
\end{proof}

\begin{proof}[Proof of Theorem~\ref{thm:ClassificationViaTP}]
Lemma~\ref{lem:TorsionPairsQcohvsCoh} implies that the assignment $\mathcal{F}\mapsto\mathcal{F}\cap\mathcal{A}_0$ from Theorem~\ref{thm:ClassificationViaTP} is injective since $\mathcal{F}$ can be reconstructed as $\Limcl(\mathcal{F} \cap \mathcal{A}_0)$.
The surjectivity follows from Theorem~\ref{thm:CharTiltingClasses} together with  Lemma~\ref{lem:TorsionPairsQcohvsCoh} since $\Limcl\mathcal{F}_0$ is always associated with a cotilting object (and $\mathcal{F}_0=\mathcal{A}_{0} \cap \Limcl\mathcal{F}_0$).
\end{proof}


\section{Derived equivalences}
\label{sec:derived-equiv}

Now we turn to derived equivalences. Most of the material in this section is rather standard nowadays, with one notable exception: we prove that a cotilting object in a Grothendieck category induces a derived equivalence to another Grothendieck category. This uses pure-injectivity of cotilting objects in a crucial way.

We first summarize the essentials of tilting theory for torsion pairs. Given a torsion pair $(\mathcal{T}, \mathcal{F})$ in an abelian category $\mathcal{A}$, there is a procedure, worked out in \cite{HRS,BondalVanDenBergh} and also treated in~\cite{ColpiHeart,Noohi,StovicekKernerTrlifaj}, to construct another abelian category $\mathcal{H}$.

\begin{prop}[{\cite[\S1.3]{HRS} and \cite[\S\S5.3--5.4]{BondalVanDenBergh}}] \label{prop:HRS-tilt}
Let $\mathcal{A}$ be an abelian category, $(\mathcal{T}, \mathcal{F})$ a torsion pair in $\mathcal{A}$, $\Der({\mathcal{A}})$ the (unbounded) derived category of  $\mathcal{A}$ with the suspension functor $\Sigma$, and put
\[ \mathcal{H} = \{ X\in\Der(\mathcal{A}) \mid H^0(X)\in\mathcal{F}, H^{1}(X)\in\mathcal{T} \text{ and } H^i(X)=0 \text{ for } i \ne 0, 1 \}\;. \]
Then $\mathcal{H}$ is itself an abelian category with a torsion pair $(\mathcal{F}, \Sigma^{-1}\mathcal{T})$. If, moreover, $\mathcal{F}$ is a generating class in $\mathcal{A}$ (i.e.\ each object of $\mathcal{A}$ is a quotient of an object of $\mathcal{F}$), then $\mathcal{F}$ is a cogenerating class in $\mathcal{H}$ and the embedding $\mathcal{H}\subseteq \Der(\mathcal{A})$ extends to an exact equivalence

\vspace{0.25cm}
\begin{adjustbox}{max totalsize={1\textwidth}{.9\textheight},center}
	\begin{tikzcd}
		\Der(\mathcal{H}) \ar[r, "\simeq"]&
		\Der(\mathcal{A})\;.
	\end{tikzcd}
\end{adjustbox}
\end{prop}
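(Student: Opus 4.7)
The plan is to realize $\mathcal{H}$ as the heart of a new bounded t-structure on $\Der(\mathcal{A})$ obtained by \emph{tilting} the standard one along $(\mathcal{T},\mathcal{F})$, to read off the torsion pair on $\mathcal{H}$ from the canonical truncation triangle, and then to apply the realization functor of Beilinson--Bernstein--Deligne in order to upgrade the embedding $\mathcal{H}\subseteq\Der(\mathcal{A})$ to a derived equivalence.

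Concretely, I would define
\begin{align*}
\mathcal{D}^{\le 0} &= \{X\in\Der(\mathcal{A})\mid H^i(X)=0 \text{ for } i>1,\; H^1(X)\in\mathcal{T}\}, \\
\mathcal{D}^{\ge 0} &= \{X\in\Der(\mathcal{A})\mid H^i(X)=0 \text{ for } i<0,\; H^0(X)\in\mathcal{F}\},
\end{align*}
and verify the t-structure axioms. The Hom-vanishing $\hom_{\Der(\mathcal{A})}(U,\Sigma^{-1}V)=0$ for $U\in\mathcal{D}^{\le 0}$ and $V\in\mathcal{D}^{\ge 0}$ reduces, via the standard cohomological truncations, to $\hom_\mathcal{A}(\mathcal{T},\mathcal{F})=0$; for the truncation triangles, one first applies the ordinary truncations and then refines them using the torsion decomposition of $H^1(X)$ and $H^0(X)$, splicing everything together via the octahedral axiom. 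Visibly $\mathcal{H}=\mathcal{D}^{\le 0}\cap\mathcal{D}^{\ge 0}$, so $\mathcal{H}$ is abelian. The torsion pair on $\mathcal{H}$ is then extracted from the canonical triangle
\[ \Sigma^{-1}H^1(X) \longrightarrow X \longrightarrow H^0(X) \longrightarrow H^1(X), \]
which for $X\in\mathcal{H}$ already lies in $\mathcal{H}$ and becomes a short exact sequence exhibiting the torsion part $\Sigma^{-1}H^1(X)\in\Sigma^{-1}\mathcal{T}$ and the torsion-free part $H^0(X)\in\mathcal{F}$; the vanishing $\hom_\mathcal{H}(F,\Sigma^{-1}T)=\hom_{\Der(\mathcal{A})}(F,\Sigma^{-1}T)=0$ is immediate since $\mathcal{A}$ has no negative Ext groups.

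Assume finally that $\mathcal{F}$ generates $\mathcal{A}$. To check that $\mathcal{F}$ cogenerates $\mathcal{H}$, given $X\in\mathcal{H}$ I would pick an epimorphism $F\twoheadrightarrow H^1(X)$ in $\mathcal{A}$ with $F\in\mathcal{F}$ and pull the defining triangle of $X$ back along it; a short diagram chase in $\mathcal{H}$ produces a monomorphism from $X$ into an object of $\mathcal{F}$. For the derived equivalence I would invoke the realization functor $\Der(\mathcal{H})\to\Der(\mathcal{A})$ associated with the above t-structure and verify that the canonical map $\ext_\mathcal{H}^n(X,Y)\to\hom_{\Der(\mathcal{A})}(X,\Sigma^n Y)$ is bijective for $X,Y\in\mathcal{H}$ and all $n\ge 0$: for $n\le 1$ this is formal, while for $n\ge 2$ one reduces via the torsion pair $(\mathcal{F},\Sigma^{-1}\mathcal{T})$ in $\mathcal{H}$ to the case $X,Y\in\mathcal{F}$, where it follows from $\mathcal{F}$ being extension-closed in $\mathcal{A}$. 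Essential surjectivity is automatic since $\mathcal{A}\subseteq\mathcal{H}\cup\Sigma^{-1}\mathcal{H}$ already generates $\Der(\mathcal{A})$ as a triangulated category. The main obstacle I expect is full faithfulness of the realization functor in degrees $n\ge 2$: there one genuinely needs the cogenerating property of $\mathcal{F}$ in $\mathcal{H}$ (obtained from the generating hypothesis on $\mathcal{F}$) in order to build suitable resolutions along which higher Ext groups can be compared between $\mathcal{H}$ and $\mathcal{A}$.
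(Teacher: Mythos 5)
The paper does not actually give its own proof of this proposition; it cites \cite{HRS} and \cite{BondalVanDenBergh}. Your outline --- defining the tilted $t$-structure, reading the torsion pair off the truncation triangle, and applying the BBD realization functor --- is precisely the route those references take, so your plan is the right one. However, the central triangle you write down is wrong, and the error propagates.

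The cohomological truncation triangle for $X\in\mathcal{H}$ is
\[ H^0(X) \longrightarrow X \longrightarrow \Sigma^{-1}H^1(X) \longrightarrow \Sigma H^0(X), \]
not $\Sigma^{-1}H^1(X) \to X \to H^0(X) \to H^1(X)$ as you wrote; there is no canonical morphism $\Sigma^{-1}H^1(X)\to X$ in general (the obstruction is exactly the class in $\ext^2_{\mathcal{A}}(H^1(X),H^0(X))$ that glues $X$). With the correct triangle, the \emph{subobject} in the short exact sequence is $H^0(X)\in\mathcal{F}$ and the \emph{quotient} is $\Sigma^{-1}H^1(X)\in\Sigma^{-1}\mathcal{T}$, so $\mathcal{F}$ is the torsion class and $\Sigma^{-1}\mathcal{T}$ the torsion-free class, matching the proposition. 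You say the opposite (torsion part in $\Sigma^{-1}\mathcal{T}$, torsion-free part in $\mathcal{F}$), yet then verify $\hom_\mathcal{H}(F,\Sigma^{-1}T)=0$, which is the Hom-vanishing for $\mathcal{F}$-as-torsion-class --- so your writeup is internally inconsistent even before one compares it to the statement. A second sign slip of the same kind occurs in the claim $\mathcal{A}\subseteq\mathcal{H}\cup\Sigma^{-1}\mathcal{H}$: an $A\in\mathcal{A}$ with $A\in\mathcal{T}$ has $\Sigma^{-1}A\in\mathcal{H}$, so $A\in\Sigma\mathcal{H}$, not $\Sigma^{-1}\mathcal{H}$; the correct statement is that $\mathcal{A}$ lies in the triangulated hull of $\mathcal{H}\cup\Sigma\mathcal{H}$.

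Two further places are sketchier than they should be. First, the cogenerating argument: given $X$ with $0\to H^0(X)\to X\to\Sigma^{-1}H^1(X)\to 0$ and an epimorphism $p\colon F_0\twoheadrightarrow H^1(X)$ (giving a monomorphism $\Sigma^{-1}H^1(X)\hookrightarrow K=\ker p$ in $\mathcal{H}$ with cokernel $F_0$), lifting the extension class to one of $K$ by $H^0(X)$ meets an obstruction in $\ext^2_\mathcal{H}(F_0,H^0(X))$. This is not automatically zero, and dealing with it is the actual content of the argument; ``a short diagram chase'' undersells the step. Second, the statement is about the unbounded derived category $\Der(\mathcal{A})$, whereas the BBD realization functor is a priori defined on $\Der^b$; one has to say how to extend the equivalence to the unbounded level (e.g.\ as in Bondal--Van den Bergh \S\S5.3--5.4). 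Neither issue makes the overall strategy incorrect, but a correct proof must fix the truncation-triangle direction throughout and supply these two missing steps.
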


\begin{deff}\label{def:HRS-tilt}
The category $\mathcal{H}$ from the proposition is called \emph{tilted from $\mathcal{A}$} in the sense of Happel, Reiten and Smal\o{} (or \emph{HRS-tilted} for short).
\end{deff}

We also record a few standard facts which are very useful for computations in HRS-tilted categories.

\begin{lem}\label{lem:HRS-exts}
Let $\mathcal{A}$ be an abelian category, $(\mathcal{T}, \mathcal{F})$ a torsion pair such that $\mathcal{F}$ is generating, and $\mathcal{H}$ the HRS-tilted category. Given $T, T'\in \mathcal{T},$ $F, F'\in \mathcal{F}$ and $n\ge 0,$ we have isomorphisms
\begin{enumerate}[(1)]
\item\label{HRSExtTT}{$\ext^n_\mathcal{H}(\Sigma^{-1}T, \Sigma^{-1}T') \simeq \ext^n_\mathcal{A}(T, T'),$}
\item\label{HRSExtFF}{$\ext^n_\mathcal{H}(F, F') \simeq \ext^n_\mathcal{A}(F, F'),$}
\item\label{HRSExtTF}{$\ext^n_\mathcal{H}(\Sigma^{-1}T, F') \simeq \ext^{n+1}_\mathcal{A}(T, F')$ and}
\item\label{HRSExtFT}{$\ext^{n+1}_\mathcal{H}(F, \Sigma^{-1}T') \simeq \ext^n_\mathcal{A}(F, T').$}
\end{enumerate}
Moreover, a sequence of the form $0 \to F \to F'' \to F' \to 0$ is exact in $\mathcal{A}$ if and only if it is exact in $\mathcal{H}$.
\end{lem}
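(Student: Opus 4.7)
The plan is to work entirely inside the derived equivalence $\Phi\colon\Der(\mathcal{H})\xrightarrow{\simeq}\Der(\mathcal{A})$ of Proposition~\ref{prop:HRS-tilt} and compute each $\ext$ group as a $\hom$ group in $\Der(\mathcal{A})$. Recall that for any abelian category $\mathcal{B}$ and $n\ge 0$, one has $\ext^n_\mathcal{B}(X,Y)\simeq\hom_{\Der(\mathcal{B})}(X,\Sigma^n Y)$ whenever $X,Y$ lie in $\mathcal{B}$. The embedding $\mathcal{H}\subseteq\Der(\mathcal{A})$ realising $\Phi$ sends an object $F\in\mathcal{F}\subseteq\mathcal{H}$ to the stalk complex concentrated in cohomological degree $0$, and sends $\Sigma^{-1}T$ (for $T\in\mathcal{T}$) to the stalk complex concentrated in degree $1$, because $H^0(\Sigma^{-1}T)=0$ and $H^1(\Sigma^{-1}T)=T$.

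Granting this dictionary, all four isomorphisms are immediate shifts. For \hyperref[HRSExtFF]{(\ref*{HRSExtFF})}, both sides equal $\hom_{\Der(\mathcal{A})}(F,\Sigma^n F')$. For \hyperref[HRSExtTT]{(\ref*{HRSExtTT})}, the left-hand side equals $\hom_{\Der(\mathcal{H})}(\Sigma^{-1}T,\Sigma^{n-1}T')$, which under $\Phi$ becomes $\hom_{\Der(\mathcal{A})}(T,\Sigma^n T')=\ext^n_\mathcal{A}(T,T')$. For \hyperref[HRSExtTF]{(\ref*{HRSExtTF})}, we get $\hom_{\Der(\mathcal{A})}(\Sigma^{-1}T,\Sigma^n F')=\hom_{\Der(\mathcal{A})}(T,\Sigma^{n+1}F')=\ext^{n+1}_\mathcal{A}(T,F')$, and similarly \hyperref[HRSExtFT]{(\ref*{HRSExtFT})} follows from $\hom_{\Der(\mathcal{A})}(F,\Sigma^{n+1}\Sigma^{-1}T')=\hom_{\Der(\mathcal{A})}(F,\Sigma^n T')$. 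The only mild point to verify is that the translation of the $\ext$ groups on the left makes sense, i.e.\ that the corresponding shifts land in non-negative degree relative to the appropriate object in its heart; the indices in the statement are chosen precisely so that this is automatic.

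For the final assertion, a sequence $0\to F\to F''\to F'\to 0$ with all three terms in $\mathcal{F}\subseteq\mathcal{A}\cap\mathcal{H}$ is short exact in $\mathcal{A}$ exactly when it completes to a distinguished triangle in $\Der(\mathcal{A})$, and similarly it is short exact in $\mathcal{H}$ exactly when it completes to a distinguished triangle in $\Der(\mathcal{H})$. Since $\Phi$ identifies the two triangulated categories and sends these three objects to themselves (placed in degree $0$ in either heart), the two notions of short exactness coincide.

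The main potential obstacle is verifying that the embedding $\mathcal{H}\hookrightarrow\Der(\mathcal{A})$ really identifies objects of the form $F$ and $\Sigma^{-1}T$ with their obvious stalk complexes and that $\Phi$ is compatible with shifts; but both facts are built into the construction of the HRS-tilt as recalled in Proposition~\ref{prop:HRS-tilt}, so the proof reduces to the bookkeeping of shifts above.
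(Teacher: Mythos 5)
Your proposal is correct and follows essentially the same route as the paper: compute each $\ext^n$ as a $\hom$ group in $\Der(\mathcal{H})$, carry it across the equivalence $\Der(\mathcal{H})\simeq\Der(\mathcal{A})$ while tracking shifts, and, for the final assertion, observe that a short exact sequence in either heart corresponds to the same distinguished triangle in the common derived category. The only point you gloss over is that one should first deduce $F''\in\mathcal{F}$ from exactness in $\mathcal{A}$ (resp.\ in $\mathcal{H}$) via closure of $\mathcal{F}$ under extensions in each, as the paper does explicitly; this is immediate and does not affect the correctness of the argument.
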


\begin{proof}
Using the derived equivalence $\Der(\mathcal{H}) \simeq \Der(\mathcal{A})$ from Proposition~\ref{prop:HRS-tilt}, we have isomorphisms
\begin{multline*}
\ext^n_\mathcal{H}(\Sigma^{-1}T, \Sigma^{-1}T') \simeq
\hom_{\Der(\mathcal{H})}(\Sigma^{-1}T, \Sigma^{n-1}T') \simeq \\ \simeq
\hom_{\Der(\mathcal{A})}(T, \Sigma^{n}T') \simeq
\ext^n_\mathcal{A}(T, T')\;.
\end{multline*}
The other isomorphisms are proved in a similar way.

If $\varepsilon\colon 0 \to F \overset{i}\to F'' \overset{p}\to F' \to 0$ is short exact either in $\mathcal{A}$ or in $\mathcal{H}$, we have $F'' \in \mathcal{F}$ and a triangle

\vspace{0.25cm}
\begin{adjustbox}{max totalsize={1\textwidth}{.9\textheight},center}
	\begin{tikzcd}
		F \ar[r, "i"]& F'' \ar[r, "p"]& F' \ar[r, "{[\varepsilon]}"]& \Sigma F
	\end{tikzcd}
\end{adjustbox}
\vspace{0.15cm}

\noindent in $\Der(\mathcal{A})$ or $\Der(\mathcal{H})$, respectively. Thanks to the derived equivalence, we have the same triangle in the other derived category and, hence, the same short exact sequence in the other abelian category.
\end{proof}

Now we can make precise what role cotilting objects play in the HRS-tilted category (see \cite[proof of Proposition 5.7]{ParraSaorin} for analogous considerations and \cite[Proposition 3.8]{ColpiHeart} for a version of the result if $\mathcal{A}$ is a module category).

\begin{prop}\label{prop:CotiltingVSInjective}
Let $\mathcal{A}$ be a Grothendieck category, $(\mathcal{T}, \mathcal{F})$ a torsion pair with $\mathcal{F}$ generating, and $\mathcal{H}$ the HRS-tilted category. Then the following hold for an object $C \in \mathcal{H}$:
\begin{enumerate}
\item\label{HRSInjObj}{$C$ is injective in $\mathcal{H}$ if and only if $C \in \mathcal{F}$ and $\ext^1_\mathcal{A}(\mathcal{F}, C) = 0$.}
\item\label{HRSInjCog}{$C$ is an injective cogenerator of $\mathcal{H}$ if and only if $C$ is a cotilting object in $\mathcal{A}$ with $\mathcal{F}=\cogen({C})$.}
\end{enumerate}
\end{prop}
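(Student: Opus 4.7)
The plan is to establish (1) first and then bootstrap (2). For the forward direction of (1), since $\mathcal{F}$ is cogenerating in $\mathcal{H}$ by Proposition~\ref{prop:HRS-tilt}, any injective $C\in\mathcal{H}$ admits a monomorphism in $\mathcal{H}$ into some object of $\mathcal{F}$; injectivity splits this embedding, so $C$ is a direct summand of an object of $\mathcal{F}$ and hence $C\in\mathcal{F}$, every torsion-free class being closed under subobjects. The vanishing $\ext^1_\mathcal{A}(\mathcal{F},C)=0$ then follows from $\ext^1_\mathcal{A}(F,C)\simeq\ext^1_\mathcal{H}(F,C)$ via Lemma~\ref{lem:HRS-exts}\,(2) and the injectivity of $C$ in $\mathcal{H}$. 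For the converse, granting $C\in\mathcal{F}\cap\mathcal{F}^\perp$, Proposition~\ref{prop:cotilting-injdim1} yields $\injdim_\mathcal{A}C\le 1$; applying $\hom_\mathcal{H}(-,C)$ to the $\mathcal{H}$-torsion sequence $0\to F_X\to X\to \Sigma^{-1}T_X\to 0$ of an arbitrary $X\in\mathcal{H}$, Lemma~\ref{lem:HRS-exts}\,(2)--(3) identifies the flanking terms as $\ext^1_\mathcal{A}(F_X,C)=0$ and $\ext^2_\mathcal{A}(T_X,C)=0$, so that $\ext^1_\mathcal{H}(X,C)=0$.

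For the forward direction of (2), assume $C$ is an injective cogenerator of $\mathcal{H}$. Part (1) already supplies $C\in\mathcal{F}\subseteq {^\perp C}$ and $\injdim_\mathcal{A}C\le 1$. To prove ${^\perp C}\subseteq\mathcal{F}$, take $X\in {^\perp C}$ with $\mathcal{A}$-torsion decomposition $0\to T\to X\to F\to 0$; the long $\ext_\mathcal{A}$-sequence together with $\ext^1_\mathcal{A}(F,C)=\ext^1_\mathcal{A}(X,C)=0$ and $\ext^2_\mathcal{A}(F,C)=0$ forces $\ext^1_\mathcal{A}(T,C)=0$, which by Lemma~\ref{lem:HRS-exts}\,(3) equals $\hom_\mathcal{H}(\Sigma^{-1}T,C)$; cogeneration of $\mathcal{H}$ by $C$ then forces $T=0$. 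The inclusion $\cogen(C)\subseteq\mathcal{F}$ is immediate since $\mathcal{F}$ is closed under products and subobjects. For the opposite inclusion, I would first identify the product of copies of $C$ formed in $\mathcal{H}$ with that formed in $\mathcal{A}$: by Proposition~\ref{prop:ExactProd} applied with $B_i=C\in\mathcal{F}^\perp$, the $\mathcal{A}$-product $\prod^{\mathcal{A}}_{i\in I}C$ coincides with its derived product, hence realizes the product in the heart $\mathcal{H}$. Cogeneration in $\mathcal{H}$ then supplies, for each $F\in\mathcal{F}$, a morphism $\iota\colon F\to C^{\times I}$ in $\mathcal{A}$ that is a monomorphism in $\mathcal{H}$; with $K=\ker_\mathcal{A}\iota\in\mathcal{F}$, the composition $K\hookrightarrow F\to C^{\times I}$ vanishes in $\mathcal{A}$ and hence in $\mathcal{H}$ ($\hom$-sets coinciding between objects of $\mathcal{F}$), so the $\mathcal{H}$-monomorphism $\iota$ forces the inclusion $K\hookrightarrow F$ to be zero, giving $K=0$.

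For the reverse direction of (2), a cotilting $C$ with $\mathcal{F}=\cogen(C)={^\perp C}$ satisfies $C\in\mathcal{F}$ and $\mathcal{F}\subseteq {^\perp C}$, so (1) yields injectivity of $C$ in $\mathcal{H}$. For cogeneration, one verifies $\hom_\mathcal{H}(X,C)\ne 0$ for every nonzero $X\in\mathcal{H}$ by inspecting the $\mathcal{H}$-torsion sequence $0\to F_X\to X\to \Sigma^{-1}T_X\to 0$. When $F_X\ne 0$, the condition $F_X\in\cogen(C)$ ensures $\hom_\mathcal{A}(F_X,C)\ne 0$, and the restriction $\hom_\mathcal{H}(X,C)\to\hom_\mathcal{H}(F_X,C)$ is surjective since its cokernel lies in $\ext^1_\mathcal{H}(\Sigma^{-1}T_X,C)\simeq\ext^2_\mathcal{A}(T_X,C)=0$. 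When $F_X=0$ and $T_X\ne 0$, Lemma~\ref{lem:HRS-exts}\,(3) gives $\hom_\mathcal{H}(X,C)\simeq\ext^1_\mathcal{A}(T_X,C)$, and non-vanishing is forced because otherwise $T_X\in {^\perp C}=\mathcal{F}$ would place $T_X$ in $\mathcal{T}\cap\mathcal{F}=\{0\}$, contradicting $T_X\ne 0$.

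The main obstacle is the inclusion $\mathcal{F}\subseteq\cogen(C)$ in the forward half of (2): one must convert $\mathcal{H}$-cogeneration, a priori involving products formed in $\mathcal{H}$, into an $\mathcal{A}$-embedding into a product of copies of $C$. This is the step genuinely requiring Proposition~\ref{prop:ExactProd}, which guarantees that the $\mathcal{A}$-product of copies of $C$ is simultaneously the derived product and the product in the heart $\mathcal{H}$; Lemma~\ref{lem:HRS-exts} then provides the morphism-level compatibility between the two hearts.
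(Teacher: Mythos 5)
Your proof of part~(1) is the same as the paper's. Part~(2) is correct, but you take a genuinely different route in two places. First, for the inclusion $\mathcal{F}\subseteq\cogen(C)$ in the direction ``injective cogenerator $\Rightarrow$ cotilting'', you directly form the $\mathcal{A}$-kernel $K$ of the $\mathcal{H}$-cogeneration map $\iota\colon F\hookrightarrow C^{\times I}$ and use the agreement of $\hom$-sets on $\mathcal{F}$ to conclude $K=0$; the paper instead routes through the torsion pair $(\mathcal{T}',\cogen(C))$ supplied by Lemma~\ref{lem:cotilting-TF}, takes the torsion decomposition of $F\in\mathcal{F}$ with respect to it, and kills the torsion part using the cogenerator property via Lemma~\ref{lem:HRS-exts}~(\ref*{HRSExtFF}). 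Both hinge on the same hom-compatibility and on Proposition~\ref{prop:ExactProd}/Corollary~\ref{cor:ExactProdOfC} identifying $\mathcal{A}$-products of $C$ with products in $\mathcal{H}$; your kernel argument is a little more elementary. Second, for the direction ``cotilting $\Rightarrow$ injective cogenerator'', the paper shows directly that every object of $\mathcal{F}$ embeds in a power of $C$ by combining Lemma~\ref{lem:cogen=copres} (which produces a $\cogen(C)$-copresentation in $\mathcal{A}$) with the exactness transfer of Lemma~\ref{lem:HRS-exts}, and then invokes that $\mathcal{F}$ is cogenerating in $\mathcal{H}$. You instead verify that $\hom_\mathcal{H}(X,C)\neq 0$ for every nonzero $X\in\mathcal{H}$ via a two-case analysis on the $\mathcal{H}$-torsion sequence. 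That is a valid equivalent criterion once injectivity of $C$ in $\mathcal{H}$ is established by part~(1), but it would be worth noting this explicitly (an injective object $W$ in an abelian category with products is a cogenerator iff $\hom(X,W)=0$ forces $X=0$), since the diagonal embedding argument is the standard way to close that gap. A minor remark on the hygiene of part~(2)'s forward implication: you should also record that ${^\perp C}=\mathcal{F}$ contains a generator, which is immediate from the hypothesis that $\mathcal{F}$ is generating but is part of Definition~\ref{def:cotilting} and shouldn't go unsaid.
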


\begin{proof}
\hyperref[HRSInjObj]{(\ref*{HRSInjObj})}
If $C$ is injective in $\mathcal{H}$, it is a summand of an object in $\mathcal{F}$ since $\mathcal{F}$ is cogenerating in $\mathcal{H}$. In particular, $C\in\mathcal{F}$. Moreover, $C\in{\mathcal{F}^\perp}$ in $\mathcal{A}$ by Lemma~\hyperref[HRSExtFF]{\ref*{lem:HRS-exts}~(\ref*{HRSExtFF})}.

Conversely, if $C\in \mathcal{F}\cap\mathcal{F}^\perp$ in $\mathcal{A}$, then the injective dimension of $C$ in $\mathcal{A}$ is at most one by Proposition~\ref{prop:cotilting-injdim1}. Moreover,
\[ \ext^1_\mathcal{H}(\mathcal{F}, C) = 0 \quad\text{ and }\quad \ext^1_\mathcal{H}(\Sigma^{-1}\mathcal{T}, C) \simeq \ext^2_\mathcal{A}(\mathcal{T}, C) = 0 \]
by Lemma~\hyperref[HRSExtFF]{\ref*{lem:HRS-exts}~(\ref*{HRSExtFF})} and~\hyperref[HRSExtTF]{(\ref*{HRSExtTF})}. It follows that $C$ is injective in $\mathcal{H}$ since each $X\in\mathcal{H}$ is an extension in $\mathcal{H}$ of an object of $\mathcal{F}$ by an object of $\Sigma^{-1}\mathcal{T}$. 

\hyperref[HRSInjCog]{(\ref*{HRSInjCog})}
Suppose first that $C\in\mathcal{A}$ is a cotilting object associated with $\mathcal{F}$. Since each product of copies of $C$ in $\mathcal{A}$ coincides with the corresponding product in $\Der(\mathcal{A})\simeq \Der(\mathcal{H})$ by Corollary~\ref{cor:ExactProdOfC}, we have $\Prodcl({C})\subseteq \mathcal{H}$. In particular, arbitrary products of copies of $C$ exist in $\mathcal{H}$ and agree with the ones in $\mathcal{A}$. Moreover, $\Prodcl({C})$ consists of injective objects by part~\hyperref[HRSInjObj]{(\ref*{HRSInjObj})} and each object of $\mathcal{F}$ is a subobject in $\mathcal{H}$ of a product of copies of $C$ by Lemmas~\ref{lem:cogen=copres} and~\ref{lem:HRS-exts}. Since $\mathcal{F}$ is itself a cogenerating class in $\mathcal{H}$, $C$ is an injective cogenerator of $\mathcal{H}$.

Conversely, let $C\in\mathcal{H}$ be an injective cogenerator in $\mathcal{H}$. We know from part~\hyperref[HRSInjObj]{(\ref*{HRSInjObj})} that $C\in\mathcal{F}\cap\mathcal{F}^\perp$ in $\mathcal{A}$ and the injective dimension of $C$ in $\mathcal{A}$ is at most one. Since $\mathcal{F}$ is a torsion-free class in $\mathcal{A}$, we have $\cogen({C}) \subseteq \mathcal{F} \subseteq {^\perp{C}}$ in $\mathcal{A}$.

We first show that $\mathcal{F} = {^\perp{C}}$. Suppose now that $A \in {^\perp{C}}$ in $\mathcal{A}$ and consider an exact sequence $0 \to T \to A \to F \to 0$ induced by the torsion pair $(\mathcal{T}, \mathcal{F})$. Then $T \in {^\perp{C}}$ in $\mathcal{A}$ since $C$ has injective dimension at most one and 
\[ \hom_\mathcal{H}(\Sigma^{-1}T, C) \simeq \ext^1_\mathcal{A}(T, C) = 0 \]
thanks to Lemma~\hyperref[HRSExtTF]{\ref*{lem:HRS-exts}~(\ref*{HRSExtTF})}. Since $C$ is assumed to be a cogenerator in $\mathcal{H}$, we have $T = 0$ and $A\simeq F \in \mathcal{F}$.

Finally, we show that also $\cogen({C}) = \mathcal{F}$.
It follows from Lemma~\ref{lem:cotilting-TF} that $\cogen{(C)}$ is a torsion-free class in $\mathcal{A}$ with the torsion class given by
\[ \mathcal{T}' = \{ T \in \mathcal{A} \mid \hom_\mathcal{A}(T, C) = 0 \} \]
Suppose now that $A \in \mathcal{F}$ and consider an exact sequence $0 \to T' \to A \to F' \to 0$ induced by the torsion pair $(\mathcal{T}', \cogen({C}))$. Then $T'\in \mathcal{F}$ and
\[ \hom_\mathcal{H}(T', C) \simeq \hom_\mathcal{A}(T', C) = 0 \]
thanks to Lemma~\hyperref[HRSExtFF]{\ref*{lem:HRS-exts}~(\ref*{HRSExtFF})}. Since $C$ is assumed to be a cogenerator in $\mathcal{H}$, we have $T' = 0$ and $A\simeq F' \in \cogen({C})$.
\end{proof}

We conclude the section with a theorem which explains the role of cotilting objects in Grothendieck categories from the point of view of derived equivalences and which generalizes results from~\cite{ColpiTiltingGrothendieck}, \cite[\S3]{ColpiHeart} and~\cite[\S5]{ParraSaorin}.

\begin{thm}\label{thm:maintheorem}
Let $\mathcal{A}$ be a Grothendieck category, $(\mathcal{T}, \mathcal{F})$ be a torsion pair in $\mathcal{A}$ such that $\mathcal{F}$ contains a generator and let $\mathcal{H}$ be the HRS-tilted abelian category. Then the following are equivalent:
\begin{enumerate}[(1)]
\item\label{HRSTiltGrothendieck}{$\mathcal{H}$ is a Grothendieck category.}
\item\label{HRSTiltInjCogen}{$\mathcal{H}$ has an injective cogenerator.}
\item\label{HRSTiltCotilting}{$\mathcal{F} = \cogen({C}) = {^\perp C}$ is associated with a cotilting object $C \in \mathcal{A}$.}
\end{enumerate}
\end{thm}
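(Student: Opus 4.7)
The plan is to prove the cyclic chain $(1) \Rightarrow (2) \Rightarrow (3) \Rightarrow (1)$. The implication $(1) \Rightarrow (2)$ is immediate because every Grothendieck category has an injective cogenerator. The implication $(2) \Rightarrow (3)$ is a direct invocation of Proposition~\ref{prop:CotiltingVSInjective}, whose second part identifies the injective cogenerators of $\mathcal{H}$ precisely with the cotilting objects of $\mathcal{A}$ whose cotilting class is $\mathcal{F}$.

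The substantive content lies in $(3) \Rightarrow (1)$. From the cotilting object $C$, Proposition~\ref{prop:CotiltingVSInjective} already provides an injective cogenerator of $\mathcal{H}$, and Theorem~\ref{thm:CotiltingPureInjective} furnishes the decisive input that $\mathcal{F} = \cogen(C)$ is closed under directed colimits in $\mathcal{A}$. Using these, my plan is to verify three things for $\mathcal{H}$ (which is already abelian by Proposition~\ref{prop:HRS-tilt}): existence of small coproducts, exactness of directed colimits (AB5), and existence of a generator.

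I would handle coproducts and AB5 uniformly. A coproduct or directed colimit of a family $(X_i)$ in $\mathcal{H}$ is taken term-wise in $\Der(\mathcal{A})$, where such colimits are computed complex-wise and are exact because $\mathcal{A}$ is Grothendieck. The resulting complex has cohomology equal to the corresponding colimits of cohomologies; the degree-$1$ cohomology lies in $\mathcal{T}$, which is closed under arbitrary colimits as a torsion class, while the degree-$0$ cohomology lies in $\mathcal{F}$ by Lemma~\ref{lem:cotilting-TF} for coproducts and by Theorem~\ref{thm:CotiltingPureInjective} for directed colimits. Hence the complex-level colimit belongs to $\mathcal{H}$ and is the colimit there, and exactness of short exact sequences in $\mathcal{H}$ under directed colimits follows by applying the long exact cohomology sequence together with exactness of directed colimits in $\mathcal{A}$.

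The hardest step is producing a generator. Fix a generator $G \in \mathcal{F}$ of $\mathcal{A}$. Since $\mathcal{F}$ is closed under subobjects, any $T \in \mathcal{T}$ sits in a short exact sequence $0 \to F' \to G^{\oplus I} \to T \to 0$ in $\mathcal{A}$ with $F' \in \mathcal{F}$, which rotates in the derived category to a short exact sequence $0 \to \Sigma^{-1}T \to F' \to G^{\oplus I} \to 0$ in $\mathcal{H}$ (justified via Lemma~\ref{lem:HRS-exts}); thus every object of the form $\Sigma^{-1}T$ is a subobject in $\mathcal{H}$ of some coproduct of copies of $G$. Using the torsion decomposition $0 \to H^0(X) \to X \to \Sigma^{-1}H^1(X) \to 0$ of an arbitrary $X \in \mathcal{H}$, one pastes this with a cover of $H^0(X)$ by copies of $G$ to produce an epimorphism onto $X$ in $\mathcal{H}$ from a coproduct built out of $G$ and its subobjects. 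The main obstacle I expect is executing the pasting cleanly and verifying that the resulting generating collection forms an actual set; a robust backup is to verify well-poweredness of $\mathcal{H}$ (inherited through the injective cogenerator $C$) and then invoke a Mitchell-type criterion to conclude that an abelian category satisfying AB5, possessing a cogenerator, and being well-powered with a generating set must be Grothendieck.
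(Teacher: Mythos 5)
Your handling of (1)$\Rightarrow$(2) and (2)$\Rightarrow$(3) coincides with the paper's; the divergence is entirely in (3)$\Rightarrow$(1), and there you have a genuine gap at the generator step. The torsion pair in $\mathcal{H}$ is $(\mathcal{F}, \Sigma^{-1}\mathcal{T})$ with $\mathcal{F}$ the \emph{torsion} class, so for $F\in\mathcal{F}$ and $T\in\mathcal{T}$ one has
\[
\hom_\mathcal{H}(F,\Sigma^{-1}T)\simeq\hom_{\Der(\mathcal{A})}(F,\Sigma^{-1}T)\simeq\ext^{-1}_\mathcal{A}(F,T)=0.
\]
Consequently no object of $\mathcal{F}$ (in particular no coproduct of copies of $G$) can map nontrivially onto $\Sigma^{-1}H^1(X)$, and the ``pasting'' you describe to produce an epimorphism onto $X$ simply does not exist. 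Your observation that $\Sigma^{-1}T$ embeds into an object of $\mathcal{F}$ is correct but is the wrong direction for generating; and your ``robust backup'' via a Mitchell-type criterion presupposes a generating set, which is exactly the thing you have not produced. The preliminary parts (coproducts via degreewise computation in $\Der(\mathcal{A})$ using closure of $\mathcal{F}$ and $\mathcal{T}$ under coproducts; plausibility of AB5 via closure of $\mathcal{F}$ under directed colimits) are reasonable, although the directed-colimit step would need a strictification argument since a direct system in $\Der(\mathcal{A})$ need not lift to one of complexes; but neither repairs the missing generator.

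The paper's route for (3)$\Rightarrow$(1) is structurally different and circumvents the generator problem entirely. By Proposition~\ref{prop:CotiltingVSInjective} and Lemma~\ref{lem:prod-C}, the full subcategory of injective objects of $\mathcal{H}$ is $\Prodcl(C)$. One then invokes the fact that two abelian categories with enough injectives and equivalent categories of injectives are themselves equivalent, and constructs an auxiliary Grothendieck category $\mathcal{H}'$ whose injectives are $\Prodcl(C)$: via Gabriel--Popescu one transports $C$ to a pure-injective $R$-module $C'$ (here the pure-injectivity of Theorem~\ref{thm:CotiltingPureInjective} is used in an essential way, which your proposal does not surface), embeds $\Mod{R}$ into the functor category $\mathcal{B}$ where $C'$ becomes injective, and finally takes the Gabriel quotient of $\mathcal{B}$ by $\ker\hom_\mathcal{B}(-,C'')$. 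That quotient is automatically Grothendieck, hence has a generator for free, and its injectives are $\Prodcl(C)$. To make your approach work you would need to independently exhibit a generating set for $\mathcal{H}$, and as the computation above shows, it cannot live inside $\mathcal{F}$; that is a substantive missing idea, not a matter of ``executing the pasting cleanly.''
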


\begin{proof}
\hyperref[HRSTiltGrothendieck]{(\ref*{HRSTiltGrothendieck})}$\Rightarrow$\hyperref[HRSTiltInjCogen]{(\ref*{HRSTiltInjCogen})}
This is well known, see for instance~\cite[Corollary X.4.3]{Stenstrom}.

\hyperref[HRSTiltInjCogen]{(\ref*{HRSTiltInjCogen})}$\Rightarrow$\hyperref[HRSTiltCotilting]{(\ref*{HRSTiltCotilting})}
This is immediate from Proposition~\hyperref[HRSInjCog]{\ref*{prop:CotiltingVSInjective}~(\ref*{HRSInjCog})}.

\hyperref[HRSTiltCotilting]{(\ref*{HRSTiltCotilting})}$\Rightarrow$\hyperref[HRSTiltGrothendieck]{(\ref*{HRSTiltGrothendieck})}
We will use a similar argument as in the proof of~\cite[Theorem 6.2]{St3}. We know from Proposition~\ref{prop:CotiltingVSInjective} and Lemma~\ref{lem:prod-C} that the category of injective objects of $\mathcal{H}$ is equivalent to $\Prodcl({C})$. Furthermore, it is a standard fact that if $\mathcal{H}$ and $\mathcal{H}'$ are two abelian categories with enough injective objects and the corresponding full subcategories of injective objects are equivalent, then also $\mathcal{H} \simeq \mathcal{H}'$. This follows for instance from \cite[Proposition IV.1.2]{AuslanderReitenSmalo}, where an argument is given for the dual situation of abelian categories with enough projective objects.

We will now construct a Grothendieck category $\mathcal{H}'$ with the full subcategory of injective objects equivalent to $\Prodcl{(C)}$. It will follow from the above that $\mathcal{H}\simeq\mathcal{H}'$ is also a Grothendieck category.

To this end, let $G\in\mathcal{A}$ be a generator, $R=\ehom_\mathcal{A}(G)$ and $C' = \hom_\mathcal{A}(G, C) \in \Mod{R}$. Then $C'$ is a pure-injective $R$-module by Theorem~\ref{thm:CotiltingPureInjective} and Proposition~\ref{prop:PureInjChar}. Moreover, it follows from Proposition~\ref{prop:GabrielPopescu} that $H$ induces an equivalence
\[ \Prodcl({C}) \simeq \Prodcl({C'})\;. \]

Let now $\mathcal{B}$ be the category of all additive functors ${R}\modl \to \Ab$, where ${R}\modl$ is the category of finitely presented left $R$-modules. This is a locally coherent Grothendieck category and the functor
\begin{align*}
T\colon \Mod{R} &\longrightarrow \mathcal{B}, \\
M &\longmapsto (M\otimes_R-)|_{R\modl},
\end{align*}
is fully faithful, preserves products and sends pure-injective modules to injective objects of $\mathcal{B}$; see~\cite[Theorem B.16]{JensenLenzing}. In particular, if we put $C'' = T(C') \in \mathcal{B}$, we have an equivalence
\[ \Prodcl({C}) \simeq \Prodcl({C''})\;. \]

As now $C''\in\mathcal{B}$ is an injective object, we have a hereditary torsion pair $(\mathcal{T}', \mathcal{F}')$ in $\mathcal{B}$, where $\mathcal{T}' = \ker\hom_\mathcal{B}(-, C'')$ and $\mathcal{F}' = \cogen{(C'')}$. By definition, $\Prodcl({C''})$ is precisely the class of torsion-free injective objects in $\mathcal{B}$ with respect to this torsion pair. Now we can take the Gabriel quotient $\mathcal{H}' = \mathcal{B}/\mathcal{T}$ (see \cite[\S III]{Gabriel} or~\cite[\S11.1]{Prest}). This is a Grothendieck category whose category of injective objects is equivalent to $\Prodcl({C})$ by \cite[Proposition III.4.9 and Corollaire III.3.2]{Gabriel} or \cite[\S\S X.1 and X.2]{Stenstrom}.
\end{proof}

\section{Torsion pairs in categories of sheaves}
\label{sec:torsion}

Now we aim at specializing to categories of quasi-coherent sheaves on Noetherian schemes.
This section is devoted to the description of suitable torsion pairs in the categories of coherent and quasi-coherent sheaves. This is in view of Theorem~\ref{thm:ClassificationViaTP} a key step for a classification of cotilting classes in $\qcoh_X$, but Theorem~\ref{thm:charTPCoh} on torsion pairs in the category of coherent sheaves seems to be of interest on its own.

For a Noetherian scheme $X$, there is a standard classification result for hereditary torsion pairs in $\qcoh_X$ due to Gabriel. We recall this result now, along with a proof. The reason for including the proof, which is in some aspects more direct than the original one, is twofold: it allows us to describe the corresponding torsion-free classes more directly and some parts of the argument will be useful in the subsequent discussion.

Throughout this section, we make use of the results from Appendix~\ref{sec:AssPoints}, where the classification of injective quasi-coherent sheaves on a Noetherian scheme is summarized and the theory of associated points of quasi-coherent sheaves is recalled. Here let us just introduce the following notation: for a point $x$ on a Noetherian scheme $X$, denote by $\mathscr{J}(x)$ the unique injective indecomposable quasi-coherent sheaf whose support is $\overline{\{x\}}$ (see the beginning of \ref{inj} for a detailed construction).

\begin{deff}\label{def:SpecClosed}
	Let $X$ be a topological space. A subset $Y \subseteq X$ is \emph{specialization closed} if $\overline{\{y\}} \subseteq Y$ for every $y \in Y$. Alternatively, a set is specialization closed if it is a union of closed subsets.
\end{deff}

\begin{prop}[{\cite[\S VI.2]{Gabriel}}]\label{prop:TYFY}
	Let $Y \subseteq X$ be a specialization closed subset. Define 
	\begin{align*}
	\mathcal{T}(Y)=&\{\mathscr{T} \in \qcoh_X\mid \supp \mathscr{T} \subseteq Y\}, \\
	\mathcal{F}(Y)=&\{\mathscr{F}\in \qcoh_X\mid \ass \mathscr{F} \cap Y=\emptyset\}.
	\end{align*}
	Then the pair $(\mathcal{T}(Y), \mathcal{F}(Y))$ is a hereditary torsion pair in $\qcoh_X$.
	
	Moreover, there is a bijective correspondence between hereditary torsion pairs in $\qcoh_X$ and specialization closed subsets of $X$, given by the assignments
	\[(\mathcal{T}, \mathcal{F}) \mapsto \supp \mathcal{T}
	\quad\text{ and }\quad
	Y \mapsto (\mathcal{T}(Y), \mathcal{F}(Y)).\]
\end{prop}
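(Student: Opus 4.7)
The plan is to verify that $(\mathcal{T}(Y), \mathcal{F}(Y))$ is a hereditary torsion pair for every specialization closed $Y$, and then to establish the stated bijection. First, $\mathcal{T}(Y)$ is closed under subobjects, quotients, extensions and arbitrary colimits, because the support of a subsheaf, quotient or extension is contained in the support of the ambient sheaf, and the support of a colimit equals the union of the individual supports. Applying Remark~\ref{rem:TP} produces a hereditary torsion pair $(\mathcal{T}(Y), \mathcal{F}')$ with
\[ \mathcal{F}' = \{\mathscr{F}\in\qcoh_X : \hom_{\qcoh_X}(\mathscr{T}, \mathscr{F}) = 0 \text{ for all } \mathscr{T}\in\mathcal{T}(Y)\}, \]
so the remaining task for the first half of the proposition is to identify $\mathcal{F}'$ with $\mathcal{F}(Y)$.

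The key input here is the theory of associated points on Noetherian schemes, recalled in the appendix. For the inclusion $\mathcal{F}(Y)\subseteq\mathcal{F}'$, given $\mathscr{F}$ with $\ass\mathscr{F}\cap Y=\emptyset$, $\mathscr{T}\in\mathcal{T}(Y)$ and $\varphi\colon\mathscr{T}\to\mathscr{F}$, the image $\im\varphi$ is a subsheaf of $\mathscr{F}$, so $\ass\im\varphi\subseteq\ass\mathscr{F}$, and simultaneously $\ass\im\varphi\subseteq\supp\im\varphi\subseteq\supp\mathscr{T}\subseteq Y$; hence $\ass\im\varphi=\emptyset$ and $\im\varphi=0$ since a nonzero quasi-coherent sheaf on a Noetherian scheme has nonempty set of associated points. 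For the reverse containment, given $x\in\ass\mathscr{F}\cap Y$, the appendix provides a nonzero subsheaf of $\mathscr{F}$ with support contained in $\overline{\{x\}}\subseteq Y$ (essentially the sheafified version of the embedding $R/\mathfrak{p}\hookrightarrow M$ associated with an associated prime), yielding a nonzero morphism from $\mathcal{T}(Y)$ into $\mathscr{F}$.

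For the bijective correspondence, one direction is quick: $\supp\mathcal{T}(Y)\subseteq Y$ is tautological, while for $y\in Y$ the sheaf $\mathscr{J}(y)$ has support $\overline{\{y\}}\subseteq Y$, so $\mathscr{J}(y)\in\mathcal{T}(Y)$ and $y\in\supp\mathcal{T}(Y)$. Conversely, given a hereditary torsion pair $(\mathcal{T},\mathcal{F})$, set $Y=\supp\mathcal{T}$. Each quasi-coherent sheaf on $X$ is the directed union of its coherent subsheaves, whose supports are closed, so $\supp\mathscr{T}$ is always specialization closed for $\mathscr{T}\in\qcoh_X$; taking unions, $Y$ is specialization closed. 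To conclude $(\mathcal{T},\mathcal{F})=(\mathcal{T}(Y),\mathcal{F}(Y))$, I would invoke the classical Gabriel principle that a hereditary torsion pair in a locally Noetherian Grothendieck category is determined by the subset of (isomorphism classes of) indecomposable injectives contained in its torsion-free class, and verify that for both pairs this subset is exactly $\{\mathscr{J}(x)\mid x\notin Y\}$. For $(\mathcal{T}(Y),\mathcal{F}(Y))$ this follows from $\ass\mathscr{J}(x)=\{x\}$ (appendix). For $(\mathcal{T},\mathcal{F})$ it follows from the standard identification $\hom_{\qcoh_X}(\mathscr{T},\mathscr{J}(x))\simeq\hom_{\mathcal{O}_{X,x}}(\mathscr{T}_x,E(\kappa(x)))$, whose nonvanishing is equivalent to $\mathscr{T}_x\ne 0$, i.e.\ $x\in\supp\mathscr{T}$.

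I expect the main obstacle to be the reverse containment in the identification $\mathcal{F}'=\mathcal{F}(Y)$, because it requires promoting the purely numerical datum of an associated point in $Y$ to an actual subobject lying in $\mathcal{T}(Y)$. This is where the affine-local content of the theory of associated points—specifically the existence of an embedding $R/\mathfrak{p}\hookrightarrow M$ for every associated prime $\mathfrak{p}$ of a module $M$—must be fed into the global sheaf-theoretic picture, and it is precisely the material assembled in the appendix that allows this step to be carried out cleanly.
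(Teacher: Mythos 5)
Your proof is correct and follows the same skeleton as the paper's: verify $\mathcal{T}(Y)$ is a hereditary torsion class via Corollary~\ref{cor:AssSvazkuBasic} and Remark~\ref{rem:TP}; identify the torsion-free orthogonal with $\mathcal{F}(Y)$ using the image argument for one inclusion and Proposition~\ref{prop:TestingSheaf} for the other; check $\supp\mathcal{T}(Y)=Y$; and check that $\supp\mathcal{T}$ is specialization closed. You correctly anticipated that the reverse containment $\mathcal{F}'\subseteq\mathcal{F}(Y)$ is where the ``isolate an associated point inside a coherent subsheaf'' machinery of the appendix is indispensable.

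The one genuinely different ingredient is how you establish that the map $(\mathcal{T},\mathcal{F})\mapsto\supp\mathcal{T}$ is injective. You invoke the Gabriel principle that hereditary torsion pairs in a locally Noetherian Grothendieck category are determined by the set of indecomposable injectives in the torsion-free class, and then match these sets; this is correct but imports a substantial external theorem as a black box. The paper instead makes the step self-contained with Lemma~\ref{lem:TorsionPairsInclusion}: the trivial inclusion $\mathcal{T}\subseteq\mathcal{T}(\supp\mathcal{T})$ reduces everything to showing $\mathcal{F}\subseteq\mathcal{F}(\supp\mathcal{T})$, which is proved by contradiction --- pick $x\in Y\cap\ass\mathscr{F}$ with $\mathscr{F}\in\mathcal{F}$, pass to $E(\mathscr{F})\in\mathcal{F}$ (closure under injective envelopes, \cite[VI.3.2]{Stenstrom}), extract $\mathscr{J}(x)$ as a summand, and exhibit a nonzero morphism $\mathscr{T}\to\mathscr{J}(x)$ via the very adjunction you cite. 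Both approaches rely on the same facts ($\ass\mathscr{J}(x)=\{x\}$, closure under injective envelopes, the adjunction $\hom_X(\mathscr{T},\mathscr{J}(x))\simeq\hom_{\oh_{X,x}}(\mathscr{T}_x,E_x)$), but the paper reassembles them from first principles whereas you outsource the reassembly to Gabriel's classification. The paper's route buys self-containment and avoids the appearance of circularity, since the proposition being proved is itself a concretization of Gabriel's classification for $\qcoh_X$; your route is shorter if the general theorem is taken as known. Your alternative argument that $\supp\mathcal{T}$ is specialization closed (directed union of coherent subsheaves with closed supports) is equally valid and a bit more conceptual than the paper's stalk-localization formula.
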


\begin{proof}
	The class $\mathcal{T}(Y)$ is closed under arbitrary direct sums, subobjects, quotients and extensions by Corollary~\hyperref[AS4]{\ref*{cor:AssSvazkuBasic}~(\ref*{AS4})} and~\hyperref[AS5]{(\ref*{AS5})}. It follows from Remark~\hyperref[TP2]{\ref*{rem:TP}~(\ref*{TP2})} that $\mathcal{T}(Y)$ is a hereditary torsion class in $\qcoh_X$.
	
	Next we show that $\hom_X(\mathcal{T}(Y), \mathcal{F}(Y))=0.$ Indeed, if $f\colon \mathscr{T} \longrightarrow \mathscr{F}$ is a morphism with $\mathscr{T} \in \mathcal{T}(Y), \; \mathscr{F} \in \mathcal{F}(Y),$ then $\im f \in \mathcal{T}(Y)\cap\mathcal{F}(Y)$. In particular, $\ass \im f \subseteq \supp \im f \subseteq Y$ and $\ass \im f \subseteq \ass \mathscr{F}\subseteq X \setminus Y$, hence $\ass \im f=\emptyset$. It follows by Corollary~\hyperref[AS3]{\ref*{cor:AssSvazkuBasic}~(\ref*{AS3})} that $\im f=0$.
	
	To prove that $(\mathcal{T}(Y), \mathcal{F}(Y))$ is a torsion pair, it remains to show that if $\mathscr{F}$ is a quasi-coherent sheaf on $X$ such that $\hom_X(\mathscr{T}, \mathscr{F})=0$ for all $\mathscr{T}\in \mathcal{T}(Y),$ then $\mathscr{F} \in \mathcal{F}(Y)$. Equivalently, we must show that whenever $\mathscr{F}$ is a quasi-coherent sheaf with $\ass \mathscr{F} \cap Y \neq \emptyset,$ there is a quasi-coherent sheaf $\mathscr{T}$ with $\supp \mathscr{T} \subseteq Y$ and a nonzero morphism $\mathscr{T}\longrightarrow\mathscr{F}$. However, this is immediate from Proposition~\ref{prop:TestingSheaf}.
	
	Clearly for any hereditary torsion pair $(\mathcal{T}, \mathcal{F})$ in $\qcoh_X$, the set $\supp \mathcal{T}$ is specialization closed (if $\mathscr{F}$ is a quasi-coherent sheaf with $\mathscr{F}_x\neq 0$ and $y \in \overline{\{x\}},$ then $\mathscr{F}_x=\mathscr{F}_y\otimes_{\oh_{X,y}}\oh_{X,x}$, so $\mathscr{F}_y \neq 0$). This shows that both the assignments in the statement are well-defined.
	
	If we start with a specialization closed subset $Y \subseteq X$, then clearly $\supp \mathcal{T}(Y)\subseteq Y$. Since for any $x\in Y$, there exists a coherent sheaf $\mathscr{F}$ with $\supp\mathscr{F} = \overline{\{x\}}$, we in fact have $\supp \mathcal{T}(Y)= Y$.
	
	It remains to prove that given any hereditary torsion pair in $(\mathcal{T}, \mathcal{F})$ in $\qcoh_X$ and $Y=\supp\mathcal{T}$, we have that $\mathcal{T}=\mathcal{T}(Y)$ and $\mathcal{F}=\mathcal{F}(Y)$. Clearly $\mathcal{T} \subseteq \mathcal{T}(Y)$, so by Lemma~\ref{lem:TorsionPairsInclusion}, it suffices to show that $\mathcal{F} \subseteq \mathcal{F}(Y)$. In other words, we must prove that $Y\cap\ass \mathcal{F} = \emptyset$. Suppose that this is not the case, that is, that there exists $x \in Y$ and $\mathscr{F}\in\mathcal{F}$ with $x \in \ass\mathscr{F}$. As $\mathcal{F}$ is closed under injective envelopes by \cite[VI.3.2]{Stenstrom}, we can use Corollary~\ref{cor:AssF=AssEF} to replace $\mathscr{F}$ by $E(\mathscr{F})$. Since $\mathscr{J}(x)$ is a direct summand of $E(\mathscr{F})$ by Lemma~\ref{lem:AssOfJx}, we also have $\mathscr{J}(x) \in \mathcal{F}$. However, if we choose any $\mathscr{T}\in \mathcal{T}$ with $\mathscr{T}_x \neq 0$, then
	%
\begin{multline*}
\mathrm{Hom}_X\left(\mathscr{T}, \mathscr{J}(x)\right)=
\mathrm{Hom}_X\left(\mathscr{T}, i_*\left(\widetilde{E_x}\right)\right) \simeq \\ \simeq
\mathrm{Hom}_{\spec \oh_{X,x}}\left(i^*\left(\mathscr{T}\right), \widetilde{E_x}\right) \simeq
\hom_{\oh_{X,x}}(\mathscr{T}_x, E_x) \ne 0\;,
\end{multline*}
	since $E_x$ is an injective cogenerator of $\Mod \oh_{X,x}$. This contradicts the assumption that $(\mathcal{T}, \mathcal{F})$ is a torsion pair and finishes the proof.
\end{proof}

\begin{cor}\label{cor:limFY=FY}
	Let $X$ be a Noetherian scheme. Then every hereditary torsion-free class in $\qcoh_X$ is closed under direct limits.
\end{cor}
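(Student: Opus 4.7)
The plan is to use Proposition~\ref{prop:TYFY} to identify every hereditary torsion-free class in $\qcoh_X$ with a class of the form
\[ \mathcal{F}(Y) = \{ \mathscr{F} \in \qcoh_X \mid \ass\mathscr{F} \cap Y = \emptyset \} \]
for some specialization closed $Y \subseteq X$, and then verify directly that $\mathcal{F}(Y)$ is closed under direct limits. Concretely, given a direct system $(\mathscr{F}_i)_{i \in I}$ in $\mathcal{F}(Y)$ with direct limit $\mathscr{F} = \varinjlim_i \mathscr{F}_i$, the desired conclusion $\mathscr{F}\in\mathcal{F}(Y)$ follows once I establish the pointwise inclusion
\[ \ass\mathscr{F} \subseteq \bigcup_{i \in I} \ass\mathscr{F}_i, \]
because the union on the right is disjoint from $Y$ by hypothesis.

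To prove this inclusion, I would translate membership $x \in \ass\mathscr{F}$ into the equivalent condition that the maximal ideal $\mathfrak{m}_x$ of the Noetherian local ring $\oh_{X,x}$ is an associated prime of the stalk $\mathscr{F}_x$, i.e.\ that there exists an embedding $\kappa(x) \hookrightarrow \mathscr{F}_x$ of $\oh_{X,x}$-modules (the characterization of $\ass$ available in Appendix~\ref{sec:AssPoints}). Then two standard facts do the work: first, stalks commute with direct limits in $\qcoh_X$, so $\mathscr{F}_x \simeq \varinjlim_i (\mathscr{F}_i)_x$; second, since $\oh_{X,x}$ is Noetherian, $\kappa(x) = \oh_{X,x}/\mathfrak{m}_x$ is finitely presented as an $\oh_{X,x}$-module, so any homomorphism from $\kappa(x)$ to a filtered colimit factors through one of its terms. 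Applying this to the embedding $\kappa(x) \hookrightarrow \varinjlim_i (\mathscr{F}_i)_x$, I obtain a factorization $\kappa(x) \to (\mathscr{F}_{i_0})_x \to \mathscr{F}_x$ for some $i_0$. The composition is nonzero, so the first map is nonzero, and because $\kappa(x)$ is simple this map is automatically injective. Thus $x \in \ass\mathscr{F}_{i_0}$, completing the inclusion.

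I do not expect a serious obstacle; the essential input is the Noetherianity of $\oh_{X,x}$, which makes $\kappa(x)$ finitely presented and powers the filtered-colimit factorization. The only piece of bookkeeping is to ensure the reduction to $\mathcal{F}(Y)$ via Proposition~\ref{prop:TYFY} and to confirm that the embedding-of-residue-field criterion for associated points, valid because of the Noetherian hypothesis on $X$, is the one adopted in Appendix~\ref{sec:AssPoints}. One could alternatively try to deduce the result more abstractly from Lemma~\ref{lem:TorsionPairsQcohvsCoh} by exhibiting every hereditary torsion-free class as $\Limcl\mathcal{F}_0$ for a torsion pair on $\coh_X$, but the stalkwise argument above is more transparent and reuses exactly the ingredients already assembled in the preceding proof of Proposition~\ref{prop:TYFY}.
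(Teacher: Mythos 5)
Your proof is correct and takes essentially the same route as the paper's: the paper reduces via Proposition~\ref{prop:TYFY} to classes of the form $\mathcal{F}(Y)$ and then cites Lemma~\ref{lem:AssFlat} for the inclusion $\ass\mathscr{F} \subseteq \bigcup_{i\in I}\ass\mathscr{F}_i$, whose proof is exactly the stalkwise finitely-presented/filtered-colimit factorization you spelled out inline. The only cosmetic difference is that the paper deduces injectivity of the factored map $\iota_{i_0}$ from injectivity of the composite, whereas you invoke simplicity of $\kappa(x)$; both are valid.
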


\begin{proof}
In view of Proposition~\ref{prop:TYFY}, it is enough to show that the classes of the form $\mathcal{F}(Y)$ (for $Y$ specialization closed) are closed under direct limits. This is, however, an immediate consequence of Lemma~\ref{lem:AssFlat}.
\end{proof}

Now we easily obtain an analogous classification of hereditary torsion pairs in $\coh_X$.

\begin{prop}\label{prop:T0YF0Y}
For a Noetherian scheme $X$, there is a bijective correspondence between hereditary torsion pairs in $\coh_X$ and specialization closed subsets $Y \subseteq X$, given by the assignments
	\[(\mathcal{T}_0, \mathcal{F}_0) \mapsto \supp \mathcal{T}_0
	\quad\text{ and }\quad
	Y \mapsto (\mathcal{T}_0(Y), \mathcal{F}_0(Y)),\]	
where $\mathcal{T}_0(Y)=\{\mathscr{T} \in \coh_X\mid \supp \mathscr{T} \subseteq Y\}$ and $\mathcal{F}_0(Y)=\{\mathscr{F}\in \coh_X\mid \ass \mathscr{F} \cap Y=\emptyset\}$.
\end{prop}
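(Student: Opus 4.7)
The plan is to deduce this classification from its quasi-coherent counterpart (Proposition~\ref{prop:TYFY}) via the bijection of Lemma~\ref{lem:TorsionPairsQcohvsCoh} between torsion pairs in $\coh_X$ and those torsion pairs $(\mathcal{T},\mathcal{F})$ in $\qcoh_X$ for which $\mathcal{F}$ is closed under direct limits. By Corollary~\ref{cor:limFY=FY}, every hereditary torsion-free class in $\qcoh_X$ is automatically closed under direct limits, so it suffices to show that this bijection restricts to a bijection between hereditary torsion pairs in $\coh_X$ and hereditary torsion pairs in $\qcoh_X$. Once this is established, Proposition~\ref{prop:TYFY} gives the correspondence with specialization closed subsets and the explicit descriptions of $\mathcal{T}_0(Y)$ and $\mathcal{F}_0(Y)$ as the restrictions of $\mathcal{T}(Y)$ and $\mathcal{F}(Y)$ to $\coh_X$.

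One direction is immediate: if $(\mathcal{T},\mathcal{F})$ is hereditary in $\qcoh_X$, then $\mathcal{T}\cap\coh_X$ is closed under subobjects in $\coh_X$. The substantive step, which I expect to be the main obstacle, is the converse: if $(\mathcal{T}_0,\mathcal{F}_0)$ is a hereditary torsion pair in $\coh_X$, then its extension $(\Limcl\mathcal{T}_0,\Limcl\mathcal{F}_0)$ is hereditary in $\qcoh_X$. To prove this, I would take an inclusion $\mathscr{T}'\hookrightarrow\mathscr{T}$ in $\qcoh_X$ with $\mathscr{T}\in\Limcl\mathcal{T}_0$ and invoke the fact that $X$ is Noetherian to write $\mathscr{T}'=\varinjlim_i\mathscr{T}'_i$ as the directed union of its coherent subsheaves. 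Since $\Limcl\mathcal{T}_0$ is closed under direct limits, it is enough to check that each $\mathscr{T}'_i$ lies in $\mathcal{T}_0$. Writing $\mathscr{T}=\varinjlim_j\mathscr{T}_j$ with $\mathscr{T}_j\in\mathcal{T}_0$ and using that $\mathscr{T}'_i$ is finitely presented in the locally Noetherian category $\qcoh_X$, the inclusion $\mathscr{T}'_i\hookrightarrow\mathscr{T}$ factors as $\mathscr{T}'_i\xrightarrow{\varphi}\mathscr{T}_j\to\mathscr{T}$ for some $j$; since the composition is a monomorphism, so is $\varphi$. Therefore $\mathscr{T}'_i$ is isomorphic to a subobject of $\mathscr{T}_j\in\mathcal{T}_0$, and hereditariness of $\mathcal{T}_0$ in $\coh_X$ yields $\mathscr{T}'_i\in\mathcal{T}_0$.

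With hereditariness preservation in hand, the bijection of the theorem is the composition of three bijections: the restriction $\mathcal{T}_0\leftrightarrow\Limcl\mathcal{T}_0$ from Lemma~\ref{lem:TorsionPairsQcohvsCoh}, the classification of hereditary torsion pairs in $\qcoh_X$ from Proposition~\ref{prop:TYFY}, and the identity on specialization closed subsets. The verification of the explicit form is then a short check: for $Y\subseteq X$ specialization closed, $(\mathcal{T}(Y)\cap\coh_X,\mathcal{F}(Y)\cap\coh_X)=(\mathcal{T}_0(Y),\mathcal{F}_0(Y))$ holds tautologically from the definitions, and for the reverse direction one uses that supports are compatible with direct limits of quasi-coherent sheaves (stalks commute with direct limits), so $\supp\mathcal{T}_0=\supp(\Limcl\mathcal{T}_0)$.
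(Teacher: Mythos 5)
Your proposal is correct and follows essentially the same route as the paper: both reduce to Proposition~\ref{prop:TYFY} via Lemma~\ref{lem:TorsionPairsQcohvsCoh} and then verify that the bijection of that lemma restricts to hereditary torsion pairs. The only cosmetic difference is in the key step showing $\Limcl\mathcal{T}_0$ is hereditary when $\mathcal{T}_0$ is: you factor a coherent subsheaf through a stage of a direct-limit presentation and use that a composite being monic forces the first factor to be monic, whereas the paper writes $\mathscr{T}$ as the union of its coherent subsheaves $\mathscr{T}_i\in\mathcal{T}_0$ and decomposes a subsheaf $\mathscr{S}$ as $\bigcup_i(\mathscr{S}\cap\mathscr{T}_i)$ — both are valid.
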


\begin{proof}
In view of Proposition~\ref{prop:TYFY} and Lemma~\ref{lem:TorsionPairsQcohvsCoh}, it only remains to observe that a torsion pair $(\mathcal{T}_0, \mathcal{F}_0)$ in $\coh_X$ is hereditary if and only if $(\mathcal{T}, \mathcal{F}) = (\Limcl\mathcal{T}_0, \Limcl\mathcal{F}_0)$ is hereditary in $\qcoh_X$. To this end, if $\mathcal{T}$ is closed under taking subsheaves, so is clearly $\mathcal{T}_0 = \mathcal{T}\cap \coh_X$. On the other hand, each $\mathscr{T} \in \mathcal{T}$ is a direct union $\mathscr{T} = \bigcup_{i\in I}\mathscr{T}_i$ of its coherent subsheaves belonging to $\mathcal{T}_0$. If $\mathcal{T}_0$ is closed under taking subsheaves and $\mathscr{S}\subseteq\mathscr{T}$, then $\mathscr{S} =  \bigcup_{i\in I} (\mathscr{S}\cap\mathscr{T}_i) \in \mathcal{T}$.
\end{proof}

If $X$ is Noetherian and affine, then in fact each torsion pair in $\coh_X$ is hereditary by~\cite[Proposition 2.5]{St1}, so we have a full classification of all torsion pairs in this case. In the non-affine situation, there may exist non-hereditary torsion pairs.

\begin{example} \label{example:NonHeredTorsion}
	Let $X = \mathbb{P}^1_k = \proj{k[x_0, x_1]}$ be a projective line over a field and let $\mathcal{T}_0\subseteq\coh_X$ be the smallest class containing the structure sheaf $\oh_X$ and closed under extensions and quotients. This is a torsion class by Remark~\hyperref[TP2]{\ref*{rem:TP}~(\ref*{TP2})} and $\oh(-1)$ belongs to the corresponding torsion-free class $\mathcal{F}_0$ since it has no global sections. However, there is an inclusion $\oh(-1)\hookrightarrow\oh_X$, so $\mathcal{T}_0$ is not hereditary.
\end{example}

The reason is that hereditary torsion pairs have a very geometric meaning. To elucidate this, we give the following definition, which encodes a natural compatibility of the notion of torsion pairs and the underlying geometry.

\begin{deff}\label{def:TP-local}
Let $X$ be a Noetherian scheme. We say that a torsion pair $(\mathcal{T}, \mathcal{F})$ in $\coh_X$ is \emph{locally compatible} if a stronger version of Definition~\hyperref[TPDef1]{\ref*{def:torznipar}~(\ref*{TPDef1})} holds: $\homSh_X(\mathcal{T}, \mathcal{F})=0$.
\end{deff}

\begin{rem}
If $X = \spec R$ is affine, then any torsion pair is locally compatible since then $\homSh_X(\widetilde{M}, \widetilde{N})\simeq \widetilde{\hom_R}(M, N)$ for each pair $M, N$ of finitely generated $R$-modules. On the other hand, the torsion pair from Example~\ref{example:NonHeredTorsion} is not locally compatible since $\oh_X(U) \simeq \oh(-1)(U)$ for each open affine subset $U \subseteq \mathbb{P}^1_k$.
\end{rem}

Using the adjunction between $\otimes$ and $\homSh_X$, one can reformulate local compatibility to another condition, which was used for instance in Thomason's classification~\cite{Thomason} of localizing subcategories in the perfect derived category of~$X$.

\begin{deff}\label{def:tensorIdeal}
Let $\mathcal{X}\subseteq\coh_X$ be a full additive subcategory. Then $\mathcal{X}$ is a \emph{tensor ideal} if $\mathscr{G}\otimes\mathscr{F}\in\mathcal{X}$ for each $\mathscr{G}\in\coh_X$ and $\mathscr{F}\in\mathcal{X}$.
\end{deff}

\begin{lem}\label{lem:tensorIdealLocComp}
Let $X$ be a Noetherian scheme. Then a torsion pair $(\mathcal{T}_0, \mathcal{F}_0)$ in $\coh_X$ is locally compatible if and only if $\mathcal{T}_0$ is a tensor ideal. 
\end{lem}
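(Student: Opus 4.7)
The plan is to deduce both implications from the tensor–hom adjunction of $\oh_X$-modules, namely the natural isomorphism
\[
\hom_X(\mathscr{G}\otimes\mathscr{T}, \mathscr{F}) \;\simeq\; \hom_X\bigl(\mathscr{G}, \homSh_X(\mathscr{T}, \mathscr{F})\bigr),
\]
combined with the characterization of the torsion class from Remark~\hyperref[TP1]{\ref*{rem:TP}~(\ref*{TP1})}, namely $\mathcal{T}_0 = \ker\hom_X(-, \mathcal{F}_0) \cap \coh_X$.

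For the implication ``locally compatible $\Rightarrow$ tensor ideal'', I would fix $\mathscr{T}\in\mathcal{T}_0$ and $\mathscr{G}\in\coh_X$, note that $\mathscr{G}\otimes\mathscr{T}$ is again coherent, and then apply the adjunction: for every $\mathscr{F}\in\mathcal{F}_0$ one has $\hom_X(\mathscr{G}\otimes\mathscr{T},\mathscr{F})\simeq\hom_X(\mathscr{G},\homSh_X(\mathscr{T},\mathscr{F}))=0$ since by assumption the target sheaf is zero. The characterization of $\mathcal{T}_0$ then forces $\mathscr{G}\otimes\mathscr{T}\in\mathcal{T}_0$.

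For the converse, assume $\mathcal{T}_0$ is a tensor ideal and fix $\mathscr{T}\in\mathcal{T}_0$, $\mathscr{F}\in\mathcal{F}_0$. The key point is that $\mathscr{H}=\homSh_X(\mathscr{T},\mathscr{F})$ is quasi-coherent (as $\mathscr{T}$ is coherent on the Noetherian scheme $X$, which is a standard affine-local computation). Since $\qcoh_X$ is locally Noetherian, every quasi-coherent sheaf is a directed union of its coherent subsheaves, so $\mathscr{H}=0$ if and only if $\hom_X(\mathscr{G},\mathscr{H})=0$ for every coherent $\mathscr{G}$. By the adjunction this equals $\hom_X(\mathscr{G}\otimes\mathscr{T},\mathscr{F})$, and $\mathscr{G}\otimes\mathscr{T}\in\mathcal{T}_0$ by the tensor-ideal hypothesis, so the group vanishes by the torsion-pair axiom.

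The argument is formal once the adjunction is in place; the only subtlety worth flagging is the quasi-coherence of $\homSh_X(\mathscr{T},\mathscr{F})$ on a Noetherian scheme when $\mathscr{T}$ is coherent, together with the fact that coherent sheaves form a generating family in $\qcoh_X$. Both are standard and no other input is required.
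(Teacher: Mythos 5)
Your proof is correct and uses the same tensor--hom adjunction as the paper in both directions; the forward implication is essentially identical. In the converse direction you take a slightly longer route than necessary: since $\mathscr{T}$ and $\mathscr{F}$ both lie in $\coh_X$, the sheaf $\homSh_X(\mathscr{T}, \mathscr{F})$ is itself coherent (not merely quasi-coherent), so rather than test against a generating family of coherent sheaves, the paper simply takes $\mathscr{G} = \homSh_X(\mathscr{T}, \mathscr{F})$ as its own test object --- the identity map gives a nonzero element of $\hom_X(\mathscr{G}, \homSh_X(\mathscr{T}, \mathscr{F}))$, hence a nonzero map $\mathscr{G} \otimes \mathscr{T} \to \mathscr{F}$, contradicting the tensor-ideal hypothesis. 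Both arguments go through, but this observation removes the need for the directed-union step you flag as the ``subtlety.''
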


\begin{proof}
This is purely formal. Suppose that the torsion pair is locally compatible and let $\mathscr{T}\in\mathcal{T}_0 = \ker\homSh_X(-, \mathcal{F}_0)$ be a torsion object and $\mathscr{G}\in\coh_X$ any coherent sheaf. Then, using the standard adjunction, we have
\[ \hom_X(\mathscr{G}\otimes\mathscr{T}, \mathscr{F}) \simeq \hom_X\left(\mathscr{G}, \homSh(\mathscr{T}, \mathscr{F})\right) = 0 \]
for each $\mathscr{F}\in\mathcal{F}_0$. Thus $\mathscr{G}\otimes\mathscr{T} \in \mathcal{T}_0$.

For the 'if' part, suppose there exist $\mathscr{T}\in\mathcal{T}_0$ and $\mathscr{F}\in\mathcal{F}_0$ with $\homSh_X(\mathscr{T}, \mathscr{F}) \ne 0$. If we put $\mathscr{G} = \homSh_X(\mathscr{T}, \mathscr{F})$, there is certainly a non-zero morphism
\[ \mathscr{G} \longrightarrow \homSh_X(\mathscr{T}, \mathscr{F}). \]
By the adjunction again, we obtain a non-zero morphism
\[ \mathscr{G} \otimes \mathscr{T} \longrightarrow \mathscr{F}, \]
hence the torsion class $\mathcal{T}_0$ is not a tensor ideal.
\end{proof}

In the sequel, we put more emphasis on the torsion-free class $\mathcal{F}_0$ rather than on the torsion class $\mathcal{T}_0$. We cannot in general expect $\mathcal{F}_0$ to be a tensor ideal since it is typically not closed under taking cokernels, but quite often it turns out to be closed under tensoring by line bundles. In that context, the following definition is useful.

\begin{deff} \label{def:AmpleFamily}
\begin{enumerate}[(1)]
\item{We say that a scheme $X$ has \emph{an ample family of line bundles} if there are global sections $f_i$ of line bundles $\mathscr{L}_i, \; i \in I$, such that the sets $D(f_i)=\{x \in X \mid f_i(x)\neq 0\}, \; i \in I$ form an affine open cover of $X$.}
\item{We say that a scheme $X$ \emph{has the resolution property} if every coherent sheaf is an epimorphic image of a vector bundle (i. e. a locally free sheaf of finite rank).}
\end{enumerate}
\end{deff}

A Noetherian scheme $X$ which has an ample family of line bundles has the resolution property, which was proved by S.~Kleiman and M.~Borelli in \cite{Borelli}, and independently by L.~Illusie in \cite{Illusie}. In such a case, every coherent sheaf is in fact a factor of a finite direct sum of negative tensor powers of the line bundles $\mathscr{L}_i$. Since any quasi-coherent sheaf is a direct union of its coherent subsheaves, the negative tensor powers of the line bundles $\mathscr{L}_i$ form a set of generators for $\qcoh_X$ as well. The above properties are satisfied for a large class of Noetherian schemes, e.g. for quasi-projective schemes over affine schemes. See \cite[section~2.1]{T-T} for more detailed discussion.

Now we can state and prove the main result of the section.

\begin{thm}\label{thm:charTPCoh}
Let $X$ be a Noetherian scheme and $(\mathcal{T}_0, \mathcal{F}_0)$ be a torsion pair in $\coh_X$. Then the following are equivalent:
\begin{enumerate}[(1)]
\item{$(\mathcal{T}_0, \mathcal{F}_0)$ is hereditary,}
\item{$(\mathcal{T}_0, \mathcal{F}_0)$ is locally compatible (Definition~\ref{def:TP-local}),}
\item{$\mathcal{T}_0$ is a tensor ideal (Definition~\ref{def:tensorIdeal}).}
\end{enumerate}

If, moreover, $X$ has an ample family of line bundles, these are further equivalent to
\begin{enumerate}[(1)]
\setcounter{enumi}{3}
\item{$\mathscr{L}\otimes\mathscr{F} \in \mathcal{F}_0$ for each line bundle $\mathscr{L}$ and each $\mathscr{F}\in \mathcal{F}_0$.}
\end{enumerate}
\end{thm}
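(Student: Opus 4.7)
The strategy is to establish (1) $\Leftrightarrow$ (2) $\Leftrightarrow$ (3) in general and then (3) $\Leftrightarrow$ (4) under the ample hypothesis. The equivalence (2) $\Leftrightarrow$ (3) is already given by Lemma~\ref{lem:tensorIdealLocComp}, and (1) $\Rightarrow$ (3) is immediate: by Proposition~\ref{prop:T0YF0Y} we may write $\mathcal{T}_0=\mathcal{T}_0(Y)$ for some specialization closed $Y$, and then $\supp(\mathscr{G}\otimes\mathscr{T})\subseteq\supp\mathscr{T}\subseteq Y$ for any coherent $\mathscr{G}$. The only nontrivial step in the general part is (2) $\Rightarrow$ (1).

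For (2) $\Rightarrow$ (1), I set $Y=\supp\mathcal{T}_0\subseteq X$, which is specialization closed since the support of any coherent sheaf is closed. The plan is to show $\mathcal{F}_0\subseteq\mathcal{F}_0(Y)$ and then conclude by Lemma~\ref{lem:TorsionPairsInclusion}. Given $\mathscr{F}\in\mathcal{F}_0$ and $x\in\ass\mathscr{F}$, the definition of associated points in Appendix~\ref{sec:AssPoints} supplies an embedding $k(x)\hookrightarrow\mathscr{F}_x$ of $\oh_{X,x}$-modules. Suppose for contradiction $x\in Y$, and pick $\mathscr{T}\in\mathcal{T}_0$ with $\mathscr{T}_x\ne 0$. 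Since $\mathscr{T}_x$ is finitely generated and nonzero over the local ring $\oh_{X,x}$, Nakayama yields a surjection $\mathscr{T}_x\twoheadrightarrow k(x)$, which composed with $k(x)\hookrightarrow\mathscr{F}_x$ produces a nonzero morphism $\mathscr{T}_x\to\mathscr{F}_x$. As $\mathscr{T}$ is coherent, this amounts to a nonzero element of $\homSh_X(\mathscr{T},\mathscr{F})_x\simeq\hom_{\oh_{X,x}}(\mathscr{T}_x,\mathscr{F}_x)$, contradicting local compatibility. Hence $\ass\mathscr{F}\cap Y=\emptyset$, i.e.\ $\mathscr{F}\in\mathcal{F}_0(Y)$. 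Combined with the trivial $\mathcal{T}_0\subseteq\mathcal{T}_0(Y)$, Lemma~\ref{lem:TorsionPairsInclusion} forces $(\mathcal{T}_0,\mathcal{F}_0)=(\mathcal{T}_0(Y),\mathcal{F}_0(Y))$, which is hereditary by Proposition~\ref{prop:T0YF0Y}.

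Assume finally the ample hypothesis. Both (3) $\Rightarrow$ (4) and the transfer of (4) to closure of $\mathcal{T}_0$ under line bundle twists are obtained by the same torsion argument: decompose the twist $\mathscr{L}\otimes(-)$ of $\mathscr{F}\in\mathcal{F}_0$ (resp.\ $\mathscr{T}\in\mathcal{T}_0$) along $(\mathcal{T}_0,\mathcal{F}_0)$, apply the exact functor $\mathscr{L}^{-1}\otimes-$, and observe that one of the resulting pieces lies in $\mathcal{T}_0\cap\mathcal{F}_0=0$. For the remaining implication (4) $\Rightarrow$ (3): given $\mathscr{G}\in\coh_X$ and $\mathscr{T}\in\mathcal{T}_0$, the ample family furnishes a finite surjection $\bigoplus_{i=1}^n\mathscr{L}_i^{\otimes -m_i}\twoheadrightarrow\mathscr{G}$ (as noted after Definition~\ref{def:AmpleFamily}); tensoring with $\mathscr{T}$ (right exact) yields a surjection $\bigoplus_{i=1}^n(\mathscr{L}_i^{\otimes -m_i}\otimes\mathscr{T})\twoheadrightarrow\mathscr{G}\otimes\mathscr{T}$, where the finite direct sum lies in $\mathcal{T}_0$ (closed under extensions, hence finite sums) and so does its quotient. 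The only genuinely nonformal step is (2) $\Rightarrow$ (1); the key insight there is that associated points provide exactly the right test objects inside $\mathcal{F}_0$ to pin down $Y$, and the Nakayama argument at a single stalk supplies just enough rigidity to align $(\mathcal{T}_0,\mathcal{F}_0)$ with its hereditary counterpart.
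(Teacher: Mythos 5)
Your proof is correct and follows essentially the same route as the paper's: the key step $(2)\Rightarrow(1)$ uses the identical Nakayama-at-the-stalk argument via $\homSh_X(\mathscr{T},\mathscr{F})_x\simeq\hom_{\oh_{X,x}}(\mathscr{T}_x,\mathscr{F}_x)$ followed by Lemma~\ref{lem:TorsionPairsInclusion}, and $(4)\Rightarrow(3)$ uses the same resolution-property argument. The only cosmetic differences are that you route $(1)\Rightarrow(3)$ directly via supports rather than $(1)\Rightarrow(2)$, and for $(3)\Rightarrow(4)$ you argue via torsion decomposition and $\mathcal{T}_0\cap\mathcal{F}_0=0$ whereas the paper invokes that $\mathscr{L}\otimes-$ is an autoequivalence; both are standard and equivalent.
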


\begin{proof}
(1)$\Rightarrow$(2)
If $(\mathcal{T}_0, \mathcal{F}_0)$ is hereditary, it is of the form $(\mathcal{T}_0(Y), \mathcal{F}_0(Y))$ for a specialization closed $Y\subseteq X$ by Proposition~\ref{prop:T0YF0Y}, and hence clearly locally compatible since both the torsion and torsion-free class are defined by conditions on stalks.

(2)$\Rightarrow$(1)
Let us assume that $(\mathcal{T}_0, \mathcal{F}_0)$ is locally compatible and put $Y=\supp\mathcal{T}_0$.

We first claim that $Y \cap \ass\mathcal{F}_0 = \emptyset$. Indeed, suppose that we have $\mathscr{F}\in\mathcal{F}_0$ and $x\in\ass\mathscr{F}$. If $\mathscr{T}\in\mathcal{T}_0$, then
\[ \hom_{\oh_{X,x}}(\mathscr{T}_x, \mathscr{F}_x) \cong \homSh_X(\mathscr{T}, \mathscr{F})_x = 0. \]
Since $\kappa(x)\hookrightarrow \mathscr{F}_x$, we also have $\hom_{\oh_{X,x}}(\mathscr{T}_x, \kappa(x)) = 0$, so $\mathscr{T}_x=0$ by the Nakayama lemma. It follows that $x\not\in\supp\mathcal{T}_0$ and the claim is proved.

It follows that $\mathcal{T}_0 \subseteq \mathcal{T}_0(Y)$ and $\mathcal{F}_0 \subseteq \mathcal{F}_0(Y)$. Thus $\mathcal{T}_0 = \mathcal{T}_0(Y)$ by Lemma~\ref{lem:TorsionPairsInclusion}, and it is a hereditary torsion class.

(2)$\Leftrightarrow$(3)
This is just Lemma~\ref{lem:tensorIdealLocComp}.

(3)$\Rightarrow$(4)
If $\mathcal{T}_0$ is a tensor ideal, clearly $\mathscr{L} \otimes \mathcal{T}_0 = \mathcal{T}_0$ for each line bundle $\mathscr{L}$. Since $\mathscr{L} \otimes -$ is an autoequivalence of $\coh_X$, the latter is equivalent to $\mathscr{L} \otimes \mathcal{F}_0 = \mathcal{F}_0$.

(4)$\Rightarrow$(3)
Denote by $\mathscr{L}_i,\; i\in I$ the ample family of line bundles. If (4) holds, we see by the same token that $\mathscr{L}_i^{\otimes n} \otimes \mathcal{T}_0 = \mathcal{T}_0$ for each $i\in I$ and $n\in\mathbb{Z}$. Since each coherent sheaf $\mathscr{G}$ admits a surjection of the form $\mathscr{L}_{i_1}^{\otimes n_1} \oplus \cdots \oplus \mathscr{L}_{i_r}^{\otimes n_r} \to \mathscr{G}$ and since $\mathcal{T}_0$ is closed under quotients and direct sums, it must be a tensor ideal.
\end{proof}

We conclude with a consequence for torsion-free classes in $\qcoh_X$.

\begin{cor} \label{cor:TFQcoh}
Let $X$ be a noetherian scheme with an ample family of line bundles and let $\mathcal{F}\subseteq \qcoh_X$ be a torsion-free class. The following are equivalent:
\begin{enumerate}[(1)]
\item{$\mathcal{F}$ is closed under injective envelopes,}
\item{$\mathcal{F}$ is closed under direct limits and $\mathscr{L}\otimes\mathscr{F} \in \mathcal{F}$ for each line bundle $\mathscr{L}$ and $\mathscr{F}\in \mathcal{F}$.}
\end{enumerate}
\end{cor}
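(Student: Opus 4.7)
My plan is to identify condition (1) with the statement that the torsion pair $(\mathcal{T}, \mathcal{F})$ in $\qcoh_X$ is hereditary, and then reduce to the classifications already established in the section. The key preliminary fact, which the paper has already used in the proof of Proposition~\ref{prop:TYFY} via \cite[VI.3.2]{Stenstrom}, is that in any Grothendieck category a torsion-free class is closed under injective envelopes if and only if the associated torsion pair is hereditary. Given this, both directions become applications of results already in the paper.

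For (1)$\Rightarrow$(2), once $(\mathcal{T}, \mathcal{F})$ is seen to be hereditary, Proposition~\ref{prop:TYFY} gives $\mathcal{F} = \mathcal{F}(Y)$ for some specialization closed $Y \subseteq X$. Closure under direct limits is then the content of Corollary~\ref{cor:limFY=FY}. Closure under tensoring by a line bundle $\mathscr{L}$ follows from the observation that $\mathscr{L}_x$ is free of rank one over $\oh_{X,x}$ for each $x\in X$, so $(\mathscr{L}\otimes\mathscr{F})_x \simeq \mathscr{F}_x$ as $\oh_{X,x}$-modules and in particular $\ass(\mathscr{L}\otimes\mathscr{F}) = \ass\mathscr{F}$; the description of $\mathcal{F}(Y)$ in terms of associated points then shows $\mathscr{L}\otimes\mathscr{F} \in \mathcal{F}$. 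Note that the ample family hypothesis is not needed in this direction.

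For (2)$\Rightarrow$(1), I restrict to coherent sheaves. Setting $\mathcal{F}_0 = \mathcal{F}\cap\coh_X$, Lemma~\ref{lem:TorsionPairsQcohvsCoh} shows that $(\mathcal{T}\cap\coh_X, \mathcal{F}_0)$ is a torsion pair in $\coh_X$, and the closure of $\mathcal{F}$ under direct limits guarantees that the $\Limcl$-extension of this pair recovers $(\mathcal{T}, \mathcal{F})$. Because $\mathscr{L}\otimes-$ preserves coherence, $\mathcal{F}_0$ is still closed under tensoring by line bundles, so Theorem~\ref{thm:charTPCoh} (this is where the ample family is essential) implies that $(\mathcal{T}_0, \mathcal{F}_0)$ is hereditary; by Proposition~\ref{prop:T0YF0Y} it equals $(\mathcal{T}_0(Y), \mathcal{F}_0(Y))$ for a specialization closed $Y$. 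Since $\mathcal{T}(Y)$ is closed under direct limits and every object of $\mathcal{T}(Y)$ is a direct union of coherent subsheaves supported in $Y$, applying $\Limcl$ to the coherent pair returns precisely the $\qcoh_X$-pair $(\mathcal{T}(Y), \mathcal{F}(Y))$ of Proposition~\ref{prop:TYFY}. Thus $(\mathcal{T}, \mathcal{F})$ is hereditary and $\mathcal{F}$ is closed under injective envelopes.

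The only genuine input beyond the bookkeeping is the equivalence ``closed under injective envelopes $\iff$ hereditary,'' which is the standard \cite[VI.3.2]{Stenstrom} argument: essentiality of $F\hookrightarrow E(F)$ forces the torsion part of $E(F)$ to vanish in one direction, while in the other direction any morphism from a subobject of a torsion object into $F$ extends along the injective envelope and must be zero. Everything else consists of tracing the classifications from Proposition~\ref{prop:TYFY}, Proposition~\ref{prop:T0YF0Y} and Theorem~\ref{thm:charTPCoh} through the restriction/extension bijection of Lemma~\ref{lem:TorsionPairsQcohvsCoh}.
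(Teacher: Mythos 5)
Your proof is correct and follows essentially the same route as the paper: translate (1) into the torsion pair being hereditary via the standard Stenstr\"om fact, note that in either case $\mathcal{F}$ is closed under direct limits so the torsion pair is determined by its restriction to $\coh_X$ via Lemma~\ref{lem:TorsionPairsQcohvsCoh}, and then invoke Theorem~\ref{thm:charTPCoh}. The one small divergence is that for $(1)\Rightarrow(2)$ you argue twist-closure directly at the level of stalks (a line bundle is stalkwise free of rank one, so it leaves associated points invariant), rather than routing through the $(1)\Leftrightarrow(4)$ equivalence of Theorem~\ref{thm:charTPCoh} restricted to coherent sheaves as the paper does; this slightly more hands-on argument lets you observe, correctly, that the ample-family hypothesis is only genuinely used in the $(2)\Rightarrow(1)$ direction. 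That is a worthwhile observation, but the overall structure and the load-bearing citations are the same.
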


\begin{proof}
Note that if (1) holds, the corresponding torsion pair $(\mathcal{T}, \mathcal{F})$ is hereditary by \cite[VI.3.2]{Stenstrom} and thus, $\mathcal{F}$ is closed under direct limits by Proposition~\ref{prop:TYFY} and Corollary~\ref{cor:limFY=FY}. In particular, the torsion pair is determined in both (1) and (2) by its restriction to $\coh_X$; see Lemma~\ref{lem:TorsionPairsQcohvsCoh}. Now we just apply (1)$\Leftrightarrow$(4) from Theorem~\ref{thm:charTPCoh}. 
\end{proof}

\section{Classification of cotilting sheaves}
\label{sec:classif}

Let $X$ be a fixed Noetherian scheme. The goal is to classify those cotilting torsion-free classes in $\qcoh_X$ which are closed under taking injective envelopes or, equivalently for those $X$ which have an ample family of line bundles, under tensoring with line bundles (recall Theorem~\ref{thm:CotiltingPureInjective} and Corollary~\ref{cor:TFQcoh}).

We already know from Theorem~\ref{thm:ClassificationViaTP} and Propositions~\ref{prop:TYFY} and~\ref{prop:T0YF0Y} that a hereditary torsion-free class $\mathcal{F} = \mathcal{F}(Y)$ in $\qcoh_X$ (where $Y\subseteq X$ is specialization closed) is associated with a cotilting sheaf $\mathscr{C}_Y$ if and only if it contains a generator. Here we discuss under what conditions on $Y$ the class $\mathcal{F}(Y)$ actually contains a generator as well as the relation of the cotilting objects here to the construction of cotilting modules in~\cite{St2} in the affine case. 

In direct analogy with the affine case, it turns out that $\mathcal{F}(Y)$ is generating if and only if the set $Y$ does not contain any associated point of the scheme $X$.  The following lemma proves necessity of this condition: 

\begin{lem} \label{lem:AssGeneratorMin}
	Let $X$ be a Noetherian scheme and $\mathscr{G}$ a generator for $\qcoh_X$. Then $\ass \oh_X \subseteq \ass\mathscr{G}$.
\end{lem}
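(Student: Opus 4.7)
The plan is to reduce the statement to a straightforward module-theoretic claim at a single stalk, exploiting that $\oh_{X,x}$ is free of rank one over itself. Recall from the appendix that $x \in \ass \mathscr{F}$ unfolds to $\mathfrak{m}_x \in \ass_{\oh_{X,x}}(\mathscr{F}_x)$; thus it suffices to fix $x \in X$ and show that if $\mathfrak{m}_x \in \ass_{\oh_{X,x}}(\oh_{X,x})$, then $\mathfrak{m}_x \in \ass_{\oh_{X,x}}(\mathscr{G}_x)$.

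First I would invoke the generating property of $\mathscr{G}$ in $\qcoh_X$ to pick an epimorphism $\pi \colon \mathscr{G}^{(I)} \twoheadrightarrow \oh_X$ for some index set $I$. Since taking stalks is exact and commutes with direct sums, localization at $x$ will yield an epimorphism $\pi_x \colon \mathscr{G}_x^{(I)} \twoheadrightarrow \oh_{X,x}$ of $\oh_{X,x}$-modules. As $\oh_{X,x}$ is free of rank one, hence projective, as a module over itself, this epimorphism splits and realizes $\oh_{X,x}$ as a direct summand of $\mathscr{G}_x^{(I)}$.

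To finish I would combine this with the standard behaviour of associated primes over the Noetherian local ring $\oh_{X,x}$: associated primes of an arbitrary direct sum satisfy $\ass_{\oh_{X,x}}\!\bigl(\bigoplus_j M_j\bigr) = \bigcup_j \ass_{\oh_{X,x}}(M_j)$, and they are preserved by passage to direct summands. Together these give
\[ \ass_{\oh_{X,x}}(\oh_{X,x}) \subseteq \ass_{\oh_{X,x}}\!\bigl(\mathscr{G}_x^{(I)}\bigr) = \ass_{\oh_{X,x}}(\mathscr{G}_x), \]
and reading off $\mathfrak{m}_x$ from the left-hand side closes the argument. The proof is essentially formal and I do not foresee any genuine obstacle; the only substantive input is the projectivity of $\oh_{X,x}$ over itself, which is precisely what allows us to split $\pi_x$ and thereby transport the associated prime $\mathfrak{m}_x$ into $\mathscr{G}_x$.
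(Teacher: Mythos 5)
Your argument is correct and is, step for step, the same as the paper's: pass to stalks, split the induced epimorphism $\mathscr{G}_x^{\oplus I} \twoheadrightarrow \oh_{X,x}$ using that $\oh_{X,x}$ is free (projective) over itself, and use that associated primes of a direct sum are the union of those of the summands. There is no essential difference in method or in the lemmas invoked.
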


\begin{proof}
    Consider an epimorphism $e:\mathscr{G}^{\oplus I} \rightarrow \oh_X$, where $I$ is some set. For a point $x \in X$, the induced epimorphism of $\oh_{X,x}$-modules $\mathscr{G}_x^{\oplus I} \rightarrow \oh_{X,x}$ splits and thus, we have that
	$$\ass_{\oh_{X,x}}\oh_{X,x} \subseteq \ass_{\oh_{X,x}}\mathscr{G}_x^{\oplus I}=\ass_{\oh_{X,x}}\mathscr{G}_x.$$
	
	Now $x \in \ass \oh_X$ means that $\mathfrak{m}_x\in \ass_{\oh_{X,x}}\oh_{X,x},$ hence in this case we have $\mathfrak{m}_x \in \ass_{\oh_{X,x}}\mathscr{G}_x$ and thus, $x \in \ass \mathscr{G}$.
\end{proof}

Next we prove that the condition is also sufficient, by constructing a generator $\mathscr{G}$ for $\qcoh_X$ with $\ass \mathscr{G}=\ass \oh_X$. 

\begin{prop} \label{prop:TorFreeGen}
Let $X$ be a Noetherian scheme. Then $\qcoh_X$ admits a generator $\mathscr{G}$ with $\ass \mathscr{G}=\ass \oh_X$.
\end{prop}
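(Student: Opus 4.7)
The plan is to exploit the fact that $X$ is Noetherian (hence quasi-compact) to produce $\mathscr{G}$ as a direct sum of coherent subsheaves of the structure sheaf, which automatically controls the associated points, and then use a classical extension-of-sections lemma to ensure the resulting sheaf is in fact a generator.

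More concretely, I will fix a finite affine open cover $X = U_1 \cup \dots \cup U_m$. For each $i$, let $\mathscr{J}_i \subseteq \oh_X$ be the coherent ideal sheaf of the reduced closed subscheme structure on $Z_i := X \setminus U_i$, and set
\[ \mathscr{G} := \bigoplus_{i=1}^{m} \bigoplus_{n=0}^{\infty} \mathscr{J}_i^{\,n}, \]
with the convention $\mathscr{J}_i^{\,0} = \oh_X$. Since every $\mathscr{J}_i^{\,n}$ is a subsheaf of $\oh_X$, the whole sum $\mathscr{G}$ embeds into a direct sum of copies of $\oh_X$, so by the results on associated points of direct sums and subsheaves recorded in Corollary~\ref{cor:AssSvazkuBasic} we immediately get $\ass\mathscr{G} \subseteq \ass\oh_X$; the reverse inclusion is free because the summand $\mathscr{J}_i^{\,0} = \oh_X$ appears in $\mathscr{G}$ (and is in fact also forced by Lemma~\ref{lem:AssGeneratorMin} once we know $\mathscr{G}$ generates).

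To show $\mathscr{G}$ is a generator, it suffices, since $X$ is Noetherian and every quasi-coherent sheaf is a directed union of its coherent subsheaves, to exhibit an epimorphism $\mathscr{G}^{\oplus I} \twoheadrightarrow \mathscr{F}$ for each coherent $\mathscr{F}$. For each $i$, the restriction $\mathscr{F}|_{U_i}$ is a coherent sheaf on the affine $U_i$, so it is generated by finitely many global sections $s_{i,1},\dots,s_{i,k_i} \in \mathscr{F}(U_i)$. The key input is the classical extension lemma (Hartshorne, Exercise II.5.15): for $\mathscr{F}$ coherent and $\mathscr{J}$ a coherent ideal with vanishing locus $Z$, the canonical map $\varinjlim_n \hom_X(\mathscr{J}^n,\mathscr{F}) \to \mathscr{F}(X \setminus Z)$ is bijective. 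Applied to $\mathscr{J}_i$ and each $s_{i,j}$, this yields $n_{i,j} \geq 0$ and a morphism $\varphi_{i,j}\colon \mathscr{J}_i^{\,n_{i,j}} \to \mathscr{F}$ whose restriction to $U_i$ (using $\mathscr{J}_i^{\,n_{i,j}}|_{U_i} = \oh_{U_i}$) is $s_{i,j}$. The combined morphism
\[ \varphi := \bigoplus_{i,j} \varphi_{i,j}\colon \bigoplus_{i,j} \mathscr{J}_i^{\,n_{i,j}} \longrightarrow \mathscr{F} \]
factors through a finite subsum of $\mathscr{G}$ and is surjective at every stalk: a point $x \in X$ lies in some $U_i$, and the stalks $(\varphi_{i,j})_x$ at $x$ are precisely the chosen generators of $\mathscr{F}_x$.

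The expected obstacle is really just verifying the extension lemma carefully in the stated generality, but this is standard for Noetherian schemes and coherent $\mathscr{F}$; all other steps are formal manipulations with associated points and with the fact that local surjectivity of a sheaf map is surjectivity.
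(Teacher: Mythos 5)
Your proof is correct and follows essentially the same route as the paper: fix a finite affine cover, take the ideal sheaves $\mathscr{J}_i$ of the complements, form $\mathscr{G}=\bigoplus_{i,n}\mathscr{J}_i^n$, invoke the extension-of-sections lemma (the paper's Lemma~\ref{lem:PowersOfI}) to lift local generators of a coherent $\mathscr{F}$ to maps $\mathscr{J}_i^n\to\mathscr{F}$, and conclude $\ass\mathscr{G}=\ass\oh_X$ from $\mathscr{J}_i^n\subseteq\oh_X$ together with Lemma~\ref{lem:AssGeneratorMin}. The only stylistic point worth noting is that the extension statement you quote is the paper's Lemma~\ref{lem:PowersOfI} (\cite[Tag 01YB]{stacks}) rather than Hartshorne Exercise~II.5.15, which is the statement that coherent sheaves extend from open subschemes.
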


\begin{proof}
Fix an affine open cover $X=U_1 \cup U_2 \cup \dots \cup U_k$ of $X$, and quasi-coherent ideals $\mathscr{I}_i \subseteq \oh_X$ corresponding to the closed subschemes $Z_i=X \setminus U_i$ (e.g. taken with the reduced closed subscheme structure), $i=1, 2, \dots, k$.

Given a coherent sheaf $\mathscr{F}$ and a section $s \in \mathscr{F}(U_i)$, by Lemma~\ref{lem:PowersOfI}, there is a positive integer $n$ and a morphism $\varphi: \mathscr{I}_i^{n} \rightarrow \mathscr{F}$ such that $s$ is in the image of $\varphi$. Summing up these contributions (over all $i$ and generating sets of $\mathscr{F}(U_i)$), it follows that there is an epimorphism of the form $\bigoplus_{i, j}\mathscr{I}_i^{n_{ij}}\rightarrow \mathscr{F}$ for some integers $n_{ij} \geq 0$. 

Thus, the sheaf $\mathscr{G}=\bigoplus_{i, n}\mathscr{I}_i^n$ is a generator for $\qcoh_X$. Then $\ass \mathscr{G} \subseteq \ass \oh_X$ folows from the fact that $\mathscr{I}_i^n \subseteq \oh_X$ for all $i$ and $n$, and the converse inclusion follows from Lemma~\ref{lem:AssGeneratorMin}. 
\end{proof}

Let us now discuss how the construction of cotilting objects from Theorem~\ref{thm:ClassificationViaTP} is related to the construction of cotilting modules from~\cite{St2}. Suppose that $Y \subseteq X$ is a specialization closed subset. Denote by $\mathcal{I}(Y)$ the class of all injective quasi-coherent sheaves $\mathscr{E}$ with $\ass \mathscr{E} \cap Y = \emptyset$. That is, $\mathcal{I}(Y)$ consists of all the injectives contained in $\mathcal{F}(Y)$.

\begin{lem}\label{lem:EpiCover}
Suppose that $\ass \oh_X \cap Y = \emptyset$. Then every $\mathcal{F}(Y)$-cover $f\colon \mathscr{F} \to \mathscr{J}$ of an injective quasi-coherent sheaf $\mathscr{J}$ is an epimorphism and it is also an $\mathcal{I}(Y)$-cover.
\end{lem}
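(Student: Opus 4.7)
The plan is to deduce both claims from the stronger observation that $\mathscr{F}$ itself is injective, i.e.\ that $\mathscr{F} \in \mathcal{I}(Y)$. The first step is a direct application of Wakamatsu's Lemma~\ref{lem:Wakamatsu}: the class $\mathcal{F}(Y)$ is a torsion-free class and hence closed under extensions, and thanks to the hypothesis $\ass \oh_X \cap Y = \emptyset$ combined with Proposition~\ref{prop:TorFreeGen}, it contains a generator of $\qcoh_X$. Wakamatsu's lemma then immediately gives that $f$ is an epimorphism (and, as a byproduct, that $\ker f \in \mathcal{F}(Y)^{\perp}$).

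The heart of the argument is to prove that $\mathscr{F}$ itself is injective, and the key ingredient is that the torsion pair $(\mathcal{T}(Y), \mathcal{F}(Y))$ is hereditary by Proposition~\ref{prop:TYFY}. This implies that $\mathcal{F}(Y)$ is closed under injective envelopes (cf.~\cite[VI.3.2]{Stenstrom}), so the injective envelope $\iota \colon \mathscr{F} \hookrightarrow E(\mathscr{F})$ lies in $\mathcal{F}(Y)$. Using injectivity of $\mathscr{J}$, I would extend $f$ along $\iota$ to $\widetilde{f}\colon E(\mathscr{F}) \to \mathscr{J}$, and then use the precover property applied to $\widetilde{f}$ (which is licit because $E(\mathscr{F}) \in \mathcal{F}(Y)$) to obtain a factorisation $\widetilde{f} = f g$ for some $g\colon E(\mathscr{F}) \to \mathscr{F}$. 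The composite $g\iota \in \ehom(\mathscr{F})$ then satisfies $f (g\iota) = \widetilde{f}\iota = f$, so the cover-minimality condition forces $g\iota$ to be an automorphism. Consequently $\iota$ is a split monomorphism, and since $\iota$ is essential, it must be an isomorphism. Thus $\mathscr{F} = E(\mathscr{F})$ is injective.

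With $\mathscr{F} \in \mathcal{I}(Y)$ in hand, the $\mathcal{I}(Y)$-cover claim is formal: precover surjectivity for the larger class $\mathcal{F}(Y)$ restricts to the subclass $\mathcal{I}(Y)\subseteq\mathcal{F}(Y)$, while the cover-minimality condition is intrinsic to $f$ and $\mathscr{F}$ alone. The main obstacle is the middle paragraph, where one must combine the precover property against $E(\mathscr{F})$ (available only because $\mathcal{F}(Y)$ is hereditary), the cover-minimality condition, and essentiality of $\iota$, arranged so as to make $\mathscr{F}$ swallow its own injective envelope. All the rest is reading off definitions.
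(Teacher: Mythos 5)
Your proof is correct and follows essentially the same route as the paper's: both exploit that $\mathcal{F}(Y)$ is a hereditary torsion-free class to place $E(\mathscr{F})$ in $\mathcal{I}(Y)$, extend $f$ along the envelope using injectivity of $\mathscr{J}$, and then deduce from the cover-minimality of $f$ that $\mathscr{F}$ is injective. The only differences are cosmetic: for the epimorphism you invoke Wakamatsu's lemma, whereas the paper argues more directly (a generator $\mathscr{G} \in \mathcal{F}(Y)$ gives an epimorphism $\mathscr{G}^{\oplus I} \twoheadrightarrow \mathscr{J}$ that must factor through $f$); and for the injectivity of $\mathscr{F}$ you unpack the ``a cover is a retract of any precover'' argument by hand and push it one step further via essentiality of $\iota$ to conclude $\iota$ is an isomorphism, while the paper simply cites the retract statement to get that $\mathscr{F}$ is a summand of $E(\mathscr{F})$.
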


\begin{proof}
By Proposition~\ref{prop:TorFreeGen}, we may choose a generator $\mathscr{G} \in \mathcal{F}(Y)$. Then there is an epimorphism $g\colon \mathscr{G}^{\oplus I} \rightarrow \mathscr{J}$ which factors through $f$, which shows that $f$ is an epimorphism.

Since $\mathcal{F}(Y)$ is a hereditary torsion-free class, an injective envelope $i\colon \mathscr{F} \to E(\mathscr{F})$ has $E(\mathscr{F}) \in \mathcal{I}(Y)$. Since $\mathscr{J}$ is injective, there exists a morphism $h\colon E(\mathscr{F}) \to \mathscr{J}$ such that $f = h \circ i$. It is immediate that $h$ is also an $\mathcal{F}(Y)$-precover of $\mathscr{J}$. Since a cover is a retract of any precover (\cite[Lemma 5.8]{G-T}), $\mathscr{F}$ is a direct summand of $E(\mathscr{F})$ and as such $\mathscr{F} \in \mathcal{I}(Y)$.
\end{proof}

Now recall the construction from~\cite{St2} or, to be more precise, a direct generalization of it to non-affine Noetherian schemes.

\begin{constr}\label{constr:ImitatedCotilting}
Fix a specialization closed subset $Y \subseteq X$ that does not contain any associated point of $X$. For each point $y \in Y$, consider an exact sequence

\vspace{0.25cm}
\begin{adjustbox}{max totalsize={1\textwidth}{.9\textheight},center}
\begin{tikzcd}
0 \ar[r]& \mathscr{K}(y) \ar[r, "\alpha_y"]& \mathscr{I}(y) \ar[r, "\beta_y"] & \mathscr{J}(y) \ar[r] & 0,
\end{tikzcd}
\end{adjustbox}
\vspace{0.15cm}

\noindent where $\beta_y\colon \mathscr{I}(y) \longrightarrow \mathscr{J}(y)$ is an $\mathcal{F}(Y)$-cover (recall that $\mathscr{J}(y)$ is the indecomposable injective with support $\overline{\{y\}}$, see \ref{inj}). In particular, $\mathscr{I}(y)$ is injective by Lemma~\ref{lem:EpiCover}.
Define quasi-coherent sheaves
$$\mathscr{K}_Y:=\prod_{y \in Y}\mathscr{K}(y), \quad\text{ and }\quad \mathscr{J}_Y:= \prod_{x \in X \setminus Y}\mathscr{J}(x),$$
and finally, put
$$\mathscr{C}_Y:=\mathscr{K}_Y \oplus \mathscr{J}_Y.$$
\end{constr}

Now we quickly see from Lemma~\ref{lem:Wakamatsu} that $\mathscr{K}(y) \in \mathcal{F}(Y)^\perp$. If we take the product of all the above short exact sequences for all $y\in Y$ together with the trivial short exact sequence $0 \to 0 \to \mathscr{J}_Y \to \mathscr{J}_Y \to 0$, we get an exact (by Proposition~\ref{prop:ExactProd}) sequence

\vspace{0.25cm}
\begin{adjustbox}{max totalsize={1\textwidth}{.9\textheight},center}
	\begin{tikzcd}
		0 \ar[r]& \mathscr{K}_Y \ar[r]& \prod\limits_{y \in Y} \mathscr{I}(y) \oplus \mathscr{J}_Y \ar[r] & \prod\limits_{x \in X} \mathscr{J}(x) \ar[r] & 0,
	\end{tikzcd}
\end{adjustbox}
\vspace{0.15cm}

\noindent
It follows from the proof of Theorem~\ref{thm:CharTiltingClasses} that $\mathscr{C}_Y \oplus \prod_{y \in Y} \mathscr{I}(y)$ is a cotilting sheaf associated with $\mathcal{F}(Y)$. Since $\mathscr{I}(y) \in \Prodcl({\mathscr{C}_Y})$ for all $y\in Y$, it follows that also $\mathscr{C}_Y$ is a cotilting sheaf associated with $\mathcal{F}(Y)$.

\begin{rem}
The arguments for \cite[Theorems 5.3 and 5.4]{St2} generalize in a straightforward way to our setting as well. In particular, the indecomposable sheaves in $\Prodcl(\mathscr{C}_Y)$ are precisely
\[
\mathscr{K}(y), \; y\in Y
\quad\text{ and }\quad
\mathscr{J}(x), \; x\in X\setminus Y\;.
\]

This in particular says that the indecomposable injectives in the HRS-tilted abelian categories $\mathcal{H}$ (Definition~\ref{def:HRS-tilt}) induced by the cotilting objects $\mathscr{C}_Y$ correspond bijectively to the points of $X$.
\end{rem}

We summarize the above discussion as follows.

\begin{thm} \label{thm:classification}
Let $X$ be a Noetherian scheme. Then the assignment
\[ Y \mapsto \mathcal{F}(Y) \]
induces a bijective correspondence between

\begin{enumerate}[(1)]
\item the specialization closed subsets $Y\subseteq X$ such that $\ass \oh_X \cap Y = \emptyset,$ and
\item the hereditary torsion-free classes in $\qcoh_X$ associated with a cotilting sheaf.
\end{enumerate}

If, moreover, $X$ has an ample family of line bundles, then the image of the above correspondence can be also described as

\begin{enumerate}[(1)]
\setcounter{enumi}{2}
\item the torsion-free classes $\mathcal{F}$ in $\qcoh_X$ associated with a cotilting sheaf and such that $\mathscr{L}\otimes\mathcal{F} = \mathcal{F}$ for each line bundle $\mathscr{L}$.
\end{enumerate}
\end{thm}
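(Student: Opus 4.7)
The plan is to reduce everything to results already assembled in the excerpt: the classification of hereditary torsion pairs in $\qcoh_X$ (Proposition~\ref{prop:TYFY}), the characterization of cotilting classes as torsion-free classes containing a generator and closed under direct limits (Theorem~\ref{thm:CharTiltingClasses}), the generator construction of Proposition~\ref{prop:TorFreeGen}, and the tensor/injective-envelope criterion from Corollary~\ref{cor:TFQcoh}.

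First I would establish the bijection between (1) and (2). By Proposition~\ref{prop:TYFY} the assignment $Y\mapsto\mathcal{F}(Y)$ is already injective on all specialization closed subsets, so injectivity is automatic. It remains to identify the image. Given a hereditary torsion-free class $\mathcal{F}=\mathcal{F}(Y)$, Corollary~\ref{cor:limFY=FY} tells us it is automatically closed under direct limits, so by Theorem~\ref{thm:CharTiltingClasses} the class $\mathcal{F}(Y)$ is a cotilting class if and only if it contains a generator of $\qcoh_X$. I would then argue the "generator" condition is equivalent to $\ass\oh_X\cap Y=\emptyset$. For necessity, if $\mathscr{G}\in\mathcal{F}(Y)$ is a generator then $\ass\mathscr{G}\cap Y=\emptyset$ by definition of $\mathcal{F}(Y)$, while Lemma~\ref{lem:AssGeneratorMin} forces $\ass\oh_X\subseteq\ass\mathscr{G}$; combining gives $\ass\oh_X\cap Y=\emptyset$. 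For sufficiency, Proposition~\ref{prop:TorFreeGen} produces a generator $\mathscr{G}$ with $\ass\mathscr{G}=\ass\oh_X$, and by assumption $\ass\oh_X\cap Y=\emptyset$, so $\mathscr{G}\in\mathcal{F}(Y)$.

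For the equivalence of (2) with (3) under the ample-family hypothesis, I would argue that the two classes of torsion-free subcategories coincide, not merely that they are in bijection via $Y\mapsto\mathcal{F}(Y)$. If $\mathcal{F}$ is a cotilting torsion-free class that is hereditary, then it is closed under injective envelopes by \cite[VI.3.2]{Stenstrom} (cited in the proof of Corollary~\ref{cor:TFQcoh}), hence by the implication (1)$\Rightarrow$(2) of Corollary~\ref{cor:TFQcoh} it satisfies $\mathscr{L}\otimes\mathcal{F}=\mathcal{F}$ for every line bundle $\mathscr{L}$, so $\mathcal{F}\in(3)$. Conversely, if $\mathcal{F}$ is a cotilting torsion-free class with $\mathscr{L}\otimes\mathcal{F}=\mathcal{F}$ for all line bundles, then Theorem~\ref{thm:CotiltingPureInjective} gives closure under direct limits, and the implication (2)$\Rightarrow$(1) of Corollary~\ref{cor:TFQcoh} shows $\mathcal{F}$ is closed under injective envelopes; hence the associated torsion pair is hereditary (again by \cite[VI.3.2]{Stenstrom}), so $\mathcal{F}\in(2)$.

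There is no real obstacle here: the theorem is essentially a packaging result that aligns the bijection of Proposition~\ref{prop:TYFY}, restricted appropriately, with the cotilting criterion of Theorem~\ref{thm:CharTiltingClasses} and with the tensor-ideal reformulation of Corollary~\ref{cor:TFQcoh}. The one place where care is needed is making sure that under the ample-family assumption the description (3) really captures the same torsion-free classes (and not just the same bijection up to an index set): this hinges on the fact that cotilting classes are automatically closed under direct limits, so that the hypothesis in Corollary~\ref{cor:TFQcoh} can be applied in both directions without extra assumptions.
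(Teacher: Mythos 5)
Your proof is correct and matches the paper's strategy; the only cosmetic difference is that you assemble the bijection directly at the level of $\qcoh_X$ (Proposition~\ref{prop:TYFY}, Corollary~\ref{cor:limFY=FY}, Theorem~\ref{thm:CharTiltingClasses}), whereas the paper composes their $\coh_X$-level repackagings (Theorem~\ref{thm:ClassificationViaTP} and Proposition~\ref{prop:T0YF0Y}), which are obtained from the same $\qcoh_X$-level statements via Lemma~\ref{lem:TorsionPairsQcohvsCoh}. The use of Lemma~\ref{lem:AssGeneratorMin} and Proposition~\ref{prop:TorFreeGen} for the generator condition, and of Theorem~\ref{thm:CotiltingPureInjective} together with Corollary~\ref{cor:TFQcoh} for the equivalence of (2) and (3), is exactly as in the paper.
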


\begin{proof}
The first part immediately follows as a combination of Theorem~\ref{thm:ClassificationViaTP} and Propositions~\ref{prop:T0YF0Y}  and~\ref{prop:TorFreeGen}, with a description of the corresponding cotilting sheaf given by the above discussion. The second part then follows from Theorem~\ref{thm:CotiltingPureInjective} and Corollary~\ref{cor:TFQcoh}.
\end{proof}

Before we proceed to examining the $1$-dimensional case of the developed theory in more detail, let us draw some consequences of Theorem~\ref{thm:classification} for the class of the (``classically'') torsion-free quasi-coherent sheaves.

Recall that a module $M$ over a Noetherian ring $R$ is called \emph{torsion-free} if for any non-zero divisor $r$ and any $m \in M$, $rm=0$ implies $m=0$. In other words, any zero divisor $r\in R$ of $M$ is a zero divisor of $R$ or, equivalently by~\cite[Proposition 1.2.1]{BrunsHerzog}, any associated prime of $M$ is contained in some associated prime of $R$. If we denote by $Y\subseteq\spec R$ the largest specialization closed subset not containing any associated prime of $R$ (i.e.\ $Y=X \setminus Z$, where $Z$ the set of all $\mathfrak{p}\in\spec R$ that specialize to an associated prime of $R$), it follows that the class of torsion-free modules is precisely the hereditary torsion-free class $\mathcal{F}(Y)$ given by Proposition~\ref{prop:TYFY}.

In order to generalize torsion-freeness to quasi-coherent sheaves on non-affine schemes, the latter description using the associated points is more useful as the classical description using elements is not local in general.

\begin{example}
Suppose that $M$ is a torsion-free $R$-module and $\mathfrak{p}$ is an associated prime of $M$ that is not an associated prime of $R$. Let $\mathfrak{q}_1,\dots\mathfrak{q}_n\in \ass R$ be all the associated primes of $R$ that contain $\mathfrak{p}$ and are minimal such. A particular example of this situation is $R=k[x, y, z]/(xz, yz, z^2)$, $M=R/(y, z)$ and $(y, z)=\mathfrak{p}\subseteq\mathfrak{q}_1=(x, y, z)$.
Picking $f \in (\bigcap_i\mathfrak{q}_i) \setminus \mathfrak{p}$, $M_f$ will not remain torsion-free as an $R_f$-module since $\mathfrak{p}_f$ is not contained in any associated prime of $R_f$. Similarly, by localization at $\mathfrak{p}$ we see that torsion-freeness is not stalk-local either.
\end{example}

\begin{deff}
A quasi-coherent sheaf $\mathscr{F}$ on a Noetherian scheme $X$ is called \emph{torsion-free} if every associated point $x$ of $\mathscr{F}$ is a generization of an associated point of $X$, or equivalently $\mathscr{F} \in \mathcal{F}(Y)$, where $Y\subseteq X$ is the largest specialization closed subset not containing any associated point of $X$.
\end{deff} 

\begin{rem}
Note, however, that the torsion-free condition is affine-local over schemes with no embedded points, in which case the defining condition degenerates into $\ass \mathscr{F} \subseteq \ass \oh_X$. In particular, when $X$ is an integral Noetherian scheme, one recovers the usual (affine-local or stalkwise) definition of torsion-free quasi-coherent sheaves (\cite[Tag 0AVQ]{stacks}). 
\end{rem}


The following is then an immediate consequence of Theorem~\ref{thm:classification}, and generalizes and supplements \cite[Theorem~4.7]{odabasi}.

\begin{cor}\label{cor:TorFreeCovers}
Let $X$ be a Noetherian scheme. The class of torsion-free quasi-coherent sheaves on $X$ is covering and generating.
In fact, it is a cotilting class which is smallest among the cotilting classes closed under injective envelopes.
\end{cor}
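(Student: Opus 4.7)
The plan is to identify the class of torsion-free quasi-coherent sheaves with the hereditary torsion-free class $\mathcal{F}(Y_0)$ for a specific specialization-closed subset $Y_0\subseteq X$ and then read the corollary off the classification already in place. By the definition immediately preceding the corollary, the class of torsion-free sheaves equals $\mathcal{F}(Y_0)$, where $Y_0\subseteq X$ is the largest specialization-closed subset disjoint from $\ass\oh_X$ (well-defined since an arbitrary union of specialization-closed sets avoiding $\ass\oh_X$ still avoids $\ass\oh_X$). Once this identification is made, essentially every needed property has been proved in the preceding sections.

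The key steps, in order, are as follows. \emph{Cotilting and injective-envelope-closed:} Apply Theorem~\ref{thm:classification} to $Y_0$; since $\ass\oh_X\cap Y_0=\emptyset$, $\mathcal{F}(Y_0)$ is a cotilting class, and it is hereditary by Proposition~\ref{prop:TYFY}, hence closed under injective envelopes. \emph{Generating:} Use Proposition~\ref{prop:TorFreeGen} to produce a generator $\mathscr{G}$ of $\qcoh_X$ with $\ass\mathscr{G}=\ass\oh_X$; then $\mathscr{G}\in\mathcal{F}(Y_0)$. \emph{Covering:} Apply El Bashir's theorem (Proposition~\ref{prop:ElBashir}). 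The class $\mathcal{F}(Y_0)$ is closed under direct sums (being torsion-free in the Grothendieck category $\qcoh_X$ by Lemma~\ref{lem:cotilting-TF}) and under direct limits by Corollary~\ref{cor:limFY=FY}; by Lemma~\ref{lem:TorsionPairsQcohvsCoh}, $\mathcal{F}(Y_0)=\Limcl\mathcal{F}_0(Y_0)$, and $\mathcal{F}_0(Y_0)\subseteq\coh_X$ is small up to isomorphism, so Proposition~\ref{prop:ElBashir} gives covers. \emph{Minimality:} Let $\mathcal{F}'$ be any cotilting class closed under injective envelopes. Since $\mathcal{F}'$ is a torsion-free class (Lemma~\ref{lem:cotilting-TF}) closed under injective envelopes, the associated torsion pair is hereditary by \cite[VI.3.2]{Stenstrom}. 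Theorem~\ref{thm:classification} then yields $\mathcal{F}'=\mathcal{F}(Y')$ for a specialization-closed $Y'$ with $\ass\oh_X\cap Y'=\emptyset$. Maximality of $Y_0$ forces $Y'\subseteq Y_0$, and hence $\mathcal{F}(Y_0)\subseteq\mathcal{F}(Y')$, giving the desired minimality.

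The proof is essentially a combination of earlier results; there is no genuine obstacle beyond correctly lining them up. The only place one might pause is the minimality step, which hinges on the (well-known but external) fact that a torsion-free class in a Grothendieck category is closed under injective envelopes exactly when the associated torsion pair is hereditary---this is what lets the classification of Theorem~\ref{thm:classification} apply to an \emph{arbitrary} cotilting class with the closure property, rather than only to classes already known to be hereditary. With that invoked, the inclusion-reversing correspondence $Y\mapsto\mathcal{F}(Y)$ immediately converts maximality of $Y_0$ into minimality of $\mathcal{F}(Y_0)$.
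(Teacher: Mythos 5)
Your proof is correct and takes essentially the same route as the paper, which states the corollary as an immediate consequence of Theorem~\ref{thm:classification} (together with Propositions~\ref{prop:TorFreeGen} and~\ref{prop:ElBashir} and the inclusion-reversing nature of $Y\mapsto\mathcal{F}(Y)$); you have simply filled in the details the authors leave implicit.
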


\begin{rem}
Let us compare Corollary~\ref{cor:TorFreeCovers} to the case of covering and generating properties of flat sheaves (which are always torsion-free).

A scheme $X$ is called \emph{semi-separated} if the diagonal morphism $\Delta: X \rightarrow X \times_{\mathbb{Z}} X$ is affine, i.e. if the intersection of every pair of affine open subsets of $X$ is again affine. Murfet showed in \cite[Corollary~3.21]{Murfet} that on a semi-separated Noetherian scheme, the class of flat quasi-coherent sheaves is generating, and Enochs and Estrada showed in \cite{EnochsEstrada} that the class is always covering. On the other hand, a recent result in~\cite{SlavikSt} shows that semi-separatedness is also necessary for $\qcoh_X$ to have a flat generator if $X$ is Noetherian. That is, one can take epimorphic flat covers if and only if $X$ is semi-separated.

Corollary~\ref{cor:TorFreeCovers} then shows that in the case of Noetherian schemes that are not necessarily semi-separated, one can still always take at least epimorphic torsion-free covers.
\end{rem}

We conclude the paper by discussing the developed theory in the case of $1$-dimensional schemes in more detail.

\begin{example}\label{example:curves}
Let $X$ be a $1$-dimensional Noetherian scheme, e.g. a quasi-projective curve over a field. 
Fix a specialization closed subset $Y \subseteq X$ avoiding the associated points of $X$. We aim to describe the cotilting sheaf $\mathscr{C}_Y$ from Construction~\ref{constr:ImitatedCotilting} more explicitly.

Fix a point $y \in Y$ and denote by $i\colon \spec \oh_{X,y} \rightarrow X$ the canonical morphism (described in detail in \ref{inj}). Denote by $\widehat\oh_{X,y}$ the $\mathfrak{m}_y$-adic completion of $\oh_{X,y}$, by $j':\spec \widehat \oh_{X,y}\rightarrow \spec \oh_{X,y}$ the map induced by the completion, and set $j(=j_y)=ij'$. 

The local ring $\oh_{X,y}$ satisfies $\mathfrak{m}_y \notin \ass \oh_{X,y}$. In particular, we have 
$$1 \geq \dim \oh_{X,y} \geq \depth \oh_{X,y}\geq 1.$$
It follows that $\oh_{X,y}$ is a local \CM ring and thus, $\widehat\oh_{X,y}$ is a local complete \CM ring, hence it admits a canonical module $\omega_y$ (\cite[Corollary 3.3.8]{BrunsHerzog}). By \cite[Theorem 3.1.17]{BrunsHerzog}, we have $\injdim \omega_y= \depth \widehat\oh_{X,y}=1$. In fact, $\omega_y$ admits an injective resolution

\vspace{0.25cm}
\begin{adjustbox}{max totalsize={1\textwidth}{.9\textheight},center}
	\begin{tikzcd}
\varepsilon:	 &	0 \ar[r] & \omega_y \ar[r] &  E^0 \ar[r, "\pi"] & E^1 \ar[r] & 0\;, & 
	\end{tikzcd}
\end{adjustbox}
\vspace{0.15cm}

\noindent where 
$$E^0=\bigoplus\limits_{\begin{subarray}{c} \mathfrak{p} \in \spec \widehat\oh_{X,y}\\ \height \mathfrak{p}=0\end{subarray}}E(\widehat \oh_{X,y}/\mathfrak{p}) \;\;\text{ and }\;\;E^1=E_{\widehat \oh_{X,y}}(\widehat \oh_{X,y}/ \widehat{\mathfrak{m}}_y) \simeq E_{\oh_{X,y}}(\kappa(y)).$$ 
In particular, $E^0 \in \mathcal{I}(\{\widehat{\mathfrak{m}}_y\})$. Note that all the finite $\widehat \oh_{X,y}$-modules are pure-injective: this follows from \cite[Proposition 4.3.29]{Prest} and the Matlis duality, since every finite $\widehat \oh_{X,y}$-module is its own double dual. 

First we aim to show that $\pi\colon E^0 \rightarrow E^1$ is an $\mathcal{F}(\{\widehat{\mathfrak{m}}_y\})$-cover. Given $M \in \mathcal{F}(\{\widehat{\mathfrak{m}}_y\}),$ we may express $M=\varinjlim_i M_i,$ where $M_i$ runs over all the finite $\widehat\oh_{X,y}$-submodules of $M$. Since $\widehat{\mathfrak{m}}_y \notin \ass M_i,$ the modules $M_i$ are maximal \CM (unless $0$) and thus, $\ext^1_{\widehat \oh_{X,y}}(M_i, \omega_y)=0$ (\cite[Proposition 3.3.3]{BrunsHerzog}). However, since $\omega_y$ is pure-injective, \cite[Lemma 6.28]{G-T} gives
$$\ext^1_{\widehat \oh_{X,y}}(M, \omega_y)=\ext^1_{\widehat \oh_{X,y}}(\varinjlim_i M_i, \omega_y)\simeq \varprojlim_i \ext^1_{\widehat \oh_{X,y}}(M_i, \omega_y)=0.$$ Thus, $\pi$ is an $\mathcal{F}(\{\widehat{\mathfrak{m}}_y\})$-precover. By Proposition~\ref{prop:ElBashir}, the $\mathcal{F}(\{\widehat{\mathfrak{m}}_y\})$-cover of $E(\kappa(y))$ exists, and by \cite[Lemma 5.8]{G-T}, its domain is a direct summand in any precover with a complement in the kernel of the precover. On the other hand, $\omega_y$ is indecomposable since $\ehom_{\widehat\oh_{X,y}}(\omega_y)\simeq \widehat\oh_{X,y}$ is a local ring (\cite[Theorem~3.3.4]{BrunsHerzog}). It follows that $\pi$ is an $\mathcal{I}(\{\widehat{\mathfrak{m}}_y\})$-cover.

For the purposes of this example, let us identify quasi-coherent sheaves over affine schemes $\spec{\oh_{X,y}}, \spec{\widehat \oh_{X,y}}$ with the respective module categories via the global sections functor. Applying $j_{*}$ to $\varepsilon$ in this sense yields a short exact sequence of quasi-coherent sheaves on $X$

\vspace{0.25cm}
\begin{adjustbox}{max totalsize={1\textwidth}{.9\textheight},center}
	\begin{tikzcd}
	0 \ar[r] & j_*(\omega_y) \ar[r] &  j_*(E^0) \ar[r, "j_*(\pi)"] & \mathscr{J}(y) \ar[r] & 0  
	\end{tikzcd}
\end{adjustbox}
\vspace{0.15cm}

\noindent
(note that $j_*$ is exact: both $i, j'$ are affine morphisms, hence $j$ is, therefore the higher direct images $R^ij_*$ vanish, \cite[Tag 073H]{stacks}).

The adjunction $(j^*, j_*)$ yields a commutative diagram 

\vspace{0.25cm}
\begin{adjustbox}{max totalsize={1\textwidth}{.9\textheight},center}
	\begin{tikzcd} 
	\hom_{\widehat \oh_{X,y}}(j^*(\mathscr{F}), E^0) \ar[r, "\simeq "] \ar[d, "\pi\circ -"] & \hom_{X}(\mathscr{F}, j_*(E^0)) \ar[d, "j_*(\pi)\circ -"] \\
	\hom_{\widehat \oh_{X,y}}(j^*(\mathscr{F}), E^1) \ar[r, "\simeq "] & \hom_{X}(\mathscr{F}, j_*(E^1))
	\end{tikzcd}
\end{adjustbox}
\vspace{0.15cm}

for every $\mathscr{F} \in \qcoh_X$ and thus, to show that $j_*(\pi)$ is an $\mathcal{F}(Y)$-precover, it remains to show that $j^*(\mathcal{F}(Y)) \subseteq \mathcal{F}(\{\widehat{\mathfrak{m}}_y\})$. The fact that $i^*(\mathcal{F}(Y)) \subseteq \mathcal{F}(\{\mathfrak{m}_y\})$ is clear from Definition~\ref{def:suppass}, as $i^*$ is identified by the above convention with the stalk functor at $y$. The inclusion $j'^*(\mathcal{F}_0(\{\mathfrak{m}_y\})) \subseteq \mathcal{F}_0(\{\widehat{\mathfrak{m}}_y\})$ amounts to the fact that the $\mathfrak{m}_y$-adic completion preserves maximal \CM modules, and finally, $j'^*(\mathcal{F}(\{\mathfrak{m}_y\})) \subseteq \mathcal{F}(\{\widehat{\mathfrak{m}}_y\})$ follows, since $j'^{*}$ preserves direct limits.

To show that $j_*(\pi)$ is an $\mathcal{F}(Y)$-cover, it is again enough to show that $j_*(\omega_y)$ is indecomposable. Since $i_*$ is fully faithful (by the fact that the counit $i^*i_* \Rightarrow \mathrm{Id}_{\Mod\widehat\oh_{X,y}}$ is an isomorphism), this amounts to showing that $j'_*(\omega_y)$ is indecomposable. Equivalently, one needs to show that $\omega_y$ is indecomposable as an $\oh_{X,y}$-module. Suppose not and consider a non-trivial decomposition $\omega_y=M\oplus N$ as an $\oh_{X,y}$-module. Taking the $\mathfrak{m}_y$-adic completion yields a non-trivial decomposition $\omega_y \simeq \widehat{\omega_y}=\widehat M\oplus \widehat N$ (note that $\widehat M, \widehat N \neq 0$: this follows e.g. by $\mathfrak{m}_y^k \omega_y=\mathfrak{m}_y^k M\oplus \mathfrak{m}_y^k N$ and $\bigcap_k \mathfrak{m}_y^k \omega_y=0$). This contradicts the indecomposability of $\omega_y$ as an $\widehat\oh_{X,y}$-module.

It follows that $j_*(\pi)$ is an $\mathcal{F}(Y)$-cover of $\mathscr{J}(y)$ and hence, we have 
$$\mathscr{K}(y) \simeq j_*(\omega_y)\;.$$
Altogether, the corresponding cotilting sheaf is of the form
$$\mathscr{C}_Y = \prod_{y \in Y} j_{y*}(\omega_y)\times \prod_{x \in X \setminus Y} \mathscr{J}(x)\;.$$
\end{example}

\begin{example}
Let $X = \mathbb{P}^1_k$ be a projective line over a field and $\xi\in X$ be the generic point. Consider a subset $Y \subseteq X$ avoiding $\xi$ (which is automatically spec. closed). Then all the local completed rings $\widehat\oh_{X,y}\simeq k[\![T]\!]$ are further Gorenstein, hence the short exact sequence $\varepsilon$ is now of the form

\vspace{0.25cm}
\begin{adjustbox}{max totalsize={1\textwidth}{.9\textheight},center}
\begin{tikzcd}
0 \ar[r]& \widehat{\oh}_{X,y} \ar[r]& \widehat{\mathscr{Q}}_y \ar[r, "\varphi"]& E_y \ar[r] & 0\;,
\end{tikzcd}
\end{adjustbox}
\vspace{0.15cm}

\noindent where $\widehat{\mathscr{Q}}_y \simeq k(\!(T)\!)$ is the completed fraction field. Thus, we obtain
\[ \mathscr{K}(y) = j_{y*}(\widehat{\oh}_{X,y}),\;\;\; y \in Y. \]
\end{example}

\begin{example} \label{example:P1VsKronecker}
If we consider $X = \mathbb{P}^1_k$ again, there are cotilting sheaves in $\qcoh_X$ whose associated cotilting classes $\mathcal{F}$ are not closed under injective envelopes (equivalently, under twists). Indeed, consider the non-hereditary torsion pair $(\mathcal{T}_0, \mathcal{F}_0)$ in $\coh_X$ from Example~\ref{example:NonHeredTorsion}. The objects of $\mathcal{F}_0$ are precisely finite direct sums of copies of $\oh(n)$, $n<0$. Since $\oh(1)$ is an ample line bundle, $\mathcal{F}_0$ is a generating class in $\coh_X$ and, by Theorem~\ref{thm:ClassificationViaTP}, $\mathcal{F} = \Limcl\mathcal{F}_0$ is a cotilting torsion-free class in $\qcoh_X$.

Let us collect more information about $\mathcal{F}$ to illustrate the theory. First of all, $\mathcal{F}$ consists precisely of (possibly infinite) direct sums of copies of $\oh(n)$, $n<0$. This follows by the same argument as for \cite[Proposition 3.6]{LenzingTransfer}. Furthermore, the torsion pair $(\mathcal{T}, \mathcal{F})$ is split, i.e.\ $\ext^1_\mathcal{A}(\mathcal{F}, \mathcal{T}) = 0$, or equivalently the short exact sequence from Definition~\hyperref[TPDef2]{\ref*{def:torznipar}~(\ref*{TPDef2})} splits for every $A \in \mathcal{A}$. Indeed, using arguments dual to those in Proposition~\ref{prop:CotiltingVSInjective}, one shows that $T = \Sigma^{-1}(\oh_X \oplus \oh(1))$ is a projective generator in the tilted category, so that the tilted category $\mathcal{H}$ is equivalent to $\Mod{R}$, where $R = \ehom_\mathcal{A}(T) = k(\bullet\!\rightrightarrows\!\bullet)$ is the Kronecker algebra (this recovers a special case of Beilinson's equvalences~\cite{Beilinson}). Since $R$ is well-known to be hereditary, Lemma~\hyperref[HRSExtFT]{\ref*{lem:HRS-exts}~(\ref*{HRSExtFT})} implies
\[ \ext^1_\mathcal{A}(\mathcal{F}, \mathcal{T}) \simeq \ext^2_\mathcal{H}(\mathcal{F}, \Sigma^{-1}\mathcal{T}) = 0 \]
(see~\cite[Lemma 2.1 in Chapter~2]{HRS} and also \cite[\S6]{ColpiFuller3} and \cite[Theorem 5.2]{StovicekKernerTrlifaj}).

In particular, $\mathcal{T}\subseteq\mathcal{F}^\perp$. One can check that also $\oh(-1), \oh(-2) \in \mathcal{F}^\perp$ and that $C = \oh(-1) \oplus \oh(-2)$ is a cotilting object cogenerating $\mathcal{F}$. On the other hand, $\oh(n) \notin \mathcal{F}^\perp$ for $n\le -3$ as there exist non-split short exact sequences $0 \to \oh(n) \to \oh(n+1) \oplus \oh(n+1) \to \oh(n+2) \to 0$. Thus,
\[ \mathcal{F}^\perp = \oh(-2) \otimes \mathcal{T} = \{ \mathscr{F} \in \qcoh_X \mid \oh(n) \text{ is not a summand of } \mathscr{F} \text{ for each } n \le -3 \}, \]
and this class has exact products by Proposition~\ref{prop:ExactProd} (in contrast to Example~\ref{example:ExtProducts}).
\end{example}

\appendix

\section{Ext-functors and products in abelian categories}\label{sec:ExtProd}

We start with discussing a few aspects of interaction of the $\ext^1$-functor with infinite products in a general Grothendieck category. This is necessary since, in contrast to module categories, Grothendieck categories do not have exact products in general. That is, given an infinite set $I$, the product functor
$$\prod_{i \in I}\colon \mathcal{A}^{I} \longrightarrow \mathcal{A}$$
preserves kernels, but it may not preserve cokernels. In fact, the exactness fails even for $\qcoh_X$, where $k$ is a field and $X=\mathbb{P}^1_k=\proj{k[x_0, x_1]}$ is the projective line over $k$; see~\cite[Example 4.9]{Krause}.

Let $\mathcal{A}$ be an abelian category and $B_i$, $i \in I$, be a collection of objects in $\mathcal{A}$ such that the product $\prod_{i\in I}B_i$ exists in $\mathcal{A}$. For any additive functor $F$ from $\mathcal{A}$ to the category of abelian groups we obtain a canonical morphism $F(\prod_i B_i) \to \prod_i F(B_i)$ whose components are $F(\pi_i)\colon F(\prod_i B_i) \to F(B_i)$, where $\pi_i\colon \prod_i B_i \to B_i$ are the product projections. Given $A \in \mathcal{A}$, we can of course specialize this to $F = \ext^1_\mathcal{A}(A,-)$:
$$\psi_{A,B_i}\colon \ext^1_\mathcal{A}\Big(A, \prod_{i \in I}B_i\Big) \longrightarrow \prod_{i \in I}\ext^1_\mathcal{A}(A, B_i)\;.$$
Unlike in module categories, this might not be an isomorphism (see Example~\ref{example:ExtProducts} below), but it still is injective. This in fact follows by formally dualizing \cite[Proposition 8.1]{ColpiFuller3}, but we provide a short proof for the reader's convenience.

\begin{prop}\label{prop:ExtMono}
If $\mathcal{A}$ is an abelian category and $A$ and $B_i$, $i \in I$, is a collection of objects such that $\prod_{i\in I}B_i$ exists in $\mathcal{A}$, then the map $\psi_{A,B_i}$ above is injective.
\end{prop}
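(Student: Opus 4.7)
The plan is to prove injectivity by exhibiting a splitting of any short exact sequence representing a class in $\ker \psi_{A,B_i}$. I will use the pushout description of the map $\psi_{A,B_i}$ combined with the universal property of the product.

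First I would fix a class $[\varepsilon] \in \ker \psi_{A,B_i}$ represented by a short exact sequence
\[
\varepsilon\colon 0 \longrightarrow \prod_{i\in I} B_i \overset{\iota}{\longrightarrow} E \overset{p}{\longrightarrow} A \longrightarrow 0,
\]
and recall that the $i$-th component of $\psi_{A,B_i}([\varepsilon])$ is represented by the pushout $\varepsilon_i\colon 0 \to B_i \to E_i \to A \to 0$ of $\varepsilon$ along the product projection $\pi_i\colon \prod_{j\in I} B_j \to B_i$. Let $q_i\colon E\to E_i$ and $k_i\colon B_i\to E_i$ denote the pushout morphisms, so that $k_i \circ \pi_i = q_i \circ \iota$.

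The hypothesis $\psi_{A,B_i}([\varepsilon]) = 0$ means that each $\varepsilon_i$ splits, so for each $i\in I$ we can fix a retraction $r_i\colon E_i\to B_i$ with $r_i\circ k_i = \mathrm{id}_{B_i}$. Setting $\tilde r_i := r_i\circ q_i\colon E \to B_i$ gives
\[
\tilde r_i\circ \iota = r_i\circ q_i\circ \iota = r_i\circ k_i\circ \pi_i = \pi_i.
\]

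By the universal property of $\prod_{i\in I} B_i$, the family $(\tilde r_i)_{i\in I}$ assembles into a unique morphism $\tilde r\colon E\to \prod_{i\in I} B_i$ with $\pi_i\circ \tilde r = \tilde r_i$ for every $i$. Then $\pi_i\circ(\tilde r\circ \iota) = \tilde r_i\circ \iota = \pi_i$ for all $i$, so the uniqueness clause of the universal property forces $\tilde r\circ \iota = \mathrm{id}_{\prod_i B_i}$. Hence $\iota$ is split and $[\varepsilon] = 0$, proving injectivity of $\psi_{A,B_i}$.

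There is no real obstacle here beyond carefully tracking the pushout maps; the argument is formal and works in any abelian category in which the product $\prod_{i\in I} B_i$ exists, without any exactness assumption on the product functor.
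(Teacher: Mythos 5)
Your proof is correct and uses essentially the same argument as the paper: form the pushout of $\varepsilon$ along each projection $\pi_i$, use the hypothesis to get retractions of the pushed-out sequences, precompose with the pushout maps $E\to E_i$ to get $\tilde r_i$ with $\tilde r_i\circ\iota=\pi_i$, and assemble these via the universal property of the product into a retraction of $\iota$. You just spell out the assembly step slightly more explicitly than the paper does.
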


\begin{proof}
Consider an extension $\varepsilon\colon 0 \to \prod_i B_i \stackrel{\lambda}\to E \to A \to 0$ representing an element of $\ext^1_\mathcal{A}(A, \prod_{i \in I}B_i)$. For each $i \in I$, consider the commutative diagram
%
\begin{adjustbox}{max totalsize={1\textwidth}{.9\textheight},center}
\begin{tikzcd} 
0 \ar[r]  & \prod\limits_{i \in I}B_i \ar[r, "\lambda"]\ar[d, "\pi_i"]& E \ar[r]\ar[d, "\sigma_i"] & A \ar[r]\ar[d, equal] & 0 \\
0 \ar[r]  & B_i \ar[r, "\lambda_i"]& E_i \ar[r]\ar[l, bend left, dotted, "\rho_i"]& A \ar[r] & 0 \\
\end{tikzcd}
\end{adjustbox}

\noindent
where the lower line is the pushout of $\varepsilon$ along the product projection $\pi_i$. If $\psi_{A,B_i}([\varepsilon]) = 0$, then $\lambda_i$ splits for each $i$. Hence there is a retraction $\rho_i\colon E \to B_i$, indicated by the dotted arrow in the diagram. In particular, $\pi_i$ factorizes through $\lambda$ since $\pi_i = \rho_i\sigma_i\lambda$. Using the universal property of the product, we obtain a factorization through $\lambda$ of the identity on $\prod_i B_i$. That is, $\lambda$ splits as well and $\varepsilon$ must represent the zero element of $\ext^1_\mathcal{A}(A, \prod_{i \in I}B_i)$.
\end{proof}

\begin{cor}\label{prop:ExtTriv}
For any object $A \in \mathcal{A}$ and any collection of objects ${B_i \in \mathcal{A}},$ $i \in I$ for which $\prod_{i\in I}B_i$ exists, we have that 
$$\ext_\mathcal{A}^{1}\Big(A, \prod_{i \in I}B_i\Big)=0 \text{ if and only if }\;\forall i \in I: \;\ext_\mathcal{A}^{1}\Big(A, B_i\Big)=0\;. $$
\end{cor}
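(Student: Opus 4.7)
The plan is to derive both directions essentially for free from Proposition~\ref{prop:ExtMono}, plus the observation that in any category with products, each factor $B_i$ is canonically a direct summand of $\prod_{i\in I} B_i$.

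For the ``$\Leftarrow$'' direction, I would simply note that if $\ext^1_\mathcal{A}(A, B_i)=0$ for every $i\in I$, then the product of these abelian groups, $\prod_{i\in I}\ext^1_\mathcal{A}(A, B_i)$, is the zero group. By Proposition~\ref{prop:ExtMono}, the canonical map $\psi_{A, B_i}\colon \ext^1_\mathcal{A}(A, \prod_i B_i) \to \prod_i \ext^1_\mathcal{A}(A, B_i)$ is injective, so its source must vanish as well. This is the main use of the technical Proposition~\ref{prop:ExtMono}.

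For the ``$\Rightarrow$'' direction, I would observe that for each fixed $j\in I$ the projection $\pi_j\colon \prod_{i\in I}B_i \to B_j$ admits a section $\iota_j\colon B_j\to \prod_{i\in I} B_i$, namely the morphism determined by the universal property of the product whose $j$-th component is the identity and all other components are zero. Hence $B_j$ is a direct summand of $\prod_i B_i$, and applying the additive functor $\ext^1_\mathcal{A}(A,-)$ exhibits $\ext^1_\mathcal{A}(A, B_j)$ as a direct summand of $\ext^1_\mathcal{A}(A, \prod_i B_i)$. If the latter vanishes, so does each $\ext^1_\mathcal{A}(A, B_j)$.

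There is no real obstacle here: the only subtle point, the potential non-surjectivity of $\psi_{A, B_i}$ in Grothendieck categories without exact products, has been taken care of in Proposition~\ref{prop:ExtMono} and does not affect either implication, since one direction needs only injectivity and the other needs only the summand argument, which is formal.
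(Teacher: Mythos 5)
Your argument is correct, and it is exactly the (implicit) proof the paper intends: the backward direction follows from the injectivity of $\psi_{A,B_i}$ established in Proposition~\ref{prop:ExtMono}, while the forward direction is the formal observation that each $B_j$ is a retract of $\prod_i B_i$, so $\ext^1_\mathcal{A}(A,B_j)$ is a retract of $\ext^1_\mathcal{A}(A,\prod_i B_i)$. Nothing to add.
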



\begin{cor}\label{cor:BunoProd}
Let $\mathcal{A}$ be an abelian category with products. For any class $\mathcal{S}$ of objects in $\mathcal{A}$ we have $$\ker \ext^1_{\mathcal{A}}(-, \mathcal{S})=\ext^1_{\mathcal{A}}(-, \Prodcl(\mathcal{S})).$$
\end{cor}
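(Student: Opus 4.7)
The statement to be proved is the equality of the two classes
$\{A \in \mathcal{A} \mid \ext^1_\mathcal{A}(A, S) = 0 \text{ for all } S \in \mathcal{S}\}$
and
$\{A \in \mathcal{A} \mid \ext^1_\mathcal{A}(A, P) = 0 \text{ for all } P \in \Prodcl(\mathcal{S})\}$,
i.e., in the paper's notation, ${}^\perp\mathcal{S} = {}^\perp\Prodcl(\mathcal{S})$. The plan is to prove the two inclusions separately; both are essentially formal consequences of Corollary~\ref{prop:ExtTriv} together with additivity of $\ext^1$.

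First, I would observe that the inclusion ${}^\perp\Prodcl(\mathcal{S}) \subseteq {}^\perp\mathcal{S}$ is trivial, since every $S \in \mathcal{S}$ is (tautologically) a direct summand of the empty-indexed product $S$ itself, hence lies in $\Prodcl(\mathcal{S})$; any $A$ in the left-hand class therefore satisfies $\ext^1_\mathcal{A}(A,S) = 0$ for each $S \in \mathcal{S}$.

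For the converse inclusion, take any $A \in {}^\perp\mathcal{S}$ and any $P \in \Prodcl(\mathcal{S})$; by definition there is a family $(S_i)_{i \in I}$ with $S_i \in \mathcal{S}$ and an object $P'$ such that $\prod_{i \in I} S_i \simeq P \oplus P'$. Since $\ext^1_\mathcal{A}(A, S_i) = 0$ for each $i$, Corollary~\ref{prop:ExtTriv} yields $\ext^1_\mathcal{A}\bigl(A, \prod_{i \in I} S_i\bigr) = 0$. Additivity of $\ext^1_\mathcal{A}(A, -)$ in the second argument then gives
\[
0 = \ext^1_\mathcal{A}\Bigl(A, \prod_{i \in I} S_i\Bigr) \simeq \ext^1_\mathcal{A}(A, P) \oplus \ext^1_\mathcal{A}(A, P'),
\]
so in particular $\ext^1_\mathcal{A}(A, P) = 0$, as required.

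There is no real obstacle: the only non-formal input is Corollary~\ref{prop:ExtTriv} (which in turn rests on the injectivity statement of Proposition~\ref{prop:ExtMono}), and once this is in hand the argument is a one-line diagram chase through the splitting of a product. The only point worth emphasising is that products are not assumed exact here, so one cannot hope to write $P$ as an exact factor of $\prod_i S_i$ in any more refined sense than as a direct summand; fortunately, being a summand is precisely what is needed to transport the vanishing of $\ext^1$.
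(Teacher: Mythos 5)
Your proof is correct and is exactly the (unwritten) argument the paper intends: the corollary is an immediate consequence of Corollary~\ref{prop:ExtTriv} combined with additivity of $\ext^1$ across direct summands, and you have spelled this out cleanly in both directions. One small terminological slip: you speak of $S$ as the ``empty-indexed product $S$ itself,'' but the empty-indexed product is the terminal (zero) object; what you mean is the \emph{singleton}-indexed product $\prod_{i\in\{*\}}S = S$, which indeed exhibits $S$ as a member of $\Prodcl(\mathcal{S})$. With that word changed, the argument is precisely right.
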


An example of $\psi_{A,B_i}$ actually not being an isomorphism can be found in the category of quasi-coherent sheaves on a projective line.

\begin{example}\label{example:ExtProducts}
Suppose that $\mathcal{A}$ is a hereditary Grothendieck category such that products are \emph{not} exact in $\mathcal{A}$. That is, there is an infinite set $I$ and a family of objects $B_i$, $i \in I$, such that the first right derived functor of $\prod_{i\in I}\colon \mathcal{A}^I \to \mathcal{A}$ does not vanish on $(B_i \mid i\in I)$. More specifically, we can chose for each $i\in I$ an injective resolution of $B_i$,

\vspace{0.25cm}
\begin{adjustbox}{max totalsize={1\textwidth}{.9\textheight},center}
\begin{tikzcd} 
0 \ar[r]  & B_i \ar[r]& E^0_i \ar[r]& E^1_i \ar[r] & 0
\end{tikzcd}
\end{adjustbox}
\vspace{0.25cm}

\noindent
and the product

\vspace{0.25cm}
\begin{adjustbox}{max totalsize={1\textwidth}{.9\textheight},center}
\begin{tikzcd} 
0 \ar[r]  & \prod\limits_{i \in I} B_i \ar[r]& \prod\limits_{i \in I} E^0_i \ar[r, "f"]& \prod\limits_{i \in I} E^1_i
\end{tikzcd}
\end{adjustbox}
\vspace{0.25cm}

\noindent
of these resolutions sequences is left, but not right exact. Since $\mathcal{A}$ is hereditary, the image of $f$ is an injective object and, hence, a summand of $\prod_{i \in I} E^1_i$. Let $\iota\colon A{\hookrightarrow}\prod_{i \in I} E^1_i$ be an inclusion of a complement of the image of $f$. Then the compositions $(\pi_i\circ\iota\colon A \to E^1_i \mid i \in I)$ define an element of $\prod_{i \in I}\ext^1_\mathcal{A}(A, B_i)$ which cannot be in the image of $\psi_{A,B_i}$.

To demonstrate a concrete example, let again $k$ be a field and $\mathcal{A} = \qcoh_X$ for $X = \mathbb{P}^1_k$. By~\cite[Example 4.9]{Krause}, we can take $I = \mathbb{N}$ and for each $n \in \mathbb{N}$ we can let $B_n$ be the kernel of the canonical epimorphism
\[ \oh(-n) \otimes_k \hom_X(\oh(-n),\oh) \longrightarrow \oh. \]
\end{example}

\begin{rem}\label{rem:ExtCoproductsOk}
As in~\cite[\S8]{ColpiFuller3}, one can also consider the dual situation, that is, the canonical morphisms
\[ \varphi_{B_i,A}\colon \ext^1_\mathcal{A}\Big(\bigoplus_{i \in I}B_i, A\Big) \longrightarrow \prod_{i \in I}\ext^1_\mathcal{A}(B_i, A) \]
However, these are isomorphisms as long as our abelian category has enough injectives, so in particular for any Grothendieck category. To see that, let

\vspace{0.25cm}
\begin{adjustbox}{max totalsize={1\textwidth}{.9\textheight},center}
\begin{tikzcd} 
{E}^\bullet\colon & E^0 \ar[r]& E^1 \ar[r]& E^2 \ar[r]& E^3 \ar[r]& \cdots
\end{tikzcd}
\end{adjustbox}
\vspace{0.25cm}

\noindent
be an injective resolution of $A$ and consider the following commutative square of abelian groups:

\vspace{0.25cm}
\begin{adjustbox}{max totalsize={1\textwidth}{.9\textheight},center}
\begin{tikzcd} 
{\ext^1_\mathcal{A}\Big(\bigoplus\limits_{i \in I}B_i, A\Big)} \ar[d, "\varphi_{B_i,A}"'] \ar[r, equal]&
{H^1\left(\hom_\mathcal{A}\Big(\bigoplus\limits_{i \in I}B_i, {E}^\bullet\Big)\right)} \ar[d, "\mathrm{can}", "\simeq"']
\\
{\prod\limits_{i \in I}\ext^1_\mathcal{A}(B_i, A)} \ar[r, equal]&
{\prod\limits_{i \in I}H^1\big(\hom_\mathcal{A}(B_i, {E}^\bullet)\big)}
\end{tikzcd}
\end{adjustbox}
\end{rem}

\section{Quasi-coherent sheaves on locally Noetherian schemes}\label{sec:AssPoints}

\subsection{Injective sheaves on locally Noetherian schemes}\label{inj}

This part is devoted to summarizing results about sheaves on locally Noetherian schemes. 

We start with structure of injective quasi-coherent sheaves on a locally Noetherian scheme $X$. It directly generalizes the structure of injective modules over a Noetherian ring; see \cite[Ch.~IV and VI]{Gabriel} and \cite[\S II.7]{H-RD}. Namely, for each point $x \in X$ there is an associated indecomposable injective sheaf $\mathscr{J}(x)$, and every injective sheaf is a direct sum of the sheaves $\mathscr{J}(x)$ for various points $x \in X$. 

In order to define the sheaves $\mathscr{J}(x)$, let us first briefly describe a natural embedding $\spec\oh_{X,x} \rightarrow X$. Consider $x \in X$ and denote by $Y$ the set of all generizations of $x$, i.e. the set of all points $y$ with $x \in \overline{\{y\}}$. Upon fixing an affine open neighbourhood $U$ of $x$, we have that 
$$Y=\{y \in U\mid \mathfrak{p}_y \subseteq \mathfrak{p}_x\},$$
where $\mathfrak{p}_y$ denotes the prime ideal of $\oh_X(U)$
 corresponding to the point $y \in U$. The localization map $\oh_X(U){\rightarrow}\oh_X(U)_{\mathfrak{p}_x}=\oh_{X,x}$ induces an embedding $$\spec{\oh_{X,x}} \stackrel{j}\longrightarrow \spec{{O}_X(U)}=U,$$
and composition of $j$ with the open immersion $U \subseteq X$ yields an embedding of schemes 
$$\spec{\oh_{X,x}}\stackrel{i}{\hookrightarrow}X$$
with $i(\spec{\oh_{X,x}})=Y$. 
It can be checked that $i$ is independent of the choice of $U$. 
 
Define 
$$\mathscr{J}(x)=i_*\left(\widetilde{E_x}\right),$$
where $E_x$ is the injective hull of $\kappa(x)$, the residue field at $x$ (in $\Mod \oh_{X,x}$). Since $\spec{\oh_{X,x}}$ is a Noetherian scheme, it follows that the morphism $i$ is quasi-compact and quasi-separated (cf. \cite[Remark~10.2 (3) and Definition~10.22]{GW}) and that $\mathscr{J}(x)$ is a quasi-coherent sheaf (\cite[Corollary~10.27]{GW}). Let us now state a version of the classification theorem.

\begin{prop}\label{prop:injectives-classification}
Let $X$ be a locally Noetherian scheme and $\mathscr{F}$ a quasi-coherent sheaf. Then the following are equivalent:
\begin{enumerate}[(1)]
\item\label{IC1}{$\mathscr{F}$ is an injective $\oh_X$-module.}
\item\label{IC2}{$\mathscr{F}$ is an injective quasi-coherent sheaf (i.e.\ it is an injective object in $\qcoh_X$).}
\item\label{IC3}{For every $x \in X$, $\mathscr{F}_x$ is an injective $\oh_{X,x}$-module.}
\item\label{IC4}{$\mathscr{F}$ is a direct sum of sheaves of the form $\mathscr{J}(x)$ for various $x \in X$.}
\end{enumerate}
\end{prop}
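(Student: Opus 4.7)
The plan is to establish the four equivalences via the cycle $(1) \Leftrightarrow (2)$, $(4) \Rightarrow (3) \Rightarrow (2)$, and finally $(2) \Rightarrow (4)$, which I expect to be the deepest step.

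The equivalence $(1) \Leftrightarrow (2)$ is classical for locally Noetherian schemes, see \cite[II.7]{H-RD}. Indeed, $(1) \Rightarrow (2)$ is immediate since $\qcoh_X$ sits as a full subcategory of the category of $\oh_X$-modules that is closed under subobjects. For the converse, the key input is that on a locally Noetherian scheme, the injective hull of a quasi-coherent sheaf taken in the category of $\oh_X$-modules is again quasi-coherent, which forces any injective in $\qcoh_X$ to be injective in the larger category.

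For $(4) \Rightarrow (3)$, I would compute the stalks of $\mathscr{J}(x) = i_*\widetilde{E_x}$ directly: at a point $y$ in the image of $i$ (a generization of $x$), the stalk is a localization of $E_x$ at the prime of $\oh_{X,x}$ corresponding to $y$, which remains injective over $\oh_{X,y}$; at other points the stalk vanishes. Since stalks commute with direct sums and direct sums of injective modules over a Noetherian local ring are again injective, (3) follows. For $(3) \Rightarrow (2)$, I would reduce to affine patches: on $U = \spec R$ with $\mathscr{F}|_U = \widetilde{M}$, the hypothesis says each $M_\mathfrak{p}$ is an injective $R_\mathfrak{p}$-module, which combined with Matlis' structure theorem forces $M$ itself to be an injective $R$-module, so that $\widetilde M$ is injective in $\qcoh_U$. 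Injectivity in $\qcoh_X$ can then be checked on an affine cover, since $X$ is locally Noetherian and quasi-coherent ideal sheaves are pieced together from affine data.

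The main obstacle is $(2) \Rightarrow (4)$. The strategy is to invoke the general decomposition theorem for injective objects in a locally Noetherian Grothendieck category (\cite[Ch.~IV and VI]{Gabriel}): $\qcoh_X$ is locally Noetherian, so every injective decomposes as a direct sum of indecomposable injectives, and what remains is identifying these indecomposables as the sheaves $\mathscr{J}(x)$ for $x \in X$. I would verify in turn that each $\mathscr{J}(x)$ is indecomposable (using that $E_x$ is indecomposable over $\oh_{X,x}$ and that $i_*$ is fully faithful on quasi-coherent sheaves coming from $\spec\oh_{X,x}$), that $\mathscr{J}(x) \not\simeq \mathscr{J}(x')$ for $x \neq x'$ (distinguished by their supports), and that every indecomposable injective arises in this form — the subtlest point, which I would handle by reducing to the affine case locally, using the Matlis classification of indecomposable injectives in $\qcoh_{\spec R}$ as the $E(R/\mathfrak{p})$'s, and then extending across $X$ via $i_*$.
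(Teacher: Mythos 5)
Your argument is essentially correct and your cycle of implications is a legitimate unpacking of the classical proofs; the paper itself takes a shortcut. Concretely, the paper cites \cite[II.7.17]{H-RD} for the equivalence of (1), (3), and (4) and only writes out the (1)$\Leftrightarrow$(2) step: (1)$\Rightarrow$(2) since monomorphisms in $\qcoh_X$ are monomorphisms in $\Mod\oh_X$, and (2)$\Rightarrow$(1) by embedding $\mathscr{F}$ into a quasi-coherent sheaf that is injective as an $\oh_X$-module (\cite[II.7.18]{H-RD}) and using injectivity in $\qcoh_X$ to split the embedding; your argument for this pair of implications is the same one. Where you differ is that you reprove (4)$\Rightarrow$(3)$\Rightarrow$(2)$\Rightarrow$(4) from scratch, which buys self-containment at the cost of re-deriving the Gabriel--Matlis structure theory.

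One step in your chain needs more justification than you give it. In (3)$\Rightarrow$(2) you assert ``Injectivity in $\qcoh_X$ can then be checked on an affine cover,'' but this is exactly the nontrivial content of \cite[II.7.17]{H-RD}: there is no Baer criterion directly available in $\qcoh_X$, so the passage from $\mathscr{F}|_U$ injective in $\qcoh_U$ for every affine open $U$ to $\mathscr{F}$ injective in $\qcoh_X$ requires an argument (a Zorn extension peeled off point by point, or, as in Hartshorne, a reduction through the structure of injective hulls). As written, your sentence quietly assumes the result you are trying to prove. The cleanest repair is to route through (1) instead: once the stalks $\mathscr{F}_x$ are injective $\oh_{X,x}$-modules, the decomposition in \cite[II.7.17]{H-RD} gives (1) directly, and then you already have (1)$\Rightarrow$(2). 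A second minor point: your (2)$\Rightarrow$(1) direction phrases the input as ``the injective hull taken in $\Mod\oh_X$ is again quasi-coherent''; that is indeed true on a locally Noetherian scheme, but all the argument actually needs is the weaker statement from \cite[II.7.18]{H-RD} (existence of some quasi-coherent $\oh_X$-injective containing $\mathscr{F}$), so the wording slightly overstates the dependency, which is worth fixing if you want the proof to stand alone.
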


\begin{proof}
The equivalence of \hyperref[IC1]{(\ref*{IC1})}, \hyperref[IC3]{(\ref*{IC3})} and \hyperref[IC4]{(\ref*{IC4})} is proved in \cite[II.7.17]{H-RD}. Let us briefly comment on the equivalence of \hyperref[IC1]{(\ref*{IC1})} and \hyperref[IC2]{(\ref*{IC2})}.

The implication \hyperref[IC1]{(\ref*{IC1})} $\Rightarrow$ \hyperref[IC2]{(\ref*{IC2})} follows directly from the fact that monomorphisms in $\qcoh_X$ are precisely monomorphisms in $\Mod{\oh_X}$ between objects from $\qcoh_X$.

Conversely, suppose that $\mathscr{F}$ is injective as a quasi-coherent sheaf. By \cite[II.7.18]{H-RD}, there is a monomorphism $\mathscr{F} \hookrightarrow \mathscr{G}$, where $\mathscr{G}$ is a quasi-coherent sheaf which is injective as an $\oh_X$-module. Using the injectivity of $\mathscr{F}$ (in $\qcoh_X$), it follows that this monomorphism splits (in $\qcoh_X$, thus also in $\Mod\oh_X$). Thus, $\mathscr{F}$ is a direct summand of injective $\oh_X$-module, hence it is an injective $\oh_X$-module as well.
\end{proof}

The following consequence of the classification theorem is important for our purposes as well.

\begin{cor}\label{cor:DirLimOfInj}
Let $X$ be a locally Noetherian scheme.
The class of injective quasi-coherent sheaves on $X$ is closed under taking direct limits.
\end{cor}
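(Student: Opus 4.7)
The plan is to reduce the statement to the classical fact (essentially the Bass--Papp theorem, or more precisely its direct-limit strengthening) that over a Noetherian ring, any direct limit of injective modules is injective. The key tool is the characterization \hyperref[IC2]{(\ref*{IC2})}$\Leftrightarrow$\hyperref[IC3]{(\ref*{IC3})} in Proposition~\ref{prop:injectives-classification}, which converts injectivity of a quasi-coherent sheaf into a stalk-wise condition.

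First, I would take a direct system $(\mathscr{F}_\alpha)_{\alpha\in\Lambda}$ of injective quasi-coherent sheaves on $X$ and form the direct limit $\mathscr{F} = \varinjlim_\alpha \mathscr{F}_\alpha$ in $\qcoh_X$. Next, I would invoke the fact that the stalk functor $(-)_x\colon \qcoh_X \to \Mod{\oh_{X,x}}$ preserves all colimits (it is a left adjoint, namely the pullback $i^*$ along the canonical morphism $i\colon \spec\oh_{X,x}\to X$ recalled in~\ref{inj}, composed with the global sections equivalence on the affine scheme $\spec\oh_{X,x}$). Consequently $\mathscr{F}_x \simeq \varinjlim_\alpha (\mathscr{F}_\alpha)_x$ as $\oh_{X,x}$-modules for every $x\in X$.

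Since $X$ is locally Noetherian, each local ring $\oh_{X,x}$ is Noetherian, and by Proposition~\hyperref[IC3]{\ref*{prop:injectives-classification}~(\ref*{IC3})} every $(\mathscr{F}_\alpha)_x$ is an injective $\oh_{X,x}$-module. The classical result that over a Noetherian ring direct limits of injective modules are again injective (a standard consequence of the Baer criterion together with the fact that a Noetherian ring has only set-many finitely generated ideals, all of which are finitely presented so that $\mathrm{Hom}(R/I,-)$ commutes with the direct limits in question) then yields that $\mathscr{F}_x$ is injective as an $\oh_{X,x}$-module for every $x\in X$. Applying the implication \hyperref[IC3]{(\ref*{IC3})}$\Rightarrow$\hyperref[IC2]{(\ref*{IC2})} of Proposition~\ref{prop:injectives-classification} back again, $\mathscr{F}$ is injective in $\qcoh_X$, as required.

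There is no real obstacle here; the only points worth being explicit about are the preservation of colimits by the stalk functor and the citation of the Bass--Papp-type closure under direct limits over a Noetherian ring. Everything else is a direct application of the classification already established in Proposition~\ref{prop:injectives-classification}.
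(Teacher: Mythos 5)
Your proof is correct and follows exactly the same strategy as the paper's: reduce to stalks via the colimit-preserving stalk functor, apply the implication~\hyperref[IC3]{(\ref*{IC3})}$\Leftrightarrow$\hyperref[IC2]{(\ref*{IC2})} of Proposition~\ref{prop:injectives-classification}, and use the Noetherian-ring fact that direct limits of injective modules are injective.
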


\begin{proof}
Suppose a sheaf $\mathscr{F}$ is given by 
$$\mathscr{F}=\varinjlim_{i}\mathscr{E}_i$$
with all $\mathscr{E}_i$'s injective quasi-coherent sheaves. 
Consider an arbitrary point $x \in X$. As the stalk functor $(-)_x$ is a left adjoint, we have 
$$\mathscr{F}_x=\varinjlim_i \left(\mathscr{E}_i\right)_x.$$

By Proposition~\ref{prop:injectives-classification}, all the $\oh_{X,x}$-modules $\left(\mathscr{E}_i\right)_x$ are injective. As $\oh_{X,x}$ is a Noetherian ring, it follows that the direct limit $\mathscr{F}_x$ is injective as well. Using Proposition~\ref{prop:injectives-classification} again, we infer that $\mathscr{F}$ is an injective quasi-coherent sheaf.
\end{proof}

A stronger result, which is employed in Section~\ref{sec:torsion}, holds for Noetherian (i.e.\ locally Noetherian and quasi-compact) schemes.

\begin{lem} \label{lem:LocNoethCatg}
Let $X$ be a Noetherian scheme. Then $\qcoh_X$ is a locally Noetherian Grothendieck category, and Noetherian objects are precisely the coherent sheaves.
\end{lem}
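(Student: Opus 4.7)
The plan is to verify the two claims separately, treating coherent sheaves as the distinguished subclass of candidates for Noetherian objects.

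First, I would establish that every coherent sheaf $\mathscr{F}$ is a Noetherian object in $\qcoh_X$. Given an ascending chain of quasi-coherent subsheaves $\mathscr{F}_1 \subseteq \mathscr{F}_2 \subseteq \cdots$ of $\mathscr{F}$, I would exploit that $X$ is quasi-compact to pick a finite affine cover $X = U_1 \cup \cdots \cup U_k$ with $U_j = \spec A_j$ and each $A_j$ Noetherian. On each $U_j$, the module $\mathscr{F}(U_j)$ is a finitely generated $A_j$-module by coherence of $\mathscr{F}$, hence Noetherian; the chain $\mathscr{F}_n(U_j)$ therefore stabilizes past some index $N_j$. Setting $N = \max_j N_j$, the equalities $\mathscr{F}_n(U_j) = \mathscr{F}_N(U_j)$ for all $n \geq N$ and all $j$ glue to $\mathscr{F}_n = \mathscr{F}_N$.

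Second, I would produce a set of Noetherian generators. The construction in the proof of Proposition~\ref{prop:TorFreeGen} already exhibits the direct sum $\bigoplus_{i,n} \mathscr{I}_i^n$ as a generator of $\qcoh_X$, where the ideal sheaves $\mathscr{I}_i$ are coherent. Hence the set $\{\mathscr{I}_i^n : 1 \leq i \leq k,\ n \geq 0\}$ already constitutes a set of coherent generators, and by the previous paragraph they are all Noetherian. This shows that $\qcoh_X$ is locally Noetherian.

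Finally, I would identify the class of Noetherian objects. One direction is the first step above. For the converse, I would invoke the standard fact that on a Noetherian scheme every quasi-coherent sheaf is the directed union of its coherent subsheaves; this follows from Lemma~\ref{lem:PowersOfI} applied section by section over a finite affine cover, since any local section of a quasi-coherent sheaf lies in the image of some map from a coherent sheaf of the form $\mathscr{I}_i^n$. Given then a Noetherian $\mathscr{F} \in \qcoh_X$ written as such a directed union $\mathscr{F} = \bigcup_\alpha \mathscr{F}_\alpha$, the directed system must terminate, forcing $\mathscr{F} = \mathscr{F}_{\alpha_0}$ for some index and hence $\mathscr{F}$ to be coherent. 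The main technical input is this last fact (the directed union decomposition), which depends essentially on the Noetherian hypothesis on $X$; the remainder of the argument is a direct gluing of the familiar module-theoretic picture over finitely many affines.
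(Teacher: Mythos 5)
Your proof is correct and follows essentially the same outline as the paper's: reduce the Noetherianity of coherent sheaves to the affine case via a finite Noetherian cover, then use the fact that every quasi-coherent sheaf is a directed union of its coherent subsheaves to deduce both that the category is locally Noetherian and that Noetherian objects are coherent. The one genuine difference is that where the paper simply cites \cite[Exercise II.5.15]{H-AG} or \cite[Corollary 10.50]{GW} for the directed-union fact, you derive it (and the existence of a coherent generating set) from the paper's own Lemma~\ref{lem:PowersOfI} and the $\bigoplus_{i,n}\mathscr{I}_i^n$ construction in the proof of Proposition~\ref{prop:TorFreeGen}. This makes the argument somewhat more self-contained at no extra cost, and there is no circularity since Proposition~\ref{prop:TorFreeGen} does not use Lemma~\ref{lem:LocNoethCatg}. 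Your explicit handling of the gluing step and the ``directed union of a Noetherian object terminates'' step is slightly more detailed than the paper's sketch but amounts to the same argument.
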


\begin{proof}
This was shown in~\cite[Ch.~6, Th\'{e}or\`{e}me 1]{Gabriel}, but we sketch the argument for the reader's convenience. By definition, $X$ has a finite open affine cover $X = U_1 \cup \dots \cup U_n$ such that $\oh_X(U_i)$ are Noetherian rings. If $\mathcal{F}$ is a coherent sheaf on $X$, then $\mathcal{F}(U_i)$ are Noetherian $\oh_X(U_i)$-modules, and hence $\mathcal{F}$ also satisfies the ascending chain conditions on subobjects in $\qcoh_X$. Finally, every quasi-coherent sheaf on $X$ is the union of its coherent subsheaves by~\cite[Exercise II.5.15]{H-AG} or~\cite[Corollary 10.50]{GW}.
\end{proof}

\begin{rem}
Despite the terminology, $\qcoh_X$ need not be a locally Noetherian category if $X$ is a locally Noetherian scheme. An example of this phenomenon was exhibited in \cite[\S II.7, p.~135--136]{H-RD}. Since direct sums of injective quasi-coherent sheaves are still injective by Corollary~\ref{cor:DirLimOfInj}, $\qcoh_X$ then cannot even be a locally finitely generated Grothendieck category due to~\cite[Proposition V.4.3]{Stenstrom}.

To obtain a better feeling, let us have a look at the example from~\cite{H-RD}. It is an integral (i.e.\ reduced and irreducible) scheme $X$ which is a direct union of a chain
\[ U_0 \subsetneq U_1 \subsetneq U_2 \subsetneq U_3 \subsetneq \cdots \]
of open Noetherian subschemes. If we denote by $\mathscr{I}_n$ the quasi-coherent sheaf of ideals of the closed subset $X\setminus U_n$, it is not difficult to check that $\oh_X = \bigcup_n \mathscr{I}_n$ in $\qcoh_X$ (as well as in $\Mod{\oh_X}$). Thus the structure sheaf $\oh_X$ is coherent, but not a finitely generated object of $\qcoh_X$.
\end{rem}

\subsection{Supports and associated points}

Let us now focus more closely on the topic of associated points and points in the support of quasi-coherent sheaves, as these are among the main tools used in this paper. In what follows in this section, let $X$ be a Noetherian scheme.

\begin{deff}\label{def:suppass}
Let $\mathscr{F}$ be a quasi-coherent sheaf on $X$. Define the \emph{support} of the sheaf $\mathscr{F}$ by
$$\supp\mathscr{F}=\{x \in X \mid \mathscr{F}_x \neq 0\}\;.$$

We say that a point $x \in X$ is an \emph{associated point} of $\mathscr{F}$ provided that there is a monomorphism of $\oh_{X,x}$-modules
$$\kappa(x) \hookrightarrow \mathscr{F}_x\;,$$
i.e.\ if there is an (affine) open set $U \subseteq X$ and a section $s \in \mathscr{F}(U)$ such that $\ann_{\oh_{X,x}}(s_x)=\mathfrak{m}_x$. Denote the set of all associated points of $\mathscr{F}$ by $\ass \mathscr{F}$.
\end{deff}

Recall that given a commutative ring $R$, its prime ideal $\mathfrak{p}$ and an $R$-module $M$, we say that $\mathfrak{p}$ is an \emph{associated prime of $M$} if $\mathfrak{p}=\ann(m)$ for some $m \in M$. That is, there is an injection of $R$-modules $R/\mathfrak{p}\hookrightarrow M$ (taking $1+\mathfrak{p}$ to $m$). Denote the set of all associated primes of $M$ by $\ass M$. Similarly, define \emph{support of $M$}, denoted by $\supp M$, as the set of all primes $\mathfrak{p}$ such that $M_{\mathfrak{p}}\neq 0$.

The following lemma describes the basic well-known properties of associated primes. The proof can be found e.g.\ in \cite[Sections~3.1 and 3.2]{Eisenbud}.  

\begin{lem}\label{AssModuluBasic}
Let $R$ be a commutative Noetherian ring, $M$ an $R$-module and $\mathfrak{p}\subseteq R$ a prime ideal. 
\begin{enumerate}[(1)]
\item\label{AM1}{$\mathfrak{p} \in \ass M$ if and only if $\mathfrak{p}_{\mathfrak{p}} \in \ass M_{\mathfrak{p}}.$}
\item\label{AM2}{$\ass M \subseteq \supp M$.}
\item\label{AM3}{$\ass M = \emptyset$ if and only if $\supp M = \emptyset$ if and only if $M=0$.}
\item\label{AM4}{Given any collection $M_i, \; i \in I,$ of $R$-modules, we have 
$$\ass \bigoplus_{i \in I}M_i=\bigcup_{i \in I}\ass M_i\;$$
and
$$\supp \bigoplus_{i \in I}M_i=\bigcup_{i \in I}\supp M_i\;.$$}
\item\label{AM5}{Given a short exact sequence of $R$-modules

\vspace{0.25cm}
\begin{adjustbox}{max totalsize={1\textwidth}{.9\textheight},center}
\begin{tikzcd}
0 \ar[r]& A \ar[r]& B \ar[r] & C \ar[r] & 0\;,
\end{tikzcd}
\end{adjustbox}
\vspace{0.25cm}

we have that $\ass B \subseteq \ass A \cup \ass C$ and $\supp B = \supp A \cup \supp C$.
}
\end{enumerate}
\end{lem}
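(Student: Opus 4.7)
The plan is to dispatch each of the five parts of Lemma~\ref{AssModuluBasic} separately, using Noetherianity of $R$ essentially only for (1) and (3); the rest is formal, relying on exactness of localization and simple primeness arguments.

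First I would handle (1) in two directions. The forward direction $\mathfrak{p}\in \ass M \Rightarrow \mathfrak{p}_\mathfrak{p}\in \ass M_\mathfrak{p}$ is immediate: localize the embedding $R/\mathfrak{p}\hookrightarrow M$ at $\mathfrak{p}$ and use exactness of localization to get $\kappa(\mathfrak{p})\hookrightarrow M_\mathfrak{p}$. For the converse, given $m/s\in M_\mathfrak{p}$ of annihilator $\mathfrak{p}_\mathfrak{p}$, I would use finite generation of $\mathfrak{p}$ to clear denominators: each generator $x_i$ of $\mathfrak{p}$ kills $m/s$, so some $t_i\notin\mathfrak{p}$ satisfies $t_i x_i m=0$ in $M$; taking $t$ to be the product yields $\mathfrak{p}\subseteq \ann(tm)$. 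One upgrades this to equality by picking, among elements of $M$ whose annihilator contains $\mathfrak{p}$ and survives localization, one with maximal annihilator (using Noetherianity), then ruling out $a\notin\mathfrak{p}$ in the annihilator since that would force the localized element to vanish.

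Next, (2) follows in one line by localizing $R/\mathfrak{p}\hookrightarrow M$ at $\mathfrak{p}$ to obtain $\kappa(\mathfrak{p})\hookrightarrow M_\mathfrak{p}$, so $M_\mathfrak{p}\neq 0$. Then (3) combines with (2): the chain $M=0\Rightarrow \supp M=\emptyset\Rightarrow \ass M=\emptyset$ is formal, and the nontrivial converse $\ass M=\emptyset\Rightarrow M=0$ I would prove by the classical Noetherian argument. If $M\ne 0$, the collection $\{\ann(m):0\ne m\in M\}$ is a nonempty family of proper ideals in $R$ and so admits a maximal element $\ann(m_0)$, which is prime: if $ab\in\ann(m_0)$ with $b\notin\ann(m_0)$, then $bm_0\ne 0$ and $\ann(bm_0)\supsetneq\ann(m_0)$ would force $a\in\ann(m_0)$ by maximality. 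This exhibits an associated prime.

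For (4), the support identity is immediate from the fact that localization commutes with direct sums. For associated points, the inclusion $\supseteq$ is clear; for $\subseteq$, given an embedding $R/\mathfrak{p}\hookrightarrow \bigoplus_i M_i$ sending $\bar 1\mapsto (m_i)$ with finite support, we have $\mathfrak{p}=\bigcap_{m_i\ne 0}\ann(m_i)$, and primeness of $\mathfrak{p}$ combined with finiteness of the intersection forces $\mathfrak{p}=\ann(m_j)$ for some $j$, yielding $\mathfrak{p}\in \ass M_j$. Finally, for (5), exactness of localization yields the support identity $\supp B=\supp A\cup\supp C$ immediately, and for the associated-points inclusion, given $\mathfrak{p}\in \ass B$ witnessed by $N\subseteq B$ with $N\simeq R/\mathfrak{p}$, I would distinguish two cases: if $N\cap A=0$, the composition $N\hookrightarrow B\to C$ is injective and $\mathfrak{p}\in \ass C$; otherwise any nonzero element of $N\cap A$ still has annihilator exactly $\mathfrak{p}$, because $R/\mathfrak{p}$ is a domain and every nonzero element of it has annihilator $\mathfrak{p}$, yielding $\mathfrak{p}\in \ass A$.

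The main obstacle is the careful bookkeeping in the converse of (1), where one must both clear denominators and verify that the resulting annihilator in $M$ is exactly $\mathfrak{p}$ rather than merely containing it; the rest of the proof runs on exactness of localization, the primeness argument for direct sums and exact sequences, and the maximal-annihilator trick enabled by the Noetherian hypothesis.
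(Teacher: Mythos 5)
Your proof is correct. The paper itself does not prove Lemma~\ref{AssModuluBasic} but merely cites \cite[Sections~3.1 and 3.2]{Eisenbud} for these facts, so there is no paper-internal argument to compare against; your argument is essentially the standard textbook treatment that the reference contains. One local remark on part~(\ref{AM1}): the maximal-annihilator detour in the converse is unnecessary. Once you have cleared denominators to obtain $tm \in M$ with $\mathfrak{p} \subseteq \ann(tm)$ and $tm/1 \neq 0$ in $M_\mathfrak{p}$, the exact same ``ruling out $a \notin \mathfrak{p}$'' observation already closes the loop: any $a \in \ann(tm) \setminus \mathfrak{p}$ would be a unit in $R_\mathfrak{p}$ killing $tm/1$, a contradiction, so $\ann(tm) = \mathfrak{p}$ directly and no maximal element need be chosen. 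The Noetherian hypothesis is then used in part~(\ref{AM1}) only to ensure $\mathfrak{p}$ is finitely generated (for the denominator-clearing) and in part~(\ref{AM3}) for the maximal-annihilator argument, which is where you already invoke it. Everything else --- parts~(\ref{AM2}), (\ref{AM4}), (\ref{AM5}) --- is handled correctly via exactness of localization and the primeness arguments as you describe.
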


Now we use these algebraic facts to prove their algebro-geometric counterparts.

\begin{cor}\label{cor:AssSvazkuBasic}
Let $X$ be a Noetherian scheme, $\mathscr{F}$ a quasi-coherent sheaf on $X$ and $x \in X$ a point. 
\begin{enumerate}[(1)]
\item\label{AS1}{$x \in \ass \mathscr{F}$ if and only if there exists an affine open neighbourhood $U$ of $x$ such that $\mathfrak{p}_x \in \ass \mathscr{F}(U)$.}
\end{enumerate}
\begin{enumerate}[(1')]
\item\label{AS1'}{$x \in \ass \mathscr{F}$ if and only if for every affine open neighbourhood $U$ of $x$, \newline{$\mathfrak{p}_x \in \ass \mathscr{F}(U)$.}}\end{enumerate}
\begin{enumerate}[(1)]
\setcounter{enumi}{1}
\item\label{AS2}{$\ass \mathscr{F} \subseteq \supp \mathscr{F}$.}
\item\label{AS3}{$\ass \mathscr{F} = \emptyset$ if and only if $\supp \mathscr{F} = \emptyset$ if and only if $\mathscr{F}=0$.}
\item\label{AS4}{Given any collection $\mathscr{F}_i, \; i \in I,$ of quasi-coherent sheaves on $X$, we have 
$$\ass \bigoplus_{i \in I}\mathscr{F}_i=\bigcup_{i \in I}\ass \mathscr{F}_i\;$$
and $$\supp \bigoplus_{i \in I}\mathscr{F}_i=\bigcup_{i \in I}\supp \mathscr{F}_i\;.$$}
\item\label{AS5}{Given a short exact sequence of quasi-coherent sheaves on $X$

\vspace{0.25cm}
\begin{adjustbox}{max totalsize={1\textwidth}{.9\textheight},center}
\begin{tikzcd}
0 \ar[r]& \mathscr{F} \ar[r]& \mathscr{G} \ar[r] & \mathscr{H} \ar[r] & 0,
\end{tikzcd}
\end{adjustbox}
\vspace{0.25cm}

we have that $\ass \mathscr{G} \subseteq \ass \mathscr{F} \cup \ass \mathscr{H}$ and $\supp \mathscr{G} = \supp \mathscr{F} \cup \supp \mathscr{H}$.
}
\end{enumerate}
\end{cor}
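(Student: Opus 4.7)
The overall strategy is to reduce each assertion to its module-theoretic counterpart in Lemma~\ref{AssModuluBasic} via the fact that both $\supp$ and $\ass$ are stalk-local notions that can equally well be read off from sections on any affine open. The key translation: for an affine open $U\ni x$ with $R=\oh_X(U)$, $M=\mathscr{F}(U)$, and $\mathfrak{p}=\mathfrak{p}_x$, one has $\mathscr{F}_x = M_{\mathfrak{p}}$ as $\oh_{X,x}=R_\mathfrak{p}$-modules and $\mathfrak{m}_x=\mathfrak{p} R_\mathfrak{p}$; Definition~\ref{def:suppass} then says that $x\in\ass\mathscr{F}$ is the same as $\mathfrak{m}_x\in\ass_{\oh_{X,x}}\mathscr{F}_x$.

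I would start by treating (\ref{AS1}) and (\ref{AS1'}) simultaneously: Lemma~\hyperref[AM1]{\ref*{AssModuluBasic}~(\ref*{AM1})} applied to $R$, $M$, and $\mathfrak{p}$ gives $\mathfrak{p}_x\in\ass M$ iff $\mathfrak{m}_x\in\ass_{\oh_{X,x}}\mathscr{F}_x$, and the right-hand side is manifestly independent of the choice of $U$; this yields the ``for some $U$'' and the ``for every $U$'' variants in one go. Part (\ref{AS2}) is instant from Definition~\ref{def:suppass}, since a monomorphism $\kappa(x)\hookrightarrow\mathscr{F}_x$ forces the stalk to be nonzero.

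For (\ref{AS3}) the plan is to close the cycle $\mathscr{F}=0\Rightarrow\supp\mathscr{F}=\emptyset\Rightarrow\ass\mathscr{F}=\emptyset\Rightarrow\mathscr{F}=0$; the first implication is trivial and the second is immediate from (\ref{AS2}). For the last, assuming $\mathscr{F}\neq 0$, the sheaf axioms supply an affine open $V$ and a section with nonzero germ at some $x\in V$, so $x\in\supp\mathscr{F}$; the resulting nonzero $\oh_{X,x}$-module $\mathscr{F}_x$ has an associated prime $\mathfrak{q}$ by Lemma~\hyperref[AM3]{\ref*{AssModuluBasic}~(\ref*{AM3})}, the embedding $\spec\oh_{X,x}\hookrightarrow X$ from~\ref{inj} sends $\mathfrak{q}$ to a point $y\in X$ generizing $x$, and a second application of Lemma~\hyperref[AM1]{\ref*{AssModuluBasic}~(\ref*{AM1})}, now to the local ring $\oh_{X,x}$ with prime $\mathfrak{q}$, translates $\mathfrak{q}\in\ass_{\oh_{X,x}}\mathscr{F}_x$ into $\mathfrak{m}_y\in\ass_{\oh_{X,y}}\mathscr{F}_y$, i.e.\ $y\in\ass\mathscr{F}$.

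Finally, (\ref{AS4}) and (\ref{AS5}) reduce to Lemma~\hyperref[AM4]{\ref*{AssModuluBasic}~(\ref*{AM4})} and~\hyperref[AM5]{(\ref*{AM5})} by a pointwise argument: the stalk functor $(-)_x$ is exact and commutes with arbitrary direct sums (as a filtered colimit of finite biproducts), so $(\bigoplus_i\mathscr{F}_i)_x = \bigoplus_i(\mathscr{F}_i)_x$ and any short exact sequence of quasi-coherent sheaves restricts at every $x$ to a short exact sequence of $\oh_{X,x}$-modules; combined with the stalk descriptions of $\supp$ and $\ass$, the corresponding module-theoretic assertions transfer immediately. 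None of the steps presents a real obstacle; the mildest point of care is in (\ref{AS3}), where one must match the associated prime produced inside $\spec\oh_{X,x}$ with an actual point of $X$ using the embedding of~\ref{inj}, but this is precisely the content of that construction.
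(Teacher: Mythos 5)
Your proof is correct and follows essentially the same strategy as the paper's: identify $\mathscr{F}_x$ with $\mathscr{F}(U)_{\mathfrak{p}_x}$ for an affine open $U\ni x$, invoke Lemma~\hyperref[AM1]{\ref*{AssModuluBasic}~(\ref*{AM1})} for (\ref{AS1}) and (\ref{AS1'}'), read (\ref{AS2}) off the definition, and reduce (\ref{AS4})--(\ref{AS5}) to their module counterparts using that $(-)_x$ is exact and preserves direct sums. The only place you add something beyond what the paper writes out is (\ref{AS3}), where the paper is terse (``follows directly from its algebraic counterpart'') and you spell out the $\ass\mathscr{F}=\emptyset\Rightarrow\mathscr{F}=0$ direction via the embedding $\spec\oh_{X,x}\hookrightarrow X$ and a second use of Lemma~\hyperref[AM1]{\ref*{AssModuluBasic}~(\ref*{AM1})}; this is a valid and arguably cleaner way to fill that gap.
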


\begin{proof}
If $U$ is an affine open neighbourhood of $x$, the stalk $\mathscr{F}_{x}$ may be computed as $\left(\mathscr{F}(U)\right)_{\mathfrak{p}_x}$ and thus, $x \in \ass \mathscr{F} $ if and only if $\mathfrak{m}_x=\left(\mathfrak{p}_x\right)_{\mathfrak{p}_x} \in \ass \left(\mathscr{F}(U)_{\mathfrak{p}_x}\right)$. Application of Lemma~\hyperref[AM1]{\ref*{AssModuluBasic}~(\ref*{AM1})} thus proves \hyperref[AS1]{(\ref*{AS1})} and \hyperref[AS1']{(\ref*{AS1'}')}. The statement \hyperref[AS2]{(\ref*{AS2})} is clear from the definition. Statements \hyperref[AS3]{(\ref*{AS3})}--\hyperref[AS5]{(\ref*{AS5})} follow directly from its algebraic counterparts using the facts that for any $x \in X,$ the stalk functor $(-)_x$ is exact (to prove \hyperref[AS5]{(\ref*{AS5})}) and preserves direct sums (to prove \hyperref[AS4]{(\ref*{AS4})}). 
\end{proof}

Additionally, let us prove the following lemma on associated points of direct limits of sheaves.

\begin{lem}\label{lem:AssFlat}
Let $X$ be a locally Noetherian scheme and $\mathscr{F}$ a quasi-coherent sheaf on $X$. If $\mathscr{F}$ is a direct limit of a directed system of quasi-coherent sheaves $(\mathscr{F}_i\; | \; i \in I),$ then $$\ass{\mathscr{F}} \subseteq \bigcup_{i \in I}\ass{\mathscr{F}_i}.$$
\end{lem}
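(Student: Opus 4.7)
The plan is to reduce the statement to a purely algebraic fact about modules over the local ring $\oh_{X,x}$ for each $x\in\ass\mathscr{F}$, and then exploit the Noetherian hypothesis together with the simplicity of $\kappa(x)$.

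First, fix $x\in\ass\mathscr{F}$; by definition we have an $\oh_{X,x}$-linear injection $\iota\colon \kappa(x)\hookrightarrow\mathscr{F}_x$. Since the stalk functor $(-)_x\colon\qcoh_X\to\Mod\oh_{X,x}$ is a left adjoint (being the restriction along the affine morphism from $\spec\oh_{X,x}$ composed with evaluation, or alternatively because stalks in $\mathsf{Mod}\,\oh_X$ commute with colimits and this passes to $\qcoh_X$), it preserves direct limits. Thus $\mathscr{F}_x\simeq\varinjlim_{i\in I}\mathscr{F}_{i,x}$ in $\Mod\oh_{X,x}$.

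Next I use that $\oh_{X,x}$ is Noetherian, so the residue field $\kappa(x)=\oh_{X,x}/\mathfrak{m}_x$ is a finitely presented $\oh_{X,x}$-module. Therefore $\hom_{\oh_{X,x}}(\kappa(x),-)$ commutes with the directed colimit, yielding
\[
\hom_{\oh_{X,x}}\bigl(\kappa(x),\mathscr{F}_x\bigr)\;\simeq\;\varinjlim_{i\in I}\hom_{\oh_{X,x}}\bigl(\kappa(x),\mathscr{F}_{i,x}\bigr).
\]
Consequently, the injection $\iota$ is represented by some morphism $\iota_i\colon\kappa(x)\to\mathscr{F}_{i,x}$ for an index $i\in I$, in the sense that $\iota$ equals the composite of $\iota_i$ with the canonical map $\mathscr{F}_{i,x}\to\mathscr{F}_x$.

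Finally, because $\kappa(x)$ is a simple $\oh_{X,x}$-module (being a field), the kernel of $\iota_i$ is either $0$ or all of $\kappa(x)$. The latter would force $\iota_i=0$ and hence $\iota=0$, contradicting the injectivity of $\iota$. Thus $\iota_i$ is itself injective, which means $x\in\ass\mathscr{F}_i\subseteq\bigcup_{j\in I}\ass\mathscr{F}_j$, as desired.

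The argument is essentially routine given the two ingredients (exactness/colimit-preservation of stalks, and finite presentability of $\kappa(x)$); the only slightly delicate point is the last step, where one must observe that a lift $\iota_i$ need not a priori be injective, and must argue using simplicity of $\kappa(x)$ that it nevertheless must be. This is the only place the Noetherian hypothesis is really used beyond finite presentability.
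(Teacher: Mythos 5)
Your proof is correct and follows essentially the same route as the paper's: reduce to stalks (which preserve directed colimits), exploit finite presentability of the cyclic module over the Noetherian local ring to factor the embedding through some $\mathscr{F}_{i,x}$, and conclude injectivity of the factor. The only deviation is that you invoke simplicity of $\kappa(x)$ to show $\iota_i$ is injective, which is unnecessary (and the paper does not do this): if $\iota = \nu_i \circ \iota_i$ is injective then $\iota_i$ is automatically injective, no hypothesis on $\kappa(x)$ needed — so your perceived ``delicate point'' is not delicate at all, and the Noetherian hypothesis is used only for finite presentability.
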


\begin{proof}
We start with a reduction to the affine case. Note that for every point $x \in X$, we have that $\mathscr{F}_x=\varinjlim_i (\mathscr{F}_i)_x$ as the stalk functor preserves colimits. It is clearly enough to show the implication
$$ \forall x \in X:\;\; \text{If }  \mathfrak{m}_x \in \ass(\mathscr{F}_x)\text{, then  }\mathfrak{m}_x \in \ass((\mathscr{F}_i)_x) \text{ for some }i \in I,$$
which is easily seen to be a consequence of the affine version of the statement.

Let us now assume that $R$ is a Noetherian commutative ring and $M, M_i, \; i \in I$ are $R$-modules with $\varinjlim_i M_i=M$.
Denote $\nu_i: M_i \rightarrow M$ the canonical homomorphisms and consider $\mathfrak{p} \in \ass{M}.$ Fix an injective homomorphism $R/\mathfrak{p}\stackrel{\iota}\hookrightarrow M$. Since $R$ is Noetherian, the module $R/\mathfrak{p}$ is finitely presented and thus, $\iota$ has a factorization 
$$\iota=\nu_i \iota_i,$$ 
where $i \in I$ is some index and $\iota_i: R/\mathfrak{p} \rightarrow M_i$ is a suitable homomorphism (see e.g.\ \cite[Lemma~2.8]{G-T}). Such $\iota_i$ is necessarily injective since $\iota$ is. It follows that $\mathfrak{p}\in \ass M_i,$ which concludes the proof. \end{proof}

Associated points are closely related to injective envelopes, exactly as in the affine case. In fact, one can define associated points of objects of certain abstract Grothendieck categories this way, \cite[\S IV.2]{Gabriel}. We record the following lemma and its consequence.

\begin{lem}\label{lem:AssOfJx}
For a point $x \in X$, $\ass \mathscr{J}(x)=\{x\}.$
\end{lem}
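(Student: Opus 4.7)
My plan is to compute the stalks of $\mathscr{J}(x)=i_*(\widetilde{E_x})$ at each point of $X$ and then reduce the question to the classical fact that $\ass_{\oh_{X,x}}E_x=\{\mathfrak{m}_x\}$ (the injective envelope of a simple module has a unique associated prime, namely its annihilator).

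First I would unpack the construction of $i\colon \spec\oh_{X,x}\hookrightarrow X$ recalled in \ref{inj}: its set-theoretic image is the set $Y$ of all generizations of $x$, and for any affine open neighbourhood $U$ of $x$, $i$ factors through $U$ as the localization map on $\oh_X(U)$ at $\mathfrak{p}_x$. From this I would extract the stalk formula: for $y\notin Y$, $\mathscr{J}(x)_y=0$, while for $y\in Y$ corresponding to a prime $\mathfrak{p}\subseteq\mathfrak{m}_x$ of $\oh_{X,x}$, $\mathscr{J}(x)_y=(E_x)_\mathfrak{p}$. In particular $\mathscr{J}(x)_x=E_x$, which by definition of the injective envelope contains $\kappa(x)$ as a submodule, yielding at once the inclusion $\{x\}\subseteq\ass\mathscr{J}(x)$.

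For the reverse inclusion, points $y\notin Y$ are immediately excluded via Corollary~\hyperref[AS2]{\ref*{cor:AssSvazkuBasic}~(\ref*{AS2})} since $\supp\mathscr{J}(x)\subseteq Y$. For $y\in Y$ with $y\ne x$, corresponding to $\mathfrak{p}\subsetneq\mathfrak{m}_x$, the condition $y\in\ass\mathscr{J}(x)$ reads $\mathfrak{m}_y=\mathfrak{p}_\mathfrak{p}\in\ass_{(\oh_{X,x})_\mathfrak{p}}(E_x)_\mathfrak{p}$, which by Lemma~\hyperref[AM1]{\ref*{AssModuluBasic}~(\ref*{AM1})} applied inside the ring $\oh_{X,x}$ is equivalent to $\mathfrak{p}\in\ass_{\oh_{X,x}}E_x$. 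Since $\ass_{\oh_{X,x}}E_x=\{\mathfrak{m}_x\}$, this forces $\mathfrak{p}=\mathfrak{m}_x$, contradicting $y\ne x$.

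The only mildly delicate point I expect is the stalk computation for $i_*$, because $i$ is not generally an open immersion; but on any affine open $U\ni x$ the restriction $\mathscr{J}(x)|_U$ is just the quasi-coherent sheaf on $U$ associated to $E_x$ viewed as an $\oh_X(U)$-module along $\oh_X(U)\to\oh_{X,x}$, so the stalk at a prime $\mathfrak{p}\subseteq\mathfrak{p}_x$ of $\oh_X(U)$ is the localization $(E_x)_\mathfrak{p}$ in the obvious sense. Once this bookkeeping is in place, the rest is a one-line application of the standard correspondence between associated primes and localization together with the Matlis-theoretic description of $\ass E_x$.
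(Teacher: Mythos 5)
Your proof has a genuine gap stemming from a miscomputation of the stalks and hence the support of $\mathscr{J}(x)$. You assert that $\supp\mathscr{J}(x)\subseteq Y$, where $Y$ is the set of generizations of $x$, and that $\mathscr{J}(x)_y=0$ for $y\notin Y$. This is false: as the paper itself states before Definition~\ref{def:SpecClosed}, $\mathscr{J}(x)$ is the indecomposable injective with support $\overline{\{x\}}$, the set of \emph{specializations} of $x$, and $Y\cap\overline{\{x\}}=\{x\}$. The source of the error is the behaviour of $i_*$ under stalks: since $i\colon\spec\oh_{X,x}\to X$ is not an open immersion, pushing forward along $i$ does not confine the support to the image of $i$. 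Concretely, for an affine open $U\ni x$ with $R=\oh_X(U)$ one has $\mathscr{J}(x)|_U\simeq\widetilde{E_x}$ with $E_x$ viewed as an $R$-module, so $\mathscr{J}(x)_y\simeq(E_x)_{\mathfrak{p}_y}$ for \emph{every} $y\in U$, not only for generizations of $x$; and since $E_x$ is already $\mathfrak{p}_x$-local, this localization equals $E_x\neq 0$ whenever $\mathfrak{p}_y\supseteq\mathfrak{p}_x$.

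As a consequence, your argument never treats the case $y\in\overline{\{x\}}\setminus\{x\}$, which is precisely where all the work lies: there the stalk is $E_x\neq 0$, so $y$ is in the support and one must actively rule it out as an associated point. Your localization step --- applying Lemma~\hyperref[AM1]{\ref*{AssModuluBasic}~(\ref*{AM1})} inside $\oh_{X,x}$ --- also does not reach these $y$, because for a specialization $y$ of $x$ the ring $\oh_{X,y}$ is not a localization of $\oh_{X,x}$ (it is the other way around). The fix is routine but must be carried out: work over $R=\oh_X(U)$, where $\ass_R E_x=\{\mathfrak{p}_x\}$, and localize at $\mathfrak{p}_y\supseteq\mathfrak{p}_x$ to find $\ass_{\oh_{X,y}}(E_x)_{\mathfrak{p}_y}=\{(\mathfrak{p}_x)_{\mathfrak{p}_y}\}$, which equals $\mathfrak{m}_y$ only when $\mathfrak{p}_y=\mathfrak{p}_x$, i.e.\ $y=x$. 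This is how the paper argues; your case $y\in Y\setminus\{x\}$, by contrast, is vacuous since there the stalk already vanishes.
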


\begin{proof}
Let $y \in \ass \mathscr{J}(x)$. Note first that $y$ must be a specialization of $x$. Indeed, otherwise there is an open set $U\subseteq X$ which contains $y$, but not $x$. Denoting again by $i\colon \spec{\oh_{X,x}}{\hookrightarrow}X$ the canonical embedding, we would obtain
\[ \mathscr{J}(x)(U) = i_*\left(\widetilde{E_x}\right)(U) = \widetilde{E_x}\left(i^{-1}(U)\right) = \widetilde{E_x}(\emptyset) = 0, \]
which is absurd.

So suppose that $y \in \overline{\{x\}}$. Then $\mathscr{J}(x)_y \cong i_*\left(\widetilde{E_x}\right){\!}_y$, which is just the injective envelope $E_x$ of $\kappa(x)$, viewed as an $\oh_{X,y}$-module. It is well known that $\ass E_x=\{\mathfrak{p}_x\}$, so $\ass \mathscr{J}(x) = \{x\}$.
\end{proof}

\begin{cor}\label{cor:AssF=AssEF}
Let $X$ be a Noetherian scheme and $\mathscr{F} \in \qcoh_X$. Denote by $E(\mathscr{F})$ the injective hull of $\mathscr{F}$. Then $\ass \mathscr{F}= \ass E(\mathscr{F})$.
\end{cor}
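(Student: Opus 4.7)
The plan is to prove the two inclusions separately. The inclusion $\ass \mathscr{F} \subseteq \ass E(\mathscr{F})$ is essentially automatic from the definitions: the canonical embedding $\mathscr{F} \hookrightarrow E(\mathscr{F})$ is a monomorphism in $\qcoh_X$, and since the stalk functor at any point $x \in X$ is exact, it yields a monomorphism $\mathscr{F}_x \hookrightarrow E(\mathscr{F})_x$. Composing any embedding $\kappa(x) \hookrightarrow \mathscr{F}_x$ with this map gives an embedding $\kappa(x) \hookrightarrow E(\mathscr{F})_x$, so $x \in \ass E(\mathscr{F})$. (This monotonicity of $\ass$ under subsheaves will be used repeatedly.)

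For the reverse inclusion $\ass E(\mathscr{F}) \subseteq \ass \mathscr{F}$, I would invoke the classification of injectives from Proposition~\ref{prop:injectives-classification} to write $E(\mathscr{F}) = \bigoplus_{i \in I} \mathscr{J}(x_i)$ for some indexed collection of points $x_i \in X$. Combining Lemma~\ref{lem:AssOfJx} with Corollary~\hyperref[AS4]{\ref*{cor:AssSvazkuBasic}~(\ref*{AS4})}, one obtains $\ass E(\mathscr{F}) = \{x_i : i \in I\}$, so it suffices to verify that each $x_i$ lies in $\ass \mathscr{F}$.

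The key step is an essentiality argument. For each index $i$, form the pullback $\mathscr{G} := \mathscr{F} \cap \mathscr{J}(x_i)$ inside $E(\mathscr{F})$. Since $E(\mathscr{F})$ is an essential extension of $\mathscr{F}$ in $\qcoh_X$ and $\mathscr{J}(x_i) \ne 0$ is a subsheaf of $E(\mathscr{F})$, we have $\mathscr{G} \ne 0$. By monotonicity of $\ass$ under subsheaves we get $\ass \mathscr{G} \subseteq \ass \mathscr{J}(x_i) = \{x_i\}$, while Corollary~\hyperref[AS3]{\ref*{cor:AssSvazkuBasic}~(\ref*{AS3})} applied to the nonzero sheaf $\mathscr{G}$ yields $\ass \mathscr{G} \ne \emptyset$. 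Hence $\ass \mathscr{G} = \{x_i\}$, and since $\mathscr{G} \subseteq \mathscr{F}$, another application of monotonicity gives $x_i \in \ass \mathscr{F}$, as required.

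The only conceptual subtlety is ensuring that the notion of essential extension in the Grothendieck category $\qcoh_X$ behaves as expected: namely, that every nonzero subobject of $E(\mathscr{F})$ meets $\mathscr{F}$ nontrivially and that intersections of subsheaves may be computed via pullbacks. These are standard facts about injective envelopes in Grothendieck categories, so the argument presents no real obstacle; the main work is already packaged in the classification of injective sheaves and in the elementary properties of $\ass$ collected in Corollary~\ref{cor:AssSvazkuBasic}.
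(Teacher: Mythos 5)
Your proof is correct and follows essentially the same route as the paper's: the easy inclusion by monotonicity of $\ass$ under subsheaves, then the decomposition $E(\mathscr{F}) = \bigoplus_i \mathscr{J}(x_i)$ combined with Lemma~\ref{lem:AssOfJx} and the essentiality of $\mathscr{F} \subseteq E(\mathscr{F})$ applied to the intersection $\mathscr{F} \cap \mathscr{J}(x_i)$. The only difference is stylistic: the paper argues by contradiction (if $x \notin \ass\mathscr{F}$ then $\ass(\mathscr{F}\cap\mathscr{J}(x)) = \emptyset$, forcing $\mathscr{F}\cap\mathscr{J}(x) = 0$), whereas you argue directly, which amounts to the same thing.
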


\begin{proof}
Obviously, $\ass \mathscr{F} \subseteq \ass E(\mathscr{F})$ as $\mathscr{F}$ is a subsheaf of $E(\mathscr{F})$. 

Suppose for contradiction that there is a point $x \in \ass E(\mathscr{F})\setminus \ass \mathscr{F}$. Then we can express $E(\mathscr{F})$ as a direct sum $\bigoplus_{i\in I}\mathscr{J}(x_i)$ for a certain collection of points $\{x_i \mid i\in I\}$ by Proposition~\hyperref[IC4]{\ref*{prop:injectives-classification}~(\ref*{IC4})}, and clearly $x_i = x$ for some $i$ by Corollary~\hyperref[AS4]{\ref*{cor:AssSvazkuBasic}~(\ref*{AS4})} and Lemma~\ref{lem:AssOfJx}. In particular, $E(\mathscr{F})$ has both $\mathscr{F}$ and $\mathscr{J}(x)$ as subsheaves. However, $\ass \left(\mathscr{F}\cap \mathscr{J}(x)\right) \subseteq \left(\ass \mathscr{F}\right) \cap \{x\}=\emptyset$. Thus, $\mathscr{F}\cap \mathscr{J}(x)=0$, which contradicts the essentiality of the inclusion $\mathscr{F} \subseteq E(\mathscr{F})$. 
\end{proof}

Given a commutative Noetherian ring $R$ and an $R$-module $M$, an associated prime $\mathfrak{p}$ of $M$ can always be ``isolated'' in a finitely generated submodule $N \subseteq M.$ That is, there is a finitely generated submodule $N$ with $\ass N=\{\mathfrak{p}\}$. This is obvious, one simply needs to take $N$ to be an isomorphic copy of $R /\mathfrak{p}$ that is embedded into $M$.

It will be useful to generalize this property for Noetherian schemes. To this end let $x \in X$ be a point and let $Z_x$ be the integral closed subscheme of $X$ with generic point $x$ (such $Z_x$ exists and is unique by \cite[Proposition~3.50]{GW}). If we denote by $j\colon Z_x{\hookrightarrow}X$ the closed immersion, a naive non-affine analogue of $R/\mathfrak{p}$ is the sheaf $j_*(\oh_{Z_x})$. It is coherent as $j_*(\oh_{Z_x}) \cong \oh_X/\mathscr{I}_x$, where $\mathscr{I}_x$ is the coherent sheaf of ideals of $\mathscr{O}_X$ whose sections are precisely those which vanish at $x$ (i.e.\	$\mathscr{I}_x(U) = \mathfrak{p}_x \subseteq \mathcal{O}_X(U)$ is the prime corresponding to $x$ if $x\in U$ and $\mathscr{I}_x(U) = \mathcal{O}_X(U)$ if $x\not\in U$ for an open affine subset $U$ of $X$).

However, in contrast to the affine case, this sheaf does not embed into every quasi-coherent sheaf $\mathscr{F}$ with $x \in \ass \mathscr{F}$. In fact, in general there is no single coherent sheaf $\mathscr{G}$ such that quasi-coherent sheaves $\mathscr{F}$ with $x \in \ass \mathscr{F}$ would be characterized by the existence of a monomorphism $\mathscr{G} \hookrightarrow \mathscr{F}$.

\begin{example}\label{example:TestingSheaf}
Let $X=\mathbb{P}^1_k$ be a projective line over a field $k$ and $\xi\in X$ be the generic point. 
A quasi-coherent sheaf $\mathscr{F}$ with $\xi \in \ass \mathscr{G}$ cannot be torsion, so it contains a line bundle $\mathscr{L}$ as a subsheaf. In fact, any line bundle $\mathscr{L}$ satisfies $\ass \mathscr{L}=\{\xi\}$, and if a testing coherent sheaf $\mathscr{G}$ for $\xi$ existed, a line bundle $\mathscr{L}\subseteq\mathscr{G}$ would also be such a testing sheaf.

However, there is no single line bundle that would embed into any other line bundle, since $\hom_{X}(\oh(m), \oh(n))=0$ if $n<m$.
\end{example}

However, we obtain the following by a straightforward modification of \cite[Tag 01YE]{stacks}.

\begin{prop}\label{prop:TestingSheaf}
Let $X$ be a Noetherian scheme, $x \in X$ be a point, $Z_x$ be the integral closed subscheme of $X$ with generic point $x$ and $j\colon Z_x{\hookrightarrow}X$ be the closed immersion.

Given a quasi-coherent sheaf $\mathscr{F}$, we have $x \in \ass \mathscr{F}$ if and only if there exists a non-zero coherent sheaf of ideals $\mathscr{I} \subseteq \oh_{Z_x}$ such that $j_*(\mathscr{I}) \hookrightarrow \mathscr{F}$. Moreover, in this case $\ass{j_*(\mathscr{I})} = \{x\}$ and $\supp j_*(\mathscr{I}) = \overline{\{x\}}$.\end{prop}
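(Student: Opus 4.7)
For the direction ``$\Leftarrow$'' (and the moreover part): assume a non-zero coherent ideal $\mathscr{I} \subseteq \oh_{Z_x}$ together with a monomorphism $j_*(\mathscr{I}) \hookrightarrow \mathscr{F}$. Since $Z_x$ is integral, every stalk $\oh_{Z_x,z}$ is a domain, so the stalk of a non-zero ideal is non-zero; this gives $\supp \mathscr{I} = Z_x$ and hence $\supp j_*(\mathscr{I}) = \overline{\{x\}}$. Moreover $\mathscr{I}$ is torsion-free as a subsheaf of $\oh_{Z_x}$, so $\ass \mathscr{I} \subseteq \ass \oh_{Z_x} = \{x\}$ on $Z_x$, and non-vanishing forces equality. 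Transferring associated points across the closed immersion $j$ (associated primes of an $\oh_X/\mathscr{I}_{Z_x}$-module over $\oh_X$ correspond bijectively to those over $\oh_{Z_x}$) gives $\ass j_*(\mathscr{I}) = \{x\}$, and then $x \in \ass \mathscr{F}$ by Corollary~\hyperref[AS5]{\ref*{cor:AssSvazkuBasic}~(\ref*{AS5})}.

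For the direction ``$\Rightarrow$'', start by localizing: pick an affine open $U_0 \ni x$ with $R = \oh_X(U_0)$; since $x \in \ass\mathscr{F}$ implies $\mathfrak{p}_x \in \ass_R \mathscr{F}(U_0)$ by Corollary~\hyperref[AS1']{\ref*{cor:AssSvazkuBasic}~(\ref*{AS1'}')}, there is a section $s \in \mathscr{F}(U_0)$ with $\ann_R(s) = \mathfrak{p}_x$. This produces a monomorphism of coherent sheaves $(j_*\oh_{Z_x})|_{U_0} \cong \widetilde{R/\mathfrak{p}_x} \hookrightarrow \mathscr{F}|_{U_0}$ over the open $U_0$. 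The task is to globalize this local inclusion to one of the form $j_*\mathscr{I} \hookrightarrow \mathscr{F}$ with $\mathscr{I}$ a non-zero coherent ideal on $Z_x$.

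The globalization proceeds in two steps. First, invoke the standard extension result for coherent subsheaves on Noetherian schemes (cf.\ \cite[Tag 01PF]{stacks}): the coherent subsheaf $(j_*\oh_{Z_x})|_{U_0} \subseteq \mathscr{F}|_{U_0}$ extends to a coherent subsheaf $\mathscr{H} \subseteq \mathscr{F}$ on all of $X$ with $\mathscr{H}|_{U_0} = (j_*\oh_{Z_x})|_{U_0}$. To force $\mathscr{H}$ into the essential image of $j_*$, pass to the annihilator $\mathscr{H}' := (0 :_\mathscr{H} \mathscr{I}_{Z_x})$, which remains coherent, is a $j_*\oh_{Z_x}$-module (hence equal to $j_*\mathscr{H}_0$ for some coherent $\mathscr{H}_0$ on $Z_x$), and satisfies $\mathscr{H}'|_{U_0} = \mathscr{H}|_{U_0}$ because $\mathscr{H}|_{U_0}$ is already killed by $\mathscr{I}_{Z_x}|_{U_0} = \widetilde{\mathfrak{p}_x}$. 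Therefore $\mathscr{H}_0|_W \cong \oh_W$, where $W = Z_x \cap U_0$.

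The remaining and main obstacle is the following lemma on $Z_x$: given a coherent sheaf $\mathscr{H}_0$ on the integral Noetherian scheme $Z_x$ with $\mathscr{H}_0|_W \cong \oh_W$ for some open $W$, there exist a non-zero coherent ideal $\mathscr{I} \subseteq \oh_{Z_x}$ and an injection $\mathscr{I} \hookrightarrow \mathscr{H}_0$; applying $j_*$ then yields $j_*\mathscr{I} \hookrightarrow j_*\mathscr{H}_0 = \mathscr{H}' \subseteq \mathscr{F}$ as required. To prove the lemma I would form the fiber product $\mathscr{I}_0 := \oh_{Z_x} \times_{j'_*\oh_W} \mathscr{H}_0$ along the two natural maps into $j'_*\oh_W$ (with $j'\colon W \hookrightarrow Z_x$); the projection $\mathscr{I}_0 \to \mathscr{H}_0$ is injective (kernels would be sections of $\oh_{Z_x}$ vanishing on the dense open $W$ of the integral scheme $Z_x$, hence zero), whereas the projection $\mathscr{I}_0 \to \oh_{Z_x}$ is surjective onto a non-zero coherent ideal $\mathscr{I}'_0$ with kernel equal to the torsion subsheaf $\mathscr{T}$ of $\mathscr{H}_0$, which is coherent and supported on a proper closed subset $T \subsetneq Z_x$ since $\mathscr{T}|_W = 0$. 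The resulting sequence $0 \to \mathscr{T} \to \mathscr{I}_0 \to \mathscr{I}'_0 \to 0$ need not split, but choosing $n$ large enough that $\mathscr{J}_T^n$ annihilates $\mathscr{T}$ (where $\mathscr{J}_T \subseteq \oh_{Z_x}$ is a coherent ideal defining $T$, non-zero by integrality), the pullback of the sequence along $\mathscr{I} := \mathscr{J}_T^n \mathscr{I}'_0 \hookrightarrow \mathscr{I}'_0$ splits, and composing the resulting section with $\mathscr{I}_0 \to \mathscr{H}_0$ gives the desired injection $\mathscr{I} \hookrightarrow \mathscr{H}_0$ (with $\mathscr{I}$ non-zero and coherent since both factors are).
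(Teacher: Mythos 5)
Your ``$\Leftarrow$'' direction and the ``moreover'' part are essentially correct, though the very last step should simply invoke that $\ass$ of a subsheaf is contained in $\ass$ of the ambient sheaf (which is immediate from the definition); Corollary~\hyperref[AS5]{\ref*{cor:AssSvazkuBasic}~(\ref*{AS5})} as stated bounds $\ass$ of the \emph{middle} term of a short exact sequence and does not give this inclusion directly. Your ``$\Rightarrow$'' direction takes a genuinely different route from the paper: both proofs reduce to the case $X=Z_x$ integral, but the paper then applies Lemma~\ref{lem:PowersOfI} to extend the affine inclusion $\oh_X(U)\hookrightarrow\mathscr{F}(U)$ directly to a morphism $\mathscr{I}^n\to\mathscr{F}$, which is injective because it is generically injective and $\mathscr{I}^n$ is torsion-free; you instead extend to a coherent subsheaf of $\mathscr{F}$, pass to its $\mathscr{I}_{Z_x}$-annihilator, and then construct an ideal inside the resulting coherent sheaf $\mathscr{H}_0$ on $Z_x$ via a fibre product and a splitting. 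That reduction is sound, but the final splitting step has a real gap.

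The gap is in the claim that $\mathscr{J}_T^n\mathscr{T}=0$ alone forces the pullback of $0\to\mathscr{T}\to\mathscr{I}_0\to\mathscr{I}'_0\to 0$ along $\mathscr{I}=\mathscr{J}_T^n\mathscr{I}'_0\hookrightarrow\mathscr{I}'_0$ to split. The implicit recipe for the section $\sigma(fg)=f\tilde g$ (with $f\in\mathscr{J}_T^n$, $\tilde g$ any lift of $g$) is not well defined: given a relation $\sum f_ig_i=0$ in $\mathscr{I}'_0$, the element $\sum f_i\tilde g_i$ lies in $\mathscr{J}_T^n\mathscr{I}_0\cap\mathscr{T}$, and this intersection is \emph{not} controlled by $\mathscr{J}_T^n\mathscr{T}=0$. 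Concretely, over $R=k[x,y]$ with $T=R/\mathfrak m$, $I'_0=\mathfrak m$, $J=\mathfrak m$, a non-split $0\to T\to P\to\mathfrak m\to 0$ has lifts $\tilde x,\tilde y$ of $x,y$ with $y\tilde x-x\tilde y=e\ne 0$, and the Koszul relation $x\cdot y-y\cdot x=0$ in $\mathfrak m^2$ is sent to $x\tilde y-y\tilde x=-e\ne 0$; so the proposed well-defined map does not exist (a section of $\mathfrak m^2\to P$ does exist in this example, but it must be chosen differently and the claim does not follow from your hypotheses). What is actually needed is the Artin--Rees lemma: working affine-locally, for $n\gg 0$ one has $\mathscr{J}_T^n\mathscr{I}_0\cap\mathscr{T}=0$, whence $\pi$ restricts to an isomorphism $\mathscr{J}_T^n\mathscr{I}_0\xrightarrow{\sim}\mathscr{J}_T^n\mathscr{I}'_0=\mathscr{I}$, and its inverse followed by $\mathscr{I}_0\hookrightarrow\mathscr{H}_0$ gives the desired embedding; since $Z_x$ is covered by finitely many affines, one can take the maximum of the local Artin--Rees constants. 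With this repair your argument goes through, but as written the step is false, not merely unjustified.
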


\begin{proof}
We start with the last sentence. Suppose that $0 \ne \mathscr{I} \subseteq \oh_{Z_x}$---then clearly $\supp j_*(\mathscr{I}) \subseteq \overline{\{x\}}$. If $y \in \ass{j_*(\mathscr{I})}$, then $y \in Z_x$, since otherwise $y$ would have an open affine neighbourhood $U$ with $j_*(\mathscr{I})(U) = 0$. If $y \in Z_x$, consider an open affine neighbourhood $U$ of $y$. Then also $x \in U$ and $\ass j_*(\mathscr{I})(U) = \{\mathfrak{p}_x\}$ since $j_*(\mathscr{I})(U)$ is a non-zero ideal of the domain $j_*(\oh_{Z_x})(U)$. This shows that $\ass{j_*(\mathscr{I})} = \{x\}$ and $\supp j_*(\mathscr{I}) \subseteq \overline{\{x\}}$.

Let now $\mathscr{F}$ be a quasi-coherent sheaf with $x \in \ass \mathscr{F}$ and let $\mathscr{I}_x$ be the coherent sheaf of ideals of $Z_x$ as above. We first reduce the situation to the case where $\mathscr{I}_x\cdot \mathscr{F} = 0$. Indeed, let $\mathscr{F}'\subseteq \mathscr{F}$ be the subsheaf of sections annihilated by $\mathscr{I}_x$ (i.e.\ $\mathscr{F}'(U) = \{s\in\mathscr{F}(U) \mid \mathscr{I}_x(U)\cdot s = 0 \}$). Then $\mathscr{F}'$ is quasi-coherent and $x\in \ass{\mathscr{F}'}$ by \cite[Tag 01PO]{stacks}, so we can replace $\mathscr{F}$ by $\mathscr{F}'$.

If $\mathscr{I}_x\cdot \mathscr{F} = 0$, then $\mathscr{F} \cong j_*(j^*\mathscr{F})$ by \cite[Remark 7.35]{GW}. Moreover, since $\mathscr{F}_x \cong j_*(j^*\mathscr{F})_x \cong (j^*\mathscr{F})_x$, we have $x\in\ass{j^*\mathscr{F}}$. If we find a non-zero coherent sheaf of ideals $\mathscr{I} \subseteq \oh_{Z_x}$ such that $\mathscr{I} \hookrightarrow j^*\mathscr{F}$, then by adjunction $j_*(\mathscr{I})$ embeds into $\mathscr{F}$. Hence we reduced the problem to the case where $X = Z_x$.

Assume, therefore, that $X$ is integral, $x$ is its generic point and $x \in \ass{\mathscr{F}}$. Then, by Corollary~\ref{cor:AssSvazkuBasic}, there exists an open affine neighbourhood $U$ of $x$ and an embedding $\psi\colon \oh_X(U) \hookrightarrow \mathscr{F}(U)$. Let $\mathscr{I}\subseteq \oh_X$ be a coherent sheaf of ideals such that $\supp{\oh_X/\mathscr{I}} = X\setminus U$ (see for instance \cite[Tag 01J3]{stacks}). The proof will be concluded by the following lemma, which implies that there exists $n\ge0$ such that $\psi$ extends to a homomorphism $\varphi\colon \mathscr{I}^n \to \mathscr{F}$. Note that since $\varphi$ is generically injective and $\mathscr{I}^n$ has torsion-free modules of sections over the sheaf of domains $\oh_X$, it follows that $\varphi$ is itself injective.
\end{proof}

\begin{lem}[{\cite[Tag 01YB]{stacks}, \cite[\S V.2, Corollaire 1]{Gabriel}}]\label{lem:PowersOfI}
Let $X$ be a Noetherian scheme and $\mathscr{F} \in \qcoh_X$. Let $\mathscr{I}\subseteq \oh_X$ be a coherent sheaf of ideals, $Z = \supp{\oh_X/\mathscr{I}}$ the corresponding closed subset and $U = X\setminus Z$. Then taking sections over $U$ induces a canonical isomorphism
\[ \varinjlim_n \hom_{X}(\mathscr{I}^n, \mathscr{F}) \stackrel{\simeq}{\longrightarrow} \mathscr{F}(U). \]
\end{lem}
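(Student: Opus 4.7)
The plan is to produce a natural candidate for the isomorphism, then verify it is bijective by reducing to a standard commutative-algebra statement on a finite affine cover. For the candidate map, observe that $U = X\setminus\supp(\oh_X/\mathscr{I})$ is precisely the locus where $\mathscr{I}$ has unit stalks; hence $\mathscr{I}|_U = \oh_U$ and $\mathscr{I}^n|_U = \oh_U$. Restricting a homomorphism $\varphi\colon\mathscr{I}^n\to\mathscr{F}$ to $U$ and evaluating at $1$ gives a section $\varphi|_U(1)\in\mathscr{F}(U)$, and this is compatible with the transition maps $\mathscr{I}^{n+1}\hookrightarrow\mathscr{I}^n$ (which become the identity on $U$), so we obtain the asserted map on the colimit.

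Next, I would globalise the problem by working with the quasi-coherent sheaf $\mathscr{H}_n:=\homSh_X(\mathscr{I}^n,\mathscr{F})$ (quasi-coherent because $\mathscr{I}^n$ is coherent on a locally Noetherian scheme, cf.\ \cite[Tag 01LA]{stacks}). Since $X$ is Noetherian, hence quasi-compact and quasi-separated, global sections commute with the filtered colimit $\varinjlim_n\mathscr{H}_n$. The construction of the previous paragraph factors as the composition of the canonical morphism $\varinjlim_n\mathscr{H}_n \to j_*j^*\mathscr{F}$ (where $j\colon U\hookrightarrow X$ is the open immersion) with the identification $\Gamma(X, j_*j^*\mathscr{F}) = \mathscr{F}(U)$. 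Thus it suffices to prove that $\varinjlim_n\mathscr{H}_n \cong j_*j^*\mathscr{F}$ as quasi-coherent sheaves on $X$, and this can be checked after restriction to each member of a finite affine open cover $X = V_1\cup\cdots\cup V_r$ of $X$.

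This reduces the problem to the following algebraic assertion, which is the content I would verify directly: for a Noetherian ring $R$, a finitely generated ideal $I=(f_1,\ldots,f_r)\subseteq R$, and an $R$-module $M$, the canonical map
\[ \varinjlim_n\hom_R(I^n, M)\;\longrightarrow\;\widetilde M\bigl(\spec R\setminus V(I)\bigr) \]
is an isomorphism. The right-hand side is the equalizer of $\prod_i M_{f_i}\rightrightarrows\prod_{i,j}M_{f_if_j}$. The map sends $\varphi$ to the tuple $(\varphi(f_i^n)/f_i^n)_i$; compatibility in $M_{f_if_j}$ follows from $f_j^n\varphi(f_i^n) = \varphi(f_i^nf_j^n) = f_i^n\varphi(f_j^n)$ already in $M$. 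For surjectivity, given a compatible tuple $(m_i/f_i^{k_i})_i$, I would choose $n$ large enough that every degree-$n$ monomial in the $f_i$ yields a well-defined element of $M$ upon multiplication by the tuple, and extend $R$-linearly to all of $I^n$; the compatibility condition on the tuple is precisely what is needed for this extension to be well defined on the relations among the monomial generators. For injectivity in the colimit, if $\varphi\colon I^n\to M$ maps to zero in every $M_{f_i}$, then each $\varphi(f_i^n)$ is annihilated by a power of $f_i$; taking $m$ large enough that $I^m$ is generated by monomials of sufficiently high degree in each variable shows that the restriction $\varphi|_{I^m}$ vanishes.

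The main obstacle is the affine case, and specifically the surjectivity step: because $I^n$ is not free, defining a homomorphism out of $I^n$ requires checking compatibility on the syzygies among the monomial generators, and this is precisely encoded by the sheaf condition (equalizer description) for $\widetilde M$ over the Zariski cover $\{D(f_i)\}$ of $\spec R\setminus V(I)$. Once this bookkeeping is handled, the rest of the argument is routine.
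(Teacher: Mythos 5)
The paper does not actually prove this lemma; it only cites \cite[Tag 01YB]{stacks} and \cite[\S V.2, Corollaire 1]{Gabriel}, so there is no internal argument to compare against. Your reduction is fine: $\homSh_X(\mathscr{I}^n,\mathscr{F})$ is quasi-coherent, global sections commute with filtered colimits on a Noetherian scheme, $j_*j^*\mathscr{F}$ is quasi-coherent, and both constructions are compatible with restriction to an open affine cover. Your injectivity argument in the affine case is also correct.

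The surjectivity step, however, has a genuine gap. Your claim that ``the compatibility condition on the tuple is precisely what is needed for this extension to be well defined on the relations'' is false under the criterion you use for choosing $n$ (namely, $n$ large enough that every degree-$n$ monomial has some $a_i\geq k_i$). What compatibility actually gives you is that the proposed values $v_{\underline a}:= f^{\underline a - k_i e_i}m_i$ satisfy $f_j^{k_j}v_{\underline a}=f^{\underline a}m_j$ for \emph{every} $j$; consequently for any relation $\sum r_{\underline a}f^{\underline a}=0$ the element $\sum r_{\underline a}v_{\underline a}$ is annihilated by all $f_j^{k_j}$, hence lies in $\Gamma_I(M)$ --- but it need not be zero. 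For a concrete counterexample take $R=k[x_1,x_2,y_1,y_2]/(x_1y_1+x_2y_2)$, $I=(x_1,x_2)$, $M=\operatorname{coker}\left(R\xrightarrow{(x_2,-x_1)}R^2\right)$, $m_1=e_1$, $m_2=e_2$, $k_1=k_2=1$. The tuple is compatible ($x_2m_1=x_1m_2$ in $M$), yet the relation $y_1x_1+y_2x_2=0$ has obstruction $y_1e_1+y_2e_2\neq 0$ in $M$, so no $R$-linear map $\varphi\colon I\to M$ with $\varphi(x_i)=m_i$ exists, even though $n=1$ already satisfies your criterion.

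The argument can be repaired, but it requires an input you have not supplied. One route is to note that the Rees algebra $\bigoplus_n I^n$ is a Noetherian graded quotient of $R[T_1,\dots,T_r]$, so the homogeneous ideal of relations is generated in degrees $\leq d_0$; for $n\geq d_0+r(K-1)+1$ every generating relation of $\ker\left(R^{S_n}\to I^n\right)$ is a shift $T^{\underline c}\rho$ with $\rho$ of degree $\leq d_0$ and $\underline c$ having some $c_i\geq K$, and then $\sum r_{\underline a}v_{\underline a+\underline c}=\left(\sum r_{\underline a}f^{\underline a}\right)v_{\underline c}=0$. Another route is to observe that the obstruction lands in $\Gamma_I(M)$ and that $\varinjlim_n\hom_R(I^n,-)$ and $\varinjlim_n\ext^1_R(I^n,-)$ both vanish on $I$-torsion modules --- equivalently, to run the whole argument through the long exact sequence obtained by applying $\hom_R(-,M)$ to $0\to I^n\to R\to R/I^n\to 0$ and comparing with the local cohomology sequence for $\Gamma_Z$. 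Either fix is fine, but as written the surjectivity argument does not go through.
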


\subsection{The closed monoidal structure on sheaves}

Finally, we recall a few basic facts about the standard closed monoidal structure on $\Mod{\oh_X}$ and $\coh_X$. We again assume that $X$ is a locally noetherian scheme. 

We will write $\otimes = \otimes_{\oh_X}$ for the usual tensor product on $\Mod{\oh_X}$, which is simply the sheafification of the obvious tensor product of presheaves; see e.g.\ \cite[\S7.4]{GW}.

If $\mathscr{F}, \mathscr{G}$ are sheaves of $\oh_X$-modules and $x\in X$, then canonically
\[ (\mathscr{F}\otimes\mathscr{G})_x \simeq \mathscr{F}_x\otimes_{\oh_{X,x}} \mathscr{G}_x.\]
If, moreover, both $\mathscr{F}, \mathscr{G}$ are quasi-coherent (resp.\ coherent), then $\mathscr{F}\otimes\mathscr{G}$ is quasi-coherent (resp.\ coherent) and 
\[ (\mathscr{F}\otimes\mathscr{G})(U) \cong \mathscr{F}(U) \otimes_{\oh_X(U)} \mathscr{G}(U) \]
for each open affine subset $U \subseteq X$ by \cite[Corollary 7.19]{GW}.

Given $\mathscr{F}, \mathscr{G}\in\Mod{\oh_X}$, one can define the sheaf of homomorphisms $\homSh_X(\mathscr{F}, \mathscr{G}) \in \Mod{\oh_X}$ by setting
\[ \homSh_X(\mathscr{F}, \mathscr{G})(U) = \hom_U(\mathscr{F}|_U, \mathscr{G}|_U) \text{ for all open subsets } U \subseteq X. \]
The usual homomorphism group $\hom_X(\mathscr{F}, \mathscr{G})$ can be recovered as the global sections of $\homSh_X(\mathscr{F}, \mathscr{G})$.
If $\mathscr{F}$ is a coherent sheaf, then
\[ \homSh_X(\mathscr{F}, \mathscr{G})_x \simeq \hom_{\oh_{X,x}}(\mathscr{F}_x, \mathscr{G}_x) \]
canonically for each $x\in X$ by \cite[Proposition 7.27]{GW}. If, moreover, $\mathscr{G}$ is quasi-coherent (resp.\ coherent), then also $\homSh_X(\mathscr{F}, \mathscr{G})$ is quasi-coherent (resp.\ coherent) and for each open affine $U \subseteq X$, we also have
\[ \homSh_X(\mathscr{F}, \mathscr{G})(U) \simeq \hom_{\oh_{X}(U)}\left(\mathscr{F}(U), \mathscr{G}(U)\right). \]

The two construction are related by the usual adjunction
\[
\homSh_X(\mathscr{F}\otimes\mathscr{G}, \mathscr{H}) \simeq \homSh_X\left(\mathscr{F}, \homSh(\mathscr{G}, \mathscr{H})\right)
\]
for any triple $\mathscr{F}, \mathscr{G}, \mathscr{H}\in\Mod{\oh_X}$. In particular, $(\Mod{\oh_X}, \otimes, \oh_X, \homSh_X)$ is naturally a closed monoidal category, and $\coh_X$ is a closed monoidal subcategory.

\bibliography{references}
\bibliographystyle{amsalpha}

\end{document}